\newtheorem{lemma}{Lemma}[section]
\newtheorem{proposition}[lemma]{Proposition}
\newtheorem{theorem}{Theorem}
\newtheorem{corollary}[lemma]{Corollary}
\newtheorem{remark}[lemma]{Remark}
\newtheorem{example}[lemma]{Example}
\newtheorem{definition}[lemma]{Definition}
\renewenvironment{proof}[1][\proofname] 
{\par\pushQED{\qed}\normalfont\topsep6\p@\@plus6\p@\relax\trivlist\item[\hskip\labelsep\bfseries#1\@addpunct{.}]\ignorespaces}{\popQED\endtrivlist\@endpefalse}
\newcommand{\proofend}{$\Box$\bigskip}
\newcommand{\C}{\mathbb{C}}
\newcommand{\R}{\mathbb {R}}
\newcommand{\Z}{\mathbb{Z}}
\newcommand{\B}{\mathcal{B}}
\newcommand{\Pev}{\mathcal{P}}
\newcommand{\Mev}{\mathcal{M}}
\newcommand{\Aev}{\mathcal{A}}
\newcommand{\Aec}{\mathcal{A}_c}
\newcommand{\Aeo}{\mathcal{A}_o}
\newcommand{\E}{\mathcal{E}}
\renewcommand{\P}{\mathbf{P}}
\newcommand{\g}{{\gamma}}
\newcommand{\G}{{\Gamma}}
\newcommand{\tr}{\operatorname{Tr}}
\newcommand{\Ao}{A_-}
\newcommand{\Ae}{A_+}
\newcommand{\lin}{\operatorname{span}}
\newcommand{\id}{\operatorname{id}}
\newcommand{\diag}{\operatorname{diag}}
\newcommand{\St}{\operatorname{St}}
\newcommand{\PS}{\operatorname{PS}}
\renewcommand{\mod}{\operatorname{mod}}
\newcommand{\rk}{\operatorname{rank}}
\newcommand{\vol}{\operatorname{vol}}
\newcommand{\dvol}{\operatorname{dvol}}
\newcommand{\Sph}{\mathbb{S}}
\title{Iterating evolutes and involutes}
\author{Maxim Arnold\footnote{
Department of Mathematics, 
University of Texas, 
800 West Campbell Road,
Richardson, TX 75080;
maxim.arnold@gmail.com}
\and
Dmitry Fuchs\footnote{
Department of Mathematics, 
University of California, 
Davis, CA 95616;
 fuchs@math.ucdavis.edu}
\and
Ivan Izmestiev\footnote{
Department of Mathematics, 
University of Fribourg,
Chemin du Mus\'ee 23,
CH-1700 Fribourg;
ivan.izmestiev@unifr.ch 
}
\and
Serge Tabachnikov\footnote{
Department of Mathematics,
Pennsylvania State University,
University Park, PA 16802;
tabachni@math.psu.edu}
\and
Emmanuel Tsukerman\footnote{
Department of Mathematics, 
University of California,
Berkeley, CA 94720-3840;
e.tsukerman@berkeley.edu}
}
\date{}
\begin{document}
\maketitle

\begin{abstract}
We study iterations of two classical constructions, the evolutes and involutes of plane curves, and we describe the limiting behavior of both constructions on a class of smooth curves with singularities given by their support functions.  

Next we study two kinds of discretizations of these constructions: the curves are replaced by polygons, and the evolutes are formed by the circumcenters of the triples of consecutive vertices, or by the incenters of the triples of consecutive sides. The space of polygons is a vector bundle over the space of the side directions, and both kinds of evolutes define vector bundle morphisms. 
In both cases, we describe the linear maps of the fibers. In the first case, the induced map of the base is periodic, whereas, in the second case, it is an averaging transformation. We also study the dynamics of the related inverse constructions, the involutes of polygons.

In addition to the theoretical study, we performed numerous computer experiments; some of the observations remain unexplained.
\end{abstract}

\setcounter{tocdepth}{2}
\tableofcontents

\section{Introduction }\label{Intro}

\subsection{Classical evolutes and involutes}\label{classical}
Recall basic facts about evolutes and involutes of smooth plane curves. The reader is referred to his favorite book on elementary differential geometry or a treatise on mathematical analysis. In particular, the material outlined below is contained in \cite{FT}.

Let $\g$ be a smooth plane curve. Its {\it evolute}, $\G$, is the envelope of  the family of normal lines to $\g$. See Figure \ref{evolutedef}.
Equivalently, the evolute is the  locus of centers of osculating circles to $\g$.  Equidistant curves share the evolute. 

\begin{figure}[hbtp]
\centering
\includegraphics[height=2.5in]{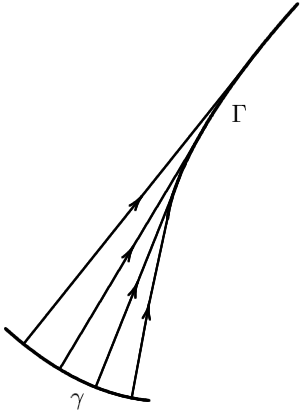}\quad\quad\quad
\includegraphics[height=2.5in]{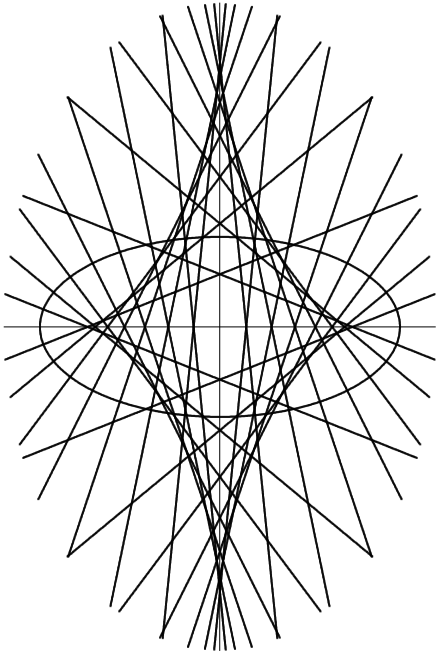}
\caption{Left: $\G$ is the evolute of $\g$, and $\g$ is an involute of $\G$. Right: the evolute of an ellipse.}
\label{evolutedef}
\end{figure}

The evolute is always locally convex: it has no inflection points. If the curve $\g$ has an inflection point then its evolute at this point escapes to infinity. We shall be mostly concerned with locally convex curves.

Typically, an evolute has cusp singularities, generically, semicubical, clear\-ly visible in Figure \ref{evolutedef}. These cusps of $\G$ correspond to the vertices of $\g$, the local extrema of its curvature. 

The evolute of a closed curve has total length zero. The length is algebraic: its sign changes each time that one passes a cusp.

We shall consider a larger class of curves with cusp singularities called {\it wave fronts}. A wave front is a piecewise smooth curve such that the tangent line is well defined at every point, including the singularities.\footnote{A more conceptual definition of a wave front: it is a projection to the plane of a smooth Legendrian curve in the space of contact elements in $\R^2$. We do not use this point of view in the present paper.} The construction of the evolute extends to wave fronts (the curvature at cusp points is infinite, and the evolute of a wave front passes through its cusps).

A coorientation of a wave front is a continuous choice of a normal direction. Unlike orientation, coorientation extends continuously through cusps. A cooriented closed wave front has an even number of cusps. A coorientation of $\g$ provides orientation, and hence coorientation, of its normal lines, and as a result, a coorientation of the evolute $\G$.

Thus one can consider a transformation on the set of cooriented wave fronts: curve $\gamma$ is mapped to its evolute $\Gamma$. We shall call this mapping the \emph{evolute transformation} and will use the notation $\G=\mathcal{E}(\g)$.

Consider the left Figure \ref{evolutedef} again. The curve $\g$ is called an {\it involute} of the curve $\G$: the involute  is orthogonal to the tangent lines of a curve. The involute $\g$ is described by the free end of a non-stretchable string whose other end is fixed on   $\G$ and which is wrapped around it (accordingly, involutes are also called evolvents). Changing the length of the string yields a 1-parameter family of involutes of a given curve.
 Having zero length of $\g$ is necessary and sufficient for the string construction to produce a closed curve.
 
 The relation between a curve and its evolute is similar to the relation between a periodic function and its derivative. A function has a unique derivative, but not every function is a derivative of a periodic function: for this, it should have zero average. And if it does, then the inverse derivative is defined up to a constant of integration. We shall see that this analogy is literally true for a certain class of plane wave fronts.

\subsection{Discretization}\label{discretization}
In this paper we  study two natural discretizations of the evolute construction to the case of polygons. Both are discretizations of the fact that the evolute of a curve is the locus of centers of its osculating circles. We consider our study as a contribution to the emerging field of discrete differential geometry \cite{BS,Ho}.

Replace the curve by a closed polygonal line $\P$. One may replace osculating circles by the circles that circumscribe the consecutive triples of vertices of $\P$. The centers of these circles form a new polygon, $\mathbf{Q}$, which may be regarded as an evolute of $\P$. To perform this construction, we need to assume that no three consecutive vertices of $\P$ are collinear. Since these centers are the intersection points of the perpendicular bisectors of the consecutive sides of $\P$, we call $\mathbf{Q}$ the  {\it perpendicular bisector evolute or $\Pev$-evolute} of $\P$. This  evolute was considered in the context of a discrete 4-vertex theorem \cite{Mo,Mu}. 

$\Pev$-evolute of a polygon shares one additional property of the classical evolutes: it borrows the coorientation from this polygon. Namely, a coorientation of a polygon consists of an arbitrary choice of coorientations (equivalently: orientations) of all sides. But the sides of $\mathbf Q$ are  perpendicular bisectors of sides of $\P$, so a choice of a coorientation of a side of $\mathbf Q$ is the same as a choice of an orientation of the corresponding side of $\P$.

Equally well, one may consider the circles that are tangent to the  consecutive triples of sides of $\P$.  In fact, for a given triple of lines, there are four such circles, so, if we make an arbitrary choice of these circles for every triple, then for an $n$-gon we will obtain as many as $4^n$ evolutes. 

To narrow this variety of choices, we notice that the center of a circle we consider is the intersection point of the angular bisectors of two consecutive angles of the polygon. Since an angle has two bisectors, we can just make a choice of one of these two bisectors at every vertex of our polygon $\P$. This gives us $2^n$ evolutes, which are all called angular bisector evolutes or $\Aev$-evolutes of $\P$. 

Thus, the sides of an $\Aev$-evolute are the chosen bisectors of the angles of $\P$. To perform this construction, we need to assume that no two consecutive vertices of $\P$ coincide and no two consecutive chosen bisectors are parallel. 

There are some choices of angle bisectors which may be regarded as natural. For example, one can take bisectors of {\it interior} angles (this choice is used in Figure \ref{hexagonexample}). Another choice is the choice of bisectors of {\it exterior} angles. Other natural choices arise when the sides of $\P$ are (co)oriented (see Figure \ref{OrBisector} below). In this approach, the $\Aev$-evolute of a polygon borrows a coorientation from this polygon.

Both versions of polygonal evolutes, along with the relative evolute of parallel polygons, are discussed in the book \cite{Pa}, see also \cite{Ta}. They are also introduced in \cite{Ho} under the names of vertex and edge evolutes.

For some (randomly chosen) hexagon $\P$ the $\Pev$- and $\Aev$-evolutes
are shown in Figure \ref{hexagonexample}.

\begin{figure}[hbtp]
	\centering
	\includegraphics[width=3.4in]{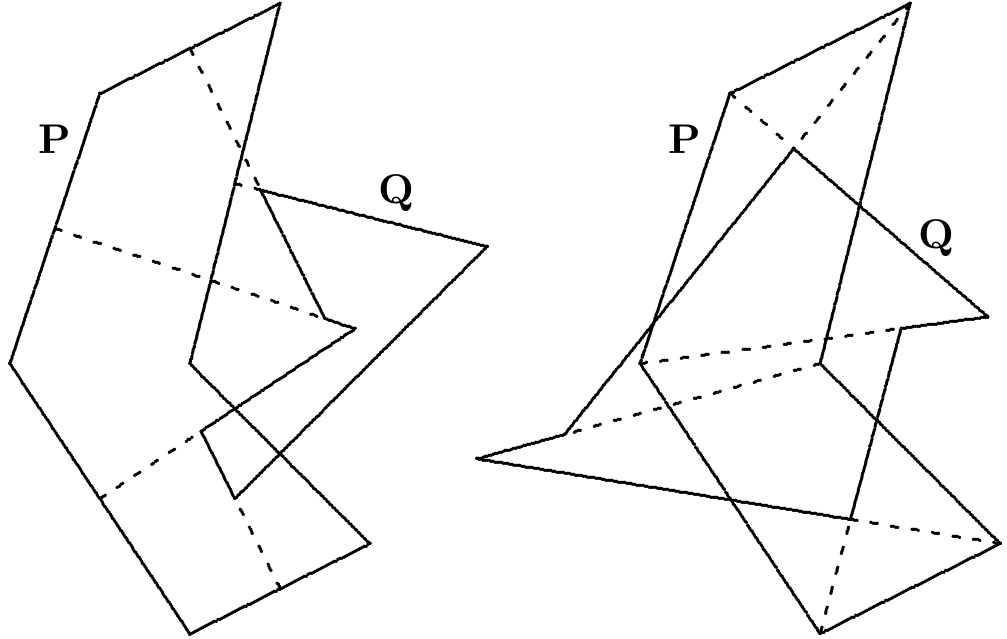}
	\caption{The two constructions of an evolute $\mathbf{Q}$ of a polygon $\P$.}
	\label{hexagonexample}
\end{figure}

It is seen that for a polygon with a small number of sides the two evolutes  may look completely different (but, certainly, for polygons approximating smooth curves, they will be close to each other).

A polygon whose evolute is $\mathbf Q$ is called an involute of $\mathbf Q$. A variety of definitions of the evolute gives rise to a variety of definitions of an involute. We will discuss this variety in Section \ref{AP-inv}. The polygonal involutes retain the most visible property of classical involutes: they exist only under an additional condition and are not unique.

\subsection{A survey of results of this paper}\label{survey}

The main subject of this paper is the dynamics of the evolute and involute transformations for smooth and polygonal curves in the plane.

Section \ref{smooth} is devoted to the smooth case. The curves in the plane which we study are closed cooriented wave fronts without inflections. The total turning angle of such curve is a multiple of $2\pi$. If it is $2\pi$, then we call the curve a {\it hedgehog}. If it is $2k\pi$ with $k>1$, then the curve is called a {\it multi-hedgehog}. 

It turns out that generically the number of cusps of consecutive evolutes grows without bounds. However, if the support function of a curve is a trigonometric polynomial of degree $n$, then the limiting shape of its iterated evolute is that of a hypocycloid of order $n$ (Theorem \ref{growth}).

A closed curve whose evolute is the given curve $\gamma$  is called an involute of $\gamma$. A curve $\gamma$ has an involute if and only if its total signed length is zero, and if this condition holds, then $\gamma$ has a one parameter family of involutes. However, for a generic curve $\gamma$ of zero signed length, precisely one involute also has  zero signed length. We call this involute an {\it evolvent}\footnote{As we mentioned earlier, in the standard terminology, ``evolvent" and ``involute" are synonymous.}, and the construction of the evolvent can be iterated. 

We observe that the iterated evolvents of a hedgehog converse to its curvature center, a.k.a. the Steiner point, and the limit shape of the iterated evolvents of a hedgehog is a hypocycloid, generically an astroid (Theorem \ref{limhypo}). We also provide analogs of these results for non-closed periodic curves, such as the classical cycloid. In particular, if such a curve differs  from its evolute by a parallel translation then it is the cycloid. (Theorem \ref{limcycl}).

Sections \ref{PolygonsHedgehogs} -- \ref{AP-inv} are devoted to polygonal versions of evolutes and involutes. It has been already pointed out in Section  \ref{discretization} that there are two competing approaches to the discretization of the notion of an evolute: for an $n$-gon $\P$, its $n$-gonal evolute can be defined as having for sides the perpendicular bisectors of the sides of $\P$, or the bisectors of the angles of $\P$. Accordingly, we speak of $\Pev$-evolutes and $\Aev$-evolutes. There arise two different theories based on $\Pev$- and $\Aev$-approaches. 

It was also noted in Section \ref{discretization} that while a $\Pev$-evolute of a given $n$-gon $\P$ is well defined and unique, the number of different $\Aev$-involutes of $\P$ is $2^n$ (or $2^{n-1}$, this depends on the details of the definition).

Section \ref{PolygonsHedgehogs} is technical: it provides a discretization of the support function (which becomes, for an $n$-gon, an $n$-tuple $(\alpha_1,p_1),\dots(\alpha_n,p_n)$ with $\alpha_i\in{\mathbb R}/2\pi{\mathbb Z},\, p_i\in\mathbb R$), their Fourier series, hypocycloids, and Steiner points (which in some cases we have to replace with geometrically less clear ``pseudo-Steiner points"). 

The $\Pev$-evolutes are studied in Section \ref{P-evo}. We notice that the $\Pev$-evolute transformation acts on  $n$-tuples $\{(\alpha_i,p_i)\}$ in the following way: $\alpha_i$ just becomes $\alpha_i+\dfrac{\pi}2$, while the components $p_i$ are transformed in a linear way (depending on $\alpha_i$): we give an explicit description of these transformations (Theorem \ref{CoordP}). 

First, we apply these results to the discrete hypocycloids (equiangular hedgehogs whose sides are tangent to a hypocycloid). In this case, we obtain a full geometric description of the $\Pev$-evolute transformation; in particular, the $\Pev$-evolute of a discrete hypocycloid is a discrete hypocycloid of the same order, magnified and rotated in an explicitly described way (Theorem \ref{EvolDHPerp}). 

For generic hedgehogs, we give  necessary and sufficient conditions for the existence of a pseudo-Steiner point, a point shared by a polygon and its $\Pev$-evolute and satisfying some natural assumptions. These necessary and sufficient conditions depend only on the directions of the sides of a polygon (Theorem \ref{thm:PsSt}).

Then we prove some partially known, partially conjectured, and partially new facts in the cases of small $n$. In particular, we prove B. Gr\"unbaum's conjecture that the third and the first $\Pev$-evolutes of a pentagon are homothetic.

After that, we pass to the main result of this section. It turns out that the spectrum of the $(n\times n)$-matrix of the transformation of $p$-components is symmetric with respect to the origin, and that this symmetry respects the sizes of the Jordan blocks of the matrix (Theorem \ref{SymSpectrum}). 

Consequently, all the eigenvalues of the square of the $\Pev$-evolute transformation have even multiplicities, generically, multiplicity 2. Since the sides of the second $\Pev$-evolute of a polygon $\P$ are parallel to the sides of $\P$, this second iteration becomes a linear transformation in the $n$-dimensional space of $n$-gons with prescribed directions of sides. 

The dynamics of this transformation is determined by the maximal (by absolute value) eigenvalue $\lambda_0$. Namely, if $\lambda_0$ is real, then the iterated $\Pev$-evolutes have two alternating limit shapes (see Figures \ref{positive} and \ref{negative}); if $\lambda_0$ is not real, then the limit behavior of the iterated $\Pev$-evolutes is more complicated, but still interesting.

Section \ref{A-evo} is devoted to $\Aev$-evolutes. There are two natural ways to specify one of the $2^n\ \Aev$-evolutes. The first  consists in choosing an $\Aev$-evolute of $\P$ using a coorientation of $\P$. We refer to the $\Aev$-evolute constructed in this way as to the $\Aev_o$-evolute. It is important that, at least generically, the $\Aev_o$-evolute of a cooriented polygon has a canonical coorientation, which makes it possible to iterate the construction of the $\Aev_o$-evolute. 

Another possibility is to take for the sides of the evolute of a polygon $\P$ the interior angle bisectors of $\P$; the evolute constructed in this way is called the $\Aev_c$-evolute.

The main properties of $\Aev_o$-evolutes are that the $\Aev_o$-evolute of a discrete hypocycloid is again a discrete hypocycloid (Theorem \ref{EvolDH}), and the iterated $\Aev_o$-evolutes of an arbitrary discrete hedgehog converge in the shape to a discrete hypocycloid (Theorem \ref{AngleEvolShapes}). By the way, Theorems \ref{EvolDHPerp} and \ref{EvolDH} show that the limit behaviors of $\Pev$- and $\Aev_o$-evolutes are similar, although the scaling factors are different.

We have no proven results on $\Aev_c$-evolutes, but some, rather mysterious, experimental results concerning $\Aev_c$-evolutes of pentagons may instigate future research.

Section \ref{AP-inv} is devoted to $\Aev$- and $\Pev$-involutes. It is easy to see that all kinds of discrete involutes of a polygon $\mathbf Q$ have to do with the properties of the composition $\mathcal S$ of the reflections of the plane in the consecutive sides of $\mathbf Q$. 

More precisely, $\Pev$-involutes correspond to fixed points of $\mathcal S$, while $\Aev$-involutes correspond to invariant lines of $\mathcal S$. This exhibits a big difference between the cases of even-gons and odd-gons: for an even-gon, $\mathcal S$ is either a rotation, or a parallel translation, or the identity, while for an odd-gon, it is either a reflection or a glide reflection. 

Thus, for a generic even-gon, the transformation $\mathcal S$ has a unique fixed point, but no invariant lines. So, a generic even-gon has a unique $\Pev$-involute (which we call the {\it $\Pev$-evolvent}) and no $\Aev$-involutes. On the contrary, a generic odd-gon has a unique $\Aev$-involute (this is called an {\it $\Aev_{\rm odd}$-evolvent}) and no $\Pev$-involutes. 

If, for an odd-gon $\mathbf Q$, the transformation $\mathcal S$ is a pure (not glide) reflection (analytically this means that a certain quantity, called ``quasiperimeter" and similar to the signed length of a smooth hedgehog, vanishes), then, in addition to the $\Aev_{\rm odd}$-evolvent, $\mathbf Q$ has a one-parameter family of $\Aev$-involutes, and, generically, one of them has zero quasiperimeter; this is the {\it $\Aev_{\rm even}$-evolvent}. 

The existence of a $\Pev$-involute for an odd-gon $\mathbf Q$ again requires the condition of vanishing quasiperimeter. Under this condition, $\mathbf Q$ has a one-parameter family of $\Pev$-involutes, and, for a generic $\mathbf Q$, precisely one of them has zero quasiperimeter; the corresponding $\Pev$-involute is the {\it $\Pev$-evolvent}. 

All kinds of evolvent constructions admit iterations. We have some special results concerning equiangular hedgehogs. Namely, the iterated $\Aev$-evolvents of an equiangular hedgehog of zero perimeter converge to its pseudo-Steiner point (Theorem \ref{AInvolEqui}). 
The iterated $\Pev$-evolvents of an equiangular hedgehog with zero perimeter for odd-gons, and zero sum of lengths of, separately, odd-numbered and even-numbered sides for even-gons, converge in shape to two discrete hedgehogs (Theorem \ref{nodd_Pinv_equi_limit}).
And the directions of the sides of iterated $\Aev_{\rm odd}$-evolvents with $n$ odd behave ergodically (Theorem \ref{ergodic}) (this generalizes the work of a number of authors on the dynamics of the map that sends a triangle to its pedal triangle).

The final section ends with a discussion of possible extensions of our results to hyperbolic and spherical geometries.

\subsection{Acknowledgments} We are grateful to B. Gr\"unbaum for 
sending us his papers \cite{Gr1,Gr2}, to T. Hoffmann for sharing his lecture notes  \cite{Ho} and for helpful remarks, and to D. Khavinson for providing reference \cite{PW} and for a useful discussion. We are also grateful to numerous other colleagues for their interest and encouragement.

This project was conceived and partially completed at ICERM, Brown University, during S. T.'s 2-year stay there and the other authors' shorter visits. We are grateful to ICERM for the inspiring, creative, and friendly atmosphere. 

D. F. is grateful to Max-Planck-Institut in Bonn, where part of the work was done, for its invariable hospitality.
Part of this work was done  while I. I.  held a Shapiro visiting position at the Pennsylvania State University; he is grateful to Penn State for its hospitality.  

I. I. was supported by ERC Advanced Grant, Project No.~247029 SDModels. 
S. T. was supported by NSF grants DMS-1105442 and  DMS-1510055.
E. T. was supported by NSF Graduate Research Fellowship under grant No. DGE 1106400. 

\section{Evolutes and involutes of smooth hedgehogs} \label{smooth}

\begin{definition}
\label{hedgehog}
A cooriented closed plane wave front without inflections and with total turning $2\pi$ is called a \emph{hedgehog}.
\end{definition}

Hedgehogs are Minkowski differences of convex curves, see \cite{LLR,MM}. \smallskip

As we pointed out in Introduction, the evolute never has inflection points, and a coorientation of $\g$ provides a coorientation of the evolute $\G$. The total turning of $\G$ is the same as that of $\g$. Thus the evolute of a hedgehog is again a hedgehog and so we can speak about the mapping $\E :\g\mapsto \G$ on the space of hedgehogs. 

\subsection{Support functions} 
\label{smooth-support}

Choose an origin $O$ in the plane.
\begin{definition}
\label{def:support}
The signed distance from $O$ to the tangent line to a hedgehog $\g$ having the coorienting vector $(\cos\alpha,\sin\alpha)$ is called \emph{support function} of $\g$. We will denote it by $p_\g(\alpha)$. 
\end{definition}

The sign of $p_\g$ is positive if the perpendicular from $O$ to the tangent line has the same orientation as the coorienting vector, and negative otherwise. 

It is clear from definition that the support function of a hedgehog is $2\pi$-periodic function. 

Given a support function $p_\g(\alpha)$, the corresponding point of the curve is given by the formula (see, e.g., \cite{Sch})
\begin{equation}
\label{coordsup}
(x,y)=(p_\g(\alpha) \cos \alpha - p_\g'(\alpha) \sin \alpha,\  p_\g(\alpha) \sin \alpha + p_\g'(\alpha) \cos \alpha).
\end{equation}

Let us describe the support function of the evolute.  

\begin{lemma}
\label{evolsupp}
Let $p_\g(\alpha)$ be the support function of a curve $\g$. Then the support function of the evolute  is given by the formula $$p_{\E(\g)}(\alpha) = p_\g'\left(\alpha-\frac{\pi}{2}\right).$$
\end{lemma}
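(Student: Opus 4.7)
The plan is to compute the support function of the evolute directly from its definition, using two facts: (i)~the evolute is the envelope of normals to $\g$, so each tangent line of $\G=\E(\g)$ is a normal line of $\g$, and (ii)~the coorientation of $\G$ is inherited from the orientation of the normal lines of $\g$ that comes from the coorientation of $\g$.

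First I would parametrize the normals of $\g$. At the point of $\g$ with coorienting unit vector $\mathbf{n}(\alpha)=(\cos\alpha,\sin\alpha)$, the tangent direction is $\mathbf{n}(\alpha)^{\perp}=(-\sin\alpha,\cos\alpha)$, so the normal line $\ell(\alpha)$ has direction $\mathbf{n}(\alpha)$. Because $\G$ is the envelope of the family $\{\ell(\alpha)\}$, each $\ell(\alpha)$ is a tangent line of $\G$. The coorientation of $\g$ orients $\ell(\alpha)$ by the vector $\mathbf{n}(\alpha)$; consequently the coorienting normal to $\G$ along the line $\ell(\alpha)$ is the vector perpendicular to $\mathbf{n}(\alpha)$ obtained by rotating by $+\pi/2$, namely
\[
\mathbf{N}(\alpha)=(-\sin\alpha,\cos\alpha)=\bigl(\cos(\alpha+\tfrac{\pi}{2}),\sin(\alpha+\tfrac{\pi}{2})\bigr).
\]
Hence if $\beta$ is the natural parameter of the support function of $\G$ (the argument of its coorienting unit vector), then $\beta=\alpha+\tfrac{\pi}{2}$.

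Next, I would compute the signed distance from $O$ to $\ell(\alpha)$ with respect to the coorientation $\mathbf{N}(\alpha)$. Using formula \eqref{coordsup}, the point $(x,y)\in\g\cap\ell(\alpha)$ is
\[
(x,y)=\bigl(p_\g(\alpha)\cos\alpha-p_\g'(\alpha)\sin\alpha,\ p_\g(\alpha)\sin\alpha+p_\g'(\alpha)\cos\alpha\bigr),
\]
and the signed distance from $O$ to $\ell(\alpha)$ is the dot product $(x,y)\cdot\mathbf{N}(\alpha)$. A short computation, in which the $p_\g(\alpha)$ terms cancel and the $p_\g'(\alpha)$ terms combine through $\sin^2\alpha+\cos^2\alpha=1$, gives $(x,y)\cdot\mathbf{N}(\alpha)=p_\g'(\alpha)$. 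Thus $p_{\E(\g)}(\beta)=p_\g'(\alpha)=p_\g'(\beta-\tfrac{\pi}{2})$, which is the claimed formula.

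I expect the only real obstacle to be bookkeeping about the coorientation: making sure that the $+\pi/2$ rotation between $\mathbf{n}(\alpha)$ and $\mathbf{N}(\alpha)$ is the one consistent with the sign convention in Definition~\ref{def:support}, so that the resulting signed distance comes out as $+p_\g'(\alpha)$ rather than its negative. Once the correct rotation is fixed, the rest of the argument is a one-line trigonometric simplification, and the statement extends from smooth hedgehogs to wave fronts by continuity at cusps, where the support function remains smooth in $\alpha$.
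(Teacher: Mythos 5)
Your proposal is correct and follows essentially the same route as the paper's proof: identify the normal line $\ell(\alpha)$ as a tangent line of the evolute with coorienting vector $(-\sin\alpha,\cos\alpha)$, and compute the signed distance from the origin via formula \eqref{coordsup} to get $p_\g'(\alpha)$, whence the $\pi/2$ shift in the argument. The paper states this in one sentence with reference to a figure; your version simply spells out the dot-product computation.
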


\begin{figure}[hbtp]
\centering
\includegraphics[height=1.5in]{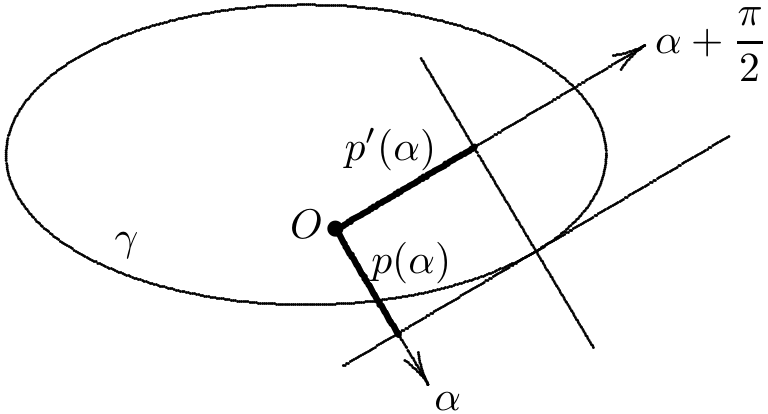}
\caption{The support function of the evolute.}
\label{support}
\end{figure}

\begin{proof}
It follows from formula (\ref{coordsup}) that the coorienting vector of the normal line to $\g$ at point $(x,y)$ is $(-\sin\alpha,\cos\alpha)$, and the signed distance from the origin to this normal  is $p'(\alpha)$, see Figure \ref{support}.
\end{proof}

Thus, iteration of evolute is differentiation of the support function, combined with the quarter-period shift of its argument.

The support function depends on the choice of the origin: parallel translation of the origin results in the addition to the support function a first harmonic, a linear combination of $\cos \alpha$ and $\sin \alpha$.

\begin{definition}
\label{Steiner} 
The \emph{Steiner point}  or the \emph{curvature centroid} of a curve $\g$ is the point
\begin{equation}
\label{Stptform}
\St(\g)=\frac{1}{\pi} \left(\int_0^{2\pi} p_\g(\alpha)\, (\cos \alpha,\sin\alpha)\, d\alpha\right).
\end{equation}
\end{definition}

It can be shown that $\St(\g)$ is the center of mass of the curve $\g$ with the density equal to the curvature.

Initially Steiner \cite{Ste} characterized $\St(\g)$ as the solution to the following problem. Roll the (convex) curve $\g$ over a line $l$; every point inside $\g$ describes a periodic curve; find among all points the one for which the area between its trajectory and the line $l$ is the smallest. 

The following observation was made by Kubota \cite{Ku}.
\begin{lemma}
\label{Stpt}
The support function of a hedgehog $\g$ is free of the first harmonics if and only if the origin is $\St(\g)$.
\end{lemma}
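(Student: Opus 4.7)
My plan is to reduce the statement to a direct Fourier-coefficient computation. Since $p_\g$ is $2\pi$-periodic, expand it as a Fourier series
\[
p_\g(\alpha) = a_0 + \sum_{k\ge 1}\bigl(a_k \cos k\alpha + b_k \sin k\alpha\bigr),
\]
so that the ``first harmonics'' referred to in the statement are precisely the terms $a_1\cos\alpha+b_1\sin\alpha$. The lemma then becomes the identity $\St(\g)=(a_1,b_1)$, from which the ``iff'' is immediate.

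To obtain this identity, I would plug the Fourier series of $p_\g$ into the definition (\ref{Stptform}) of $\St(\g)$ and exchange the sum and the integral. By the usual orthogonality relations on $[0,2\pi]$,
\[
\int_0^{2\pi}\!\cos k\alpha\cos\alpha\,d\alpha = \pi\delta_{k,1},\qquad
\int_0^{2\pi}\!\sin k\alpha\cos\alpha\,d\alpha = 0,
\]
and analogously with $\cos\alpha$ replaced by $\sin\alpha$, only the $k=1$ terms survive. Each surviving integral equals $\pi$, which cancels the normalizing factor $1/\pi$ in (\ref{Stptform}), yielding $\St(\g)=(a_1,b_1)$.

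Finally, I would record the compatibility with the translation behavior of the support function already noted in the paper: moving the origin by $v=(v_1,v_2)$ changes $p_\g(\alpha)$ by subtracting $v_1\cos\alpha+v_2\sin\alpha$, i.e. it shifts $(a_1,b_1)$ by $-(v_1,v_2)$ and leaves all higher harmonics intact. Hence the unique translation of origin that kills the first harmonics is the one that moves the origin to $(a_1,b_1)=\St(\g)$, which is exactly the claim.

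There is no real obstacle here beyond the bookkeeping of the normalization constant; the entire argument is a one-line Fourier orthogonality computation once the definitions are unwound, and the use of the pointwise formula for $\St(\g)$ makes the computation work termwise without convergence subtleties (a finite Fourier sum suffices for smooth hedgehogs and then one passes to $L^2$ if desired).
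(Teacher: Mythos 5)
Your proof is correct and follows essentially the same route as the paper: the paper's one-line argument is that formula (\ref{Stptform}) gives the zero vector if and only if $p_\g$ is $L_2$-orthogonal to the first harmonics, which is exactly your orthogonality computation $\St(\g)=(a_1,b_1)$ written out in detail. The closing paragraph about translations is a consistent but unnecessary addition.
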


\begin{proof}
Formula (\ref{Stptform}) yields a zero vector if and only if $p_\gamma(\alpha)$ is $L_2$-orthogonal to the first harmonics.
\end{proof}

As an immediate corollary of Lemma \ref{Stpt} we get:

\begin{corollary}
\label{Stptevo}
Hedgehog and its evolute share the Steiner point: $\St(\g)=\St(\mathcal{E}(\g))$. 
\end{corollary}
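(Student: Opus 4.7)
The plan is to combine Lemma \ref{Stpt} with the description of the evolute's support function from Lemma \ref{evolsupp}, exploiting the fact that the subspace of first harmonics in $L^2(S^1)$ is invariant under both differentiation and translation of the argument.

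First I would place the origin at $\St(\g)$. By Lemma \ref{Stpt}, this choice is characterized by the absence of the first harmonics in the Fourier expansion of $p_\g$; equivalently, $p_\g$ is $L^2$-orthogonal to $\cos\alpha$ and $\sin\alpha$. The goal then reduces to showing that $p_{\E(\g)}$ also has no first harmonics, since by Lemma \ref{Stpt} this forces $\St(\E(\g))$ to coincide with the chosen origin, namely $\St(\g)$.

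By Lemma \ref{evolsupp}, $p_{\E(\g)}(\alpha) = p_\g'\bigl(\alpha - \tfrac{\pi}{2}\bigr)$, so I need to check that differentiation followed by a quarter-period shift preserves the property of being free of first harmonics. This is immediate from the action on Fourier modes: if $p_\g(\alpha) = \tfrac{a_0}{2} + \sum_{k\ge 2}(a_k\cos k\alpha + b_k\sin k\alpha)$, then
\begin{equation*}
p_\g'\bigl(\alpha - \tfrac{\pi}{2}\bigr) = \sum_{k\ge 2} k\bigl(-a_k\sin k(\alpha - \tfrac{\pi}{2}) + b_k\cos k(\alpha - \tfrac{\pi}{2})\bigr),
\end{equation*}
which still contains only harmonics of order $\ge 2$. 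More conceptually, the two-dimensional space spanned by $\cos\alpha$ and $\sin\alpha$ is invariant under $\tfrac{d}{d\alpha}$ and under argument translation, hence so is its $L^2$-orthogonal complement.

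There is no real obstacle here; the entire content is the recognition that Lemma \ref{Stpt} gives a spectral characterization of the Steiner point, and that the evolute operator $\E$ acts on support functions by an operator that commutes with the projection onto first harmonics (in fact, annihilates no harmonic other than the constant and preserves each frequency). Applying Lemma \ref{Stpt} in both directions then yields $\St(\E(\g)) = \St(\g)$.
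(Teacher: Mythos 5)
Your argument is correct and is exactly the one the paper intends: the paper derives this corollary as "immediate" from Lemma \ref{Stpt}, the point being that the operator $p\mapsto p'(\,\cdot-\tfrac{\pi}{2})$ of Lemma \ref{evolsupp} preserves each Fourier frequency and hence the property of being free of first harmonics. No differences to report.
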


\subsection{Limiting behavior of iterated evolutes} 
\label{smooth-cycloids}
Using the notation of the previous paragraph we can rephrase the object under consideration. The space of hedgehogs can now be thought of as the space of $2\pi$-periodic functions whose Fourier expansion starts with harmonics of order $2$ or higher. We  study two linear operators: the derivative $d$ and the inverse derivative $d^{-1}$ acting on this space. Since we are interested in the shape of  curves, we  consider periodic functions up to a non-zero factor (corresponding to a dilation) and shift of the argument (corresponding to a rotation).

\begin{figure}[hbtp]
\centering
\includegraphics[height=1.8in]{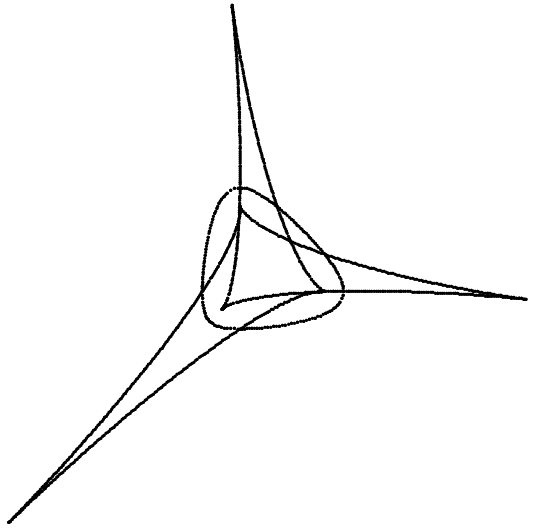}
\caption{The  curve $x=\exp(\cos t),\, y =\exp(\sin(t)$ and its evolute.}\label{6cusps}
\end{figure}

In computer experiments we observed that, typically, the number of cusps of consecutive evolutes increases without bound. Figure \ref{6cusps} shows the curve $x=\exp(\cos t),\, y =\exp(\sin t)$ and its evolute; the evolute has 6 cusps. Figure \ref{18cusps} shows the fifth evolute of the same curve. It has 18 cusps; to see them all, we had to present different magnifications of this evolute.

\begin{definition}
\label{hypocycloid}
A hedgehog whose support function is a pure harmonic of order $n$ is called a \emph{hypocycloid of order $n$} (see Figure \ref{hypo}). A hypocycloid of order 2 is also called the \emph{astroid}.
\end{definition}

\begin{figure}[hbtp]
\centering
\includegraphics[height=3.6in]{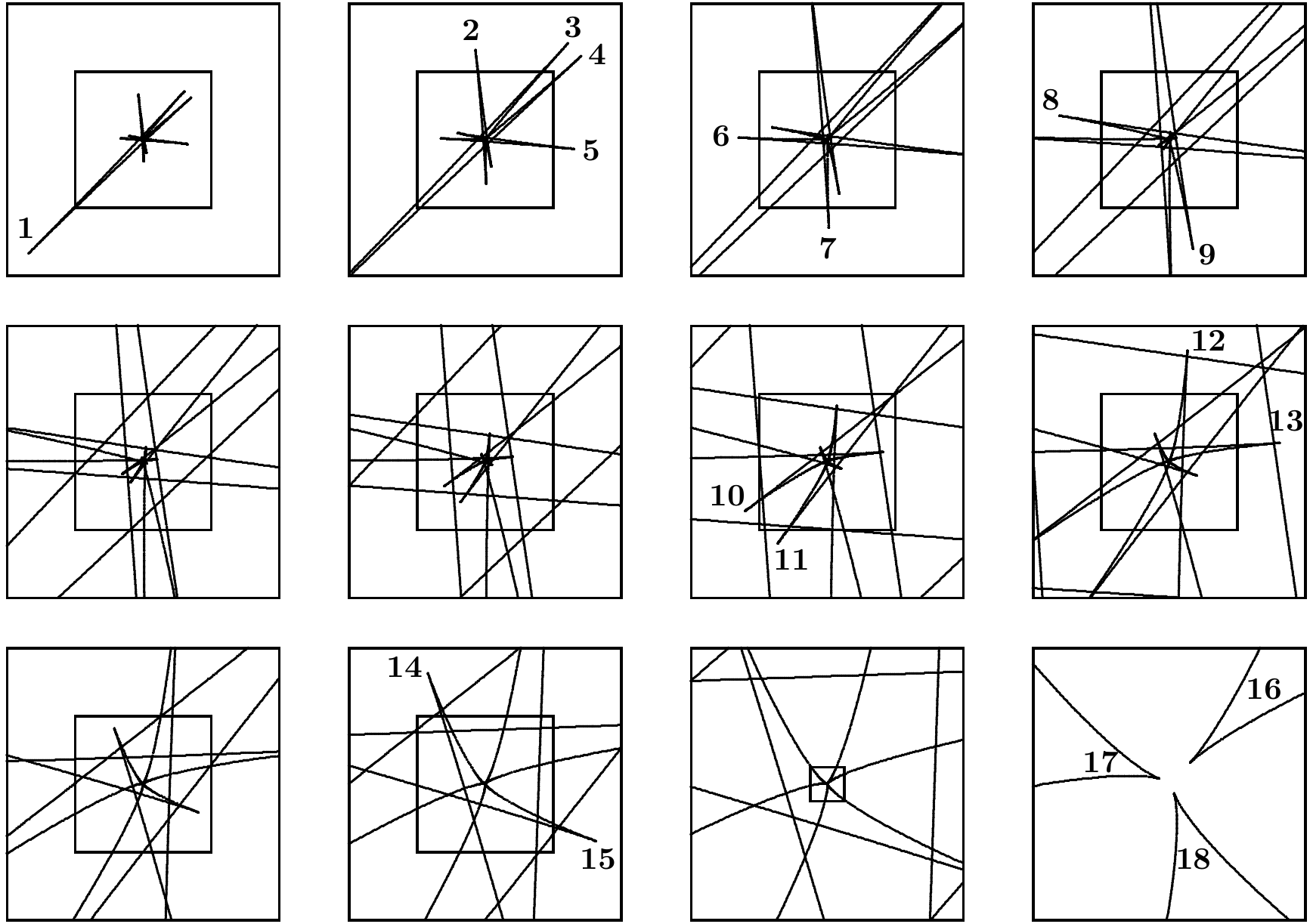}
\caption{Fifth evolute of the  curve $x=\exp(\cos t),\, y =\exp(\sin(t)$ with the cusps numerated.}\label{18cusps}
\end{figure}

The following theorem is the first  result of this work.

\begin{theorem}
\label{growth}
If the support function of a curve $\g$ is a trigonometric polynomial of degree $n$, then the limiting shape of its iterated evolute is a hypocycloid of order $n$. Otherwise, the number of cusps of the iterated evolutes increases without bound.
\end{theorem}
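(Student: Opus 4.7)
The plan is to work in Fourier coordinates on the space of support functions. I first move the origin to the Steiner point $\St(\g)$; by Corollary \ref{Stptevo} the Steiner point is preserved under $\E$, so by Lemma \ref{Stpt} the first harmonic of every $p_{\E^N(\g)}$ vanishes. Writing
\[
p_\g(\alpha) = c_0 + \sum_{k \ge 2}(a_k \cos k\alpha + b_k \sin k\alpha),
\]
Lemma \ref{evolsupp} tells us that one application of $\E$ kills the constant term and sends the $k$-th harmonic to a $k$-th harmonic whose amplitude is multiplied by $k$, with a phase shift depending only on $k$. Hence for $N \ge 1$ the $k$-th harmonic of $p_{\E^N(\g)}$ has amplitude $k^N\sqrt{a_k^2 + b_k^2}$. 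Since the shape of a hedgehog is invariant under positive rescaling of its support function, I will analyze the normalized function obtained by dividing out the largest harmonic amplitude.

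For the first statement, suppose $p_\g$ is a trigonometric polynomial of degree $n \ge 2$, so $(a_n, b_n) \ne (0,0)$ while $a_k = b_k = 0$ for $k > n$. For every $k < n$ with $(a_k, b_k) \ne (0,0)$, the ratio of the $k$-th and the $n$-th harmonic amplitudes in $p_{\E^N(\g)}$ is $(k/n)^N\sqrt{(a_k^2 + b_k^2)/(a_n^2 + b_n^2)} \to 0$. After normalizing by the $n$-th amplitude, $p_{\E^N(\g)}$ converges in every $C^r$-norm (the space of trigonometric polynomials of degree $\le n$ being finite-dimensional) to a pure $n$-th harmonic, which by Definition \ref{hypocycloid} is the support function of a hypocycloid of order $n$. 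By formula \eqref{coordsup} the shape of $\E^N(\g)$ converges to that of the hypocycloid.

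For the second statement, cusps of a hedgehog correspond to sign changes of its curvature radius $\rho_\g = p_\g + p_\g''$, and a short induction using Lemma \ref{evolsupp} gives $\rho_{\E^N(\g)}(\alpha) = (p_\g + p_\g'')^{(N)}(\alpha - N\pi/2)$; thus the $k$-th Fourier harmonic of $\rho_{\E^N(\g)}$ has amplitude $k^N(k^2 - 1)\sqrt{a_k^2 + b_k^2}$. I intend to use the Sturm--Hurwitz theorem (a $2\pi$-periodic function whose Fourier expansion is supported on modes of order $\ge K$ has at least $2K$ sign changes on $[0, 2\pi)$). For any target $M$, pick $K \ge M$ with $(a_K, b_K) \ne (0,0)$, which exists because $p_\g$ is not a trigonometric polynomial; split $\rho_{\E^N(\g)} = L_N + H_N$ into the low-frequency part ($2 \le k < K$) and the high-frequency part ($k \ge K$). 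Sturm--Hurwitz applied to $H_N$ gives at least $2K$ sign changes, and provided $L_N$ is a small enough perturbation of $H_N$, the sum $\rho_{\E^N(\g)}$ inherits at least that many sign changes, yielding $\ge 2M$ cusps of $\E^N(\g)$ for $N$ large; letting $M \to \infty$ then completes the argument.

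The main obstacle is this quantitative comparison of $L_N$ and $H_N$. Per mode, any high-mode amplitude in $H_N$ dominates any low-mode amplitude in $L_N$ by a factor $(k'/k)^N \to \infty$ (with $k' \ge K > k$), so in $L^\infty$ the leading mode of $H_N$ eventually outgrows the finite sum $L_N$. The difficulty is that $H_N$ contains infinitely many modes which may interfere, so the peaks of $H_N$ between its sign changes need not be as large as its leading amplitude. The plan is to isolate a dyadic band of modes around the mode $k_N$ that maximizes $k^N(k^2 - 1)\sqrt{a_k^2 + b_k^2}$ (well-defined because the coefficients decay faster than any polynomial by smoothness of $p_\g$), bound both the complementary tail of $H_N$ and the low-frequency part $L_N$ by a quantity strictly smaller than the peaks inside this band, and conclude via a pointwise perturbation argument on each sign-change interval. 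Making this tail estimate work uniformly in $N$ while being sensitive to the actual rate of decay of $(a_k, b_k)$ is the technical heart of the proof.
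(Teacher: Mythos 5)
Your first half is correct and is essentially the paper's argument: $d^2$ acts on the $k$-th harmonic by $-k^2$, so after normalization the top harmonic of a trigonometric polynomial support function dominates and the limit shape is the order-$n$ hypocycloid. The second half, however, contains a genuine gap, and it is exactly the one you flag yourself. Sturm--Hurwitz gives you a \emph{count} of sign changes of the high-frequency part $H_N$, but no quantitative lower bound on $|H_N|$ at its local extrema between consecutive sign changes; without such a bound you cannot conclude that adding $L_N$ preserves those sign changes, even if $\|L_N\|_\infty$ is small compared to the leading amplitude of $H_N$. Since $H_N$ contains infinitely many modes whose phases you do not control, its peaks can in principle be much smaller than its largest Fourier coefficient, and the proposed dyadic-band isolation is only a plan, not an argument: you would need a tail estimate uniform in $N$ that is adapted to the (unknown) decay rate of the coefficients $(a_k,b_k)$, and nothing in the proposal supplies it. In effect you are trying to reprove from scratch a nontrivial oscillation theorem.

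The paper sidesteps all of this by arguing contrapositively with the P\'olya--Wiener theorem \cite{PW}: if the number of zeros of all derivatives of a periodic function is uniformly bounded, the function is a trigonometric polynomial. The cusps of $\E^N(\g)$ are the zeros of $(p+p'')^{(N)}$ (your formula for the radius of curvature of the iterated evolute is correct), so if the cusp counts stay bounded then $p+p''$ is a trigonometric polynomial, hence so is $p$ -- contradicting the hypothesis. I recommend either quoting P\'olya--Wiener as the paper does, or, if you want a self-contained proof, recognizing that the quantitative peak estimate you need is the actual content of that theorem and must be established, not sketched.
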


\begin{proof} The pure harmonics are  eigenvectors of the operator $d^2$ with the  harmonics of order $n$ having the eigenvalue  $-n^2$. Therefore the higher the harmonic, the faster it grows.  Hence if the support function is a trigonometric polynomial, we can rescale to keep the highest harmonic of a fixed size, and then all smaller harmonics will disappear in the limit. 
	
According to a theorem of Polya and Wiener \cite{PW}, if the number of zeros of all derivatives of a periodic function is uniformly bounded, then this function is a trigonometric polynomial. 
	
In terms of the support function, the radius of curvature of a curve is given by $p+p''$, and this function vanishes at cusps. Thus the number of cusps of the iterated evolutes is the number of zeros of the functions $p^{(n)} + p^{(n+2)}$. If this number is uniformly bounded then $p+p''$ is a trigonometric polynomial, and therefore so is $p$.\end{proof}

Geometric applications of the Fourier decomposition of the support 
function is a subject of the book \cite{Groe}.

\begin{figure}[hbtp]
\centering
\includegraphics[height=1.5in]{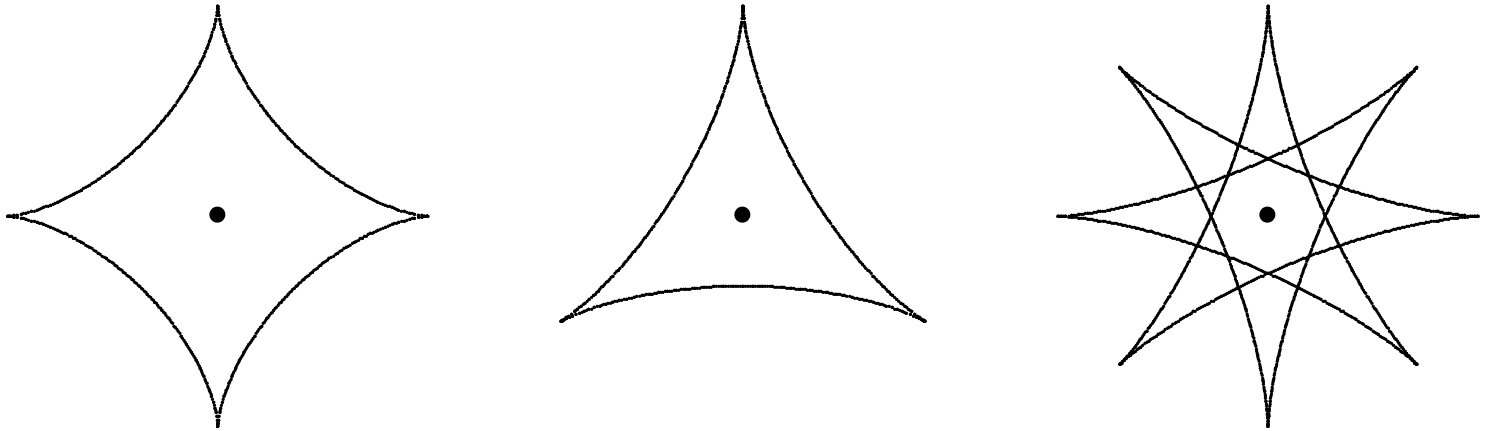}
\caption{Hypocycloids of order 2, 3, and 4 (the middle one is traversed twice).}
\label{hypo}
\end{figure}

As a corollary, we obtain a description of the curves that are similar to their 
evolutes.

\begin{corollary}
\label{similar}
A hedgehog $\g$ is similar to its evolute $\mathcal{E}(\g)$ if and only if $\g$ is a hypocycloid. 
\end{corollary}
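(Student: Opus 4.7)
The plan is to translate the similarity condition into a functional equation for support functions and then exploit Fourier decomposition. By Corollary \ref{Stptevo}, $\g$ and $\mathcal E(\g)$ share the Steiner point, and since the Steiner point is defined in an affinely natural way (center of mass weighted by curvature), any similarity $T$ satisfying $T(\g)=\mathcal E(\g)$ must send $\St(\g)$ to $\St(\mathcal E(\g))=\St(\g)$. Hence $T$ fixes the common Steiner point; taking this point as origin, $T$ is a pure rotation-dilation (and possibly a reflection), i.e., multiplication by a nonzero complex number $\mu$, or $z\mapsto\mu\bar z$.

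On the level of support functions, a rotation by $\theta$ combined with dilation by $\rho>0$ sends $p_\g(\alpha)$ to $\rho\, p_\g(\alpha-\theta)$, while a reflection replaces the argument $\alpha$ by $-\alpha$ up to a shift. Combining the similarity condition with Lemma \ref{evolsupp} yields the functional equation
$$p_\g'(\alpha-\pi/2) = \rho\, p_\g(\pm\alpha - \theta)$$
for suitable $\rho>0$ and $\theta$. Using Lemma \ref{Stpt}, write the support function as a Fourier series $p_\g(\alpha)=\sum_{n\geq 2}(a_n\cos n\alpha + b_n\sin n\alpha)$ with no zeroth or first harmonics. Differentiation multiplies the $n$-th harmonic by $n$ (and shifts its phase by $\pi/2$), whereas the right-hand side merely rescales every harmonic by $\rho$ and rotates the phase uniformly by $n\theta$. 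Equating moduli on each harmonic forces $n=\rho$ for every $n$ with $(a_n,b_n)\neq(0,0)$. Since $\rho$ is a single fixed number, only one value of $n$ can occur, so $p_\g$ is a pure harmonic and $\g$ is a hypocycloid.

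For the converse, if $p_\g$ is a pure harmonic of order $n$, then $p_{\mathcal E(\g)}(\alpha)=p_\g'(\alpha-\pi/2)$ is again a pure harmonic of order $n$, differing from $p_\g$ only by an overall factor $n$ and a phase shift; geometrically, this is the same hypocycloid up to rotation and scaling, hence $\g\sim\mathcal E(\g)$. The only real subtlety is to treat direct and indirect similarities simultaneously, but the Fourier argument is blind to the sign in $\pm\alpha$ once one passes to absolute values, so both cases collapse to the same conclusion.
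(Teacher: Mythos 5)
Your proof is correct, but it follows a genuinely different route from the paper's. The paper obtains this corollary from Theorem \ref{growth}: if $\g$ is similar to $\E(\g)$ then all iterated evolutes are similar to $\g$, so their number of cusps is bounded; by the Polya--Wiener argument the support function is a trigonometric polynomial, and the limiting shape of the iterates --- which here is the shape of $\g$ itself --- is a hypocycloid. You instead work directly with the functional equation $p_\g'(\alpha-\pi/2)=\rho\,p_\g(\pm\alpha-\theta)$ and compare moduli of Fourier coefficients harmonic by harmonic, concluding $n=\rho$ on every occupied mode. Your argument is more self-contained (it needs neither Theorem \ref{growth} nor the Polya--Wiener theorem, only Lemma \ref{evolsupp} and orthogonality of harmonics), and it also cleanly isolates where the Steiner point enters: the similarity must fix it, which normalizes $T$ to a linear map. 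The paper's route, by contrast, is a one-line consequence once the heavier Theorem \ref{growth} is in hand, and it places the corollary in the broader context of the dynamics of iterated evolutes. One small inaccuracy in your write-up: Lemma \ref{Stpt} only removes the \emph{first} harmonics, not the zeroth, so you are not entitled to assume $a_0=0$ at the outset; however, this costs nothing, since differentiation annihilates the constant term and the functional equation then forces $\rho a_0=0$, hence $a_0=0$. You might also note that an orientation-reversing similarity would reverse the total turning of the cooriented front, so under the paper's conventions only direct similarities actually arise; your observation that the Fourier argument is insensitive to the sign of $\alpha$ disposes of the indirect case in any event.
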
 

\begin{figure}[hbtp]
\centering
\includegraphics[height=2.5in]{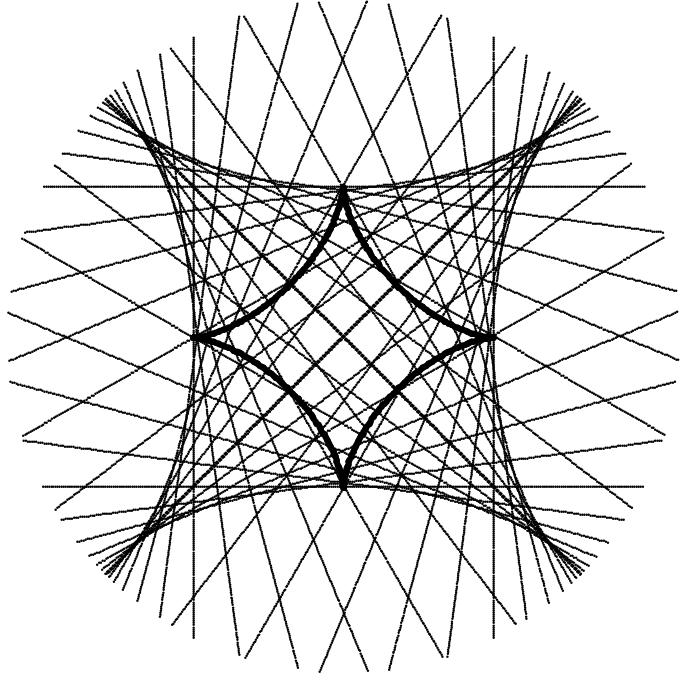}
\caption{Astroid and its evolute.}
\label{astroid}
\end{figure}

That hypocycloids are similar to their evolutes is a classical fact (C. Huygens, 1678).  See Figure \ref{astroid}. 

\subsection{Iterating involutes}\label{Involute}

The inverse derivative operator is not defined on all periodic functions; for it to be defined, the mean value of the function should vanish.  Assuming this condition, the inverse derivative is  unique. 

Obviously, the mean value of the support function of a hedgehog $\g$ does not depend on the choice of the origin: it is just the total signed length of $\g$. If we assume that this mean value is zero, then involutes exist, and the support function of precisely one of them has zero mean value. In other words, a hedgehog of zero signed length possesses a unique involute of zero signed length. We will call this unique involute the {\it evolvent}. What is important, is that we can iterate evolvents as well as evolutes.

Now we are ready to formulate the main result of this section.

\begin{theorem}
\label{limhypo}
The iterated evolvents of a hedgehog converge to its Steiner point. The limiting shape of iterated evolvents of a hedgehog is a hypocycloid, generically, of order 2.
\end{theorem}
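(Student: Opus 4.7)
The plan is to pass to the Fourier expansion of the support function and read off the action of the evolvent on each mode. First I would normalize the origin to coincide with the Steiner point $\St(\g)$, so that by Lemma~\ref{Stpt} the support function has no first harmonics; I would also use that the evolvent is only defined when the zeroth harmonic (the total signed length) vanishes. Both conditions are preserved by iteration: the Steiner point by Corollary~\ref{Stptevo} (which applies equally to the inverse map), and the vanishing of the zeroth harmonic by the very definition of the evolvent as the unique involute of zero signed length. Hence the iteration stays in the space
\[
p_\g(\alpha)=\sum_{n\ge 2}\bigl(a_n\cos n\alpha + b_n\sin n\alpha\bigr).
\]

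Next I would diagonalize the evolvent in this Fourier basis. By Lemma~\ref{evolsupp}, $\E$ acts on the harmonic of order $n$ by multiplying the amplitude by $n$ and shifting the phase by a fixed angle depending only on $n$; the evolvent therefore divides each amplitude by $n$ and applies the opposite phase shift. After $k$ iterations, the $n$-th harmonic of the $k$-th evolvent has amplitude $n^{-k}\sqrt{a_n^2+b_n^2}$ and a rotated phase.

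Both conclusions now follow by comparing decay rates. The $C^1$ norm of the iterated support function is bounded by $\sum_{n\ge 2}(1+n)\,n^{-k}\sqrt{a_n^2+b_n^2}$, which tends to $0$; formula~\eqref{coordsup} then yields uniform convergence of the curves to the origin, which is $\St(\g)$. For the limiting shape, multiply the support function by $2^k$: the second harmonic keeps constant amplitude (only its phase rotates from step to step), while every harmonic of order $n\ge 3$ has amplitude at most $(2/n)^k\to 0$. The rescaled support function converges in $C^2$ to a pure second harmonic, whose curve is by Definition~\ref{hypocycloid} an astroid. If $(a_2,b_2)=0$, the same argument with $2$ replaced by the smallest $n$ with $(a_n,b_n)\ne 0$ yields a hypocycloid of that order; this is what ``generically'' refers to.

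I expect no substantial technical obstacle: the proof reduces to diagonalizing $\E$ in the Fourier basis, paralleling the proof of Theorem~\ref{growth}. The only points requiring care are that the two normalizations (Steiner point at the origin, zero total signed length) persist along the iteration so that the distinguished antiderivative is unambiguous at every step, and that ``limit shape'' is understood modulo rotation, since the phase of the surviving second harmonic rotates by a fixed amount with each iteration.
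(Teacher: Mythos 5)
Your proposal is correct and follows essentially the same route as the paper: both diagonalize the evolvent in the Fourier basis of the support function and observe that the $n$-th harmonic decays like $n^{-k}$ (the paper phrases this via the eigenvalues $-1/n^2$ of $d^{-2}$), so that after rescaling only the lowest surviving harmonic remains, giving a hypocycloid, generically the astroid. Your write-up merely supplies details the paper leaves implicit (persistence of the two normalizations, the $C^1$/$C^2$ convergence estimates, and the caveat that the limit shape is understood modulo the rotation coming from the phase shift).
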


\begin{proof} The  operator $d^{-2}$ has pure harmonics as eigenvectors; the eigenvalue on $n$-th  harmonics  is $-1/n^2$. Therefore the higher the harmonic, the faster it decays. We can rescale to keep the first non-trivial harmonic of the support function of a fixed size, and then all higher harmonics will disappear in the limit. In addition, the operator $d$ is proportional to a $90^{\circ}$ rotation on each 2-dimensional space of pure harmonics.\end{proof}

Using this result, we can obtain still another proof of the 4-vertex theorem. Recall that this theorem, in its simplest case,  asserts that a convex closed plane curve has at least four vertices, that is, extrema of curvature; see, e.g., \cite{FT}.

\begin{corollary} Any convex closed plane curve has at least four vertices. \end{corollary}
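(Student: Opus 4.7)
The plan is to show that the evolute $\G:=\E(\g)$ has at least four cusps; since the cusps of $\G$ correspond exactly to the vertices of $\g$, this gives the theorem. Placing the origin at the common Steiner point of $\g$ and $\G$ (Corollary \ref{Stptevo}), Lemma \ref{Stpt} ensures that $p_\G$ has no first harmonic, and since the evolute of a closed curve has zero algebraic length, $p_\G$ also has zero mean. So $\G$ is a hedgehog of the kind to which Theorem \ref{limhypo} applies: its iterated evolvents $\G_{-1},\G_{-2},\ldots$ are well defined and their shapes converge to a hypocycloid of some order $n\ge 2$ (the lowest Fourier mode actually present in $p_\G$).

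A hypocycloid of order $n$ has exactly $2n$ simple cusps (for $p(\alpha)=\cos n\alpha$ the curvature radius $p+p''=(1-n^2)\cos n\alpha$ has $2n$ simple zeros). By stability of simple zeros under uniform convergence of the appropriately rescaled functions, the curvature radius $p_{\G_{-m}}+p''_{\G_{-m}}$ therefore has at least $2n\ge 4$ zeros for all sufficiently large $m$; equivalently, $\G_{-m}$ has at least four cusps once $m$ is large enough.

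To transfer this conclusion back to $\G$ itself, I would invoke monotonicity of the cusp count under the evolvent operation. Let $T$ be the evolvent operator on mean-zero support functions: antidifferentiation composed with a quarter-period argument shift. Since $T$ and the Fourier multiplier $\id+d^2$ commute, the curvature radius of $\G_{-m}$ equals $T^m f$, where $f:=p_\G+p''_\G$. A Rolle-type argument on the circle shows that $\#\,\text{zeros}(Tg)\le\#\,\text{zeros}(g)$ for any smooth mean-zero $2\pi$-periodic $g$: between any two consecutive zeros of $Tg$ the derivative $(Tg)'(\alpha)=g(\alpha+\pi/2)$ must vanish. Chaining these inequalities gives
\[
\#\,\text{cusps}(\G)=\#\,\text{zeros}(f)\;\ge\;\#\,\text{zeros}(T^m f)=\#\,\text{cusps}(\G_{-m})\;\ge\;4
\]
for $m$ large, so $\g$ has at least four vertices.

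The main technical point is the zero-counting step. In the generic case all zeros involved are simple and both the stability of simple zeros under uniform convergence and the Rolle inequality are immediate. The non-generic case is handled either by counting zeros with multiplicity (the Fourier argument in the proof of Theorem \ref{limhypo} still forces at least $2n$ zeros of the rescaled limit with multiplicity, and the Rolle inequality on the circle respects multiplicities) or by a standard small-perturbation reduction to the generic case.
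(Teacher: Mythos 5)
Your proof is correct and follows essentially the same route as the paper: reduce to counting cusps of the zero-length hedgehog $\G=\E(\g)$, observe that the cusp count cannot increase under passing to evolvents, and conclude from the fact that iterated evolvents converge in shape to a hypocycloid with at least four cusps (Theorem \ref{limhypo}). The only cosmetic difference is that you phrase the monotonicity step analytically, as a Rolle inequality for zeros of the radius of curvature $p+p''$ on the circle, whereas the paper states the same fact geometrically (between two consecutive cusps there is a minimum of curvature).
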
  

\begin{proof} The vertices of a curve are the cusps of its evolute. Thus we want to prove that every hedgehog of zero length has at least four cusps. 
	
Since the curvature at cusps is infinite, there is at least one minimum of curvature  between any two consecutive cusps. That is, the number of cusps of the evolute is not less than that of the curve. Said differently, the number of cusps of the involute is not greater than that of the curve.
	
Now, given a hedgehog $\g$, consider its iterated evolvents. In the limit, one has a hypocycloid, which has at least four cusps. Therefore $\g$ has at least four cusps as well. \end{proof}

\subsection{Multi-hedgehogs} \label{smooth-rotation}

The above discussion extends to locally convex closed wave fronts with other rotation numbers. 

\begin{definition}\label{multi-hedgehog} A cooriented locally convex closed plane wave front without inflections and with total turning $2\pi n$ is called \emph{$n$-hedgehog}. If a locally convex closed wave front is not coorientable then its total turning is $(2n+1)\pi$ with $n\ge 1$. We shall call it \emph{$\left(n+\frac12\right)$-hedgehog}.\end{definition}
The support function $p(\alpha)$ of an $n$-hedgehog is a periodic function with the period $2\pi n$. Accordingly, its Fourier series is composed of $\cos(k\alpha/n)$ and $\sin(k\alpha/n)$ with $k\ge 1$. 
The support function of $\left(n+\frac12\right)$-hedgehog is skew-periodic: $$p(\alpha + (2n+1)\pi)=-p(\alpha),$$and its Fourier series comprises the terms $\cos(k\alpha/(2n+1))$ and $\sin(k\alpha/(2n+1))$ with odd $k$. 

As before, an iterated evolvent converges to an epicycloid, and for the same reason: only the lowest non-zero harmonics `survive' under iterations of  the operator $d^{-2}$. If such a curve is similar to its evolute then it is an epicycloid. 

\begin{figure}[hbtp]
\centering
\includegraphics[height=2.2in]{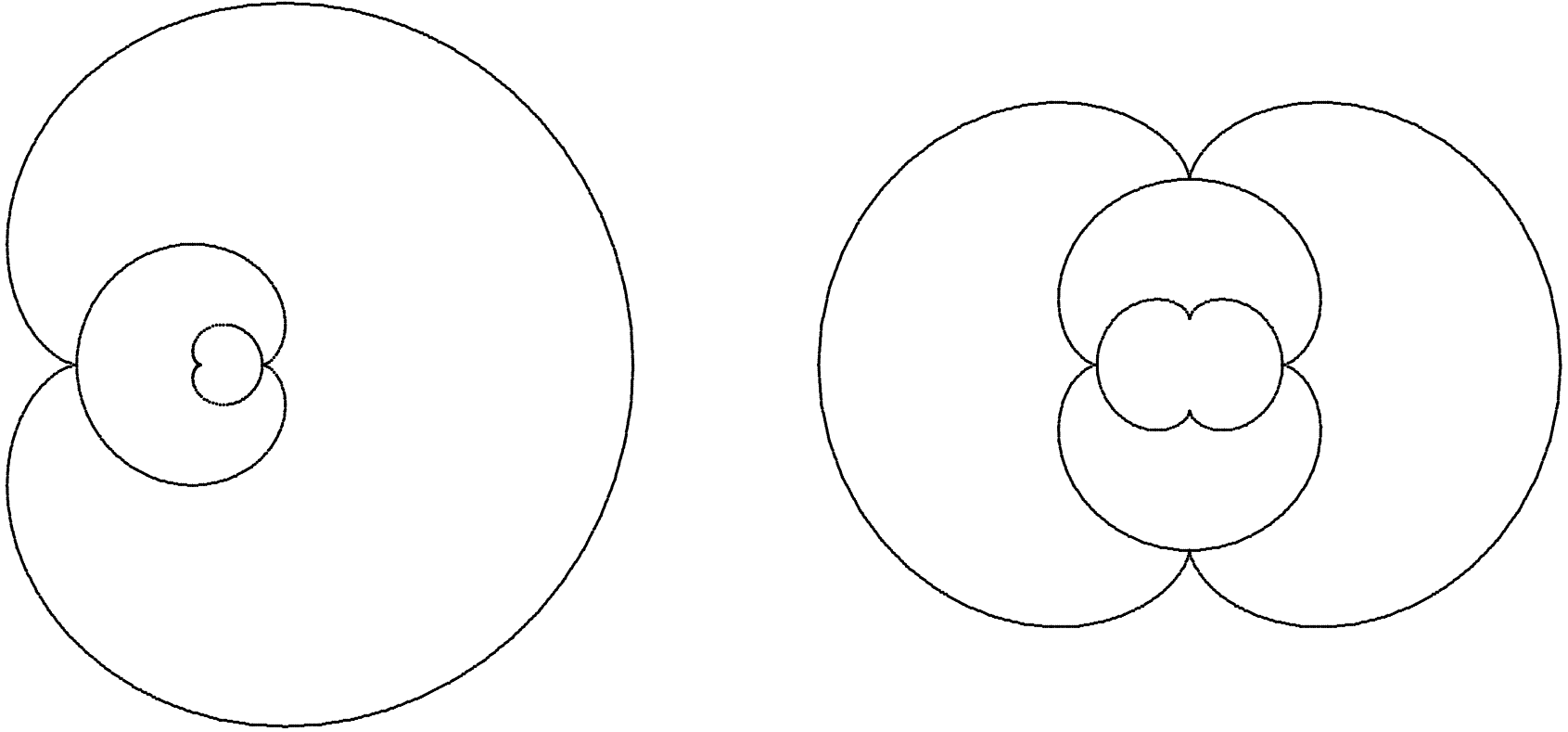}
\caption{Left: a cardioid with two consecutive evolutes. Right: a nephroid with two consecutive evolutes.}
\label{nephroid}
\end{figure}

The first examples of epicycloids are the cardioid and the nephroid; these are $\frac32$- and $2$-hedgehogs, respectively. See Figure \ref{nephroid}.

\subsection{Evolutes of non-closed smooth hedgehogs}
One can omit the closedness condition for the classical hedgehogs and consider periodic curves instead. The results stated above still apply in this case. That the cycloid coincides, up to a parallel translation, with its evolute was discovered by C. Huygens in his work on clocks. 

The cycloid is not closed, and its support function is not periodic, see Figure \ref{cycloid}.

\begin{figure}[hbtp]
	\centering
	\includegraphics[width=4in]{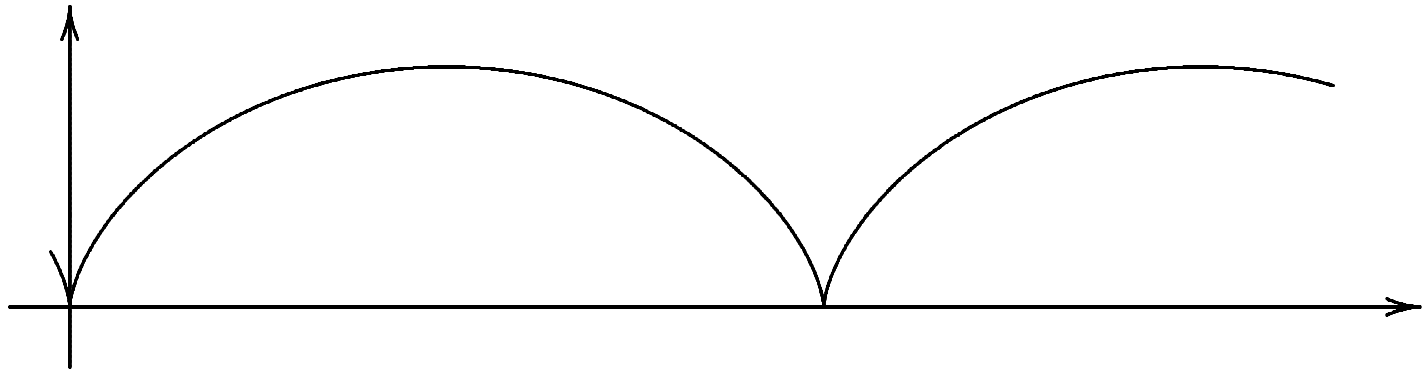}
	\caption{The support function of a cycloid is $p(\alpha)=-\alpha 
		\cos\alpha$.}
	\label{cycloid}
\end{figure}

Consider the class of locally convex curves $\gamma(\alpha)$, parameterized by angle, having the following periodicity property: $$\gamma(\alpha+2\pi)=\gamma(\alpha) -(2\pi,0)$$(the choice of this shift vector is made for convenience).

\begin{lemma}
\label{cyclshift}
The support function of such a curve has the form
\begin{equation}
\label{suppform}
p(\alpha)=-\alpha \cos\alpha + f(\alpha),
\end{equation}
where $f(\alpha)$ is $2\pi$-periodic.
\end{lemma}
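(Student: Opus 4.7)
The plan is to use formula (\ref{coordsup}) to translate the quasi-periodicity of $\gamma$ into a functional equation for $p$, and then integrate.

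First I would write out $\gamma(\alpha) = (p(\alpha)\cos\alpha - p'(\alpha)\sin\alpha,\ p(\alpha)\sin\alpha + p'(\alpha)\cos\alpha)$ and note that, since $\cos$ and $\sin$ are $2\pi$-periodic, the difference $\gamma(\alpha+2\pi) - \gamma(\alpha)$ depends only on the quantity $q(\alpha) := p(\alpha+2\pi) - p(\alpha)$ and its derivative $q'(\alpha)$. Explicitly,
$$\gamma(\alpha+2\pi) - \gamma(\alpha) = \bigl(q(\alpha)\cos\alpha - q'(\alpha)\sin\alpha,\ q(\alpha)\sin\alpha + q'(\alpha)\cos\alpha\bigr).$$

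Next, I would impose $\gamma(\alpha+2\pi) - \gamma(\alpha) = (-2\pi,0)$. This gives a linear system in $q$ and $q'$; multiplying the first equation by $\cos\alpha$ and the second by $\sin\alpha$ and adding eliminates $q'$ and yields immediately
$$q(\alpha) = -2\pi\cos\alpha,$$
after which the second equation is automatically satisfied (a consistency check). Thus the failure of $p$ to be $2\pi$-periodic is precisely $-2\pi\cos\alpha$.

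Finally, set $f(\alpha) := p(\alpha) + \alpha\cos\alpha$. Then
$$f(\alpha+2\pi) = p(\alpha+2\pi) + (\alpha+2\pi)\cos\alpha = \bigl(p(\alpha) - 2\pi\cos\alpha\bigr) + \alpha\cos\alpha + 2\pi\cos\alpha = f(\alpha),$$
so $f$ is $2\pi$-periodic, giving $p(\alpha) = -\alpha\cos\alpha + f(\alpha)$ as claimed. The argument is almost entirely mechanical; there is no real obstacle, only the bookkeeping step of recognizing that $(-2\pi,0)$ in Cartesian coordinates translates, via the moving frame $\{(\cos\alpha,\sin\alpha),(-\sin\alpha,\cos\alpha)\}$, into the pair of conditions that force $q = -2\pi\cos\alpha$.
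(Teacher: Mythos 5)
Your proposal is correct and follows essentially the same route as the paper: both set $q(\alpha)=p(\alpha+2\pi)-p(\alpha)$, derive the same two equations from formula (\ref{coordsup}), conclude $q(\alpha)=-2\pi\cos\alpha$, and verify that $f(\alpha)=p(\alpha)+\alpha\cos\alpha$ is periodic. The only (immaterial) difference is that you eliminate $q'$ algebraically, whereas the paper first solves the ODE $q\sin\alpha+q'\cos\alpha=0$ to get $q=C\cos\alpha$ and then fixes $C=-2\pi$.
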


\begin{proof}
Let $q(\alpha)=p(\alpha+2\pi)-p(\alpha)$. Then formulas \eqref{coordsup} imply:
$$q(\alpha) \cos\alpha - q'(\alpha) \sin\alpha = -2\pi,\ q(\alpha) \sin\alpha + q'(\alpha) \cos\alpha = 0.$$
The second differential equation has the solution $C \cos\alpha$, and the first equation determines the constant: $C=-2\pi$. Thus $q(\alpha)=-2\pi \cos\alpha$, that is, $p(\alpha+2\pi)=p(\alpha) -2\pi \cos\alpha$.
	
Let $f(\alpha)=p(\alpha) + \alpha \cos\alpha$. Then
$$f(\alpha+2\pi)=p(\alpha+2\pi) + (\alpha+2\pi) \cos\alpha = p(\alpha) + \alpha \cos\alpha=f(\alpha),
$$
and the result follows.
\end{proof}

We further restrict the class of curves assuming that the average value of the periodic part of the support function is zero. This is an analog of the assumption that the signed length of a curve vanishes. We have an analog of Theorem \ref{limhypo}. 

\begin{theorem}
\label{limcycl}
On the class of curves under consideration, the operation of taking involute is defined and unique, and under its iteration, every curve converges to the cycloid obtained from the one with the support function  $-\alpha \cos\alpha$ by a parallel translation. If a curve differs 
from its evolute by a parallel translation then the curve is the cycloid.
\end{theorem}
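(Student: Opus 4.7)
The plan is to mirror the Fourier-analytic strategy of Theorem \ref{limhypo}, this time on the class of support functions of the form \eqref{suppform} with $\langle f\rangle = 0$. First I would check that this class is closed under the evolute. Applying Lemma \ref{evolsupp} to $p(\alpha) = -\alpha\cos\alpha + f(\alpha)$ produces $\E(p)(\alpha) = -\alpha\cos\alpha + \tilde f(\alpha)$ with
\begin{equation*}
\tilde f(\alpha) \;=\; \frac{\pi}{2}\cos\alpha - \sin\alpha + f'\!\left(\alpha-\frac{\pi}{2}\right),
\end{equation*}
and $\tilde f$ has zero mean since each of $f'$, $\cos\alpha$ and $\sin\alpha$ does. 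To build the involute I would invert this identity: the right-hand side is a derivative evaluated at $\alpha - \pi/2$, so shifting by $+\pi/2$ and antidifferentiating gives $f$ up to a constant, and that constant is uniquely pinned down by the normalization $\langle f\rangle = 0$. This settles existence and uniqueness of the involute on the class.

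Next I would analyze the iteration harmonic by harmonic. Expanding $f = \sum_{k\ge 1} f_k$, the involute decomposes as a linear operator on the periodic parts plus a fixed first-harmonic shift: the linear part sends the $k$-th harmonic to an antiderivative shifted by $+\pi/2$ in the argument, which acts on the complex amplitude as multiplication by $\tfrac{1}{k}e^{i(k-1)\pi/2}$; the fixed shift is the first harmonic $\sin\alpha - \tfrac{\pi}{2}\cos\alpha$. For $k\ge 2$ the linear part contracts by a factor of at least $1/2$, so these harmonics decay at a geometric rate under iteration. On the first harmonic the linear part is the identity, and iterating merely stacks up a linearly growing multiple of $\sin\alpha - \tfrac{\pi}{2}\cos\alpha$. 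Reading off formula \eqref{coordsup}, a first-harmonic summand $a\cos\alpha + b\sin\alpha$ in the support function is precisely a translation of the curve by $(a,b)$, so this linear growth of the first harmonic is nothing more than a parallel translation of the plane. Consequently, in shape---i.e., modulo parallel translations---the iterated involutes converge to the pure cycloid with support function $-\alpha\cos\alpha$, which is the claim.

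For the self-similarity statement, assume that $\gamma$ differs from $\E(\gamma)$ by a parallel translation. In support-function terms, this means $\tilde f - f$ is a first harmonic, which by the formula above reduces to $f'(\alpha - \pi/2) - f(\alpha) \in \lin\{\cos\alpha, \sin\alpha\}$. The operator $f\mapsto f'(\cdot - \pi/2)$ multiplies the complex amplitude of the $k$-th harmonic by $k\, e^{i(1-k)\pi/2}$, a scalar of modulus $k$, which equals $1$ only for $k=1$. Requiring that $f'(\cdot - \pi/2) - f$ carry no harmonic of order $\ge 2$ therefore forces all such harmonics of $f$ to vanish, so $p(\alpha) = -\alpha\cos\alpha + (\text{first harmonic})$ is indeed a translated cycloid. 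The main obstacle throughout is not any single calculation but tracking the correct notion of convergence: the first Fourier harmonic is fixed (not contracted) by the linear part of the involute and in fact grows linearly under iteration, yet this growth is purely a parallel translation of the plane, so the theorem really asserts shape convergence rather than convergence of the curves as subsets of the plane.
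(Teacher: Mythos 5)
Your proof is correct and follows essentially the same route as the paper's (very terse) argument: derive the affine action \eqref{diffshift} of the evolute on the periodic part $f$, invert it using the zero-mean normalization, and let the harmonic-by-harmonic contraction (multiplier $\tfrac1k e^{i(k-1)\pi/2}$ on the $k$-th harmonic) drive the convergence. You are in fact more careful than the paper in two places, both to your credit: the explicit tracking of the linearly accumulating first-harmonic term, which shows that the convergence is convergence of shapes modulo a drifting parallel translation, and the explicit verification of the self-similarity statement via the observation that the multiplier $k\,e^{i(1-k)\pi/2}$ has modulus $k\ne 1$ for $k\ge 2$.
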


\begin{proof} 
As before, taking the evolute of a curve amounts to differentiating the function (\ref{suppform}) and shifting back its argument by $\pi/2$ (this operation preserves the form of the function). The effect on the function $f$ is as follows:
\begin{equation}
\label{diffshift}
f(\alpha) \mapsto f'\left(\alpha-\frac{\pi}{2}\right) + \frac{\pi}{2} \cos\alpha-\sin\alpha.
\end{equation}
Under the assumption that $f(\alpha)$ has zero average, the inverse of the operator (\ref{diffshift}) is uniquely defined. For the same reasons as in the proof of Theorem \ref{limhypo}, these iterations converge to the first harmonics. The respective limiting curve is the cycloid that differs from the one with the support function $-\alpha \cos\alpha$ by a parallel translation.
\end{proof}

\begin{remark}
{\rm Another curve that is isometric to its own evolute is a logarithmic spiral: the isometry in question is a rotation. The support function of a logarithmic spiral is $p(\alpha)=c \exp{(b\alpha)}$ where $b$ and $c$ are constants. More generally, if the support function of a curve is
$$
p(\alpha)=c_1 \exp{(b_1\alpha)}+c_2 \exp{(b_2\alpha)}\ \ {\rm with}\ \ b_1^{b_2}=b_2^{b_1},
$$ 
then the evolute is obtained from the curve by a rotation, see Figure \ref{selfrot} where $b_1=2, b_2=4$.
}
\end{remark}
\begin{figure}[hbtp]
	\centering
	\includegraphics[width=1.6in]{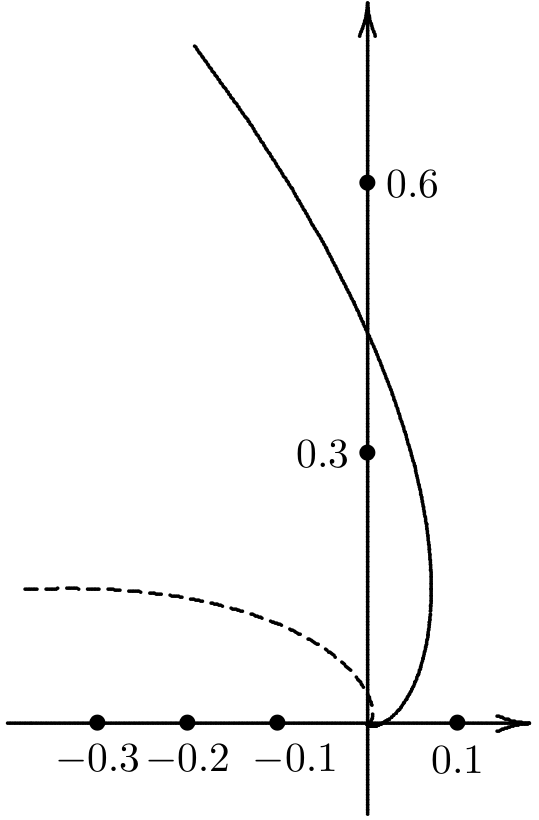}
	\caption{The evolute is a rotated copy of the curve.}
	\label{selfrot}
\end{figure}

\section{Cooriented polygons and discrete hedgehogs} \label{PolygonsHedgehogs} 

Let us summarize the basic facts about the classical evolutes and involutes,  considered in Section~\ref{smooth}, that we know so far.
\begin{itemize}
\item The evolute transformation is well defined on the space of hedgehogs/multi-hedgehogs. 
\item The necessary and sufficient condition for the existence of an involute is the zero-length condition.
\item A hedgehog and its evolute share their Steiner point.
\item The set of hedgehogs, self-similar under the evolute transformation, is the set of hypocycloids, the  hedgehogs whose support functions are pure harmonics.
\item The iterated evolvents converge to the Steiner point, while their shapes converge to a hypocycloid, generically, the astroid. 
\end{itemize}

In this section, we consider two polygonal versions of the evolute transformation which have been already described in Introduction (see Figure \ref{hexagonexample}). We will discuss what properties of the classical evolute transformations persist in either case. 

We start with the definition of the discrete version of the hedgehogs and multi-hedgehogs. As before, our analysis will be based on the notion of the support function. From now on, all index operations are assumed to be taken modulo $n$.
 
\subsection{Definitions and coordinate presentation} \label{DefCoord} Recall that the support function measures the signed distances to the tangents of a cooriented curve. The sign is positive if the perpendicular dropped from $O$ to the tangent has the same direction as the coorienting vector, and negative otherwise, see Figure \ref{lines}.

\begin{definition}
\label{coor_polyg}
A \emph{cooriented polygon} is a cyclically ordered sequence of cooriented lines $l_1, \ldots, l_n$ such that $l_j$ meets $l_{j+1}$ at a single point. We write \[l_j = (\alpha_j, p_j), \quad \alpha_j \in \R/2\pi\Z, \quad p_j \in \R,\] where $\alpha_j$ is the coorienting normal, and $p_j$ is the signed distance from the origin.\end{definition}

Reversing the coorientation transforms $(\alpha, p)$ to $(\alpha + \pi, -p)$.

\begin{figure}[hbtp]
\centering
\includegraphics[height=1.4in]{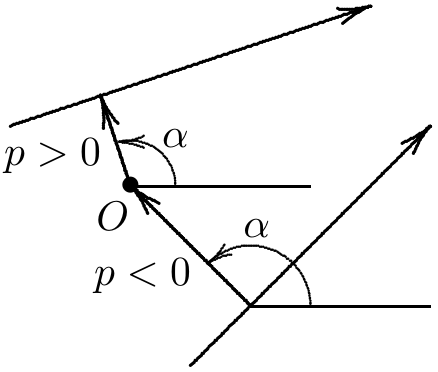}
\caption{Coordinates in the space of lines.}
\label{lines}
\end{figure}

As in the smooth case, a coorientation of a line also provides an orientation (one may turn the normal by $\pi/2$ in the counterclockwise direction).

\begin{definition} \label{polygon} For a cooriented polygon $\P$:
\begin{itemize}
\item The point of intersection of the lines $l_j$ and $l_{j+1}$ is called the \emph{$j$-th vertex} of $\P$ and is denoted $P_{j+\frac12}$.	
\item The interval $P_{j-\frac12} P_{j+\frac12}$ is called \emph{$j$-th side} of $\P$.	
\item The \emph{signed length} $\ell_j$ of the $j$-th side is defined as the distance from $P_{j-\frac12}$ to $P_{j+\frac12}$, taken with the negative sign if the order of the vertices disagrees with the orientation of $l_j$.	
\item The \emph{turning angle} at  vertex $P_{j+\frac12}$ is defined as
\begin{equation}
\label{SmallTurn}
\theta_{j+\frac12} = \alpha_{j+1} - \alpha_j (\mod{2\pi}).
\end{equation}
Since we assume that the consecutive lines $l_j$ and $l_{j+1}$ are not parallel, we have $\theta_{j+\frac12} \not\equiv 0 (\mod \pi)$. While $\theta_{j+\frac12}$ is well-defined modulo $2\pi$, we will always choose a representative between $-\pi$ and $\pi$.	
\item The integer $k$, corresponding to the sum of all turning angles, is called \emph{total turning} of $\P$:
\[
\sum_{j=1}^n \theta_{j+\frac12} = 2k\pi,
\]
\end{itemize}  
\end{definition}

\begin{remark}
\label{HalfInt}
{\rm One may also consider cooriented polygons with a half-integer turning number: take cooriented lines $l_1, \ldots, l_n$, define turning angles $\theta_{j+\frac12}$ as above, except that $\theta_{\frac12} = \alpha_1 - \alpha_n + \pi$. Informally speaking, every line changes its orientation once one traverses the polygon. Formally,  this corresponds to a cooriented $2n$-gon $\{l_1, \ldots, l_{2n}\}$, where the line $l_{n+j}$ is obtained from $l_j$ by reversing the orientation (see Figure \ref{SpecHedge} for example).
}
\end{remark}

\begin{lemma}
\label{CoordPolygon}
The vertex coordinates and the side lengths of a cooriented polygon are given by the following formulas:
\begin{equation}
\label{vertexcoord}
P_{i+\frac12}=\left(\frac{p_i \sin \alpha_{i+1} - p_{i+1} \sin \alpha_i}{\sin \theta_{i+\frac12}}, \frac{p_{i+1} \cos \alpha_i - p_i \cos \alpha_{i+1}}{\sin\theta_{i+\frac12}}\right),
\end{equation}
\begin{equation}
\label{lengths}
\ell_i = \frac{p_{i-1} - p_i\cos\theta_{i-\frac12}}{\sin\theta_{i-\frac12}} + \frac{p_{i+1} - p_i\cos\theta_{i+\frac12}}{\sin\theta_{i+\frac12}}.
\end{equation}
\end{lemma}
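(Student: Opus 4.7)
The plan is to treat the two formulas separately, as each follows from an elementary computation, so the proof is essentially a bookkeeping exercise that I want to set up carefully.

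For the vertex coordinates, I would first write the equation of a cooriented line $l_i=(\alpha_i,p_i)$ in the form $x\cos\alpha_i + y\sin\alpha_i = p_i$; this is immediate from the definition of the support function, since the left-hand side is the signed length of the projection of $(x,y)$ onto the coorienting unit normal $(\cos\alpha_i,\sin\alpha_i)$. Then $P_{i+\frac12}=l_i\cap l_{i+1}$ is the solution of a $2\times 2$ linear system whose determinant is
\[
\cos\alpha_i\sin\alpha_{i+1}-\cos\alpha_{i+1}\sin\alpha_i=\sin(\alpha_{i+1}-\alpha_i)=\sin\theta_{i+\frac12},
\]
which is nonzero by the nondegeneracy assumption. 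Cramer's rule then gives exactly the displayed coordinates (\ref{vertexcoord}).

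For the side lengths, the key remark is that the oriented line $l_i$ has unit tangent vector $\tau_i=(-\sin\alpha_i,\cos\alpha_i)$, obtained by rotating the coorienting normal by $+\pi/2$. Hence, with the sign convention stated in Definition \ref{polygon},
\[
\ell_i=\bigl\langle P_{i+\frac12}-P_{i-\frac12},\;\tau_i\bigr\rangle.
\]
I would compute the two scalar products $\langle P_{i+\frac12},\tau_i\rangle$ and $\langle P_{i-\frac12},\tau_i\rangle$ separately, substituting (\ref{vertexcoord}) (with the index shifted by one for $P_{i-\frac12}$). Each is a single fraction whose numerator, after expanding, contains the terms $\sin^2\alpha_i + \cos^2\alpha_i$ (which collapse to $1$) and $\cos\alpha_i\cos\alpha_{i\pm1}+\sin\alpha_i\sin\alpha_{i\pm1}=\cos\theta_{i\pm\frac12}$. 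A short calculation yields
\[
\langle P_{i+\frac12},\tau_i\rangle=\frac{p_{i+1}-p_i\cos\theta_{i+\frac12}}{\sin\theta_{i+\frac12}},\qquad
\langle P_{i-\frac12},\tau_i\rangle=\frac{p_i\cos\theta_{i-\frac12}-p_{i-1}}{\sin\theta_{i-\frac12}},
\]
and subtracting gives (\ref{lengths}).

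There is no real obstacle here; the only thing to watch is the sign convention in the second projection (since the index of the vertex formula is shifted by $-1$, one must be careful which trig identity produces which sign). A sanity check I would include is that (\ref{lengths}) is the discrete analogue of the smooth formula $\ell(\alpha)=p(\alpha)+p''(\alpha)$: replacing each difference quotient $(p_{i\pm1}-p_i\cos\theta)/\sin\theta$ by its leading order in small $\theta$ gives $p_i/\theta + \theta\,p''_i/2 + \cdots$, whose symmetric sum over both neighbors reproduces the classical expression in the limit. This informal check both confirms the signs and motivates the form of the result.
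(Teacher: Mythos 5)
Your proposal is correct and follows essentially the same route as the paper: write each line as $x\cos\alpha_i+y\sin\alpha_i=p_i$, solve the $2\times2$ system for the vertex, and obtain $\ell_i=\langle P_{i+\frac12}-P_{i-\frac12},(-\sin\alpha_i,\cos\alpha_i)\rangle$ by expanding the scalar products. The computations check out (the paper's proof is just a terser version of the same argument), so there is nothing to add.
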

\begin{proof}
The equation of the line with coordinates $(\alpha_i, p_i)$ is
\[ x\cos\alpha_i+y\sin\alpha_i=p_i. \]
Solving a system of two such equations yields the first formula. The second formula is obtained from
\[ \ell_i = \langle P_{i+\frac12} - P_{i-\frac12}, v_i \rangle \]
where $v_i = (-\sin\alpha_i, \cos\alpha_i)$ is the orienting vector of the line $l_i$.
\end{proof}

\begin{definition}[Discrete analog of Definitions \ref{hedgehog} and
\ref{multi-hedgehog}]
\label{discrete_hedgehog}
If all the turning angles of a cooriented polygon $\P$ are positive, $\theta_j \in (0,\pi)$, and total turning equals $k$, the  polygon $\P$ is called a \emph{discrete $k$-hedgehog}. If $k=1$, we call it a \emph{discrete hedgehog}. 
\end{definition}

The space of parallel cooriented lines carries a natural linear structure:
\[(\alpha, p) + (\alpha, p') = (\alpha, p+p'), \quad \lambda(\alpha, p) = (\alpha, \lambda p)\]
This makes the space of cooriented $n$-gons with fixed normals an $n$-dimension\-al vector space. For discrete hedgehogs this structure is the extension of the Minkowski addition and homotheties of convex polygons; discrete hedgehogs can be viewed as formal Minkowski differences of convex polygons. Moreover, for every $\{(\alpha_j, p_j)\}$, there is a constant $C$ such that the polygon defined by $\{(\alpha_j, p_j +C)\}$ is convex, i.e., has $\ell_j > 0$.

Note that, while an arbitrary cooriented polygon is not, in general, a $k$-hedgehog for any $k$, it can be turned into a $k$-hedgehog (with integer or half-integer $k$) by an appropriate reversing of the coorientations of the sides, and this reversing is unique up to a total reversing of all the coorientations.

\subsection{Discrete Fourier analysis}
\label{Fourier}
Similarly to the smooth case, a special role in the theory of the evolute transformation of discrete hedgehogs is played by the discrete hypocycloids. To describe discrete hypocycloids we will use the technique of discrete Fourier transformation, see \cite{Sch1} and \cite{Ion}.
Let us briefly discuss it. 

For $0 < m < n/2$, define the \emph{discrete harmonics} $\mathbf{C}_m(n)$ and $\mathbf{S}_m(n)$ as the cooriented polygons formed by the lines
\begin{equation}
\label{CnSn}
\left\{\left(\frac{2\pi j}n, \cos \frac{2\pi m j}n \right)\right\}_{j = 1}^n\,\mbox{and} \quad \left\{\left(\frac{2\pi j}n, \sin \frac{2\pi m j}n \right)\right\}_{j = 1}^n
\end{equation}
respectively. The polygons $\mathbf{C}_j, \mathbf{S}_j$ have turning angles $2\pi/n$, thus they are discrete hedgehogs. Additionally, put
\[ \mathbf{C}_0(n) = \left\{\left(\frac{2\pi j}n, 1 \right)\right\}_{j=1}^n \mbox{and} \quad \mathbf{C}_{n/2}(n) = \left\{\left( \frac{2\pi j}n, (-1)^j \right)\right\}_{j=1}^n\]
(the latter is defined for $n$ even only). Geometrically, $\mathbf{C}_0(n)$ is a regular $n$-gon circumscribed about a unit circle, with the sides oriented in accordance with the counterclockwise orientation of the circle. The polygon $\mathbf{C}_{n/2}(n)$ is a regular $n/2$-gon, each side taken twice, with the opposite orientations. All lines of the polygon $\mathbf{C}_1$ go through the point $(1,0)$, and all lines of the polygon $\mathbf{S}_1$ go through $(0,1)$. See Figure \ref{SpecHedge}.

\begin{figure}[htbp]
	\centering
	\includegraphics[width=4.5in]{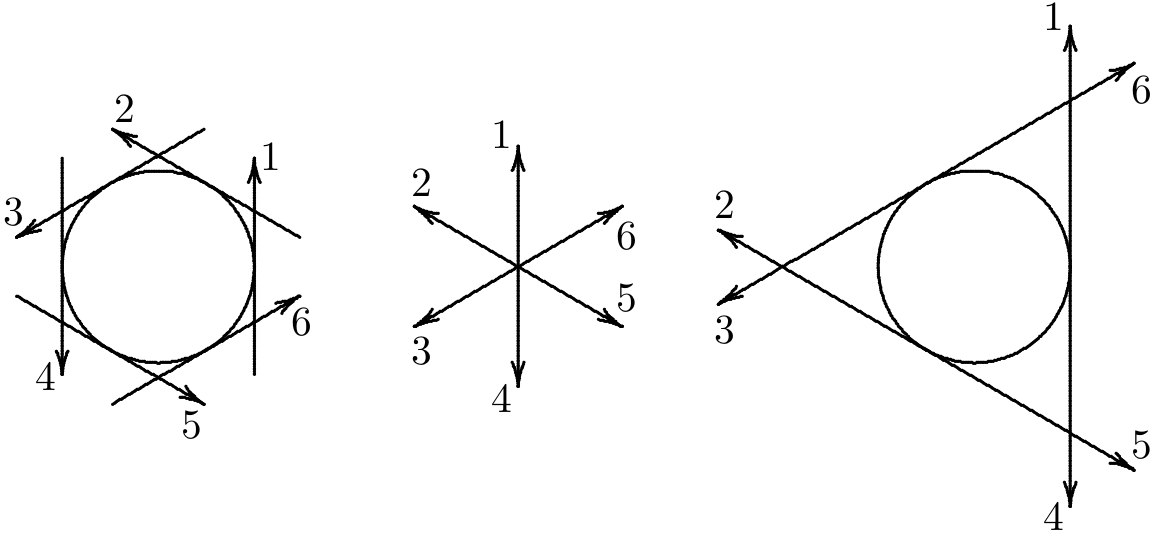}
	\caption{The polygons $\mathbf{C}_0(6)$, $\mathbf{C}_1(6)$ and 
		$\mathbf{C}_3(6)$.}
	\label{SpecHedge}
\end{figure}

It is possible to define $\mathbf{C}_m$ and $\mathbf{S}_m$ for $m > n/2$ by the same formulas \eqref{CnSn}.  However, for indices larger than $n/2$, the following reduction formulas hold:
\begin{equation}\label{reduction_formulas}
\begin{array}{ll}
\mathbf{C}_{n-m}(n) = \mathbf{C}_m(n), &\qquad \mathbf{S}_{n-m}(n) = -\mathbf{S}_m(n)\\ 
\mathbf{C}_{n+m}(n) = \mathbf{C}_m(n), &\qquad \mathbf{S}_{n+m}(n) = \mathbf{S}_m(n).
\end{array}
\end{equation}

\begin{lemma}
\label{DHypocycl_onepoint} Let $l_1, \ldots, l_n$ be oriented lines tangent to a hypocycloid of order $n+1$ or $n-1$ and such that the turning angle from $l_j$ to $l_{j+1}$ equals $2\pi/n$ for all $j$. Then all lines $l_j$ intersect at one point which lies on the circle inscribed into the hypocycloid.
\end{lemma}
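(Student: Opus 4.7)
The plan is to work directly with the support function. By Definition~\ref{hypocycloid}, a hypocycloid of order $m$ has support function $p(\alpha)=A\cos(m\alpha)+B\sin(m\alpha)$ for some constants $A,B$, and since this expression has no first harmonic, Lemma~\ref{Stpt} places its Steiner point at the origin. The hypothesis on $l_1,\dots,l_n$ translates to $l_j=(\alpha_j,p_j)$ with $\alpha_j=\alpha_0+\tfrac{2\pi j}{n}$ and $p_j=p(\alpha_j)$.

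The key observation is that for $m=n\pm 1$,
\[ m\alpha_j=m\alpha_0+\tfrac{2\pi mj}{n}\equiv m\alpha_0\pm\tfrac{2\pi j}{n}\equiv m\alpha_0\pm(\alpha_j-\alpha_0)\pmod{2\pi}, \]
so that $p_j=A\cos\bigl(m\alpha_0\pm(\alpha_j-\alpha_0)\bigr)+B\sin\bigl(m\alpha_0\pm(\alpha_j-\alpha_0)\bigr)$. Applying the angle-addition formulas and regrouping by $\cos\alpha_j$ and $\sin\alpha_j$ yields an identity of the form $p_j=x_0\cos\alpha_j+y_0\sin\alpha_j$, where $(x_0,y_0)$ depends on $A,B,\alpha_0,n$ but not on $j$. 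Since the equation of $l_j$ is $x\cos\alpha_j+y\sin\alpha_j=p_j$, this is precisely the statement that every $l_j$ passes through the fixed point $(x_0,y_0)$.

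It then remains to place $(x_0,y_0)$ on the inscribed circle. The regrouping in the previous step realizes $(x_0,y_0)$ as the image of $(A,B)$ under an orthogonal transformation depending only on $n\alpha_0$, so $x_0^2+y_0^2=A^2+B^2=:R^2$. On the other hand, from \eqref{coordsup} the squared distance from the Steiner point to a point of the hypocycloid equals $p^2+(p')^2=R^2+(m^2-1)R^2\sin^2(m\alpha-\phi)\ge R^2$, with equality exactly at the points where $|p|=R$. Therefore the hypocycloid is tangent from outside to the circle of radius $R$ centered at the Steiner point; this is its inscribed circle, and $(x_0,y_0)$ lies on it.

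I do not foresee a serious obstacle; the argument is essentially a repackaging of the congruence $m\equiv\pm 1\pmod n$, which collapses the $m$-th harmonic sampled on the arithmetic progression $\{\alpha_j\}$ into a first harmonic, and first harmonics of the support function correspond geometrically to single points of the plane. The only mildly delicate point is bookkeeping the two cases $m=n+1$ and $m=n-1$ in parallel, which the $\pm$ notation handles uniformly.
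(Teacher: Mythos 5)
Your argument is correct and is essentially the paper's proof: the paper encodes the same congruence $m\equiv\pm1\pmod n$ via the reduction formulas, writing $a\mathbf{C}_{n\pm1}(n)+b\mathbf{S}_{n\pm1}(n)=a\mathbf{C}_1(n)\pm b\mathbf{S}_1(n)$ and concluding that all the lines pass through $(a,\pm b)$, which is exactly your harmonic-collapse computation after the normalization $\alpha_0=0$. The one point where you go beyond the paper is that you actually verify the inscribed-circle claim via $p^2+(p')^2=R^2+(m^2-1)R^2\sin^2(m\alpha-\phi)\ge R^2$, whereas the paper simply asserts that $(a,\pm b)$ lies on the inscribed circle.
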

\begin{proof}
This is a  consequence  of the reduction formulas \eqref{reduction_formulas}. We may assume that the coorienting normal of the line $l_j$ has direction $2\pi j/n$, and the hypocycloid is the envelope of the lines $p(\alpha) = a\cos (n\pm1)\alpha + b\sin (n\pm1)\alpha$. Then the lines $l_j$ are the sides of the polygon $a\mathbf{C}_{n\pm 1}(n) + b\mathbf{S}_{n\pm 1}(n)$. We have
\[ a\mathbf{C}_{n\pm 1}(n) + b\mathbf{S}_{n\pm 1}(n) = a\mathbf{C}_1(n) \pm b\mathbf{S}_1(n).\]
Thus all the lines $l_j$ pass through the point $(a, \pm b)$. This point lies on the inscribed circle of the hypocycloid.
\end{proof}

\begin{example}
{\rm 
Three oriented lines spaced by $2\pi/3$ and tangent to an astroid meet in a point lying on the circle inscribed in the astroid. This is a particular case of Lemma \ref{DHypocycl_onepoint}. Vice versa, for every point on this circle, the three oriented tangents to the astroid form the angles of $2\pi/3$ (see Figure \ref{AstroidCremona}). This is related to Cremona's construction: an astroid is the envelope of the chords of a circle whose  endpoints move in the opposite directions, one three times faster than the other.
}
\end{example}	

\subsection{Discrete hypocycloids} \label{DiscrHypo}
Recall that the classical hypocycloid of order $m$ is the envelope of the lines $(\alpha, a\cos m\alpha+b\sin m\alpha)$ (see Section \ref{smooth}). We will use the same definition with the  discrete harmonics $\mathbf{C}_m(n)$ and $\mathbf{S}_m(n)$ (see Section \ref{Fourier}) replacing $\cos m\alpha$ and $\sin m\alpha$. 

Every equiangular hedgehog can be represented, after an appropriate choice of the horizontal axis, as a sum of discrete harmonics
\begin{equation}
\label{DiscrFourier}
\P = \frac{a_0}2 \mathbf{C}_0 + \sum_{m=1}^{\lfloor\frac{n-1}2\rfloor} (a_m \mathbf{C}_m + b_m \mathbf{S}_m) \quad \left( + \frac{a_{n/2}}2 \mathbf{C}_{n/2} \right)
\end{equation}
(the summand with the index $n/2$ is missing for $n$ odd.)

\begin{definition}[Discrete analog of Definition \ref{hypocycloid}]\label{discrete_hypocycloids} An equiangular hedgehog with the sides tangent to a hypocycloid is called a \emph{discrete hypocycloid}. 
\end{definition}

\begin{figure} [htbp]
\centering
\includegraphics[width=3.9in]{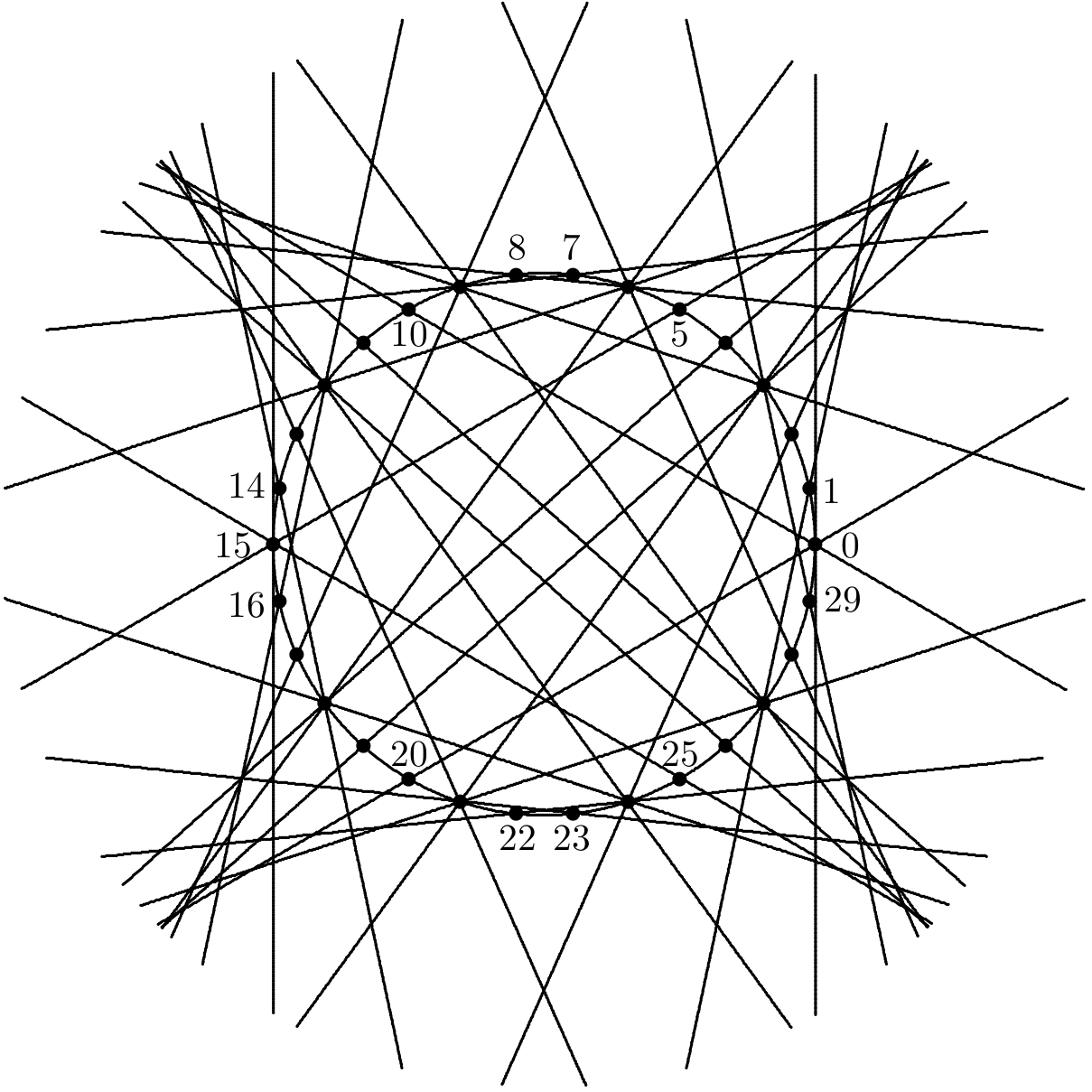}
\caption{For any $n$, the line through $n$ and $-3n$ is tangent to the astroid.}
\label{AstroidCremona}
\end{figure}

By choosing the origin at the center of the hypocycloid and directing the $x$-axis along the normal to the first side, one can represent a discrete hypocycloid in the form $a\mathbf{C}_m(n) + b\mathbf{S}_m(n)$. From the reduction formulas \eqref{reduction_formulas} it follows that all discrete hypocycloids with $n$ sides have order less than or equal to $n/2$. That is, they are tangent to a hypocycloid of some order $m\le n/2$.

\subsection{Discrete Steiner point}\label{DiscrSteiner}

Finally we have to address the discretization of the Steiner point prior to proceeding to the analysis of the discrete evolutes. 

The Steiner point can be defined for any convex closed curve in $\R^2$ by means of the formula \eqref{Stptform}.
From \eqref{Stptform} the following properties of the Steiner point are obvious:
\begin{enumerate}
\item
Minkowski linearity:
\[
\St(\gamma_1 + \gamma_2) = \St(\gamma_1) + \St(\gamma_2), \quad \St(\lambda\gamma) = \lambda\St(\gamma)
\]
where $\gamma_1 + \gamma_2$ is the boundary of the Minkowski sum of the regions bounded by $\gamma_1$ and $\gamma_2$; linear operations with curves correspond to linear operations with their support functions.
\item
Motion equivariance:
\[
\St(F(\gamma)) = F(\St(\gamma))
\]
for every proper motion $F \colon \R^2 \to \R^2$.
\item
Hausdorff continuity: the map $\gamma \mapsto \St(\gamma)$ from the space of convex closed planar curves to $\R^2$ is continuous with respect to the Hausdorff metric.
\end{enumerate}

Answering a question posed by Gr\"unbaum, Shephard \cite{Sh0} has shown that the Steiner point is uniquely characterized by the above three properties. Schneider \cite{Sch0} extended this result to higher dimensions, where the Steiner point of a convex body $K \subset \R^d$ is defined similarly to \eqref{Stptform} as
\[
\St(K) = \frac{d}{\vol(\Sph^{d-1})} \int_{\Sph^{d-1}} p(v) v\, \dvol
\]
($\dvol$ is the standard volume form on $\Sph^{d-1}$).
Schneider first showed that the first two properties imply the above formula for $\St(K)$ whenever the support function of $K$ is a linear combination of spherical harmonics (this part of the argument works only for $d \ge 3$). Since any integrable function on $\Sph^{d-1}$ can be approximated by a linear combination of spherical harmonics, a Hausdorff-continuous extension is unique.

If $\partial K$ is smooth, then $\St(K)$ is the centroid of $\partial K$ with the mass density equal to the Gauss-Kronecker curvature. The Steiner point of a convex polyhedron is the centroid of its vertices weighted by the exterior angles. In particular, for convex polygons we have
\begin{equation}
\label{eqn:StPointPolygon}
\St(\P) = \frac1{2\pi} \sum_{i=1}^n \theta_{i+\frac12} P_{i+\frac12}
\end{equation}
Hence, for equiangular polygons the Steiner point coincides with the vertex centroid. For more details on the Steiner point, see \cite{Gr0, Sch}.

The following lemma shows that the properties (i) and (ii) characterize the Steiner point for equiangular polygons uniquely. As a result, it gives an alternative proof of Shephard's theorem.
\begin{lemma}
Let $f$ be a map from the space of convex equiangular $n$-gons to $\R^2$ that is equivariant with respect to the proper motions and linear with respect to the Minkowski addition. Then $f(\P)$ is the vertex centroid of $\P$.
\end{lemma}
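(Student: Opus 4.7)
The plan is to fix the side directions, use Minkowski linearity to write $f$ as a linear combination of the support values $p_j$, and then extract the coefficients from the rotation and translation equivariances.

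First I would fix $\alpha_1 \in \R/2\pi\Z$ and consider the open cone in $\R^n$ of support vectors $(p_1,\ldots,p_n)$ giving convex equiangular $n$-gons with normals $\alpha_j = \alpha_1 + 2\pi(j-1)/n$. Since this cone spans $\R^n$, Minkowski linearity forces $f$ to be the restriction of an $\R$-linear map $\R^n \to \R^2$, hence
\[
f(\P) \;=\; \sum_{j=1}^n p_j\, v_j(\alpha_1)
\]
for some vectors $v_j(\alpha_1)\in\R^2$. Rotation equivariance with an arbitrary angle $\psi$ gives $v_j(\alpha_1+\psi) = R_\psi v_j(\alpha_1)$. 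Specializing to $\psi = 2\pi/n$ and relabeling cyclically (the rotated polygon, relabeled so its first line sits again at $\alpha_1$, has support values $(p_n,p_1,\ldots,p_{n-1})$) yields $v_{j+1}(\alpha_1) = R_{2\pi/n} v_j(\alpha_1)$. These two constraints together reduce the problem to a single unknown vector: $v_j(\alpha_1) = R_{\alpha_j} u$ for $u := v_1(0)$.

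Translation by $t \in \R^2$ sends each $p_j$ to $p_j + \langle t, n_j\rangle$ with $n_j = (\cos\alpha_j, \sin\alpha_j)$, so equivariance $f(\P+t)=f(\P)+t$ becomes
\[
\sum_j \langle t, n_j\rangle\, R_{\alpha_j} u \;=\; t \qquad \text{for all } t \in \R^2.
\]
Passing to complex notation ($\R^2\cong\C$, rotation by $\alpha$ equals multiplication by $e^{i\alpha}$, and $\langle t, n_j\rangle = \tfrac12(t e^{-i\alpha_j} + \bar t e^{i\alpha_j})$), the left-hand side equals $\tfrac{n}{2}\, u\, t + \tfrac{u}{2}\, \bar t\, \sum_j e^{2i\alpha_j}$. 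For $n\ge 3$ the geometric sum $\sum_j e^{2i\alpha_j}$ vanishes, forcing $u = 2/n$ (a real number, corresponding to $(2/n, 0)$). Therefore
\[
f(\P) \;=\; \frac{2}{n}\sum_j p_j (\cos\alpha_j, \sin\alpha_j).
\]

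It remains to identify this with the vertex centroid. Starting from \eqref{vertexcoord} and writing $P_{j+\frac12} = \tfrac{i}{\sin(2\pi/n)}\bigl(p_{j+1} e^{i\alpha_j} - p_j e^{i\alpha_{j+1}}\bigr)$ in complex notation, the sum $\sum_j P_{j+\frac12}$ telescopes after a single index shift, collapsing to $2\sum_j p_j e^{i\alpha_j}$; dividing by $n$ recovers the expression above. The main obstacle is pure bookkeeping: disentangling the two rotational actions — rotation of the plane, which shifts the reference angle $\alpha_1$, versus cyclic relabeling of the lines, which permutes the support values — and tracking signs in the complex manipulations. The analytic content is entirely routine.
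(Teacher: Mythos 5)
Your proof is correct, and its first half (Minkowski linearity gives $f(\P)=\sum_j p_j v_j$, rotation equivariance plus cyclic relabeling gives $v_{j+1}=R_{2\pi/n}v_j$, hence $v_j=R_{\alpha_j}u$ for a single unknown $u$) is exactly the paper's argument. You diverge at the two remaining steps. To pin down $u$, the paper uses a rotation by $2\pi/n$ about the point $(1,0)$, which fixes the degenerate polygon $\mathbf{C}_1$ (all lines through $(1,0)$); equivariance then forces $f(\mathbf{C}_1)=(1,0)$, and solving $\sum_j\cos\frac{2\pi j}{n}\,R^j(u)=(1,0)$ gives $u=(\frac2n,0)$. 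You instead use translation equivariance, which yields the identity $\frac{n}{2}ut+\frac{u}{2}\bar t\sum_je^{2i\alpha_j}=t$; this works because $\sum_je^{2i\alpha_j}=0$ precisely when $n\ge 3$ (the geometric series has ratio $e^{4\pi i/n}\ne 1$), a small case check the paper's route avoids but which you correctly have available. Finally, the paper never computes the vertex centroid from formula \eqref{vertexcoord}: having shown that linearity and equivariance determine $f$ uniquely, it simply observes that the vertex centroid is itself Minkowski linear and motion equivariant and therefore must coincide with $f$. Your direct verification that $\frac1n\sum_jP_{j+\frac12}=\frac2n\sum_jp_je^{i\alpha_j}$ is a valid, slightly more computational substitute; the paper's uniqueness argument is the cleaner of the two and is also what lets the authors read the lemma as an alternative proof of Shephard's theorem. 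Both normalization steps are sound, so this is a legitimate variant of the same proof rather than a different one.
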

\begin{proof}
By linearity, the map $f$ can be uniquely extended to the space of equiangular hedgehogs. It follows that in terms of the support numbers $(p_1, \ldots, p_n)$ we have
\[
f(\P) = \sum_{j=1}^n p_j w_j,
\]
for some vectors $w_1, \ldots, w_n \in \R^2$ independent of the support numbers. Let $R$ denote the rotation of $\R^2$ about the origin by $\frac{2\pi}{n}$. Since rotation around the origin cyclically permutes the support numbers, the motion equivariance implies that
\[
\sum_{j=1}^n p_j w_{j+1} = \sum_{j=1}^n p_j R(w_j)
\]
for all values of $p_j$. Hence $w_{j+1} = R(w_j)$. 

Now consider the hedgehog $\P = \mathbf{C}_1$, that is the polygon degenerated to the point $(1,0)$, see Section \ref{Fourier}. The rotation about $(1,0)$ by $\frac{2\pi}{n}$ sends $\mathbf{C}_1$ to itself, so that by the equivariance we have $f(\mathbf{C}_1) = (1,0)$. This means that
\[
\sum_{j=1}^n \cos\frac{2\pi j}{n} R^j(w_0) = (1,0)
\]
(where $w_0 = w_n$), from which we easily compute $w_0 = (\frac2n, 0)$.

We have just shown that the linearity together with the equivariance determine the map $f$ uniquely, and that
\begin{equation}
\label{eqn:VertCentr}
f(\P) = \frac{2}{n} \sum_{j=1}^n p_jv_j,
\end{equation}
where $v_j=(\cos \frac{2\pi j}{n}, \sin \frac{2\pi j}{n})$. On the other hand, the vertex centroid is both Minkowski linear (it depends linearly on the vertex coordinates that depend linearly on the support numbers) and motion equivariant. Hence $f(\P)$ is the vertex centroid of $\P$.
\end{proof}

\begin{lemma}[Discrete analog of Lemma \ref{Stpt}]
\label{PseudoSteiner2Origin}
An equiangular hedgehog has the vertex centroid at the origin if and only if its discrete Fourier transform \eqref{DiscrFourier} is free from the first harmonics $\mathbf{C}_1$ and $\mathbf{S}_1$.
\end{lemma}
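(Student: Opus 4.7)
The plan is to compute both sides of the claimed equivalence as explicit linear functionals of the support numbers $(p_1,\ldots,p_n)$ and observe that they vanish on exactly the same subspace.

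First I would use the formula \eqref{eqn:VertCentr} established in the preceding lemma (with $f$ taken to be the vertex centroid): the vertex centroid of an equiangular hedgehog $\P$ with support numbers $(p_1,\ldots,p_n)$ is
\[
\frac{2}{n}\sum_{j=1}^n p_j v_j, \qquad v_j = \left(\cos\tfrac{2\pi j}{n},\ \sin\tfrac{2\pi j}{n}\right).
\]
Thus the centroid equals the origin precisely when the two real numbers
\[
\sum_{j=1}^n p_j \cos\tfrac{2\pi j}{n} \quad\text{and}\quad \sum_{j=1}^n p_j \sin\tfrac{2\pi j}{n}
\]
both vanish.

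Next I would unpack the discrete Fourier decomposition \eqref{DiscrFourier}. Since the coefficient of the line $l_j$ in $\P$ is, by definition, the support number $p_j$, expanding $\P=\frac{a_0}{2}\mathbf{C}_0+\sum_m(a_m\mathbf{C}_m+b_m\mathbf{S}_m)(+\frac{a_{n/2}}{2}\mathbf{C}_{n/2})$ and reading off the coefficient at $l_j=(2\pi j/n,p_j)$ gives
\[
p_j = \frac{a_0}{2} + \sum_{m=1}^{\lfloor(n-1)/2\rfloor}\!\!\bigl(a_m\cos\tfrac{2\pi m j}{n}+b_m\sin\tfrac{2\pi m j}{n}\bigr)\ \bigl(+\tfrac{a_{n/2}}{2}(-1)^j\bigr).
\]
By the standard orthogonality relations for the discrete trigonometric system on the $n$-th roots of unity (which can either be quoted from \cite{Sch1,Ion} or verified in one line by summing geometric series), for $1\le m<n/2$ one has
\[
a_m = \frac{2}{n}\sum_{j=1}^n p_j\cos\tfrac{2\pi mj}{n}, \qquad b_m = \frac{2}{n}\sum_{j=1}^n p_j\sin\tfrac{2\pi mj}{n}.
\]

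Specializing to $m=1$, I see that $a_1$ and $b_1$ are precisely (up to the common nonzero factor $2/n$) the two components of the vertex centroid computed in the first step. Hence $a_1=b_1=0$ is equivalent to the centroid being the origin, which is exactly the claim of the lemma. The only step that is not a formality is the invocation of discrete orthogonality, and even that is routine; there is no serious obstacle.
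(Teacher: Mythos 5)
Your proposal is correct and follows essentially the same route as the paper: both arguments rest on the explicit centroid formula \eqref{eqn:VertCentr} together with the orthogonality of the discrete harmonics, the only difference being that you extract $a_1,b_1$ from the support numbers by discrete orthogonality, whereas the paper evaluates the (Minkowski-linear) centroid on each harmonic separately and notes that all vertices of $a\mathbf{C}_1+b\mathbf{S}_1$ sit at $(a,b)$. The identification of the centroid with $(a_1,b_1)$ is the same in both cases, so there is nothing to add.
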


\begin{proof}
One can easily see (for example with the help of the formula \eqref{eqn:VertCentr}) that the vertex centroids of the hedgehogs $\mathbf{C}_0$ and $\mathbf{C}_m, \mathbf{S}_m$ for $m > 1$ lie at the origin. Thus the vertex centroid of an equiangular hedgehog coincides with the vertex cenroid of its first harmonic components. The hedgehog $a\mathbf{C}_1 + b\mathbf{S}_1$ has all of its vertices (and as a consequence, the vertex centroid) at the point $(a,b)$. The lemma follows.
\end{proof}

That the vertex centroid of an equiangular polygon is preserved by our constructions of angular bisector evolute and perpendicular bisector evolute will be proved later (see Propositions \ref{PsStPres} and \ref{PSPres_A}).

For non-equiangular hedgehogs the classical Steiner point \eqref{eqn:StPointPolygon} is generally not preserved by our discrete evolute constructions. Therefore we define the {\it pseudo-Steiner point} in the following axiomatic way.

\begin{definition}
\label{dfn:PsSt}
A map $\PS$ from a set of discrete hedgehogs invariant under a discrete evolute construction to 
$\R^2$  is called the \emph{pseudo-Steiner point} if it satisfies the following conditions.
\begin{itemize}
\item $\PS$ is linear with respect to the support numbers $p \in \R^n$:
\[ \PS(\alpha, \lambda p + \mu q) = \lambda \PS(\alpha, p) + \mu \PS(\alpha, q); \]
\item $\PS$ is equivariant with respect to the proper isometries of $\R^2$:
\[ \PS(F(\alpha,p)) = F(\PS(\alpha,p)) \]
for every proper isometry $F \colon \R^2 \to \R^2$;
\item $\PS$ takes the same value on a polygon and on its discrete evolute.
\end{itemize}
\end{definition}

We will see in Section \ref{sec:PsSt} that the pseudo-Steiner point is well-defined and unique for the perpendicular bisector evolute, at least in some generic situations. As for the angular bisector evolute, we have no reasonable results. This is due to the fact that the turning angles change in quite a complicated way under the angular bisector evolute transformation.

Now we are ready to describe two types of the discrete evolute transformations. Our construction is based on the definition of the evolute as the locus of the centers of the osculating circles, and we consider two different discretizations of the osculating circles of a polygon.

\section{$\Pev$-evolutes}\label{P-evo}
\subsection{Definition and coordinate presentation}\label{DefP-evo}
Let $\P=(l_1, \ldots, l_n)$ be a cooriented polygon with vertices $P_{1+\frac12}, \ldots, P_{n+\frac12}$. Denote by $l_j^*$ the line that passes through the midpoint of the segment $P_{j-\frac12}P_{j+\frac12}$, and whose direction is obtained from the direction of $l_j$ by a counterclockwise quarter-turn. The lines $l^*_j$ and $l^*_{j+1}$ are not parallel since the lines $l_j$ and $l_{j+1}$ are not parallel. By construction, their intersection point $P^*_{j+\frac12}$ is the circumcenter of the triangle $P_{j-\frac12}P_{j+\frac12}P_{j+\frac32}$ (if, say, $P_{j-\frac12} = P_{j+\frac12}$, then $P^*_{j+\frac12}$ is the center of the circle passing through $P_{j+\frac32}$ and tangent to $l_j$ at $P_{j-\frac12} = P_{j+\frac12}$). 

\begin{definition}
\label{Pevo}
The cooriented polygon $\P^*=\{l^*_1, \ldots, l^*_n\}$ is called the \emph{perpendicular bisector evolute} (or  $\Pev$-evolute) of $\P$. We will denote it by $\Pev(\P)$.
\end{definition}

The choice of (co)orientation of the polygon plays no significant role: reversing the orientation of $l_j$ simply reverses the orientation of $l^*_j$.

As before, let $(\alpha_i, p_i)$ be the coordinates of the line $l_i$.
\begin{lemma}
\label{ParamPBisect}
The line $l^*_i$ has the coordinates
\[
\alpha_i^*=\alpha_i+\frac{\pi}{2}, \quad p_i^*=\frac{p_{i+1} \sin \theta_{i-1/2} - p_{i-1} \sin \theta_{i+1/2} + p_i \sin (\theta_{i+1/2} - \theta_{i-1/2})}{2 \sin \theta_{i-1/2}\, \sin \theta_{i+1/2}}.
\]
\end{lemma}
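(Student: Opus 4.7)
The plan is to read off $\alpha_i^*$ directly from the definition of $l_i^*$ and then compute $p_i^*$ by expressing the midpoint of the $i$-th side of $\P$ in Cartesian coordinates (via Lemma~\ref{CoordPolygon}) and projecting it onto the coorienting normal of $l_i^*$.

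First I would handle $\alpha_i^*$. By the convention ``the orientation is the counterclockwise quarter-turn of the coorientation,'' the orienting vector of $l_i$ is $v_i=(-\sin\alpha_i,\cos\alpha_i)$. By definition of $l_i^*$, its orienting vector is the counterclockwise quarter-turn of $v_i$, hence its coorienting normal is the counterclockwise quarter-turn of $(\cos\alpha_i,\sin\alpha_i)$, namely $(-\sin\alpha_i,\cos\alpha_i)$. Thus $\alpha_i^*=\alpha_i+\tfrac{\pi}{2}$.

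Next, since $l_i^*$ passes through $M_i:=\tfrac12(P_{i-\frac12}+P_{i+\frac12})$, the signed distance from the origin is
\[
p_i^* \;=\; M_i\cdot(\cos\alpha_i^*,\sin\alpha_i^*) \;=\; \tfrac12\bigl(P_{i-\frac12}+P_{i+\frac12}\bigr)\cdot(-\sin\alpha_i,\cos\alpha_i).
\]
Substituting the vertex coordinates from \eqref{vertexcoord} and expanding, the $p_{i-1}$- and $p_{i+1}$-terms simplify immediately to $\sin^2\alpha_i+\cos^2\alpha_i=1$, while the $p_i$-terms combine via $\cos\alpha_i\cos\alpha_{i\pm1}+\sin\alpha_i\sin\alpha_{i\pm1}=\cos(\alpha_{i\pm1}-\alpha_i)=\cos\theta_{i\pm\frac12}$. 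A short calculation gives
\[
P_{i-\frac12}\cdot(-\sin\alpha_i,\cos\alpha_i) \;=\; \frac{-p_{i-1}+p_i\cos\theta_{i-\frac12}}{\sin\theta_{i-\frac12}},\qquad
P_{i+\frac12}\cdot(-\sin\alpha_i,\cos\alpha_i) \;=\; \frac{p_{i+1}-p_i\cos\theta_{i+\frac12}}{\sin\theta_{i+\frac12}}.
\]

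Finally I would average these two expressions, put them over the common denominator $2\sin\theta_{i-\frac12}\sin\theta_{i+\frac12}$, and use the sine subtraction identity $\cos\theta_{i-\frac12}\sin\theta_{i+\frac12}-\cos\theta_{i+\frac12}\sin\theta_{i-\frac12}=\sin(\theta_{i+\frac12}-\theta_{i-\frac12})$ on the $p_i$-coefficient. This yields exactly the stated formula. No real obstacle is anticipated; the only care required is bookkeeping of signs (from the direction of the coorienting normal of $l_i^*$ and from the orientation of $P_{i-\frac12}P_{i+\frac12}$) and the correct application of the turning-angle identity in the last step.
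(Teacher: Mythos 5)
Your proposal is correct and follows exactly the paper's argument: obtain $\alpha_i^*$ from the quarter-turn, then compute $p_i^*$ as the scalar product of the midpoint of $P_{i-\frac12}P_{i+\frac12}$ (via Lemma~\ref{CoordPolygon}) with the coorienting normal $(-\sin\alpha_i,\cos\alpha_i)$ of $l_i^*$. The paper leaves the ``substitution and simplification'' implicit, whereas you carry it out; the intermediate expressions and the final use of the sine subtraction identity all check out.
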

\begin{proof}
Since $l^*_i$ is obtained from $l_i$ by a counterclockwise quarter-turn, its angle coordinate is as stated in Lemma.

The coorienting normal $\nu^*_i$ of $l^*_i$ has the coordinates $(-\sin \alpha_i, \cos \alpha_i)$. Therefore the signed distance from $O$ to $l^*_i$ equals $\langle X, \nu^*_i \rangle$, where $X$ is an arbitrary point on $l^*_i$. Use as $X$ the midpoint of the segment $P_{i-1/2}P_{i+1/2}$. The coordinates of $P_{i\pm 1/2}$ were computed in Lemma \ref{CoordPolygon}. Substitution and simplification yields the formula for $p^*_i$.
\end{proof}
Lemma \ref{ParamPBisect} implies the following coordinate description of the transformation $\Pev$.

\begin{theorem}
\label{CoordP}
Let $\P=\{(\alpha_j,p_j)\}$ be a cooriented polygon. Then  $\Pev(\P)=\{(\alpha^*_j,p^*_j)\}$ where $\alpha^*_j=\alpha_j+\pi/2$ and the vectors $p=(p_1,\dots,p_n)$ and $p^\ast=(p^\ast_1,\dots,p^\ast_n)$ are related by the formula $p^\ast=\Pev_\theta p$, in which $\Pev_\theta$ is the cyclically tridiagonal matrix
\begin{equation}
\label{eqnA}
\Pev_\theta = \frac12
\begin{pmatrix} 
a_1-a_2& b_2  & 0&\cdots 0&-b_1\\ -b_2& a_2-a_3 & b_3&\ddots & 0\\ \vdots&\ddots&\ddots &\ddots& 0\\ 0 &  \ddots     &   -b_{n-1} &a_{n-1}-a_n& b_n\\ b_1&0&\cdots  0&-b_n& a_n-a_1
\end{pmatrix}
\end{equation}
with
\[ a_j = \cot\theta_{j-\frac12},\  b_j = \csc\theta_{j-\frac12}. \]
\end{theorem}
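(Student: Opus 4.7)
The plan is to derive Theorem \ref{CoordP} as a direct corollary of Lemma \ref{ParamPBisect}, which already expresses $(\alpha^*_i,p^*_i)$ in terms of $(\alpha_{i-1},p_{i-1})$, $(\alpha_i,p_i)$, $(\alpha_{i+1},p_{i+1})$. The claim $\alpha^*_j=\alpha_j+\pi/2$ is the first output of that lemma and reflects only the geometric fact that $l^*_j$ is the perpendicular bisector of the $j$-th side, hence its direction is rotated by a quarter-turn from the direction of $l_j$. All the substance is in recasting the formula for $p^*_i$ as the $i$-th row of the cyclically tridiagonal matrix \eqref{eqnA}.

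I would first read off the off-diagonal entries. Lemma \ref{ParamPBisect} gives the coefficient of $p_{i-1}$ in $p^*_i$ as
\[
\frac{-\sin\theta_{i+\frac12}}{2\sin\theta_{i-\frac12}\sin\theta_{i+\frac12}} = -\tfrac12\csc\theta_{i-\frac12} = -\tfrac12 b_i,
\]
and the coefficient of $p_{i+1}$ as
\[
\frac{\sin\theta_{i-\frac12}}{2\sin\theta_{i-\frac12}\sin\theta_{i+\frac12}} = \tfrac12\csc\theta_{i+\frac12} = \tfrac12 b_{i+1}.
\]
This matches exactly the subdiagonal entry $-\tfrac12 b_i$ and superdiagonal entry $\tfrac12 b_{i+1}$ in row $i$ of $\tfrac12\Pev_\theta$, and (with the index convention read cyclically modulo $n$) it also matches the two corner entries $-\tfrac12 b_1$ and $\tfrac12 b_1$ in the top-right and bottom-left positions.

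The only calculation of substance is the diagonal entry, for which I would expand the numerator using the sine difference formula:
\[
\sin(\theta_{i+\frac12}-\theta_{i-\frac12}) = \sin\theta_{i+\frac12}\cos\theta_{i-\frac12} - \cos\theta_{i+\frac12}\sin\theta_{i-\frac12}.
\]
Dividing by $2\sin\theta_{i-\frac12}\sin\theta_{i+\frac12}$ separates the fraction into
\[
\tfrac12\bigl(\cot\theta_{i-\frac12}-\cot\theta_{i+\frac12}\bigr) = \tfrac12(a_i - a_{i+1}),
\]
which is precisely the diagonal entry in row $i$ of $\tfrac12\Pev_\theta$.

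Combining the three coefficients into a single linear relation $p^*=\Pev_\theta p$ and checking that the cyclic closure (rows $1$ and $n$) is produced correctly by the indexing convention $a_{n+1}=a_1$, $b_{n+1}=b_1$ completes the argument. I do not anticipate a genuine obstacle; the main thing to be careful about is the index shift between $a_j,b_j$ (which use $\theta_{j-\frac12}$) and the turning angles $\theta_{i\pm\frac12}$ that appear in Lemma \ref{ParamPBisect}, so that the matrix entries land in the correct positions and with the correct signs.
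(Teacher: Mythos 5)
Your proposal is correct and is exactly the paper's route: the paper states Theorem \ref{CoordP} as an immediate consequence of Lemma \ref{ParamPBisect}, and your coefficient-matching (including the sine-difference expansion giving the diagonal entries $\tfrac12(a_i-a_{i+1})$ and the cyclic corner entries) is precisely the implicit bookkeeping the paper omits. The only nitpick is the phrase ``row $i$ of $\tfrac12\Pev_\theta$'' --- the factor $\tfrac12$ is already part of $\Pev_\theta$ as defined in \eqref{eqnA}, so the entries you list are those of $\Pev_\theta$ itself.
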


Since the turning angles $\theta_{j+\frac12}$ remain unchanged, the dynamics of the iterated $\Pev$-evolute transformation depend solely on the spectral properties of the matrix $\Pev_{\theta}$ (for a generic polygon, on the  eigenvalues with the maximal absolute values: see details in Section \ref{limitbehavior}).

\begin{remark}
{\rm 
The coordinates of the vertices of the $\Pev$-evolute can be computed from those of the polygon as follows. Let $P_{j+1/2} = (x_j, y_j)$ and $P^*_{j+1/2} = (x^*_j, y^*_j)$. Then we have
\begin{equation*}
\begin{split}
x_j^* = \frac{\det\left|\begin{array}{ccc} x_{j-1}^2 & x_j^2 & x_{j+1}^2 \\ y_{j-1} & y_j & y_{j+1} \\ 1 & 1 & 1\end{array}\right| + \det\left|\begin{array}{ccc} y_{j-1}^2 & y_j^2 & y_{j+1}^2 \\ y_{j-1} & y_j & y_{j+1} \\ 1 & 1 & 1 \end{array}\right|} {2 \det\left|\begin{array}{ccc} x_{j-1} & x_j & x_{j+1} \\ y_{j-1} & y_j & y_{j+1} \\ 1 & 1 & 1 \end{array}\right|}, \\
y_j^* = \frac{\det\left|\begin{array}{ccc} x_{j-1} & x_j & x_{j+1} \\ x_{j-1}^2 & x_j^2 & x_{j+1}^2 \\ 1 & 1 & 1 \end{array}\right| + \det\left|\begin{array}{ccc} x_{j-1} & x_j & x_{j+1} \\ y_{j-1}^2 & y_j^2 & y_{j+1}^2 \\ 1 & 1 & 1\end{array}\right|} {2 \det\left|\begin{array}{ccc} x_{j-1} & x_j & x_{j+1} \\ y_{j-1} & y_j & y_{j+1} \\ 1 & 1 & 1\end{array}\right|}.
\end{split}
\end{equation*}
}
\end{remark}

We conclude this section with a simple geometric observation concerning the rank and the kernel of the map $p\mapsto p^\ast$. We will use the following notation. For a set $\alpha=\{\alpha_1,\dots,\alpha_n\}$, let$$\B(\alpha)=\sum_{i=1}^n(-1)^{i-1}\alpha_i.$$If the numbers $\alpha_i$ are defined modulo $2\pi$, then $\B(\alpha)$ is also defined modulo $2\pi$. If $n$ is even, then $\B$ is invariant, up to a sign change, with respect to cyclic permutations of $\alpha$. This is not so if $n$ is odd and, specifically for this case, we introduce a new notation:
$$\B_j(\alpha)=\sum_{i=1}^{n-1}(-1)^i\alpha_{j+i}$$
(we assume that $\alpha_{n+i}=\alpha_i$); obviously, in this case, $\B(\alpha)=\alpha_1+\B_1(\alpha)$. It is also clear that $\B_j(\alpha)$ can be expressed in terms of the turning angles $\theta_{i+\frac12}=\alpha_{i+1}-\alpha_i$:$$2\B_j=\theta_{j+\frac12}-\theta_{j+\frac32}+\theta_{j+\frac52}-\dots+\theta_{j+n-\frac12}.$$

\begin{proposition}\label{KernelRank}
{\rm(i)} If $n$ is odd, then the kernel of the linear map $p\mapsto p^\ast$ has dimension $1$, so its rank equals $n-1$. This kernel is generated by the vector $(p_1^\ast,\dots,p_n^\ast)$ where $$p_j^\ast=\cos\B_j(\alpha).$$
{\rm(ii)} If $n$ is even, and $\B(\alpha)\ne0$, then the map $p\mapsto p^\ast$ has zero kernel, so its rank equals $n$. If, for $n$ even, $\B(\alpha)=0$, then the rank equals $n-2$.
\end{proposition}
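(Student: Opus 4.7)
The proof rests on the geometric fact that $p^*_j = 0$ is equivalent to the perpendicular bisector $l^*_j$ passing through the origin, which in turn is equivalent to $|OP_{j-\frac12}| = |OP_{j+\frac12}|$. Hence $p\in\ker(p\mapsto p^*)$ iff all vertices of $\P$ lie on a common circle centered at $O$. My plan is thus to count the dimension of the space of polygons with prescribed side normals $(\alpha_1,\dots,\alpha_n)$ that are inscribed in such a circle, and then to write down the explicit kernel vector in the odd case.

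Parametrize the vertices as $P_{j+\frac12}=r(\cos\phi_{j+\frac12},\sin\phi_{j+\frac12})$; the radius $r$ provides one free continuous parameter. The condition that the chord $P_{j-\frac12}P_{j+\frac12}$ has coorienting normal $\alpha_j$ reads $\phi_{j-\frac12}+\phi_{j+\frac12}\equiv 2\alpha_j\pmod{2\pi}$. This is a circulant linear system in the $\phi$'s with first row $(1,1,0,\dots,0)$ and eigenvalues $1+e^{2\pi ik/n}$, all nonzero for $n$ odd and vanishing only at $k=n/2$ when $n$ is even. Therefore for $n$ odd the $\phi_{j+\frac12}$ are uniquely determined by $\alpha$ (modulo the discrete antipodal choice $\phi\leftrightarrow\phi+\pi$), giving a one-dimensional kernel in $p$-space and $\rk=n-1$. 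For $n$ even, solvability of the circulant system requires the vanishing of the alternating sum $\B(\alpha)$; when $\B(\alpha)\neq 0$ no inscribed polygon exists and the kernel is trivial, while when $\B(\alpha)=0$ the angular configuration acquires one extra parameter which, together with $r$, yields a two-dimensional kernel and $\rk=n-2$.

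For the explicit formula in case (i), normalize $r=1$. The signed distance from $O$ to the chord connecting two unit-circle points at angles $\phi_{j\pm\frac12}$ equals $\cos\bigl((\phi_{j+\frac12}-\phi_{j-\frac12})/2\bigr)$, the sign being dictated automatically by the coorientation induced by $\alpha_j$. Iterating the recurrence $\phi_{m+\frac12}=2\alpha_m-\phi_{m-\frac12}$ from index $j$ around the polygon and invoking the cyclic closure $\phi_{j+n+\frac12}=\phi_{j+\frac12}$ lets me telescope $\phi_{j+\frac12}-\phi_{j-\frac12}$ into $\pm 2\B_j(\alpha)$; since $\cos$ is even, this gives $p_j=\cos\B_j(\alpha)$, matching the stated kernel vector. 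The main obstacle is not the global dimension count (which follows cleanly from the circulant eigenvalue argument) but the careful sign and index bookkeeping in this telescoping step that identifies the alternating sum $\B_j(\alpha)$ uniformly in $j$.
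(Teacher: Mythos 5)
Your proposal is correct and follows essentially the same route as the paper: both identify the kernel with polygons inscribed in a circle centered at $O$, determine the vertex angles by propagating the relation $\phi_{j+\frac12}=2\alpha_j-\phi_{j-\frac12}$ around the polygon (your circulant-eigenvalue computation is just a repackaging of the paper's step-by-step reflection argument and its closure condition $(1-(-1)^n)x_1=\pm2\B(\alpha^*)$), and read off the kernel vector from the chord-distance formula. The only cosmetic difference is that the paper phrases the recurrence as reflections in the radii $\alpha_j^*$ and obtains $\cos\B_j(\alpha)$ directly from $x_1=\B(\alpha^*)$, whereas you telescope the angular differences; the sign bookkeeping you flag is handled there in exactly the same spirit.
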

\begin{proof}
Obviously, a polygon belongs to the kernel of the map $p\mapsto p^\ast$ if and only if it is inscribed in a circle centered at $O$ (which means that all the perpendicular bisectors pass through $O$). For a given $\alpha$, let us try to construct such a polygon (see Figure \ref{inscribed}).

\begin{figure} [htbp]
\centering
\includegraphics[width=2.6in]{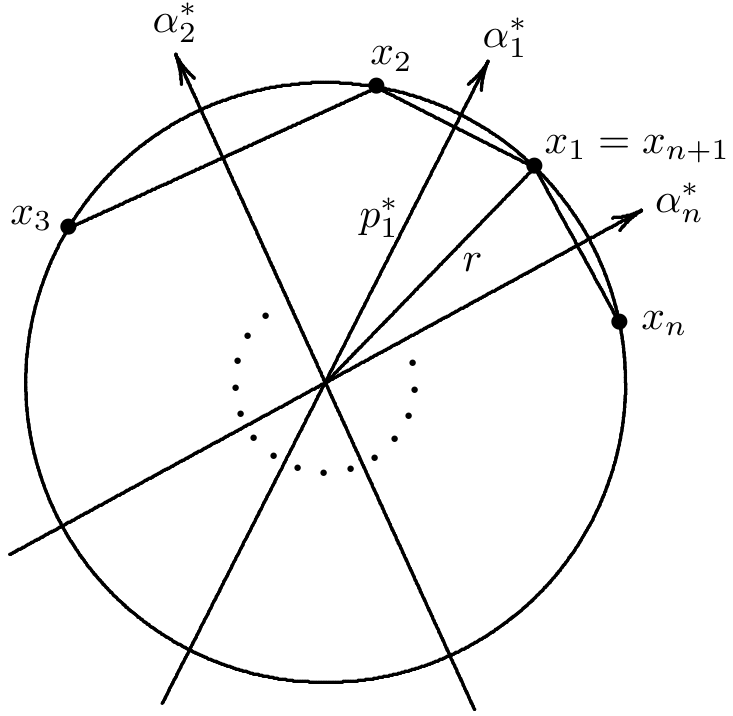}
\caption{To Proposition \ref{KernelRank}.}
\label{inscribed}
\end{figure}

For the first vertex choose an arbitrary position, $x_1$. Then the second vertex, $x_2$, is obtained from $x_1$ by a reflection in the radius $\alpha_1^\ast$, which makes it $2\alpha_1^\ast-x_1$. Similarly, $x_3=2\alpha_2^\ast-2\alpha_1^\ast+x_1,\dots,x_{n+1}=2\alpha^\ast_n-2\alpha^\ast_2+\dots-(-1)^n2\alpha^\ast_1=(-1)^nx_1.$ Since $x_{n+1}=x_1$, we obtain $$(1-(-1)^n)x_1=2(\alpha^\ast_n-\alpha^\ast_{n-1}+\dots-(-1)^n\alpha_1^\ast)=(-1)^{n-1}2\B(\alpha^\ast).$$Here comes the difference between the cases of odd and even $n$. If $n$ is odd, then $x_1=\B(\alpha^\ast)$, hence $x_1$, as well as all the other $x_i$'s, is uniquely defined, and hence the kernel of the map $p\mapsto p^\ast$ is one-dimensional. Moreover, in this case,$$p_1^\ast=r\cos(\B(\alpha^\ast)-\alpha^\ast_1)=r\cos\B_1(\alpha^\ast)=r\cos\B_1(\alpha),$$and similar formulas hold for all $p^\ast_i$. For $n$ even, $\B(\alpha^\ast)=\B(\alpha)$, and if $\B(\alpha)\ne0$, then our problem of constructing an inscribed polygon has no solutions; so, the map $p\mapsto p^\ast$ has no kernel. Finally, if $n$ is even and $\B(\alpha)=0$, then any $x_1$ will work, so we have (for any $r$) a one-parameter family  of inscribed polygons, and hence our kernel has dimension 2.
\end{proof}

\subsection{The equiangular case} \label{PevEquiang}
Let us first consider the case of equiangular hedgehogs, which are polygons with turning angles $\theta_j = 2\pi/n$ for all $j$. The transformation matrix for the support numbers takes the form
\[ \Pev_\theta = \frac1{2\sin(2\pi/n)}(\mathcal{Z} - \mathcal{Z}^\top), \]
where $\mathcal{Z}$ is the matrix corresponding to the index shift, 
\begin{equation}
\label{eqnZ}
\mathcal{Z}=\begin{pmatrix}
0 & 1  & 0\cdots & 0\\ \vdots&\ddots&\ddots&\vdots \\ 0&\cdots&0&1\\ 1& 0\cdots& 0&0
\end{pmatrix}
\end{equation}

The following lemma is classical, see for example \cite{Da}.
\begin{lemma}
\label{spectrum}
The eigenvalues of the matrix $\Pev_\theta$ are $$\lambda_m=i\, \frac{\sin(2\pi m/n)}{\sin(2\pi/n))},\ m=0,1,\dots,n-1.$$
\end{lemma}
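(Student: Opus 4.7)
The plan is to diagonalize the cyclic shift matrix $\mathcal{Z}$ explicitly, since $\Pev_\theta$ is a scalar multiple of $\mathcal{Z} - \mathcal{Z}^\top$ in the equiangular case. First I would recall that $\mathcal{Z}^n = I$, so the minimal polynomial of $\mathcal{Z}$ divides $\lambda^n - 1$ and the eigenvalues of $\mathcal{Z}$ are exactly the $n$-th roots of unity $\zeta_m = e^{2\pi i m/n}$, for $m = 0, 1, \ldots, n-1$. A direct verification shows that the Fourier vector $v_m = (1, \zeta_m, \zeta_m^2, \ldots, \zeta_m^{n-1})^\top$ is an eigenvector of $\mathcal{Z}$ with eigenvalue $\zeta_m$; in particular, $\mathcal{Z}$ is unitarily diagonalizable.

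Next I would use the key observation that $\mathcal{Z}$ is an orthogonal matrix (a permutation matrix), so $\mathcal{Z}^\top = \mathcal{Z}^{-1}$. Consequently $\mathcal{Z}^\top$ has the same eigenvectors $v_m$, but with eigenvalues $\zeta_m^{-1} = \overline{\zeta_m}$. Therefore $\mathcal{Z} - \mathcal{Z}^\top$ is also diagonalized by $\{v_m\}$, with eigenvalues
\[
\zeta_m - \zeta_m^{-1} = e^{2\pi i m/n} - e^{-2\pi i m/n} = 2i\sin\frac{2\pi m}{n}.
\]
Dividing by the scalar factor $2\sin(2\pi/n)$ yields the claimed eigenvalues $\lambda_m = i\sin(2\pi m/n)/\sin(2\pi/n)$.

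Since the argument is a one-line computation once the Fourier basis is in hand, there is really no main obstacle; the only point requiring a word of care is that for $n$ even the eigenvalue $\lambda_{n/2}$ equals $0$, which is consistent with Proposition~\ref{KernelRank} (the map $p \mapsto p^*$ has a nontrivial kernel precisely when $n$ is odd or when $\B(\alpha) = 0$, and for the equiangular polygon with $n$ even we indeed have $\B(\alpha) = 0$, producing a two-dimensional kernel spanned by $v_0$ and $v_{n/2}$).
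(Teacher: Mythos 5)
Your proof is correct and follows essentially the same route as the paper: both diagonalize $\mathcal{Z}$ and $\mathcal{Z}^\top$ simultaneously on the Fourier vectors $v_m=(1,\zeta_m,\dots,\zeta_m^{n-1})^\top$ and read off $\zeta_m-\zeta_m^{-1}=2i\sin(2\pi m/n)$. The additional remarks about the zero eigenvalues for $n$ even are a correct consistency check but not needed for the statement.
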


\begin{proof}
Let $a$ be an $n$-th root of unity, $a=\cos(2\pi m/n)+i\sin(2\pi m/n)$. The vector $v=(1,a,\dots,a^{n-1})^\top$ is an eigenvector of both $\mathcal Z$ and $\mathcal Z^\top$: $\mathcal Zv=av,\, \mathcal Z^\top v=a^{-1}v=\overline av$. Hence, $\Pev_\theta v=\lambda_mv$ where $\lambda_m$ is as above.
\end{proof}

One can immediately see that the eigenvalues $\lambda_m$ and $\lambda_{n-m}$ are complex conjugate: $\overline{\lambda_m} = \lambda_{n-m}$. The hedgehogs in the corresponding invariant real subspace are the discrete hypocycloids of order $m$ (see Definition \ref{discrete_hypocycloids}). The only zero eigenvalues are $\lambda_0$ and  $\lambda_{n/2}$ (for even $n$); the corresponding hedgehogs are $\mathbf{C}_0(n)$ and $\mathbf{C}_{n/2}(n)$.

This calculation of the spectrum of $\Pev_\theta$ leads to the following results.

\begin{theorem}
\label{EvolDHPerp}
Let $\P$ be an equiangular hedgehog with $n$ sides tangent to a hypocycloid $h$ of order $m$. Then $\Pev(\P)$ is tangent to $\E(h)$,  scaled by the factor $\dfrac{\sin(2\pi m/n)}{m\sin(2\pi/n)}$ with respect to its center. 
\end{theorem}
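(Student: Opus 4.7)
The plan is to exploit the spectral decomposition of $\Pev_\theta$ from Lemma \ref{spectrum} together with the formula for the smooth evolute of a hypocycloid from Lemma \ref{evolsupp}. Since $\Pev$ is linear in the support numbers (Theorem \ref{CoordP}) and the whole picture is equivariant under rotations about the origin, I place the origin at the center of $h$ and use linearity plus a rotation to reduce to the case $\P = \mathbf{C}_m(n)$, i.e.\ the equiangular hedgehog with $\alpha_j = 2\pi j/n$ and $p_j = \cos(2\pi mj/n)$, tangent to the smooth hypocycloid $h$ with support function $p_h(\alpha)=\cos m\alpha$. The case of $\mathbf{S}_m(n)$, and hence of a general $\P = a\mathbf{C}_m(n) + b\mathbf{S}_m(n)$, then follows either by the same argument or by applying a rotation.

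First I would compute $\E(h)$ via Lemma \ref{evolsupp}:
\[
p_{\E(h)}(\alpha) = p_h'\!\left(\alpha - \tfrac{\pi}{2}\right) = -m\sin\!\left(m\alpha - \tfrac{m\pi}{2}\right),
\]
so $\E(h)$ is a hypocycloid of the same order $m$, centered at the origin, whose support function is a pure $m$-th harmonic of amplitude $m$.

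Next I would compute $\Pev(\P)$ using Theorem \ref{CoordP}. The angle coordinates become $\alpha_j^* = 2\pi j/n + \pi/2$. For the support numbers I would invoke the complex eigenvectors $v = (e^{2\pi i mj/n})_{j=1}^n$ of $\Pev_\theta$ with eigenvalue $\lambda_m = is_m$, where $s_m := \sin(2\pi m/n)/\sin(2\pi/n)$, exactly as in the proof of Lemma \ref{spectrum}. Writing $\bigl(\cos(2\pi mj/n)\bigr)_j = \tfrac12(v+\bar v)$, the operator $\Pev_\theta$ sends this vector to $\tfrac12(is_m v - is_m \bar v) = -s_m\bigl(\sin(2\pi mj/n)\bigr)_j$. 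Substituting $2\pi mj/n = m\alpha_j^* - m\pi/2$, the new support numbers at the shifted angles are
\[
p_j^* = -s_m\sin\!\left(m\alpha_j^* - \tfrac{m\pi}{2}\right) = \frac{s_m}{m}\, p_{\E(h)}(\alpha_j^*).
\]

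Therefore $\Pev(\P)$ is an equiangular hedgehog of $n$ sides tangent at the angles $\alpha_j^*$ to the homothet of $\E(h)$ centered at the origin with ratio $s_m/m = \sin(2\pi m/n)/(m\sin(2\pi/n))$, which is the claim. I do not anticipate a genuine obstacle: Lemma \ref{spectrum} already supplies the spectral content, and the remaining work is bookkeeping of the quarter-period shift $\alpha \mapsto \alpha + \pi/2$ introduced by the $\Pev$-evolute. The only care needed is to convert the complex-eigenvector statement into the real action on the invariant plane $\lin(\mathbf{C}_m,\mathbf{S}_m)$, after which the identification of the two hypocycloids is immediate.
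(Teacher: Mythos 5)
Your proposal is correct and follows essentially the same route as the paper: reduce to the discrete harmonics $\mathbf{C}_m(n)$, $\mathbf{S}_m(n)$, use the eigenvalue $\lambda_m = i\sin(2\pi m/n)/\sin(2\pi/n)$ of $\Pev_\theta$ from Lemma \ref{spectrum} to compute their $\Pev$-evolutes, and compare with the support function of $\E(h)$ given by Lemma \ref{evolsupp} after the quarter-turn shift. Your explicit passage from the complex eigenvectors to the real action on $\lin\{\mathbf{C}_m,\mathbf{S}_m\}$ is exactly the computation the paper records in its displayed formulas.
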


\begin{proof}
Without loss of generality, assume $\alpha_j = 2\pi j/n$. Compute the evolutes of the discrete hedgehogs $\mathbf{C}_m(n)$ and $\mathbf{S}_m(n)$:
\begin{gather*}
\left( \frac{2\pi j}n, \cos \frac{2\pi m j}n \right)_{j=1}^n \mapsto \frac{\sin(2\pi m/n)}{\sin(2\pi/n)} \left( \frac{\pi}2 + \frac{2\pi j}n, -\sin \frac{2\pi m j}n \right)_{j=1}^n,\\ \left( \frac{2\pi j}n, \sin \frac{2\pi m j}n \right)_{j=1}^n \mapsto \frac{\sin(2\pi m/n)}{\sin(2\pi/n)} \left( \frac{\pi}2 + \frac{2\pi j}n, \cos \frac{2\pi m j}n \right)_{j=1}^n.
\end{gather*}
Hence the evolute of a hedgehog, tangent to the hypocycloid $h$ given by $p(\alpha) = a\cos m\alpha + b\sin m\alpha$, is tangent to the hypocycloid
\[ p(\alpha) = \frac{\sin(2\pi m/n)}{\sin(2\pi/n)} \left(-a\sin m\left(\alpha - \frac{\pi}2\right) + b\cos m\left(\alpha - \frac{\pi}2\right)\right). \]
By Lemma \ref{evolsupp}, the evolute of $h$ has equation $$p(\alpha) = m \left(-a\sin m\left(\alpha - \dfrac{\pi}2\right) + b\cos m\left(\alpha - \dfrac{\pi}2\right)\right),$$
and the theorem follows.
\end{proof}

The second $\Pev$-evolute of $\P$ is tangent to an appropriately scaled second evolute of $h$. Since $\E^2(h)$ is homothetic to $h$ with the factor $-m^2$, and the angle coordinates of $\Pev^2(\P)$ are $\alpha_j + \pi$, we have the following.
\begin{corollary} \label{corEvolDHPerp}
The second evolute $\Pev^2(\P)$ is homothetic to $\P$ with respect to the center of $h$ with the coefficient $-\dfrac{\sin^2(2\pi m/n)}{\sin^2(2\pi/n)}$.
\end{corollary}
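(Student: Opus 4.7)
The plan is to iterate Theorem~\ref{EvolDHPerp}. A first application shows that $\Pev(\P)$ is an equiangular $n$-gon with the same turning angles $2\pi/n$ as $\P$, whose sides are tangent to the hypocycloid $h_1 := c \cdot \E(h)$, where $c = \sin(2\pi m/n)/(m\sin(2\pi/n))$. Since both the smooth evolute transformation and scaling about the Steiner point preserve the order of a hypocycloid, $h_1$ is again a hypocycloid of order $m$ centered at the center of $h$. A second application of Theorem~\ref{EvolDHPerp} to the pair $(\Pev(\P), h_1)$ then gives that $\Pev^2(\P)$ is tangent to $c \cdot \E(h_1) = c^2 \cdot \E^2(h)$, while the two quarter-turns of the coorientation directions accumulate to $\alpha^{**}_j = \alpha_j + \pi$.

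Next, I would identify $\E^2(h)$ explicitly. Writing $p_h(\alpha) = a\cos m\alpha + b\sin m\alpha$ and applying Lemma~\ref{evolsupp} twice yields
\[
p_{\E^2(h)}(\alpha) \;=\; p_h''(\alpha - \pi) \;=\; -m^2\, p_h(\alpha - \pi),
\]
which exhibits $\E^2(h)$ as the image of $h$ under the central homothety with coefficient $-m^2$. Composing this with the overall scaling by $c^2$, the hypocycloid to which $\Pev^2(\P)$ is tangent is the image of $h$ under the homothety centered at the center of $h$ with coefficient $-c^2 m^2$.

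Finally, a central homothety with negative coefficient applied to a tangent polygon scales the signed distances by the absolute value of the coefficient and shifts every coorientation angle by $\pi$ (this is how a negative-coefficient homothety acts on cooriented lines). Since $\Pev^2(\P)$ has coorientation angles $\alpha_j + \pi$ and is tangent to the $(-c^2 m^2)$-homothetic image of $h$, it must coincide with the $(-c^2 m^2)$-homothetic image of $\P$, giving the stated coefficient $-c^2 m^2 = -\sin^2(2\pi m/n)/\sin^2(2\pi/n)$. The only bookkeeping required is to match the shift of the coorientation angles by $\pi$ with the negative sign of the resulting homothety coefficient; once this is in place, the composition of the two scaling factors is routine.
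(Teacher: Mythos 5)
Your argument is the same as the paper's: iterate Theorem~\ref{EvolDHPerp} twice, use Lemma~\ref{evolsupp} to identify $\E^2(h)$ as the $(-m^2)$-homothety of $h$, and match the accumulated angle shift $\alpha_j\mapsto\alpha_j+\pi$ with the negative sign of the homothety coefficient, so the two scaling factors multiply to $-\sin^2(2\pi m/n)/\sin^2(2\pi/n)$. The paper compresses this into two sentences preceding the corollary, but the content and the bookkeeping are identical to yours.
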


We see that the $\Pev$-evolute of a discrete hypocycloid is ``smaller'' than the evolute of the corresponding smooth hypocycloid. As the ratio $n/m$ tends to infinity, the $\Pev$-evolute tends to the smooth one. In  Figure \ref{9and5},  we present the $\Pev$-evolutes of discrete astroids with $5$ and $9$ sides.

\begin{figure}[htbp]
	\centering
	\includegraphics[width=2.6in]{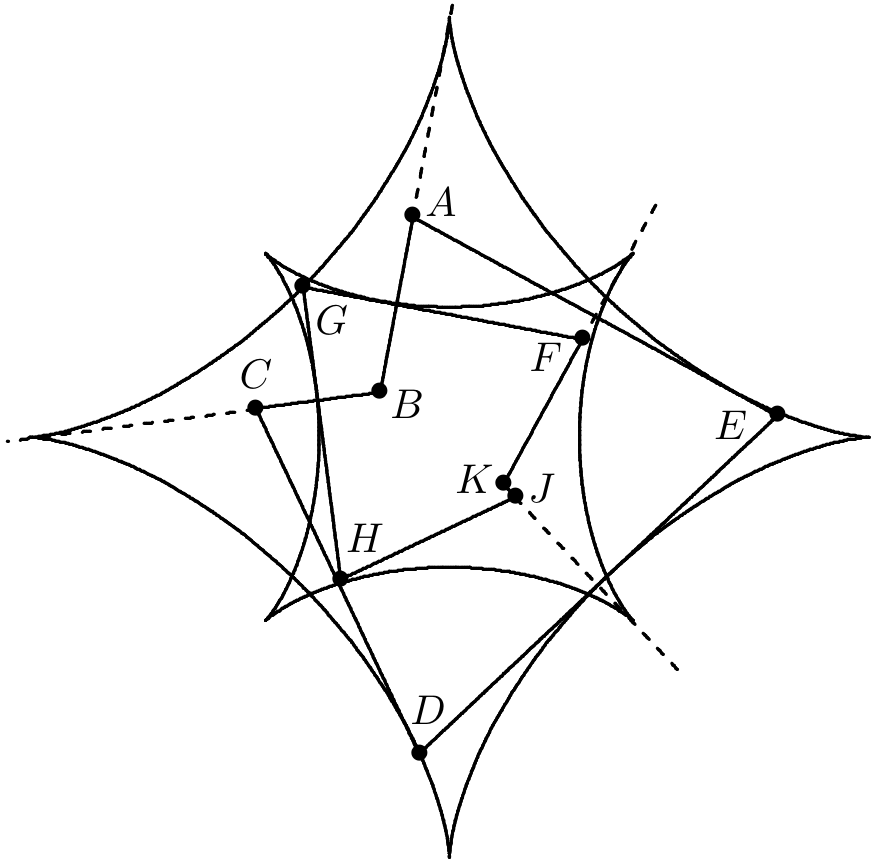}
	\includegraphics[width=2.4in]{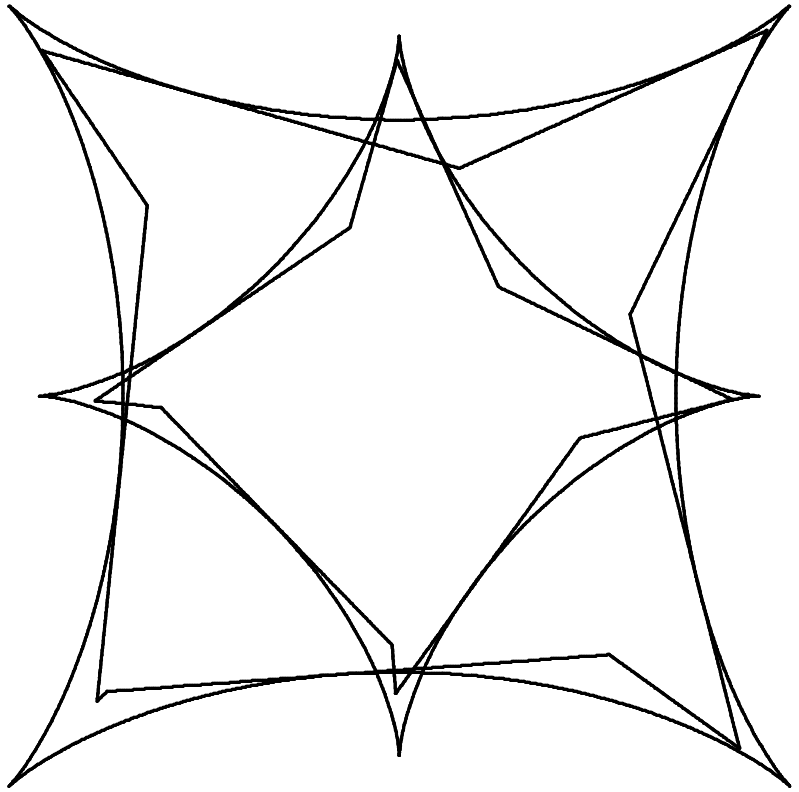}
	\caption{Discrete astroids and their $\Pev$-evolutes.}
	\label{9and5}
\end{figure}

Note that, for $n=5$, one gets 
$$\dfrac{\sin4\pi/5}{\sin2\pi/5} = 2 \cos\dfrac{2\pi}5 = 0,618...<1.$$  
Therefore the $\Pev$-evolutes of the discrete astroids with $5$ sides shrink to a point. The same happens with the hypocycloids of order $m$ with $2m+1$ sides. 

Let us take a closer look at the absolute values of the spectrum of  $\Pev_\theta$. Put $\mu_m = |\lambda_m| = |\lambda_{n-m}|$ for $0 \le m \le n/2$. Then $\mu_0 = 0$ and $\mu_1 = 1$.
\begin{itemize}
\item For $n$ odd, we have $\mu_0 < \mu_{\frac{n-1}2} < \mu_1 < \mu_{\frac{n-3}2} < \mu_2 <\cdots$ with the maximum of $\mu_m$ at \[ m =
\begin{cases}
\dfrac{n-1}4, &\text{for } n \equiv 1 \pmod 4\\
\dfrac{n+1}4, &\text{for } n \equiv 3 \pmod 4.
\end{cases} \]
\item For $n$ even, we have $\mu_0 = \mu_{\frac{n}2} < \mu_1 = \mu_{\frac{n}2-1} < \cdots$ with the maximum of $\mu_m$ at \[ m =
\begin{cases}
\dfrac{n}4, &\text{for } n \equiv 0\pmod 4\\
\dfrac{n\pm 2}4, &\text{for } n \equiv 2 \pmod 4.
\end{cases} \]
\end{itemize}

\begin{proposition}
\label{PsStPres}
The vertex centroids of an equiangular hedgehog and of its $\Pev$-evolute coincide.
\end{proposition}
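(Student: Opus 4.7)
The plan is to reduce to a basis-by-basis check using the discrete Fourier decomposition. The map $p \mapsto \St(\P) - \St(\Pev(\P))$ is linear in the support numbers (for fixed equiangular normals $\alpha_j = 2\pi j/n + \alpha_0$), so it suffices to verify equality of the two vertex centroids on each of the basis hedgehogs $\mathbf{C}_0$, $\mathbf{C}_m$, $\mathbf{S}_m$ ($1 \le m < n/2$), and, when $n$ is even, $\mathbf{C}_{n/2}$.

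For $m \neq 1$ (including $m=0$ and $m = n/2$), Lemma \ref{PseudoSteiner2Origin} places the vertex centroids of $\mathbf{C}_m$ and $\mathbf{S}_m$ at the origin. From the calculation performed inside the proof of Theorem \ref{EvolDHPerp}, $\Pev$ sends $\mathbf{C}_m$ and $\mathbf{S}_m$ to linear combinations of the order-$m$ harmonics in the $\pi/2$-shifted angular basis; in particular, $\Pev(\P)$ is again equiangular and its Fourier expansion with respect to its own normals $\alpha_j + \pi/2$ contains no first-harmonic component. Applying Lemma \ref{PseudoSteiner2Origin} a second time, now to $\Pev(\P)$, shows that its vertex centroid is also at the origin, and hence coincides with that of $\P$.

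The only remaining cases are the first harmonics $\mathbf{C}_1$ and $\mathbf{S}_1$. These are degenerate hedgehogs: all $n$ lines of $a\mathbf{C}_1 + b\mathbf{S}_1$ pass through the single point $(a,b)$, so its vertex centroid is $(a,b)$. Using the equiangular specialization of Theorem \ref{CoordP} (where the diagonal entries $a_j - a_{j+1}$ vanish, leaving only $p^*_j = (p_{j+1}-p_{j-1})/(2\sin(2\pi/n))$), a short trigonometric identity (namely $\cos\frac{2\pi(j+1)}{n} - \cos\frac{2\pi(j-1)}{n} = -2\sin\frac{2\pi j}{n}\sin\frac{2\pi}{n}$, and similarly for the sine) shows that $\Pev(a\mathbf{C}_1 + b\mathbf{S}_1)$ is again the first harmonic $a\mathbf{C}_1 + b\mathbf{S}_1$ written in the rotated basis, which degenerates to the same point $(a,b)$. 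Hence the centroids again coincide, and linearity completes the proof.

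The main conceptual point is that the $\pi/2$ rotation of the normals under $\Pev$ leaves the evolute equiangular, so Lemma \ref{PseudoSteiner2Origin} can be applied to $\Pev(\P)$ in its own Fourier basis. The only mild obstacle is the degenerate first-harmonic case, for which Lemma \ref{PseudoSteiner2Origin} is vacuous and one must verify by direct substitution that $\Pev$ fixes first harmonics up to the change of basis.
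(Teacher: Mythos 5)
Your proof is correct and follows essentially the same route as the paper's: decompose into discrete Fourier harmonics, observe via Lemma \ref{PseudoSteiner2Origin} that only the first-harmonic component contributes to the vertex centroid, note that $\Pev$ preserves the order of harmonics (the computation inside Theorem \ref{EvolDHPerp}), and check directly that the evolute of a degenerate first harmonic passes through the same point. You merely spell out the first-harmonic verification more explicitly than the paper does.
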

\begin{proof}
The vertex centroid of an equiangular hedgehog coincides with the vertex centroid of its first harmonic component, see the proof of Lemma \ref{PseudoSteiner2Origin}. The $\Pev$-evolute sends discrete hypocycloids (which are given by pure harmonics) to discrete hypocycloids of the same order, see the proof of Theorem \ref{EvolDHPerp}. The vertices of a hypocycloid of order $1$ coincide; the vertices of its evolute also lie at the same point. Hence the evolute construction preserves the vertex centroid.
\end{proof}

Now we can address the limiting behavior of the  iterated $\Pev$-evolutes of equiangular hedgehogs.
\begin{enumerate}
\item [$n=3,\,4$] The $\Pev$-evolutes degenerate to a point after the first step.
\item[$n=5\phantom{,\,4}$] An equiangular pentagon becomes a discrete astroid after the first step, and then shrinks to its vertex centroid in such a way that $\Pev^{(k+2)}(\P)$ is homothetic to  $\Pev^{(k)}(\P)$.
\item[$n=6\phantom{,\,4}$]   After the first iteration of $\Pev$, the  $\mathbf{C}_0$ and $\mathbf{C}_3$ terms in the decomposition \eqref{DiscrFourier} disappear. After that the sequence $\Pev^k(\P)$ becomes periodic. Namely, it alternates between two hexagons as shown in Figure \ref{RegHexEvol}. (In fact, the line $l_j$ becomes the line $l_{j+3}$ after two iterations, so the sequence is alternating only if we ignore the marking of the sides.)

 \begin{figure}[htbp]
 \centering
 \includegraphics[width=4.8in]{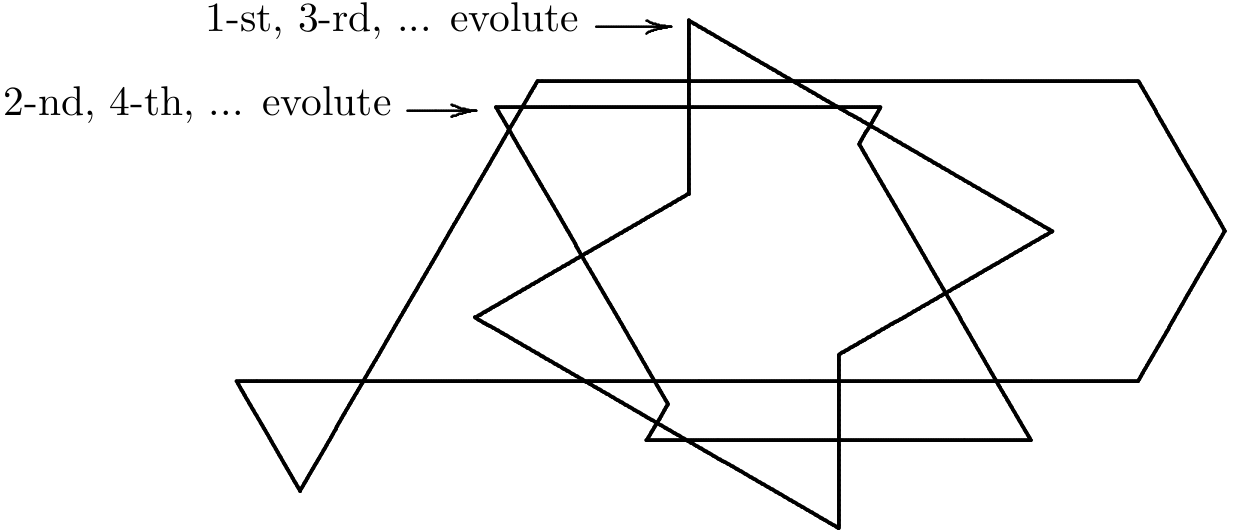}
 \caption{Iterated $\Pev$-evolutes of an equiangular hexagon become periodic after the first step.}
 \label{RegHexEvol}
 \end{figure}
 
 \item[$n\ge 7\phantom{,\,4}$]	The iterated $\Pev$-evolutes of a generic equiangular  hedgehog expand; their shapes tend to the discrete hypocycloids, generically of  order $m$, given by the formulas above. \big(If $n \equiv 2\pmod 4$, then the limiting shapes are linear combinations of the hypocycloids of order $\dfrac{n-2}4$ and $\dfrac{n+2}4$\big).
 
\end{enumerate}  

\subsection{The generic case: a pseudo-Steiner point}
\label{sec:PsSt}
Let us now move on to the case of polygons with generic turning angles. In the generic situation, the classical Steiner point is not preserved by the $\Pev$-transform. The existence of a pseudo-Steiner point (see Definition \ref{dfn:PsSt}) is governed by the spectral properties of the matrix $\Pev_\theta$.

Introduce the following vectors of support numbers:
\[ C=(\cos\alpha_1,\cos\alpha_2,\ldots,\cos\alpha_n),\ S=(\sin\alpha_1,\sin\alpha_2,\ldots,\sin\alpha_n). \]

The polygon $(\alpha, aC+bS)$ consists of lines passing through the point $(a,b)$ (see Lemma \ref{DHypocycl_onepoint}). The parallel translation through vector $(a,b)$ results in the following change of the coordinates of a hedgehog:
\[ \mathcal{T}_{(a,b)}: (\alpha,p) \mapsto (\alpha, p + aC + bS). \]

\begin{proposition}
\label{Pinvariant}
The linear subspace $\lin\{C, S\}$ is $\Pev_\theta$-invariant; the complexification of $\Pev_\theta$ restricted to $\lin\{C, S\}$ has $\pm i$ as eigenvalues.
\end{proposition}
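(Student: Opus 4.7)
My plan is to prove both invariance and the eigenvalue computation simultaneously by identifying what polygons the support numbers $aC + bS$ represent, and then reading off the action of $\Pev_\theta$ geometrically. The key observation is that the line $(\alpha_j, a\cos\alpha_j + b\sin\alpha_j)$ has equation $x\cos\alpha_j + y\sin\alpha_j = a\cos\alpha_j + b\sin\alpha_j$, so it passes through the point $(a,b)$; equivalently, $aC + bS$ is the vector of support numbers (with respect to the given angles $\alpha_j$) of the translation vector $(a,b)$ viewed as a ``degenerate'' polygon. This is consistent with Lemma \ref{DHypocycl_onepoint} and with the parallel translation formula $\mathcal{T}_{(a,b)}$ just introduced.

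For the action of $\Pev_\theta$, I would compute it geometrically. Since every line $l_j$ of the polygon $(\alpha, aC+bS)$ passes through $(a,b)$, every vertex $P_{j+\frac12} = l_j \cap l_{j+1}$ equals $(a,b)$, every midpoint of a side is $(a,b)$, and hence every perpendicular bisector $l^*_j$ is the line through $(a,b)$ perpendicular to $l_j$. The evolute line $l^*_j$ therefore has coorienting angle $\alpha_j + \frac{\pi}2$ and passes through $(a,b)$, so — regarded as a vector of $n$ support numbers indexed by $j$ (which is what $\Pev_\theta$ outputs in the presentation of Theorem \ref{CoordP}) — its signed distances to the origin are $a\cos\bigl(\alpha_j+\frac{\pi}2\bigr) + b\sin\bigl(\alpha_j+\frac{\pi}2\bigr) = -a\sin\alpha_j + b\cos\alpha_j$. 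This shows $\Pev_\theta(aC+bS) = bC - aS$, which proves invariance and gives
\[
\Pev_\theta(C) = -S, \qquad \Pev_\theta(S) = C.
\]
The matrix of $\Pev_\theta|_{\lin\{C,S\}}$ in the basis $(C,S)$ is $\left(\begin{smallmatrix} 0 & 1 \\ -1 & 0 \end{smallmatrix}\right)$, whose characteristic polynomial is $\lambda^2 + 1$, so the eigenvalues are $\pm i$, with eigenvectors $C \mp iS$ in the complexification.

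I do not anticipate a real obstacle. The only thing worth double-checking is that the geometric reasoning in the degenerate case matches the algebraic formula of Lemma \ref{ParamPBisect}; this is a short trigonometric identity check (substituting $p_i = \cos\alpha_i$ into the stated formula, using $\alpha_{i\pm 1} = \alpha_i \pm \theta_{i\pm\frac12}$, expanding and simplifying, yields $p^*_i = -\sin\alpha_i$), which I would include as a one-line verification or relegate to the remark that the formulas extend by continuity from the generic (non-degenerate) case on which both derivations manifestly agree.
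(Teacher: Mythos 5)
Your proposal is correct and follows essentially the same route as the paper: the paper's proof also rests on the identities $\Pev_{\theta}(C)=-S$, $\Pev_{\theta}(S)=C$ (its equation \eqref{invplane}), deduced from the fact that $\Pev$ commutes with parallel translations, and then reads off the eigenvalues $\pm i$. Your degenerate-polygon argument (all lines through $(a,b)$, hence all perpendicular bisectors through $(a,b)$ with angles shifted by $\pi/2$) is just an explicit, geometric version of the ``straightforward computation'' the paper leaves unstated, and your trigonometric check against Lemma \ref{ParamPBisect} is a welcome confirmation.
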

\begin{proof}
This follows from the fact that  $\Pev$  commutes with the parallel translation $\mathcal{T}_{(a,b)}$. By a straightforward computation one gets
\begin{equation}
\label{invplane}
\Pev_{\theta} (C) = -S,\ \Pev_{\theta} (S) = C.
\end{equation}
This completes the proof.
\end{proof}

The following theorem provides a necessary and sufficient condition for the existence and uniqueness of a pseudo-Steiner point for $\Pev$-evolute.

\begin{theorem}
\label{thm:PsSt}
A pseudo-Steiner point for $\P$  exists if and only if the $\Pev_\theta$~-~invariant subspace $\lin\{C, S\} \subset \R^n$ has a $\Pev_\theta$-invariant complement:
\[ \R^n = \lin\{C, S\} \oplus W, \quad \Pev_\theta(W) \subset W. \]
If, in addition, the complexified restriction of $\Pev_\theta$ to $W$ doesn't have $\pm i$ as eigenvalues, then the pseudo-Steiner point is unique.
\end{theorem}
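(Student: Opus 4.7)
The plan is to translate the axioms for a pseudo-Steiner point into a single linear-algebraic equation on a map $L\colon\R^n\to\R^2$, and then read existence and uniqueness directly from the structure of $\Pev_\theta$.

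First I would fix the angular coordinates $\alpha$ and write $\PS(\alpha,p)=L(p)$. Linearity in $p$ together with translation equivariance applied at $p=0$ forces $L$ to be a real linear map with $L(C)=(1,0)$ and $L(S)=(0,1)$. Rotation equivariance specifies how $L$ transforms when $\alpha$ is rotated; combining this with the $\Pev$-invariance axiom (recall that $\Pev$ shifts $\alpha$ by $\pi/2$ while acting on $p$ by $\Pev_\theta$) produces the intertwining relation
\[
L\circ\Pev_\theta \;=\; R_{-\pi/2}\circ L.
\]
Identifying $\R^2$ with $\C$ so that $R_{-\pi/2}$ becomes multiplication by $-i$, the task reduces to finding a real linear $L\colon\R^n\to\C$ with $L(C)=1$, $L(S)=i$, and $L\circ\Pev_\theta=-iL$.

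Next I would prove the existence equivalence. In the forward direction, set $W:=\ker L$: the intertwining relation makes $W$ invariant under $\Pev_\theta$, while the prescribed values $L(C), L(S)$ force $L$ to be injective on $\lin\{C,S\}$, hence $\dim W=n-2$ and $\R^n=\lin\{C,S\}\oplus W$. Conversely, given such an invariant complement $W$, define $L$ to be the projection along $W$ onto $\lin\{C,S\}$ followed by $aC+bS\mapsto a+bi$; the intertwining condition is automatic on $W$, and on $\lin\{C,S\}$ it reduces to the identities $\Pev_\theta(C)=-S$ and $\Pev_\theta(S)=C$ supplied by Proposition~\ref{Pinvariant}. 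The pseudo-Steiner point then extends to rotations and to iterated $\Pev$-evolutes of $\P$ through rotation equivariance.

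For uniqueness, I would examine the difference $M:=L-L'$ of two candidates; it obeys the same intertwining relation and vanishes on $\lin\{C,S\}$. Complexifying and decomposing $\C^n$ into generalized eigenspaces of $\Pev_\theta$, an induction on Jordan-block depth shows that $M$ vanishes on the generalized $\lambda$-eigenspace for every $\lambda\neq -i$, because at each step the relation $\lambda M(v)=-iM(v)$ forces $M(v)=0$ whenever $\lambda+i\neq0$. Since $\Pev_\theta$ is real, its $+i$- and $(-i)$-generalized eigenspaces are complex conjugate, so the hypothesis that neither $+i$ nor $-i$ lies in the spectrum of $\Pev_\theta|_W$ confines the generalized $(-i)$-eigenspace of $\Pev_\theta$ to $\lin\{C,S\}_{\C}$ and reduces it to $\C(C-iS)$; then $M(C-iS)=M(C)-iM(S)=0$ yields $M\equiv 0$. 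The main obstacle is precisely this uniqueness step: one must track possible Jordan blocks at every eigenvalue and invoke the real structure (which pairs $+i$ with $-i$) to conclude that excluding them from the spectrum of $\Pev_\theta|_W$ collapses the generalized $(-i)$-eigenspace onto $\C(C-iS)$. The existence half, by contrast, is essentially bookkeeping once the intertwining equation has been extracted.
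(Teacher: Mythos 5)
Your proposal is correct and follows essentially the same route as the paper: both reduce the axioms to a linear map $L\colon\R^n\to\R^2$ with $L(C)=(1,0)$, $L(S)=(0,1)$ and the intertwining relation $L\circ\Pev_\theta=R_{-\pi/2}\circ L$, identify $\ker L$ with the invariant complement $W$, and reverse the construction for existence. Your generalized-eigenspace argument for uniqueness actually spells out a step the paper leaves implicit (it only remarks that the correspondence ``implies the uniqueness statement''), and it is a correct way to do so.
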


\begin{proof}
Assume that there is a map $\PS$ with the properties stated in Definition \ref{dfn:PsSt}. By  linearity, we can write
\[ \PS(\alpha, p) = \Pi_\alpha p \]
where $\Pi_\alpha \colon \R^n \to \R^2$ is a linear homomorphism. The translation equivariance implies
\[ \Pi_\alpha(aC+ bS) = (a,b). \]
In particular, $\Pi_\alpha$ is an epimorphism. Denote the kernel of $\Pi_\alpha$ by $W_\alpha$. We have $\R^n = \lin\{C, S\} \oplus W_\alpha$, and we claim that $W_\alpha$ is $\Pev_\theta$-invariant. 

Indeed, denoting by ${R}_{\pi/2}$ the rotation by $\pi/2$, we have
\[ \Pi_\alpha p = \PS(\alpha, p) = \PS(\alpha^*, p^*) = {R}_{\pi/2} (\PS(\alpha, \Pev_\theta p)) = {R}_{\pi/2} \Pi_\alpha \Pev_\theta p. \]
Hence if $p \in W_\alpha$, then ${R}_{\pi/2} \Pi_\alpha \Pev_\theta p = 0$, which implies $\Pev_\theta p \in W_\alpha$.
	
The above argument shows that if a pseudo-Steiner point exists, then it is obtained by projecting $p \in \R^n$ to $\lin\{C, S\}$ along a $\Pev_\theta$-invariant complement, and then identifying $\lin\{C, S\}$ with $\R^2$ by $C \mapsto (1,0)$, $S \mapsto (0,1)$. Vice versa, this construction always yields a pseudo-Steiner point. This implies the existence and the uniqueness statements of the theorem.
\end{proof}

\begin{remark}
Compare this to the definition of the classical Steiner point of convex $d$-dimensional bodies: it is obtained by projecting the support function to the space of the spherical harmonics of order $1$, and then taking the corresponding point in $\R^d$.
\end{remark}

\begin{corollary}
The set of polygons which does not have pseudo-Steiner point has measure zero.
\end{corollary}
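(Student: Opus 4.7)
The plan is to reduce the existence of a pseudo-Steiner point to a real-analytic non-vanishing condition on the angle data $\alpha=(\alpha_1,\ldots,\alpha_n)$, and then invoke the standard fact that the zero set of a non-identically-zero real-analytic function on a connected analytic manifold has Lebesgue measure zero.

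First I would observe that a sufficient condition for the existence of a pseudo-Steiner point is that $i$ (and hence $-i$, by complex conjugation of the real matrix) be a simple eigenvalue of $\Pev_\theta$. Indeed, when $i$ is simple, the induced map $\overline{\Pev_\theta}$ on the quotient $\R^n/\lin\{C,S\}$ has spectrum disjoint from $\{\pm i\}$; writing $\Pev_\theta$ in block form $\bigl(\begin{smallmatrix} A & B \\ 0 & D \end{smallmatrix}\bigr)$ with respect to any non-invariant complement of $\lin\{C, S\}$, the Sylvester equation $A\phi - \phi D = -B$ is uniquely solvable, producing a $\Pev_\theta$-invariant complement of $\lin\{C, S\}$ and hence, by Theorem \ref{thm:PsSt}, a pseudo-Steiner point.

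Next I would factor the characteristic polynomial as $\chi_{\Pev_\theta}(\lambda, \alpha) = (\lambda^2 + 1)\, q(\lambda, \alpha)$, using Proposition \ref{Pinvariant}; the coefficients of $q$ are rational trigonometric functions of $\alpha$, hence real-analytic. The condition that $i$ be a simple eigenvalue is exactly $q(i, \alpha)\ne 0$. To show that the complex-valued real-analytic function $\alpha \mapsto q(i, \alpha)$ on $(\R/2\pi\Z)^n$ is not identically zero, I would exhibit one $\alpha$ where it is nonzero. For $n$ odd and for $n = 4$, the equiangular configuration $\alpha_j = 2\pi j / n$ works: by Lemma \ref{spectrum}, the eigenvalues $\lambda_m = i\sin(2\pi m/n)/\sin(2\pi/n)$ show that $i$ is simple. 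For $n \geq 6$ even, the equiangular case has $i$ with algebraic multiplicity two; however, $C + iS$ is an eigenvector of $\Pev_\theta$ at eigenvalue exactly $i$ for every $\alpha$ (by Proposition \ref{Pinvariant}), so first-order eigenvalue perturbation theory applied to a generic small perturbation of the equiangular polygon splits the $2$-dimensional $i$-eigenspace: one perturbed eigenvalue stays pinned at $i$ (following $C+iS$) while the other moves away generically.

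From the non-vanishing of $q(i, \cdot)$ at some point of the connected manifold $(\R/2\pi\Z)^n$, the real and imaginary parts of $q(i, \cdot)$ cannot both vanish identically, so their common zero set has Lebesgue measure zero. The set of $\alpha$ for which a pseudo-Steiner point exists contains the complement of this zero set, and since this existence depends only on $\alpha$ (not on the support numbers), Fubini yields that the set of polygons without a pseudo-Steiner point has measure zero. The main obstacle is the verification of splitting for $n \geq 6$ even: one must check that the restriction of the first-order perturbation matrix to the $2$-dimensional $i$-eigenspace of the equiangular $\Pev_\theta$ is generically not a scalar multiple of the identity, which reduces to an explicit (but routine) computation in the basis of discrete harmonics.
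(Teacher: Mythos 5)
The paper states this corollary without proof, so there is no official argument to compare against; judged on its own terms, your strategy is the natural one and is correct in outline: the bad set is contained in $\{\alpha : q(i,\alpha)=0\}$ where $\chi_{\Pev_\theta}(\lambda)=(\lambda^2+1)\,q(\lambda,\alpha)$, the Sylvester-equation step correctly shows that simplicity of $i$ yields a $\Pev_\theta$-invariant complement of $\lin\{C,S\}$ and hence a pseudo-Steiner point via Theorem \ref{thm:PsSt}, and the equiangular witnesses for $n$ odd and $n=4$ are verified by Lemma \ref{spectrum}. One small repair: the entries of $\Pev_\theta$ blow up where consecutive sides are parallel, so $q(i,\cdot)$ is analytic only on an open, \emph{disconnected} subset of the torus; you should first multiply by a suitable power of $\prod_j\sin\theta_{j-\frac12}$ to get a trigonometric polynomial on all of $(\R/2\pi\Z)^n$ before invoking the measure-zero principle for real-analytic functions on a connected manifold.

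The genuine gap is the case of even $n\ge 6$, and the ``routine'' computation you defer in fact comes out against you: the first-order perturbation of the double eigenvalue $i$ at the equiangular polygon vanishes for \emph{every} admissible perturbation direction. Writing $a=\cot(2\pi/n)$, $b=\csc(2\pi/n)$ and differentiating \eqref{eqnA} at the equiangular point gives
\[
\frac{\partial\Pev_\theta}{\partial\theta_{k-\frac12}}=\frac12\left(b^2e_{k-1,k-1}-b^2e_{k,k}-ab\,e_{k-1,k}+ab\,e_{k,k-1}\right),
\]
where $e_{pq}$ denotes the matrix unit. For the orthogonal eigenvectors $v_m=(\omega^{jm})_j$, $\omega=e^{2\pi i/n}$, one computes $\bar v_m^\top\,\partial\Pev_\theta/\partial\theta_{k-\frac12}\,v_m=-iab\sin(2\pi m/n)$, which is independent of $k$; since any deformation of a closed polygon satisfies $\sum_k\dot\theta_{k-\frac12}=0$ (the total turning is locally constant), the trace of the projected perturbation on the $i$-eigenspace $\lin\{v_1,v_{n/2-1}\}$ is identically zero. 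As one branch is pinned at $i$ exactly (by Proposition \ref{Pinvariant} and \eqref{invplane}), the other branch then also has vanishing first derivative, so the double eigenvalue does not split to first order. (Note also that ``not a scalar multiple of the identity'' is not the right splitting criterion: a non-scalar nilpotent projected perturbation still produces no first-order eigenvalue splitting.) Consequently, for even $n\ge 6$ you still owe a witness with $q(i,\alpha)\ne 0$, obtained either from a second-order analysis or from a different explicit polygon; the corollary is surely true (the discussion in Section \ref{limitbehavior} presupposes generically simple eigenvalues), but the proposal as written does not establish it in this case.
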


\begin{corollary}
For odd-gons close to equiangular ones, the pseudo-Steiner point exists and is unique.
\end{corollary}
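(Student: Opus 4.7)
The plan is to apply Theorem~\ref{thm:PsSt}, so the goal reduces to establishing two facts for every odd $n$-gon with angles $\alpha$ sufficiently close to the equiangular configuration $\alpha^{0}=\bigl(\tfrac{2\pi}{n},\tfrac{4\pi}{n},\dots,2\pi\bigr)$: first, that $\lin\{C,S\}\subset\R^{n}$ admits a $\Pev_{\theta}$-invariant complement $W$; and second, that $\Pev_{\theta}|_{W}$ has no eigenvalue equal to $\pm i$.

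First I would do the equiangular case. By Lemma~\ref{spectrum}, the eigenvalues of $\Pev_{\theta^{0}}$ are $\lambda_{m}=i\sin(2\pi m/n)/\sin(2\pi/n)$ with $m=0,\dots,n-1$, and for $n$ odd the equation $|\sin(2\pi m/n)|=|\sin(2\pi/n)|$ has only the solutions $m=1$ and $m=n-1$. Thus $\pm i$ appear as eigenvalues with algebraic multiplicity exactly one each. Moreover, the identities $\Pev_{\theta}(C)=-S$ and $\Pev_{\theta}(S)=C$ from Proposition~\ref{Pinvariant} give $\Pev_{\theta}(C\pm iS)=\pm i(C\pm iS)$, so the two-dimensional space $\lin\{C,S\}$ already accounts for the full eigenspaces of $\pm i$. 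Consequently, at $\alpha^{0}$ the characteristic polynomial factors as $\chi_{\alpha^{0}}(x)=(x^{2}+1)\,q_{\alpha^{0}}(x)$ with $q_{\alpha^{0}}(\pm i)\neq 0$, and the complement $W^{0}=\ker q_{\alpha^{0}}(\Pev_{\theta^{0}})$ satisfies the hypotheses of Theorem~\ref{thm:PsSt}.

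Next I would extend this by a continuity/perturbation argument. For every $\alpha$, Proposition~\ref{Pinvariant} shows that $(x^{2}+1)$ divides $\chi_{\alpha}(x)$, so we may write $\chi_{\alpha}(x)=(x^{2}+1)\,q_{\alpha}(x)$, with $q_{\alpha}$ depending continuously on the entries of $\Pev_{\theta}$, and hence continuously on $\alpha$. Since $q_{\alpha^{0}}(\pm i)\neq 0$, the same holds in a neighbourhood of $\alpha^{0}$. For such $\alpha$, the polynomials $x^{2}+1$ and $q_{\alpha}(x)$ are coprime, so the primary decomposition yields
\[
\R^{n}=\ker(\Pev_{\theta}^{2}+I)\oplus\ker q_{\alpha}(\Pev_{\theta}),
\]
the first summand being two-dimensional (as $\pm i$ are simple roots of $\chi_{\alpha}$) and containing $\lin\{C,S\}$, hence equal to it. Therefore $W=\ker q_{\alpha}(\Pev_{\theta})$ is a $\Pev_{\theta}$-invariant complement to $\lin\{C,S\}$, and the eigenvalues of $\Pev_{\theta}|_{W}$ are precisely the roots of $q_{\alpha}$, none of which is $\pm i$. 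Theorem~\ref{thm:PsSt} then gives both existence and uniqueness of the pseudo-Steiner point.

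The step I expect to require the most care is the verification, at the equiangular point, that the $(\pm i)$-eigenspaces of $\Pev_{\theta^{0}}$ coincide exactly with $\lin\{C,S\}$ rather than merely contain it; this is what lets me conclude that $q_{\alpha^{0}}(\pm i)\neq 0$ and turns the perturbation argument into an open condition. The restriction to $n$ odd enters precisely here, because for $n$ even the eigenvalue $0$ is repeated and further collisions of $|\lambda_{m}|=1$ can in principle occur; for $n$ odd, Lemma~\ref{spectrum} rules them out.
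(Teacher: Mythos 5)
Your proposal is correct and follows essentially the same route as the paper: compute the equiangular spectrum via Lemma~\ref{spectrum}, observe that for odd $n$ the eigenvalues $\pm i$ are simple and their eigenspaces are exactly $\lin\{C,S\}$, and then perturb by continuity to get a unique $\Pev_\theta$-invariant complement satisfying the hypotheses of Theorem~\ref{thm:PsSt}. The paper's version is terser (it just notes all eigenvalues are distinct at the equiangular point and remain so nearby), while your factorization $\chi_\alpha=(x^2+1)q_\alpha$ with the primary decomposition is a slightly more careful writing of the same idea, needing only that $\pm i$ stay simple.
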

\begin{proof}
The spectrum of $\Pev_\theta$ depends continuously on $\theta$. Since for odd $n$ and $\theta_j =2\pi/n$ the matrix $\Pev_\theta$ is diagonalizable (over $\C$) with different eigenvalues, the eigenvalues remain different for ``almost equiangular'' odd-gons. In particular, the space $\lin\{C, S\}$ has a unique $\Pev_\theta$-invariant complement.
\end{proof}

The following proposition shows that if a pseudo-Steiner point is not defined, then there is a polygon that ``drifts'' under the evolute transformation.

\begin{proposition}
\label{drifting}
Denote by  $\P_\theta$ the set of the cooriented polygons  with fixed turning angles $\theta$. Then either there exists a polygon $\P\in \P_\theta$ such that its second $\Pev$-evolute is, up to the orientation 
reversing of all the lines,  a translate $\mathcal{T}_{a,b}(\P)$ of $\P$ by a non-zero vector, or there exists a pseudo-Steiner point for all $\P\in \P_\theta$. 
\end{proposition}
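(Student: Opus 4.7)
The plan is to reformulate both alternatives as conditions on the single operator $N := I + \Pev_\theta^2$ acting on $\R^n$, and then to apply Theorem \ref{thm:PsSt}.

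First I would translate the drifting condition into linear algebra. Applying $\Pev$ twice shifts $\alpha_j \mapsto \alpha_j + \pi$ and sends $p \mapsto \Pev_\theta^2 p$; reversing the coorientations of all sides then identifies $\Pev^2(\P)$ with $(\alpha, -\Pev_\theta^2 p)$. Since $\mathcal{T}_{(a,b)}(\P) = (\alpha, p + aC + bS)$, the equation $\Pev^2(\P) = \mathcal{T}_{(a,b)}(\P)$ up to coorientation reversal becomes $N p = -(aC + bS)$. Formula \eqref{invplane} gives $\Pev_\theta^2 C = -C$ and $\Pev_\theta^2 S = -S$, so $V := \lin\{C,S\} \subset \ker N$. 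Hence a drifting polygon (with $(a,b) \neq 0$) exists if and only if $V \cap \im N \neq 0$.

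Next, the key step is to show that $V \cap \im N = 0$ is equivalent to $V$ having a $\Pev_\theta$-invariant complement in $\R^n$; by Theorem \ref{thm:PsSt} this is precisely the condition for the existence of a pseudo-Steiner point. I would work over $\C$, with $v := C + iS$ a $\Pev_\theta$-eigenvector for $i$, so that $V^\C = \C v \oplus \C \bar v$. Using the factorization $N = (\Pev_\theta - iI)(\Pev_\theta + iI)$ and the identity $(\Pev_\theta + iI)v = 2iv$, one checks that $v \in \im N \iff v \in \im(\Pev_\theta - iI)$, and, since $\Pev_\theta - iI$ is invertible on every generalized eigenspace $M_\lambda$ with $\lambda \ne i$, iff $v \in \im(\Pev_\theta - iI)|_{M_i}$. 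The Jordan normal form on $M_i$ then gives the decisive equivalence: the invariant line $\C v \subset \ker(\Pev_\theta - iI)$ is a $\Pev_\theta$-invariant direct summand of $M_i$ iff $v \notin \im(\Pev_\theta - iI)|_{M_i}$. The analogous statement at $-i$ holds for $\bar v$ by complex conjugation, since $N$ is real; conjugating a complex complement of $\C v$ in $M_i$ produces a complement of $\C\bar v$ in $M_{-i}$, and assembling these with the remaining generalized eigenspaces yields a conjugation-invariant $\Pev_\theta$-invariant complement of $V^\C$ in $\C^n$, whose real part is the sought real complement of $V$ in $\R^n$. Combining the steps, $V$ has a $\Pev_\theta$-invariant complement iff $v \notin \im N$ iff $V \cap \im N = 0$, and the dichotomy follows.

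The main technical obstacle is the Jordan-form criterion ``an invariant eigenline is a direct summand of its generalized eigenspace iff its generator is not in the image of $T - \lambda I$'' together with the bookkeeping needed to descend the resulting invariant decomposition from $\C^n$ back to $\R^n$; both are standard facts from linear algebra, but must be spelled out carefully so that the real structure is preserved throughout.
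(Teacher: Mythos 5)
Your argument is correct and rests on the same two pillars as the paper's proof: Theorem \ref{thm:PsSt} reducing the pseudo-Steiner question to the existence of a $\Pev_\theta$-invariant complement of $\lin\{C,S\}$, and the Jordan-block structure of $\Pev_\theta$ at the eigenvalues $\pm i$ producing the drift. The paper merely carries out the Jordan-chain step in real coordinates, exhibiting $p,q$ with $\Pev_\theta p = q+S$, $\Pev_\theta q = -p+C$ and computing $\Pev_\theta^2 p = -p+2C$ directly, which is the real form of your statement that $v=C+iS$ lies in $\im(\Pev_\theta - iI)|_{M_i}$.
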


\begin{proof}
If there is a drifting polygon, then there is no pseudo-Steiner point, since the translation equivariance contradicts  $\Pev$-invariance.
	
If there is no pseudo-Steiner point then, by Theorem \ref{thm:PsSt}, $\lin\{C, S\}$ has no $\Pev_\theta$-invariant complement. This means that $\lin\{C, S\}$ is at the head of a Jordan cell corresponding to the complex eigenvalues $\pm i$. It follows that there are vectors $p, q \in \R^n$ such that
\[ \Pev_\theta p = q + S, \quad \Pev_\theta q = -p + C. \]
But then
\[ \Pev_\theta^2 p = \Pev_\theta(q+S) = (-p+C) + C = -p + 2C.\]
Hence the second evolute of $(\alpha, p)$ is $(\alpha + \pi, -p + 2C)$ which is the polygon $(\alpha, p)$ with reversed orientations of the sides, translated by the vector $(-2, 0)$.
\end{proof}

At the moment, we do not have examples of polygons with no pseudo-Steiner point.

We have seen that, for equiangular even-gons, the eigenvalues $\pm i$ are double. By the above arguments, the eigenvalues $\pm i$ produce polygons equal to their second $\Pev$-evolutes up to the side reorientations. The following examples exhibit non-equiangular even-gons with the same property. 

\begin{figure}[hbtp]
\centering
\includegraphics[height=1.7in]{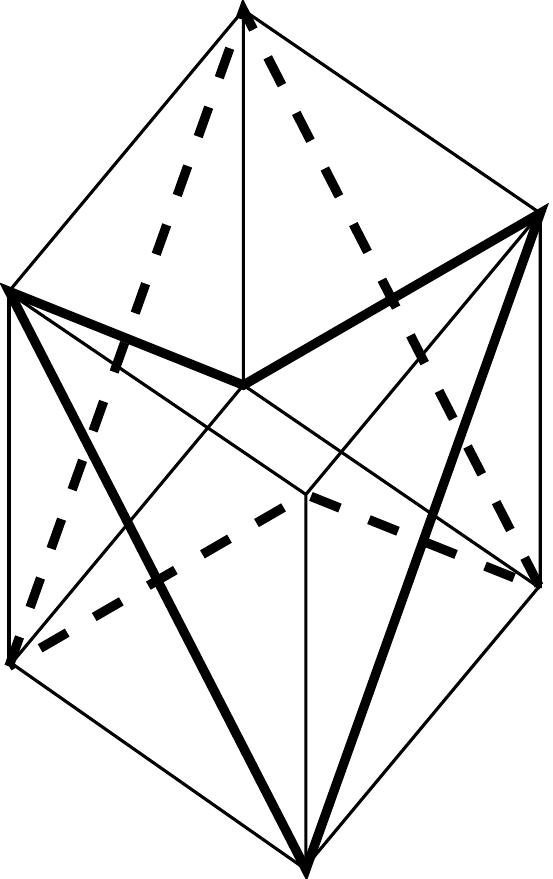} \hspace{0.2 \textwidth}
\includegraphics[height=1.7in]{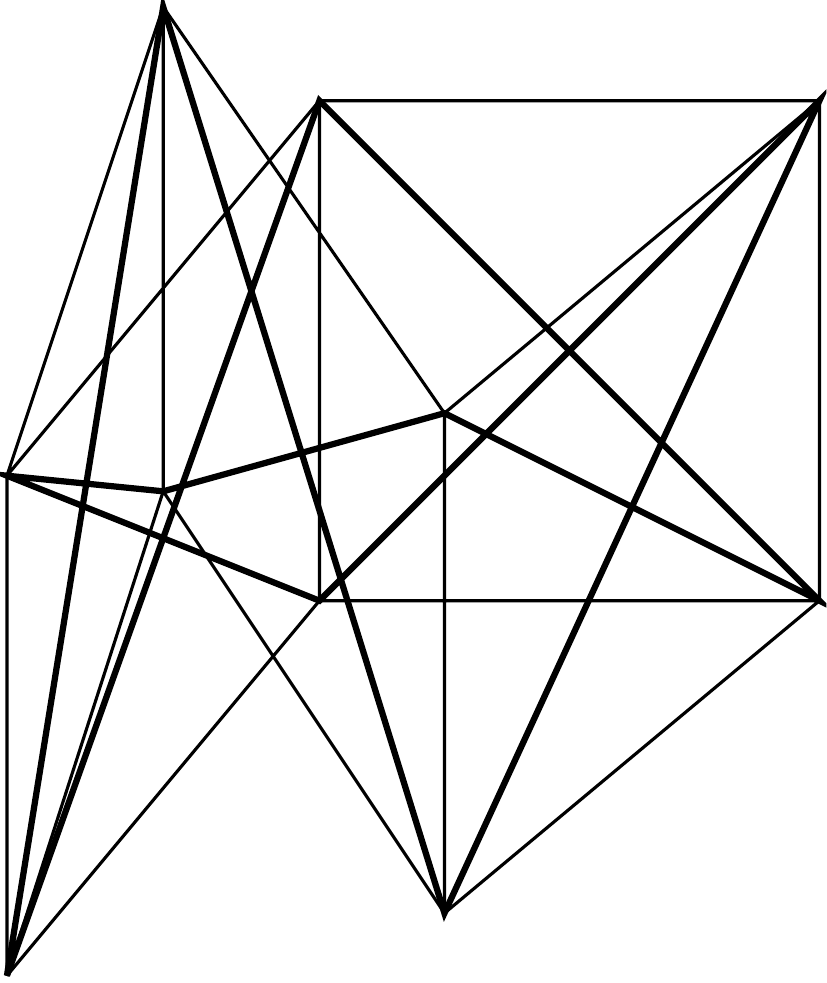}
\caption{Left: two polygons, one in solid and another in dashed line, that are each other's $\Pev$-evolutes. Right: the diagonals of the rhombi form a decagon that is its own $\Pev$-evolute.}
\label{self}
\end{figure}

\begin{example}
\label{pairs}
{\rm One can construct a pair of polygons such that each one is the $\Pev$-evolute of the other one. One can also construct a polygon that is its own $\Pev$-evolute. For this, one needs to construct a closed chain of rhombi in which every two consecutive rhombi share a side, and all these sides are parallel. If the number of rhombi in the chain is even then their diagonals form two polygons that are each other's $\Pev$-evolutes, and if the number of rhombi is odd then the diagonals form a polygon that is its own $\Pev$-evolute. See Figure \ref{self}.}
\end{example}

\subsection{Small values of $n$}
\label{small}

Let us describe the behavior of the $\Pev$-evolutes for  polygons with small number of vertices.

\subsubsection{The case $n=3$}\label{small3}  This case is trivial. The first $\Pev$-evolute degenerates to a point, the circumcenter of the triangle, which is at the same time the pseudo-Steiner point of the triangle. Note that the classical Steiner point is usually different from the circumcenter.

\subsubsection{The case $n=4$}\label{small4}This case is well understood: if a quadrilateral $\P$ is inscribed then its $\Pev$-evolute is a point, the center of the circle; otherwise $\Pev^2(\P)$ is homothetic to $\P$, with the similarity coefficient depending on the angles, but not on the lengths of the sides of the quadrilateral; see \cite{Be,Gr1,Gr2,Ki,La,RT,Sh} for a detailed study of this problem. The center of homothety is the pseudo-Steiner point of the quadrilateral, and again it is in general different from the Steiner point. Also see \cite{Tsu} for higher-dimensional and non-Euclidean versions of these results. 

We give a proof using above developed machinery.

\begin{proposition}
\label{Tsukerman} Let $\P$ be a quadrilateral. Then $\Pev^2(\P)$ is homothetic to $\P$. 
\end{proposition}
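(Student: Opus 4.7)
The key observation is that two iterations of $\Pev$ shift each angle $\alpha_j$ by $\pi$, so the sides of $\Pev^2(\P)$ are parallel to those of $\P$ (with reversed coorientations). The claim therefore reduces to linear algebra on the $4$-dimensional space of support vectors $p \in \R^4$ for fixed turning angles. My plan is to show that $\Pev_\theta^2$ acts, modulo the $2$-plane $\lin\{C,S\}$ of ``translation vectors,'' as a scalar, which is exactly what encodes a homothety.

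I would begin by determining the spectrum of the matrix $\Pev_\theta$ from Theorem \ref{CoordP}. By Proposition \ref{Pinvariant}, the plane $\lin\{C,S\}$ is $\Pev_\theta$-invariant and the complexified restriction has eigenvalues $\pm i$. The diagonal of $\Pev_\theta$ is a telescoping sum, so $\tr \Pev_\theta = 0$; hence the remaining two eigenvalues sum to zero and can be written as $\pm \lambda$ for some $\lambda \in \C$. The characteristic polynomial is therefore $(x^2+1)(x^2-\lambda^2)$, so by Cayley--Hamilton
\[
(\Pev_\theta^2 + I)(\Pev_\theta^2 - \lambda^2 I) = 0.
\]

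Consequently, for every $p \in \R^4$, the vector $(\Pev_\theta^2 - \lambda^2 I)p$ lies in $\ker(\Pev_\theta^2 + I)$. In the generic case $\lambda^2 \ne -1$, the algebraic multiplicity of $-1$ as an eigenvalue of $\Pev_\theta^2$ equals $2$; since $\lin\{C,S\}$ is a $2$-dimensional subspace of $\ker(\Pev_\theta^2 + I)$, the two coincide. Hence $\Pev_\theta^2 p = \lambda^2 p + \mu C + \nu S$ for scalars $\mu, \nu$ depending on $p$. Geometrically this means that $\Pev^2(\P)$ arises from $\P$ by the homothety with ratio $-\lambda^2$ (the minus absorbs the $\pi$-shift of angles) centered at $-(\mu, \nu)/(1+\lambda^2)$, which is precisely the claim. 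Note that when $\lambda = 0$ (the inscribed case) the homothety degenerates to the constant map to the circumcenter.

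The main obstacle is the degenerate situation $\lambda^2 = -1$, where Cayley--Hamilton only yields $(\Pev_\theta^2 + I)^2 = 0$ and the above kernel argument fails. Either $\Pev_\theta^2 = -I$, in which case $\Pev^2(\P) = \P$ (identity homothety), or $\Pev_\theta$ has a genuine Jordan block over $\lin\{C,S\}$, in which case $\Pev^2(\P)$ differs from $\P$ by a nonzero parallel translation, exactly as in Proposition \ref{drifting}. I would handle this either by arguing that translations are degenerate homotheties whose center has escaped to infinity (consistent with the blow-up of $-(\mu, \nu)/(1+\lambda^2)$ as $\lambda^2 \to -1$), or by checking directly that for $n=4$ the Jordan block situation actually occurs only on a codimension-positive set of turning angles and invoking continuity.
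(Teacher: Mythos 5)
Your argument is correct and is essentially the paper's own proof in different packaging: both rest on the $\Pev_\theta$-invariance of $\lin\{C,S\}$ with eigenvalues $\pm i$ and on the telescoping identity $\tr \Pev_\theta=0$, the only difference being that you apply Cayley--Hamilton to the full $4\times 4$ matrix while the paper applies the trace-zero criterion to the induced $2\times 2$ map on the quotient $U/\lin\{C,S\}$. The degenerate case $\lambda^2=-1$ that you flag (dilation ratio $1$, where the ``homothety'' could a priori degenerate to a nonzero translation, cf.\ Proposition \ref{drifting}) is silently passed over in the paper's proof as well, so your explicit discussion of it is, if anything, more careful than the original.
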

\begin{proof}
Let $C$ and $S$ be as in  Section \ref{sec:PsSt}. Let $U$ be the 4-dimensional $p$-space, and $W \subset U$ be the $\Pev_\theta$-invariant subspace spanned by the vectors $C$ and $S$. 
Denote by $\overline \Pev$ the induced linear map on $U/W$. We want to  prove that $\overline \Pev^2$ is a dilation; then $\Pev_\theta$ will be a composition of a dilation and a parallel translation, as needed.

Note that $\overline \Pev$ is a $2\times 2$ matrix. If Tr\,$\overline\Pev=0$ then $\overline \Pev^2$ is a dilation. Thus it suffices to show that Tr\,$\overline \Pev=0$.

Since the trace is linear, 
$${\rm Tr}\,\overline \Pev= {\rm Tr}\, \Pev - {\rm Tr}\, \Pev_W.$$
The latter trace is zero, since the map is a rotation by $\pi/2$, and we need to compute Tr\,$\Pev$. According to Lemma \ref{ParamPBisect}, this trace equals
\begin{equation}
\label{zerotrace}
\sum \frac{\sin (\theta_{j+1/2} - \theta_{j-1/2})}{2 \sin \theta_{j-1/2}\, \sin \theta_{j+1/2}} = \sum \frac{\cot \theta_{j-1/2} - \cot  \theta_{j+1/2}}{2} =0,
\end{equation}
as needed. 
\end{proof}

\subsubsection{The case $n=5$} \label{small5} B. Gr\"unbaum \cite{Gr1,Gr2} observed in computer experiments that the third $\Pev$-evolute of a pentagon is homothetic to the first one. We prove this below. 

\begin{proposition}
\label{Grun}
Let $\P$ be a pentagon. Then $\Pev^3(\P)$ is homothetic to $\Pev(\P)$ (see Figure \ref{pentastab} for the illustration).
\end{proposition}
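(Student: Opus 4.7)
The plan is to exploit the special structure of the $5\times 5$ matrix $A = \Pev_\theta$ on the three-dimensional quotient $\R^5/W$, where $W = \lin\{C,S\}$ is the $A$-invariant subspace from Proposition \ref{Pinvariant}. On $W$, the map $A$ acts as a $90^\circ$ rotation ($C \mapsto -S$, $S \mapsto C$), so $\tr(A|_W) = 0$ and $\det(A|_W) = 1$. Using the block-triangular form of $A$ in a basis extending a basis of $W$, one has $\tr A = \tr(A|_W) + \tr\bar A$ and $\det A = \det(A|_W)\det\bar A$.

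The first step is to show that $\tr A = 0$ and $\det A = 0$. The trace calculation is exactly the telescoping identity \eqref{zerotrace} already used in the proof of Proposition \ref{Tsukerman}, which is valid for every $n$. The determinant vanishes because $n = 5$ is odd, so Proposition \ref{KernelRank}(i) provides a one-dimensional kernel. Combining these with $\tr(A|_W) = 0$ and $\det(A|_W) = 1$ forces $\tr\bar A = 0$ and $\det\bar A = 0$. Hence the cubic characteristic polynomial of $\bar A$ has the form
\[
\chi_{\bar A}(x) = x(x^2 + c)
\]
for some $c \in \R$, and Cayley--Hamilton applied to $\bar A$ gives $\bar A^3 + c\bar A = 0$. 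Lifting this identity yields
\[
A^3 p + c\,Ap \in W \quad\text{for every } p \in \R^5.
\]

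Finally, I would translate this into the geometric homothety claim. Writing $p^* = Ap$ and $p^{***} = A^3 p$ for the support vectors of $\Pev(\P)$ and $\Pev^3(\P)$ respectively, the identity reads $p^{***} = -cp^* + aC + bS$ for some $a,b \in \R$ depending on $p$. Because the angle coordinates of $\Pev^3(\P)$ differ from those of $\Pev(\P)$ by $\pi$, reversing coorientation identifies $\Pev^3(\P)$ with the polygon having angle coordinates $\alpha + \pi/2$ and support numbers $-p^{***} = cp^* - aC - bS$. Applying the standard support-function transformation for a planar homothety (a line $(\beta, q)$ is sent by the homothety of ratio $\rho$ and center $(x_0, y_0)$ to $(\beta, \rho q + (1-\rho)(x_0\cos\beta + y_0\sin\beta))$), a short matching of coefficients of $C$ and $S$ shows that this is precisely the image of $\Pev(\P)$ under the homothety with ratio $c$ and center $(b/(1-c),\, -a/(1-c))$.

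The only real subtlety is the exceptional value $c = 1$, in which the above ``homothety'' degenerates to a pure translation; this locus is a proper subvariety of the space of pentagons and can be dispatched by a continuity argument (or by noting that for equiangular pentagons one computes $c = \sin^2(\pi/5)/\sin^2(2\pi/5) \ne 1$, so generic pentagons are safe). The main conceptual point of the proof is that the three numeric facts $\tr A = 0$, $\det A = 0$, and the presence of the invariant rotation subspace $W$ together exhaust the coefficient data of a degree-three polynomial in the quotient, leaving exactly one free parameter $c$ which plays the role of the homothety ratio.
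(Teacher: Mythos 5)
Your proof is correct and takes essentially the same route as the paper's: the same three ingredients (the telescoping trace identity \eqref{zerotrace}, the one-dimensional kernel from Proposition \ref{KernelRank}(i), and the invariant plane $\lin\{C,S\}$ of Proposition \ref{Pinvariant}) force $\Pev_\theta^3\equiv -c\,\Pev_\theta \pmod{\lin\{C,S\}}$, the paper merely packaging this by first restricting to the $4$-dimensional image of $\Pev_\theta$ and observing that the induced traceless $2\times 2$ map on $U/\lin\{C,S\}$ squares to a scalar, where you instead quotient $\R^5$ by $\lin\{C,S\}$ directly and read off $\bar A^3+c\bar A=0$ from the characteristic polynomial $x(x^2+c)$. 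Your explicit conversion of the congruence into a homothety (reorientation, matching against $C^*=-S$, $S^*=C$, and the center formula) fills in a step the paper leaves implicit; just note that in the exceptional case $c=1$ with nonzero translation part the conclusion would genuinely fail rather than follow by continuity --- this degenerate locus (related to the ``drifting'' polygons of Proposition \ref{drifting}) is silently excluded in the paper's proof as well.
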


\begin{figure}[hbtp]
\centering
\includegraphics[height=3.2in]{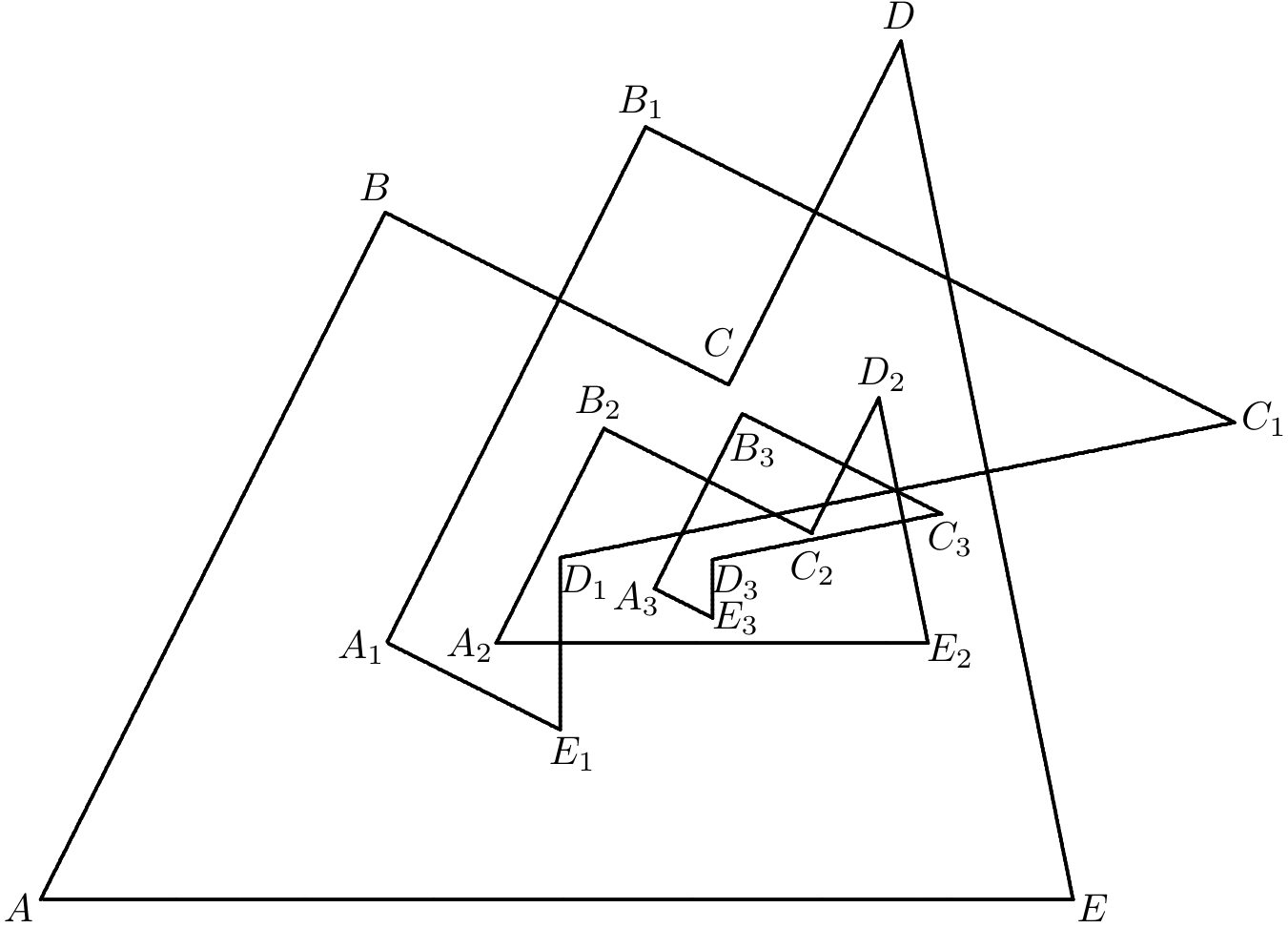}
\caption{Consecutive $\Pev$-evolutes of a pentagon. The third evolute $A_3B_3C_3D_3E_3$ is homothetic to the first evolute $A_1B_1C_1D_1E_1$.}
\label{pentastab}
\end{figure}

\begin{proof} The proof is similar to the one for quadrilaterals, given above. We want to show that if $\P$ is in the image of the map $\Pev$, then $\Pev^2(\P)$ is homothetic to $\P$.
	
Let $V$ be the 5-dimensional $p$-space and $U\subset V$ be the image of the linear map $\Pev$. Abusing notation, we denote the restriction of $\Pev$ to $U$ by the same letter. Let $W \subset U$ be the $\Pev$-invariant subspace spanned by the vectors $C$ and $S$. Denote by $\overline \Pev$ the induced linear map on $U/W$. We want to  prove that $\overline \Pev^2$ is a homothety.
	
As before, it suffices to show that Tr\,$\overline \Pev =0$, and as before, $\mathrm{Tr}\, \overline\Pev = {\rm Tr}\,\Pev_U = {\rm Tr}\,\Pev_V$. But the latter vanishes, since (\ref{zerotrace}) holds for all values of $n$.
\end{proof}

Again, the center of homothety is the pseudo-Steiner point of the pentagon.

Certainly, in some particular situation, the $\Pev$-evolute of some order can also degenerate to a point. For the case of pentagons we were able to give a necessary and sufficient condition of the third evolute collapsing to a point. We will need  the notations $\B$ and $\B_j$ introduced in Section \ref{DefP-evo}. 

\begin{proposition}\label{n5Pdegenrate}
Let $\P$ be a pentagon. Then $\Pev^3(\P)$ is a point if and only if  $\sum_{j=1}^5\sin2\B_j(\alpha)= 0$. Equivalently, the area of the inscribed pentagon with sides parallel to those of $\P$ must be zero.
\end{proposition}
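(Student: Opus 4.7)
The plan is to use Proposition \ref{Grun}: since $\Pev^3(\P)$ is homothetic to $\Pev(\P)$, there is a scalar $c = c(\alpha)$ and a translation $t \in W = \lin\{C,S\}$ such that $\Pev^3(p) = c \cdot \Pev(p) + t$ in support coordinates. As in the proof of Proposition \ref{Grun}, $c$ is the Cayley--Hamilton scalar by which $\overline{\Pev}^2$ acts on the two-dimensional space $U/W$ (with $U = \im \Pev_\theta$); since $\overline{\Pev}$ has trace $0$ on $U/W$, this scalar is $-\det(\overline{\Pev})$. For a non-inscribed pentagon $\P$ (so that $\Pev(\P)$ is itself a non-degenerate pentagon), $\Pev^3(\P)$ collapses to a point precisely when $c = 0$. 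Because $\det(\Pev_\theta|_W) = 1$ (the restriction to $W$ is a quarter-turn by \eqref{invplane}), we have $c = -\det(\Pev_\theta|_U)$, and this vanishes iff $\ker \Pev_\theta \cap U \neq 0$. Since $\ker \Pev_\theta$ is one-dimensional by Proposition \ref{KernelRank}, the condition is equivalent to
\[
\ker \Pev_\theta \subseteq \im \Pev_\theta = (\ker \Pev_\theta^T)^\perp.
\]

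The heart of the argument is to exhibit a generator of $\ker \Pev_\theta^T$. I claim that $q = (\sin \B_j(\alpha))_{j=1}^5$ works. Reading off the transpose of the matrix \eqref{eqnA}, this amounts to verifying for each $i$ the identity
\[
(\cot\theta_{i-\frac12} - \cot\theta_{i+\frac12})\sin \B_i + \csc\theta_{i-\frac12}\sin \B_{i-1} - \csc\theta_{i+\frac12}\sin \B_{i+1} = 0.
\]
Clearing denominators by $\sin\theta_{i-\frac12}\sin\theta_{i+\frac12}$ turns the first coefficient into $\sin(\theta_{i+\frac12} - \theta_{i-\frac12})$. Expressing $\B_{i-1}, \B_i, \B_{i+1}$ in terms of the five consecutive turning angles $\theta_{i-\frac12}, \theta_{i+\frac12}, \theta_{i+\frac32}, \theta_{i+\frac52}, \theta_{i+\frac72}$, whose sum is $2\pi$ by the closing condition, I can apply $\sin(2\pi - x) = -\sin x$ to eliminate the two turning angles that do not appear naturally. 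The reduced identity is an algebraic statement in three free angles $\phi = \theta_{i-\frac12}$, $\psi = \theta_{i+\frac12}$, $\rho = \theta_{i+\frac52}$, and falls out of the product-to-sum identities $\sin(\psi - \phi)\sin(\psi + \phi) = \sin^2 \psi - \sin^2 \phi$ and $\sin(\psi - \phi)\cos(\psi + \phi) = \sin \psi \cos \psi - \sin \phi \cos \phi$ (after expanding each $\sin(\cdot + \rho)$ and collecting coefficients of $\cos \rho$ and $\sin \rho$ separately). This trigonometric verification, though purely mechanical, is the main obstacle.

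Once the generator is identified, the orthogonality criterion becomes $\sum_{j=1}^5 \cos \B_j \sin \B_j = \tfrac12 \sum_{j=1}^5 \sin 2\B_j = 0$, which is the first stated condition. For the equivalence with the area criterion, recall from the proof of Proposition \ref{KernelRank} that the vertices of the inscribed pentagon lie on a circle of radius $r$ at angular coordinates $x_i$ determined by $x_{i+1} = 2\alpha_i^* - x_i$ with $x_1 = \B(\alpha^*)$. A short telescoping using $\alpha_i^* = \alpha_i + \frac{\pi}{2}$ for all $i$ (so that $\B(\alpha^*) = \B(\alpha) + \frac{\pi}{2}$) yields $x_{i+1} - x_i = -2\B_i(\alpha)$, and hence the signed area
\[
\frac{r^2}{2}\sum_{i=1}^5 \sin(x_{i+1} - x_i) = -\frac{r^2}{2}\sum_{i=1}^5 \sin 2\B_i(\alpha)
\]
vanishes precisely when the same sum does, giving the geometric reformulation.
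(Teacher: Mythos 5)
Your proposal is correct, and its skeleton matches the paper's: both reduce the question to whether $\ker \Pev_\theta$ (spanned by $(\cos\B_j(\alpha))_j$ per Proposition \ref{KernelRank}) lies inside $\im\Pev_\theta$, and both extract the condition $\sum_j \cos\B_j\sin\B_j = \tfrac12\sum_j\sin 2\B_j = 0$ as the pairing of that kernel generator against the linear functional cutting out the image. The difference is in how that functional is obtained. The paper imports it from Section \ref{AP-inv}: an odd-gon is a $\Pev$-evolute iff its quasiperimeter $\sum_j p_j\sin\B_j(\alpha)$ vanishes, a fact derived there from the geometric computation of $\mathcal S^2(0)$ for the composition of reflections in the sides. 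You instead verify directly that $q=(\sin\B_j(\alpha))_j$ spans $\ker\Pev_\theta^\top$ by reading off the matrix \eqref{eqnA}; this makes the proof self-contained within Section \ref{P-evo} at the cost of a trigonometric identity. That identity is true and your outline of its verification is sound: after clearing denominators and using the closing relation $\sum_j\theta_{j+\frac12}=2\pi$ one gets $\B_i=\phi+\psi+\rho-\pi$, $\B_{i+1}=\pi-\rho-\phi$, $\B_{i-1}=\pi-\psi-\rho$ with $\phi=\theta_{i-\frac12}$, $\psi=\theta_{i+\frac12}$, $\rho=\theta_{i+\frac52}$, and the identity
\[
-\sin(\psi-\phi)\sin(\phi+\psi+\rho)-\sin\phi\sin(\phi+\rho)+\sin\psi\sin(\psi+\rho)=0
\]
follows from exactly the two product-to-sum identities you name. (Alternatively, you could bypass the computation entirely by noting that your claim $\Pev_\theta^\top q=0$ is literally the statement $\langle \Pev_\theta p, q\rangle = 0$ for all $p$, i.e., that every $\Pev$-evolute of a pentagon has zero quasiperimeter, which is the paper's Section \ref{Pinvodd} input.) Your framing of the degeneration via Cayley--Hamilton, $c=-\det\overline\Pev=-\det(\Pev_\theta|_U)$, is equivalent to the paper's observation that $\rk\Pev_\theta^2=3$ forces the homothety factor to vanish, and your derivation of the area reformulation from $x_{i+1}-x_i=-2\B_i(\alpha)$ is correct and more explicit than what the paper provides. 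The only caveats are degenerate ones that the paper also glosses over: inscribed pentagons (where $\Pev(\P)$ is already a point) and the measure-zero case $q=0$.
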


\begin{proof} The proof is based on Proposition \ref{KernelRank} and results of Section \ref{AP-inv}. The results of Section \ref{Pinvodd} show that a cooriented odd-gon belongs to the image of the evolute transformation if and only if its {\sl quasiperimeter} is zero. The quasiperimeter of an odd-gon is defined and calculated in Section \ref{oddn}: for an odd-gon with the sides $l_j=(\alpha_j,p_j)$ it is equal to $\sum_{j=1}^np_j\sin\B_j(\alpha)$. This formula, combined with the formula in Proposition \ref{KernelRank}, shows that for the generator of the kernel of the transformation $p\mapsto p^\ast$ the quasiperimeter is$$\sum_{j=1}^5 p_j^\ast\sin\B_j(\alpha)=\sum_{j=1}^5\cos\B_j(\alpha)\sin\B_j(\alpha)=\frac12\sum_{j=1}^5\sin2\B_j(\alpha).$$ 

\begin{figure}[htbp]
\centering
\includegraphics[width=2.3in]{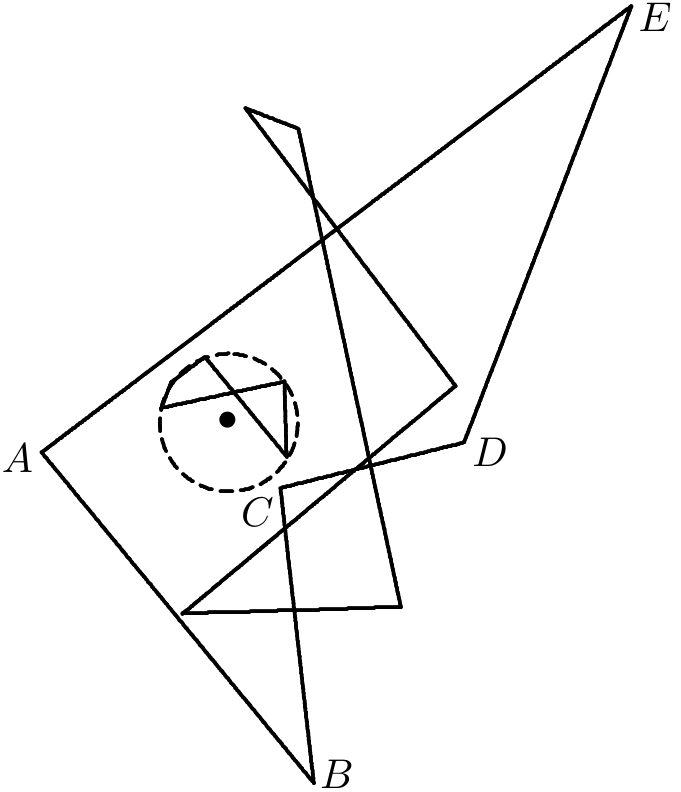}
\caption{A pentagon $ABCDE$ whose third $\Pev$-evolute is a point (so its second $\Pev$-evolute is inscribed into a circle).}
\label{NilPent}
\end{figure}

Thus, the condition $\sum_{j=1}^5\sin2\B_j(\alpha) = 0$ means precisely that the image of the map $p\mapsto p^\ast$ is contained in its kernel. This means that $\rk \Pev_\theta^2 = 3$. It follows that the homothety $\Pev_\theta^2$ of $U/W$, considered in the previous proposition, is of factor $0$.
\end{proof}

Therefore any set of turning angles $\{\theta_j\}_{j=1}^5$ providing shrinking to a point on the third iteration of $\Pev$ can be obtained as follows. Let $\{\beta_j\}_{j=1}^5$ be a solution of $\sum_j\sin\beta_j=0$ and $\sum_j \beta_j = 0\pmod{2\pi}$. Then $\theta_{j+\frac12} = (\beta_j + \beta_{j+1})/2$. See Figure \ref{NilPent} for an example.

\subsubsection{The case $n=6$} \label{small6}
In Section \ref{Dyngeneric}, we will discuss the dynamics of the $\Pev$-transformation for a generic polygon, in particular, for a generic hexagon. Below we present some particular examples of degenerate behavior.

\begin{proposition}
\label{hexapar} 
Let $\P$ be a hexagon. Then\smallskip

{\rm(i)} If the angles of $\P$ satisfy the condition $\B(\alpha) = 0\pmod \pi$ then $\Pev^3(\P)$ is homothetic to $\Pev(\P)$.\smallskip

{\rm(ii)} If the opposite sides of $\P$ are parallel then $\Pev^3(\P)=\Pev(\P)$, that is, 
$\Pev(\P)$ and $\Pev^2(\P)$ are each other's evolutes. See Figure \ref{selfdual}.
\end{proposition}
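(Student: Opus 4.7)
My plan is to adapt the trace argument from Proposition~\ref{Grun} to the hexagon case for part~(i), and then exploit an extra $\mathbb{Z}/3$-symmetry present under the stronger hypothesis of (ii) to upgrade ``homothetic'' to equality.

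For (i), the hypothesis $\B(\alpha) \equiv 0 \pmod{\pi}$ is exactly the closing condition in the proof of Proposition~\ref{KernelRank}, so for a suitable choice of coorientations the kernel of $\Pev_\theta$ on $V = \R^6$ has dimension $2$ and the image $U = \Pev_\theta(V)$ is $4$-dimensional. Since $\Pev_\theta(U) \subset U$, and since $\Pev_\theta(C) = -S$ and $\Pev_\theta(S) = C$ force $W = \lin\{C,S\} \subset U$, the quotient $U/W$ is $2$-dimensional. I will then compute
\[
\tr \overline{\Pev} \;=\; \tr(\Pev|_U) - \tr(\Pev|_W) \;=\; \tr(\Pev|_V) - 0 \;=\; 0,
\]
using that $\Pev|_W$ is a quarter-turn, that the telescoping identity (\ref{zerotrace}) gives $\tr(\Pev|_V) = 0$, and that $\tr(\Pev|_U) = \tr(\Pev|_V)$ because the image of $\Pev_\theta$ lies in $U$ (so the induced map on $V/U$ is zero). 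Any $2\times 2$ trace-zero matrix $M$ satisfies $M^2 = -\det(M)\,I$, so $\overline{\Pev}^2$ is scalar on $U/W$; combined with $\Pev^2|_W = -I$, this makes $\Pev^2|_U$ a homothety of polygons. Applied to $\Pev(\P) \in U$, this gives that $\Pev^3(\P) = \Pev^2(\Pev(\P))$ is homothetic to $\Pev(\P)$.

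For (ii), the condition $\alpha_{i+3} = \alpha_i + \pi$ yields $\B(\alpha) = -\pi \equiv 0 \pmod{\pi}$ so that (i) applies, and additionally forces the turning angles $\theta_{j+\frac12}$ to be periodic with period $3$ and to satisfy $\theta_{\frac32}+\theta_{\frac52}+\theta_{\frac72} = \pi$. Hence $\Pev_\theta$ commutes with the cyclic shift $\mathcal{Z}^3$, and respects the decomposition $V = V^+ \oplus V^-$ into the $\pm 1$-eigenspaces of $\mathcal{Z}^3$ (each $3$-dimensional), with bases $e^\pm_j = e_j \pm e_{j+3}$ for $j=1,2,3$. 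I will write both $3\times 3$ matrices $\Pev|_{V^\pm}$ explicitly and show that each has characteristic polynomial $\lambda(\lambda^2 + 1)$: the traces telescope to zero, the determinants vanish using $b_j^2 = a_j^2 + 1$ together with the symmetric identity $\sum_\mathrm{cyc}(a_1-a_2)a_3^2 = -(a_1-a_2)(a_2-a_3)(a_3-a_1)$, and the sum of $2\times 2$ principal minors simplifies to $\tfrac14(a_1 a_2 + a_2 a_3 + a_3 a_1 + 3)$, which equals $1$ precisely because $\cot\theta_{\frac32}\cot\theta_{\frac52} + \cot\theta_{\frac52}\cot\theta_{\frac72} + \cot\theta_{\frac72}\cot\theta_{\frac32} = 1$ under $\theta_{\frac32}+\theta_{\frac52}+\theta_{\frac72} = \pi$. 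Thus $\Pev_\theta$ has eigenvalues $0,0,\pm i,\pm i$, so $\Pev_\theta^2 = -I$ on the $4$-dimensional image $U$. In coordinates $\Pev^2(\P) = (\alpha+\pi, -p)$, which is $\P$ with every side-coorientation reversed; because the perpendicular bisector construction depends only on the unoriented polygon, this gives $\Pev(\Pev^2(\P)) = \Pev(\P)$.

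The main obstacle is the appearance of the trigonometric identity $\cot\theta_1\cot\theta_2+\cot\theta_2\cot\theta_3+\cot\theta_3\cot\theta_1 = 1$ in part (ii); it is classical, but the fact that the constraint $\theta_1+\theta_2+\theta_3 = \pi$ (half of the total turning) forces it to hold is exactly what upgrades the generic homothety of (i) to the sharp equality $\Pev_\theta^2|_U = -I$. A minor but useful subtlety in (i) is that the identity $\tr(\Pev|_U) = \tr(\Pev|_V)$ does not require $V = \ker \Pev_\theta \oplus U$; it follows solely from $\im \Pev_\theta \subset U$, which kills the induced action on $V/U$.
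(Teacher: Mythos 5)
Your proposal is correct. Part (i) is essentially the paper's own argument: the paper likewise invokes Proposition \ref{KernelRank} to get $\dim U=4$, observes $\lin\{C,S\}\subset U$, and runs the trace computation $\tr\overline{\Pev}=\tr\Pev|_U-\tr\Pev|_W=\tr\Pev|_V=0$ via the telescoping identity \eqref{zerotrace}, exactly as you do (your remark that $\tr\Pev|_U=\tr\Pev|_V$ needs only $\im\Pev_\theta\subset U$, not a direct-sum decomposition, is a nice clarification of a step the paper leaves implicit). Part (ii) is where you genuinely diverge: the paper proves it synthetically, showing that the circumradii of $\triangle FAB$ and $\triangle ABC$ in $\Pev(\P)=ABCDEF$ are equal by exploiting that the composition of the six reflections is the identity, chasing angles to get $\angle AFB=\angle BCA$, and applying the Sine Rule. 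Your route instead uses the period-$3$ symmetry $a_{j+3}=a_j$, $b_{j+3}=b_j$ to split $\Pev_\theta$ into two $3\times3$ blocks on the $\pm1$-eigenspaces of $\mathcal{Z}^3$ and computes each characteristic polynomial to be $\lambda(\lambda^2+1)$; I checked the three coefficient computations (telescoping trace, the determinant via $b_j^2=a_j^2+1$ and the alternating identity, and the sum of principal $2\times2$ minors reducing to $\cot\theta_{\frac12}\cot\theta_{\frac32}+\cot\theta_{\frac32}\cot\theta_{\frac52}+\cot\theta_{\frac52}\cot\theta_{\frac12}=1$ from $\theta_{\frac12}+\theta_{\frac32}+\theta_{\frac52}\equiv0\pmod\pi$), and they are all correct; since each block then has the three distinct eigenvalues $0,\pm i$, the matrix is diagonalizable, $U$ is the sum of the $\pm i$-eigenspaces, and $\Pev_\theta^2|_U=-\id$ as you claim. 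What your approach buys is more information: it exhibits the full spectrum $\{0,0,i,i,-i,-i\}$, shows diagonalizability, and pins the homothety ratio of part (i) down to exactly $-1$ in the parallel-sides case, making (ii) a literal sharpening of (i); what the paper's synthetic proof buys is a coordinate-free geometric explanation (equal circumradii of consecutive vertex triples) that does not depend on the support-number machinery. Both are valid; only the minor bookkeeping point that ``opposite sides parallel'' should be normalized to $\alpha_{j+3}=\alpha_j+\pi$ by a choice of coorientations (harmless, since $\Pev$ is insensitive to coorientation and the cotangent identity holds mod $\pi$ in any case) deserves a sentence if you write this up.
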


\begin{figure}[hbtp]
\centering
\includegraphics[width=2.1in]{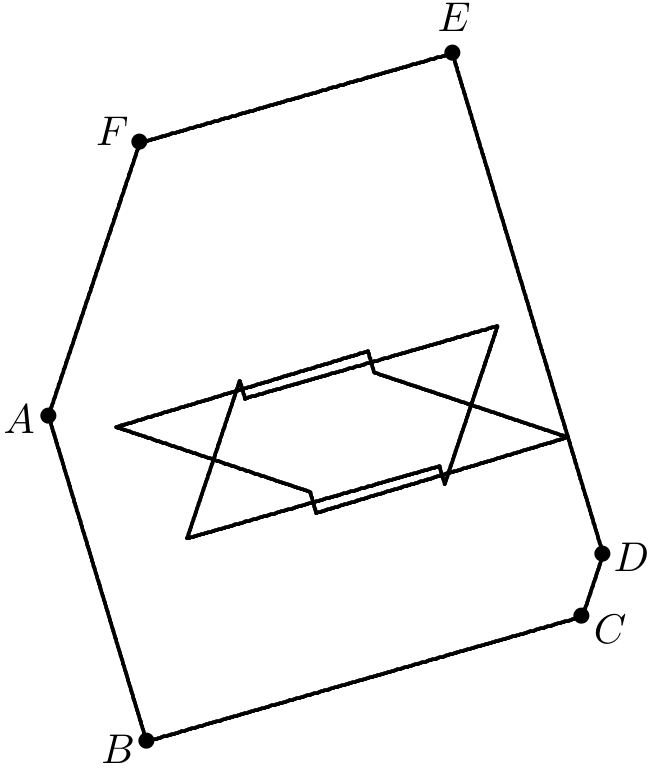} 
\caption{The $\Pev$ and $\Pev^2$ images of a hexagon $ABCDEF$ with parallel opposite sides are each other's $\Pev$-evolutes.}
\label{selfdual}
\end{figure}

The second part of this statement was already noted for equiangular 
hexagons, see Figure \ref{RegHexEvol}.

\begin{proof} The proof of (i) is similar to the proof of Proposition \ref{Grun}. Let $V$ be the 6-dimensional $p$-space, $K\subset V$ the kernel of the map $\Pev_{\theta}$, and $U\subset V$ its image. It follows from Proposition \ref{KernelRank} that  $K$ is 2-dimensional, and hence $U$ is 4-dimensional. The rest of the proof is the same as in Proposition \ref{Grun}.
	
To prove (ii), consider Figure \ref{hexaproof}. The hexagon $\Pev(\P)$ is labelled $ABCDEF$. Consider the circles circumscribed about $\triangle FAB$ and $\triangle ABC$. The line through the centers of these circles is the perpendicular bisector of the segment $AB$. We need to show that the centers are at equal distance from $AB$, that is, that the radii of the circles are equal.
	
\begin{figure}[hbtp]
\centering
\includegraphics[height=2.4in]{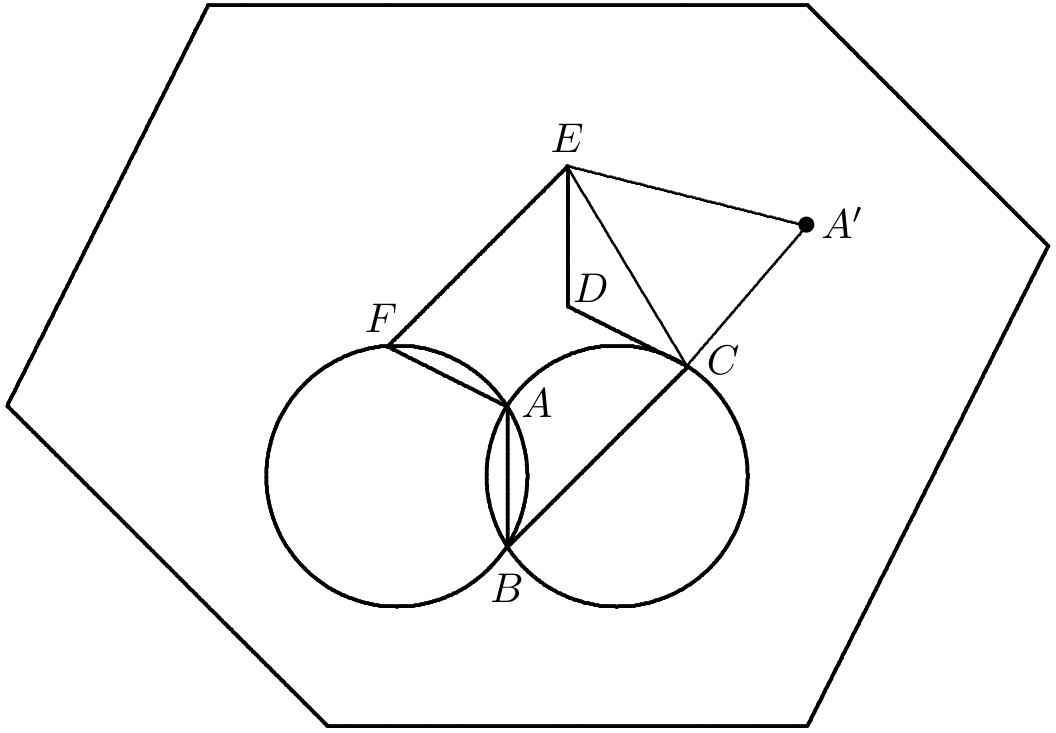} 
\caption{Proof of Proposition \ref{hexapar}(ii).}
\label{hexaproof}
\end{figure}

The composition of reflections 
$$\mathcal{S}_{EF} \circ \mathcal{S}_{DE} \circ \mathcal{S}_{CD} \circ \mathcal{S}_{BC} \circ \mathcal{S}_{AB} \circ \mathcal{S}_{FA}$$
is the identity. Applying this to point $A$, we obtain
$$A':=\mathcal{S}_{CD} \circ \mathcal{S}_{BC} (A) = \mathcal{S}_{DE}  \circ \mathcal{S}_{EF} (A).$$ 
The composition of two reflections is the rotation through the angle twice that between the axes. Thus $\angle ACA' = 2 \angle BCD$. Since the points $A$ and $A'$ are symmetric with respect to the line $CE$, one has $\angle ACE = \angle BCD$, and hence $\angle DCE = \angle BCA$. But $\angle DCE = \angle AFB$, therefore $\angle AFB = \angle BCA$.
	
Finally, the radii of the two circles, found via the Sine Rule, are:
$$\frac{|AB|}{2 \sin (\angle AFB)}\qquad{\rm and}\qquad\frac{|AB|}{2 \sin (\angle BCA)},$$
hence the radii are equal, as needed.
\end{proof}

\subsection{Dynamics of the $\Pev$-transformation in the generic case}\label{Dyngeneric}
\subsubsection{Symmetry of the spectrum of $\Pev_\theta$.}
Let us further investigate the spectral properties of the matrix $\Pev_\theta$. The above  considerations show  that the matrix $\Pev_\theta$ is not always diagonalizable: the geometric multiplicity of the zero eigenvalue can be larger than the algebraic multiplicity. 
From Proposition \ref{Pinvariant}, it follows that the spectrum of $\Pev_\theta$ always contains $\pm i$. The following theorem provides more information on the spectrum of $\Pev_\theta$.

\begin{theorem}
\label{SymSpectrum}
The spectrum of the linear map $\Pev_\theta \colon p \mapsto p^*$ is symmetric with respect to the origin. Moreover, the opposite eigenvalues have the same geometric multiplicity and the same sizes of the Jordan blocks.
\end{theorem}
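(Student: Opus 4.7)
The plan is to reduce the theorem to exhibiting an invertible linear map $M\colon\R^n\to\R^n$ with $M\Pev_\theta=-\Pev_\theta M$. Such an intertwiner sends the generalized $\lambda$-eigenspace of $\Pev_\theta$ bijectively onto the generalized $(-\lambda)$-eigenspace, and it conjugates the nilpotent operator $\Pev_\theta-\lambda I$ restricted to the former with $-(\Pev_\theta+\lambda I)$ restricted to the latter. Both claims of the theorem then follow at once: opposite eigenvalues have equal geometric multiplicity, and their Jordan blocks have the same sizes.

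To construct $M$, I would first record the factorization $2\Pev_\theta=X-Y$ already implicit in Lemma~\ref{ParamPBisect}, where $X$ and $Y$ are the cyclic bidiagonal matrices
\[
(Xp)_i=\frac{p_{i+1}-\cos\theta_{i+1/2}\,p_i}{\sin\theta_{i+1/2}},\qquad (Yp)_i=\frac{p_{i-1}-\cos\theta_{i-1/2}\,p_i}{\sin\theta_{i-1/2}}.
\]
Their sum $L=X+Y$ is the symmetric signed-length operator of Lemma~\ref{CoordPolygon}, and geometrically $(Xp)_i$ and $-(Yp)_i$ are the signed distances along the side $l_i$ from the foot of the perpendicular dropped from the origin to the two endpoints $P_{i+1/2}$ and $P_{i-1/2}$. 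Writing out the eigenvalue recurrences for $X$ and $Y$ and telescoping over the cycle, one checks that both reduce to the same scalar condition $\prod_{i=1}^n\bigl(\cos\theta_{i+1/2}+\lambda\sin\theta_{i+1/2}\bigr)=1$, so $X$ and $Y$ share the same characteristic polynomial and are conjugate over $\C$. I would then look for an involutive intertwiner $M$ satisfying simultaneously $MX=YM$ and $MY=XM$; any such $M$ produces $M\Pev_\theta=\tfrac12 M(X-Y)=\tfrac12(Y-X)M=-\Pev_\theta M$, exactly the required relation.

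The hard part will be to write $M$ down explicitly and to verify the intertwining entry by entry. The equiangular case supplies the heuristic: there $M$ can be taken to be the discrete-Fourier involution $\zeta^m\leftrightarrow\zeta^{-m}$ that exchanges the eigenvalues $\lambda_m=i\sin(2\pi m/n)/\sin(2\pi/n)$ and $\lambda_{n-m}=-\lambda_m$ from Lemma~\ref{spectrum}; in support coordinates it amounts (for $n$ even) to $p_i\mapsto(-1)^ip_i$ composed with an appropriate reflection. For general turning angles the corresponding twist has to incorporate the non-uniformity of the $\theta_{i+1/2}$, and the verification of $MX=YM$ should boil down to the trigonometric identity $\cot\theta_{i-1/2}-\cot\theta_{i+1/2}=\sin(\theta_{i+1/2}-\theta_{i-1/2})/(\sin\theta_{i-1/2}\sin\theta_{i+1/2})$ that already underlies the trace computation \eqref{zerotrace}. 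Invertibility of $M$ would then follow from the non-vanishing of the product $\prod_k(\cos\theta_{k+1/2}+\lambda\sin\theta_{k+1/2})$ away from the eigenvalues of $X$, since those products are the entries of the eigenvectors of $X$ and $Y$ that $M$ must permute.
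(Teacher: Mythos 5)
Your reduction is sound: an invertible $M$ with $M\Pev_\theta=-\Pev_\theta M$ would give both conclusions at once, and your decomposition $2\Pev_\theta=X-Y$ with $X+Y=\Mev_\theta$ (the signed-length operator of Lemma \ref{CoordPolygon}) is exactly the right bookkeeping --- it is the same algebra that underlies the paper's argument. The problem is that your proof stops precisely where the real work begins. The existence of an invertible $M$ anticommuting with $\Pev_\theta$ is \emph{equivalent} to the similarity of $\Pev_\theta$ and $-\Pev_\theta$, i.e.\ to the statement being proved, so postulating such an $M$ without producing it is circular. The stronger double intertwining $MX=YM$, $MY=XM$ does not follow from $X$ and $Y$ sharing a characteristic polynomial: that gives (generically) some $N$ with $NXN^{-1}=Y$, but then $NYN^{-1}=N^2XN^{-2}$ need not equal $X$, and no candidate $M$ is ever written down for general turning angles; the equiangular heuristic does not say how to twist it. As it stands the argument is a plan, not a proof.

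The paper escapes this trap by changing the target slightly: rather than conjugating $\Pev_\theta$ to $-\Pev_\theta$ directly, it verifies by a finite computation that $\Mev_\theta\Pev_\theta$ is antisymmetric, i.e.\ $\Pev_\theta^\top\Mev_\theta=-\Mev_\theta\Pev_\theta$, so that whenever $\Mev_\theta$ is invertible one gets $\Mev_\theta^{-1}\Pev_\theta^\top\Mev_\theta=-\Pev_\theta$; since every matrix is similar to its transpose, this yields the desired similarity. Two further ingredients are then still required and are supplied there: the nondegeneracy of $\Mev_\theta$ (quoted from \cite{Izm}, valid when $\sum_j\theta_{j-1/2}\not\equiv 0\pmod{2\pi}$) and a continuity argument for the degenerate case. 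If you want to finish along your own lines, the shortest route is to take $M=\Mev_\theta=X+Y$ as the intertwiner between $\Pev_\theta^\top$ and $-\Pev_\theta$, prove the antisymmetry of $(X+Y)(X-Y)$ from your explicit formulas, and then deal with the invertibility of $\Mev_\theta$ and the degenerate angles; the trigonometric identity you quote from \eqref{zerotrace} is not by itself sufficient for any of these steps.
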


Below, we give two proofs of this theorem. The first one is elementary and self-contained, but it does not cover the statement concerning the Jordan blocks. The second one covers all the statements, but it uses a reference to a recent result of one of the authors (I. I.), which may be less convenient to the reader.

\begin{proof}[First Proof (partial)] The argumentation below will show that the spectrum is symmetric, including the algebraic multiplicity of the eigenvalues only.
	
\begin{lemma}
If for an $n \times n$ matrix $A$ we have
\begin{equation}
\label{TraceOdd}
\tr A^k = 0 \text{ for all odd }k \le n,
\end{equation}
then the spectrum of $A$ is symmetric with respect to the origin, including the algebraic multiplicity.
\end{lemma}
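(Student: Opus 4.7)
The plan is to translate the trace hypothesis into a symmetry of the characteristic polynomial of $A$ via Newton's identities. Writing $p_k = \tr A^k = \sum_i \lambda_i^k$ for the power sums of the eigenvalues and $e_k$ for the elementary symmetric polynomials in the eigenvalues, Newton's identities read
\[
k e_k = \sum_{i=1}^{k} (-1)^{i-1} e_{k-i}\, p_i, \qquad 1 \le k \le n.
\]
I would then prove, by induction on odd $k \le n$, that all odd $e_k$ vanish.

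The inductive argument runs as follows. Suppose $e_j = 0$ for every odd $j < k$, where $k$ itself is odd. In the sum above, split the index $i$ by parity. When $i$ is odd, the factor $p_i$ vanishes by hypothesis (note $i \le k \le n$, so it is in the allowed range). When $i$ is even, the complementary index $k - i$ is odd and strictly smaller than $k$, so $e_{k-i} = 0$ by the induction hypothesis (with the base case $e_0 = 1$ handled separately: the $i = k$ contribution is just $\pm p_k = 0$). Hence the right-hand side vanishes and $e_k = 0$.

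With all odd $e_k$ killed, the characteristic polynomial
\[
\chi(\lambda) = \sum_{k=0}^{n} (-1)^k e_k \lambda^{n-k}
\]
contains only those monomials $\lambda^{n-k}$ with $k$ even, hence a direct substitution gives $\chi(-\lambda) = (-1)^n \chi(\lambda)$. Thus $\chi$ is either an even or an odd polynomial in $\lambda$, and in both cases the multiset of its roots is invariant under $\lambda \mapsto -\lambda$. This yields the symmetry of the spectrum with respect to the origin, counted with algebraic multiplicity.

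The main obstacle is that this argument sees only the characteristic polynomial, so it controls algebraic multiplicities but says nothing about the sizes of the Jordan blocks attached to opposite eigenvalues (and in particular it does not distinguish geometric from algebraic multiplicity). To refine the symmetry to the level of Jordan structure one would need an extra ingredient, such as an involution on the space that anticommutes with $A$ (so that $v \mapsto \text{(involution)}\,v$ identifies the generalized eigenspace of $\lambda$ with that of $-\lambda$ as a module over $\mathbb{C}[A]$); this is the content I would expect to be addressed by the authors' alternative proof invoking the external result.
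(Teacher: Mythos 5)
Your argument is correct and is essentially identical to the paper's own proof: both invoke Newton's identities and induct on odd $k$, observing that in each term of $k e_k=\sum_{i=1}^{k}(-1)^{i-1}e_{k-i}p_i$ either $i$ or $k-i$ is odd, so every summand vanishes. Your closing observation that this route controls only algebraic multiplicities and not Jordan structure matches the paper exactly, which labels this argument ``partial'' and supplies a second proof (via an anticommuting conjugation $\mathcal{M}_\theta^{-1}\Pev_\theta^{\top}\mathcal{M}_\theta=-\Pev_\theta$) precisely to handle the Jordan blocks.
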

\begin{proof}
Let $\Lambda = \{\lambda_1, \ldots, \lambda_n\}$ be the multiset of eigenvalues of $A$, listed with their algebraic multiplicity. We have $-\Lambda = \Lambda$ if and only if the characteristic polynomial has the form $\lambda^m P(\lambda^2)$, that is, if and only if the elementary symmetric polynomials $\sigma_k$ in $\lambda$ vanish for all odd $k$. We have $\tr A^k = \tau_k$, where
$\tau_k = \lambda_1^k + \cdots + \lambda_n^k$.
Newton's formulas relate $\sigma_k$ and $\tau_k$:
\[ k\sigma_k = \sum_{j=1}^k (-1)^{j-1} \sigma_{k-j} \tau_j. \]
If $k$ is odd, then either $j$ or $k-j$ is odd. Therefore the vanishing of $\tau_j$ and $\sigma_{j'}$ for all odd $j, j'$ up to $k$ implies the vanishing of $\sigma_k$. The lemma follows by induction on odd $k$. 
\end{proof}
	
It now suffices to prove \eqref{TraceOdd} for $A=\Pev_\theta$ (see \eqref{eqnA}). We have
\[ \Pev_\theta = A_1 + \cdots + A_n \]
where $A_j$ is the matrix with zero entries except for the $(j-1,j)\times(j-1,j)$ block
\[
\begin{pmatrix}
-a_j & b_j\\ -b_j & a_j
\end{pmatrix}.
\]
Let us consider the case of an even $n$ first. Put
\[ \Ao = A_1 + A_3 + \ldots + A_{n-1}, \quad \Ae = A_2 + A_4 + \ldots + A_n, \]
so that $\Pev_\theta = \Ao + \Ae$. Observe that
\begin{equation}
\label{SquareId}
\Ao^2 = -\id,\ \Ae^2=-\id.
\end{equation}
We have
\begin{equation}
\label{eqnAk}
\Pev_\theta^k = \sum\limits_{\{i_1,\dots i_m\},{ \{j_1,\dots, j_m\}}} \Ao^{i_1} \Ae^{j_1} \cdots \Ao^{i_m} \Ae^{j_m},
\end{equation}
where the sum is taken over all integer partitions $k = i_1 + j_1 + \cdots + i_m + j_m$ with $i_1$ and $j_m$ non-negative and other summands positive. We claim that, for odd $k$, every summand has zero trace.
	
Indeed, due to \eqref{SquareId}, we can decrease any $i_s$ or $j_t$ by two at the price of a minus sign. In the reduced form, every summand will become an alternating product
\[ \pm \Ao \Ae \cdots \Ao \quad \text{or} \quad \pm \Ae \Ao \cdots \Ae \] 
Due to the cyclic property of the trace $\tr(XY) = \tr(YX)$, we can make further cancellation and obtain
\[ \tr(\pm \Ao \Ae \cdots \Ao) = \pm \tr A_\pm = 0, \quad \tr(\pm \Ae \Ao \cdots \Ae) = \pm \tr A_\pm = 0.
\]
Thus all summands in \eqref{eqnAk} have zero trace, hence $\tr A^k = 0$ for odd $k$ and even $n$.
	
If $n$ is odd, then consider the $2n \times 2n$ matrix $\widetilde{\Pev}_\theta$ with $a_{n+j} = a_j$ and $b_{n+j} = b_j$. By the above argument, $\tr\widetilde{\Pev}_\theta^k = 0$ for all odd $k$. On the other hand, for all $k < n$, we have $\tr \widetilde{\Pev}_\theta^k = 2 \tr \Pev_\theta^k$. Thus we have \eqref{TraceOdd} also in the case of  odd $n$.
\end{proof}
\begin{proof}[Second Proof (complete).]
Consider the cyclically tridiagonal matrix
\[
\Mev_\theta = 
\begin{pmatrix} 
-(a_1+a_2)& b_2  & 0&\cdots 0&b_1\\ b_2& -(a_2+a_3) & b_3&\ddots & 0\\ \vdots&\ddots&\ddots &\ddots& 0\\ 0 &  \ddots     &   b_{n-1} &-(a_{n-1}+a_n)& b_n\\ b_1&0&\cdots  0&b_n& -(a_n+a_1)
\end{pmatrix}.
\]
A direct computation shows that the matrix $\mathcal{M}_\theta\Pev_\theta$ is antisymmetric. Due to $\mathcal{M}_\theta^\top = \mathcal{M}_\theta$, this can be written as
\begin{equation}
\label{eqn:AM}
\Pev_\theta^\top \mathcal{M}_\theta= -\mathcal{M}_\theta \Pev_\theta.
\end{equation}
By \cite[Theorem 3]{Izm}, we have $\det \mathcal{M}_\theta \ne 0$ whenever $\sum_{j=1}^n \theta_{j-1/2} \not\equiv 0(\mathrm{mod}\ 2\pi)$. If this is the case, then
\[ {\cal M}^{-1}_\theta \Pev_\theta^\top {\cal M}_\theta = -\Pev_\theta, \]
and hence the matrices $\Pev_\theta^\top$ and $-\Pev_\theta$ have the same Jordan normal form. This implies Theorem \ref{SymSpectrum} under the assumption $\sum_{j=1}^n \theta_{j-1/2} \not\equiv 0(\mathrm{mod}\ 2\pi)$. The general case follows by continuity.
\end{proof}

\begin{remark}
{\rm Theorem \ref{SymSpectrum} holds for any matrix of the form \eqref{eqnA} with $b_j^2-a_j^2=1$ for all $j$. }
\end{remark}
\begin{remark}
{\rm According to formula \eqref{lengths} in Lemma \ref{CoordPolygon}, the matrix $\Mev_\theta$ computes the side lengths of a polygon from its support numbers:
$\ell = \Mev_\theta p.$ The symmetry of $\Mev_\theta$ is explained by the fact that $\ell_i = \dfrac{\partial\, \mathrm{area}(p)}{\partial p_i}$, where $\mathrm{area}(p)$ is the area of the polygon, so that
\[
\Mev_\theta = \left( \frac{\partial^2\mathrm{area}}{\partial p_i \partial p_j} \right)
\]
The operator $p \mapsto\Mev_\theta p$ is a discrete analog of the operator $p \mapsto p+p''$ (the radius of curvature in terms of the support function). The spectrum of $\Mev_\theta$, its relation to the discrete Wirtinger inequality and to the isoperimetric problem are discussed in \cite{Izm}.}
\end{remark}
\begin{remark}
{\rm It is easy to show that the matrix $-\Pev_\theta^\top$ transforms the side lengths of a polygon to the side lengths of its evolute: $\ell^* = -\Pev_\theta^\top\ell$. This agrees with  equation \eqref{eqn:AM}:
\[ \Mev_\theta\Pev_\theta p = \Mev_\theta p^* = \ell^* = -\Pev_\theta^\top \ell = -\Pev_\theta^\top \Mev_\theta p. \]
The antisymmetry of $\Mev_\theta\Pev_\theta$ is equivalent to $\langle\Mev_\theta\Pev_\theta p, p \rangle = 0$, that is, to
$\sum_i p_i \ell_i^* = 0.$ 
We do not know any geometric explanation of this antisymmetry.}
\end{remark}
\begin{remark}
{\rm Theorem \ref{SymSpectrum} shows that if $n$ is odd, then the matrix $\mathcal{P}_\theta$ is degenerate. We have already known this from Proposition \ref{KernelRank}.}
\end{remark}

\subsubsection{The limit behavior of iterated $\Pev$-evolutes}
\label{limitbehavior}

Theorem \ref{SymSpectrum} states that the spectrum of the matrix $\Pev_\theta$ is symmetric with respect to 0. Let $\lambda$ be the maximum module of eigenvalues of $\Pev_\theta$. Generically, there are three options: (i) $\Pev_\theta$ has precisely two eigenvalues of module $\lambda$: $\lambda$ and $-\lambda$; (ii) $\Pev_\theta$ has precisely two eigenvalues of module $\lambda$: $\lambda i$ and $-\lambda i$;  (iii) $\Pev_\theta$ has precisely four eigenvalues of module $\lambda$: $\pm\mu$ and $\pm\overline\mu$, where $|\mu|=\lambda$ and $\mu$ is neither real, nor purely imaginary. In Case (i), $\lambda^2$ is a multiple 2 eigenvalue of $\Pev_\theta^2$, in Case (ii), $-\lambda^2$ is a multiple 2 eigenvalue of $\Pev_\theta^2$. In both these cases, $\Pev_\theta^2$ possesses a two-dimensional eigenspace with the eigenvalue, respectively, $\lambda^2$ or $-\lambda^2$. In Case (iii), $\Pev_\theta$ has a four-dimensional invariant subspace with the eigenvalues $\pm\mu,\pm\overline\mu$. 

Take a randomly chosen generic (cooriented) $n$-gon $\P=\P_0$ (in the drawings below, $n=6$); let $\theta$ be the sequence of turning angles and $\lambda$ be the maximal module of eigenvalues of $\Pev_\theta$. Consider the sequence of $\Pev$-evolutes:  $\P_1,\P_2,\P_3,\dots$. Roughly speaking, these evolute will exponentially expand with the base $\lambda$ and drift away from the origin. To compensate these expansion and drifting, we apply to every polygon $\P_0,\P_1,\P_2,\dots$ two consecutive transformations: a translation which shifts the vertex centroid to the origin, and then a rescaling with respect to the origin which makes the maximum distance from the vertices to the origin equal to $1$. We keep the notation $\P_0,\P_1,\P_2,\dots$ for the transformed polygons.

\begin{figure}[htbp]
\centering
\includegraphics[width=4.1in]{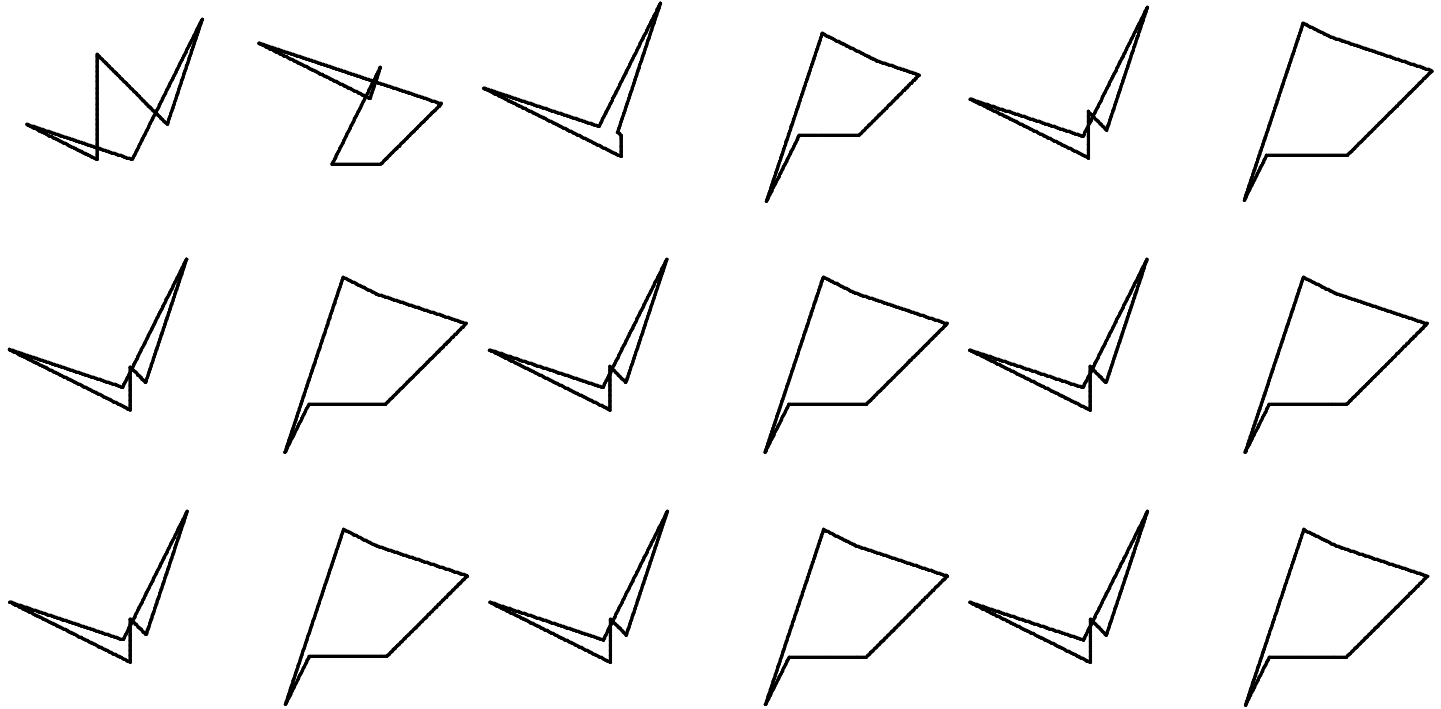}
\caption{A polygon and a sequence of $\Pev$-evolutes; Case (i).}
\label{positive}
\end{figure}

\begin{figure}[htbp]
\centering
\includegraphics[width=4.1in]{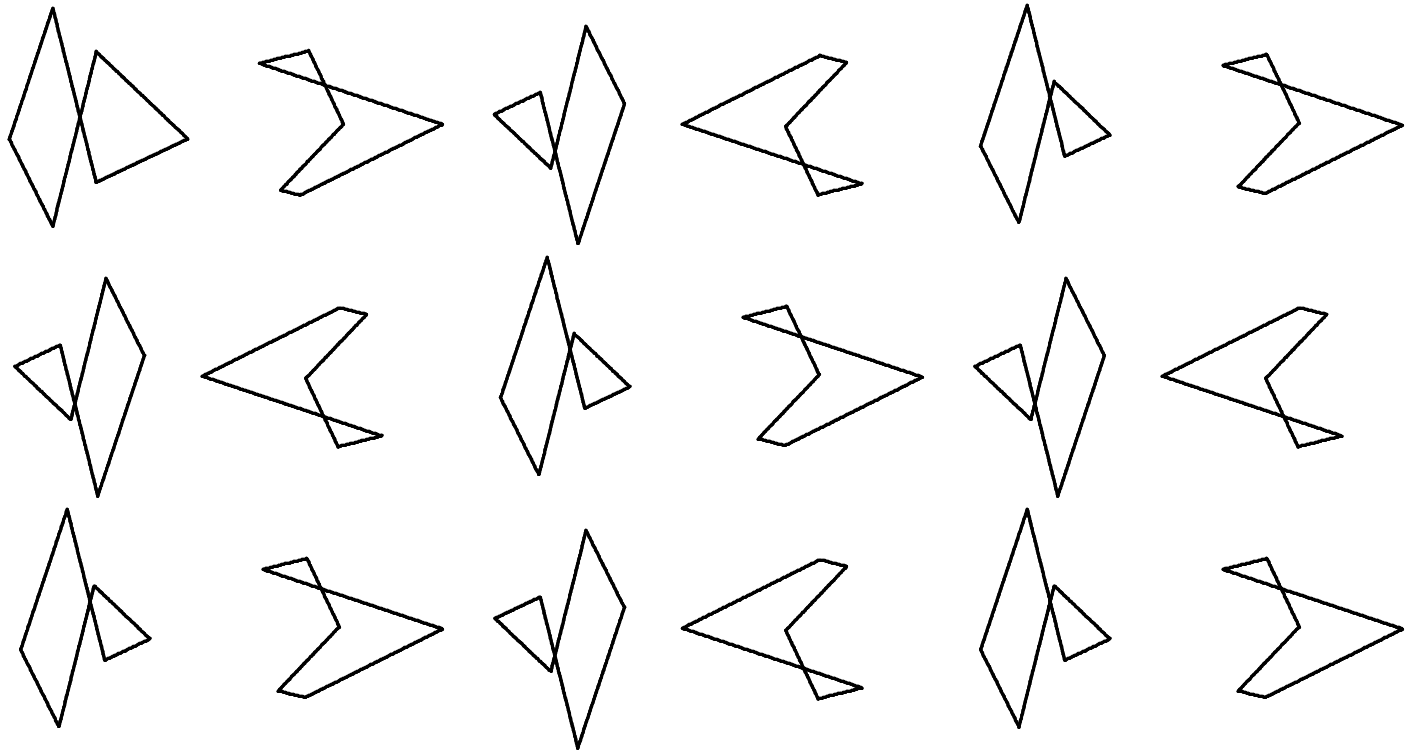}
\caption{A polygon and a sequence of $\Pev$-evolutes; Case (ii).}
\label{negative}
\end{figure}

If $\theta$ represents the first of the three cases (the maximum module eigenvalues of $\Pev_\theta$ are $\lambda$ and $-\lambda$), then, for a large $N$, the polygons $\P_N$ and $\P_{N+2}$ are almost the same, and the limit polygons $\lim\P_{2n}$ and $\lim\P_{2n+1}$ are (up to a translation and a rescaling), $\Pev$-evolutes of each other (see Figure \ref{positive}). If $\theta$ represents the second of the three cases (the maximum module eigenvalues of $\Pev_\theta$ are $\lambda i$ and $-\lambda i$), the polygons $\P_N$ and $\P_{N+2}$ are almost central symmetric to each other, the polygon $\mathbf{Q}_1=\lim\limits_{n\to\infty}\P_{4n+1}$ is the $\Pev$-evolute of $\mathbf{Q}_0=\lim\limits_{n\to\infty}\P_{4n}$, and the $\Pev$-evolute of $\mathbf{Q}_1$ is $-\mathbf{Q}_0$ (again up to a translation and a rescaling; see Figure \ref{negative}).

In the third case (the maximum module eigenvalues of $\Pev_\theta$ are $\pm\mu$ and $\pm\overline\mu$ where $\mu$ is neither real, nor purely imaginary), the behavior of the sequence of $\Pev$-evolutes appears irregular; see an example of this in Figure \ref{imaginary}. Still, in Figure \ref{imaginary}, for $N$ large, there is a certain similarity between $\P_N$ and $-\P_{N+8}$; this may be an indication of a proximity of $\mu^4$ to some negative real number.

\begin{figure}[htbp]
\centering
\includegraphics[width=3.25in]{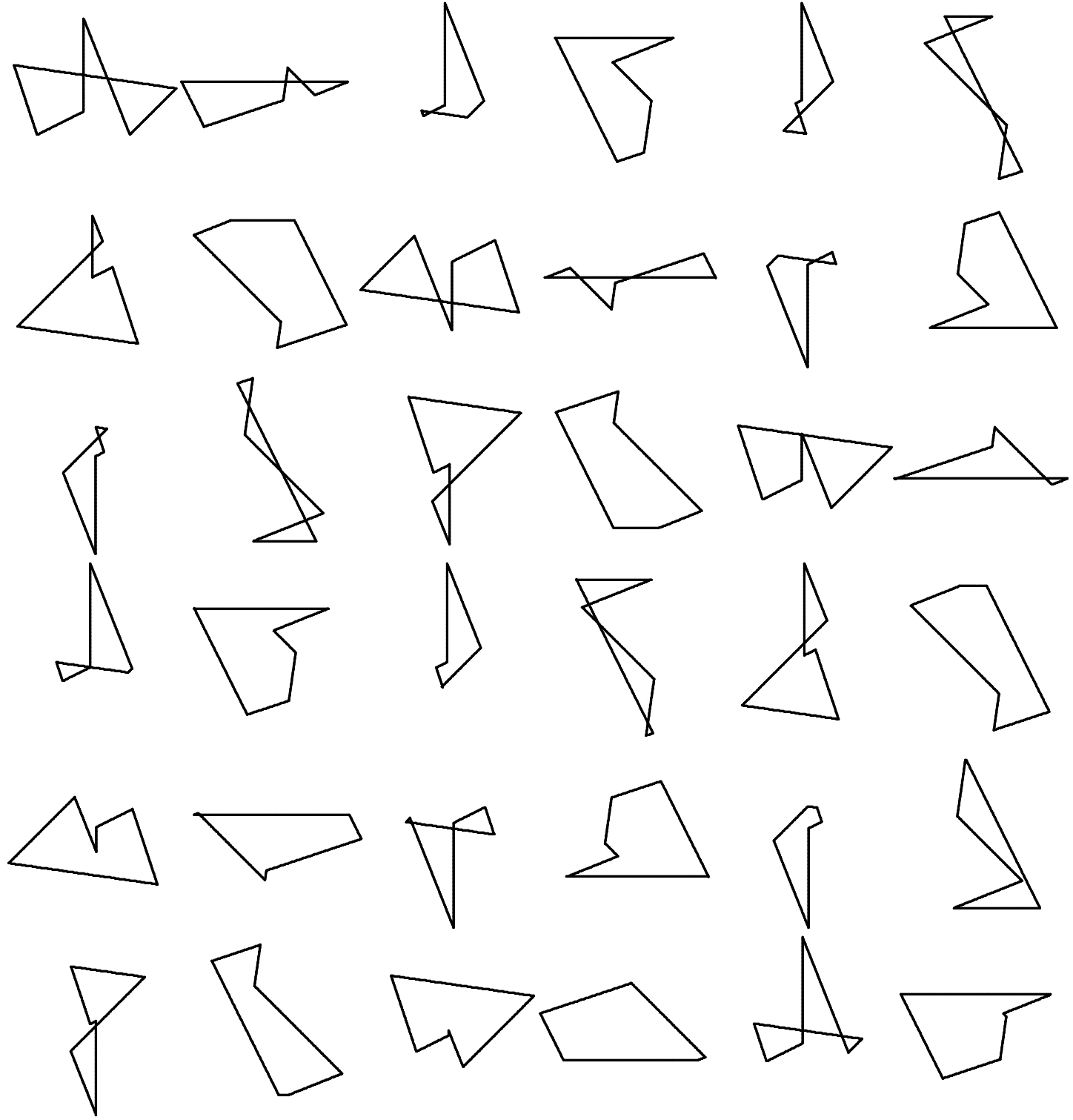}
\caption{A polygon and a sequence of $\Pev$-evolutes; Case (iii).}
\label{imaginary}
\end{figure}

\section{$\Aev$-evolutes}\label{A-evo}

As we noted in Introduction, there exists another way to discretize the evolute construction. The vertices of this evolute of a polygon are the centers of the circles tangent to the triples of the consecutive sides of the given polygon, and its sides are bisectors of angles of the given polygon. We call this evolute the angle bisector evolute, or an $\Aev$-evolute, and we denote an $\Aev$-evolute of a polygon $\P$ by $\Aev(\P)$. Since every angle has two bisectors, this construction is not single valued: an $n$-gon has as many as $2^n$ $\Aev$-evolutes.

\subsection{$\Aev_o$-evolutes}\label{Ao-evo}

\subsubsection{Definition of  $\Aeo$-evolute}\label{defAo}

To secure a choice of a preferred $\Aev$-evolute, we apply the construction to a (co)oriented polygon. If $l$ and $l'$ are two oriented crossing lines, and $e$ and $e'$ are their orienting unit vectors, then there appear two non-zero vectors, $e+e'$ and $e'-e$. The oriented lines generated by these vectors are denoted by $l^\circ$ and $l^\ast$; they are called, respectively, an {\it exterior} and {\it interior bisector}\footnote{If $l$ and $l'$ are consecutive sides of a cyclically oriented polygon, then $l^\circ$ and $l^\ast$ are the exterior and interior bisectors of the angle of this polygon; this is a geometric justification of this terminology.} of the angle $\angle(l,l')$. The definition of the lines $l^\circ$ and $l^\ast$, {\it but not of their orientatiions}, can be continuously extended to the case when the lines $l$ and $l'$ (but not necessarily their orientations) coincide, provided that the ``intersection point" $P$ is marked on $l=l'$: if the orientations of $l$ and $l'$ agree, then $l^\circ=l,\, P\ni l^\ast\perp l$; if they disagree, then $P\ni l^\circ\perp l,\, l^\ast=l$. These constructions are illustrated in Figure \ref{OrBisector}.

\begin{figure}[htbp]
\centering
\includegraphics[width=4.6in]{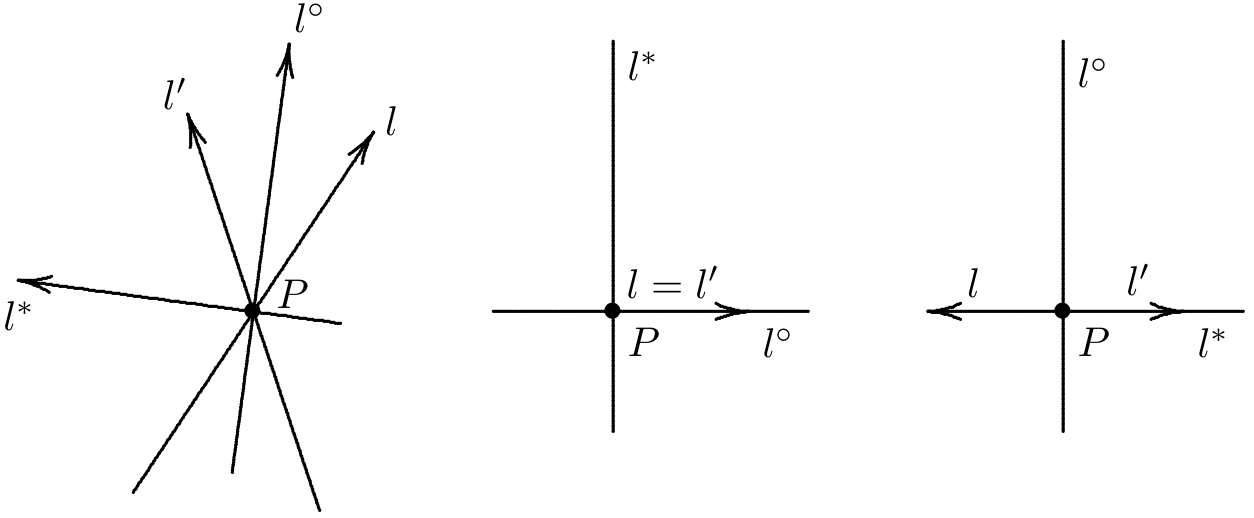}
\caption{The bisectors of oriented lines.}
\label{OrBisector}
\end{figure}

\begin{definition} \label{Aeo}
Consider a cooriented polygon $\P =(l_1, \ldots, l_n)$. The cooriented polygon $\P^*=(l^*_{1+\frac12}, \ldots, l^*_{n+\frac12})$, where $l^\ast_{i+\frac12}$ is the interior bisector of the angle $\angle(l_i,l_{i+1})$, is called the  \emph{oriented angle bisector evolute of $\P$}. We will denote it by $\Aeo(\P)$. 
\end{definition}
\begin{remark}
{\rm 
Obviously, an $n$-gon has $2^n$ different coorientations, precisely as it has $2^n$ different choices of bisectors. But the choices of coorientations and bisectors do not bijectively correspond to each other: if a coorientation is replaced by the opposite coorientation, then the interior and exterior bisectors remain interior and exterior bisectors (although their orientations reverse). Thus, only $2^{n-1}$ of $2^n$ angle bisector evolutes described in Section \ref{discretization} can be $\Aeo$-evolutes. Namely, they correspond to the choices of angle bisectors with an even number of the exterior bisectors.
}
\end{remark}

Notice that the shape of an $\Aeo$-evolute depends very essentially on the choice of the orientations of the sides. Figure \ref{Aeo_vs_Aec} shows two $\Aeo$-evolutes of the same pentagon $\P$ endowed with two different orientations: the first one, $\mathbf Q=Q_1Q_2Q_3Q_4Q_5$, corresponds to the ``cyclic" orientation of $\P$, $\overrightarrow{P_{\frac12}P_{\frac32}}$, $\overrightarrow{P_{\frac32}P_{\frac52}}$, $\overrightarrow{P_{\frac52}P_{\frac72}}$, $\overrightarrow{P_{\frac72}P_{\frac92}}$, $\overrightarrow{P_{\frac92}P_{\frac12}}$, the second one, $\mathbf R=R_1R_2R_3R_4R_5$, corresponds to the orientation of $\overrightarrow{P_{\frac32}P_{\frac12}}$, $\overrightarrow{P_{\frac32}P_{\frac52}}$, $\overrightarrow{P_{\frac52}P_{\frac72}}$, $\overrightarrow{P_{\frac92}P_{\frac72}}$, $\overrightarrow{P_{\frac12}P_{\frac92}}$.

\begin{figure}[htbp]
\centering
\includegraphics[width=4in]{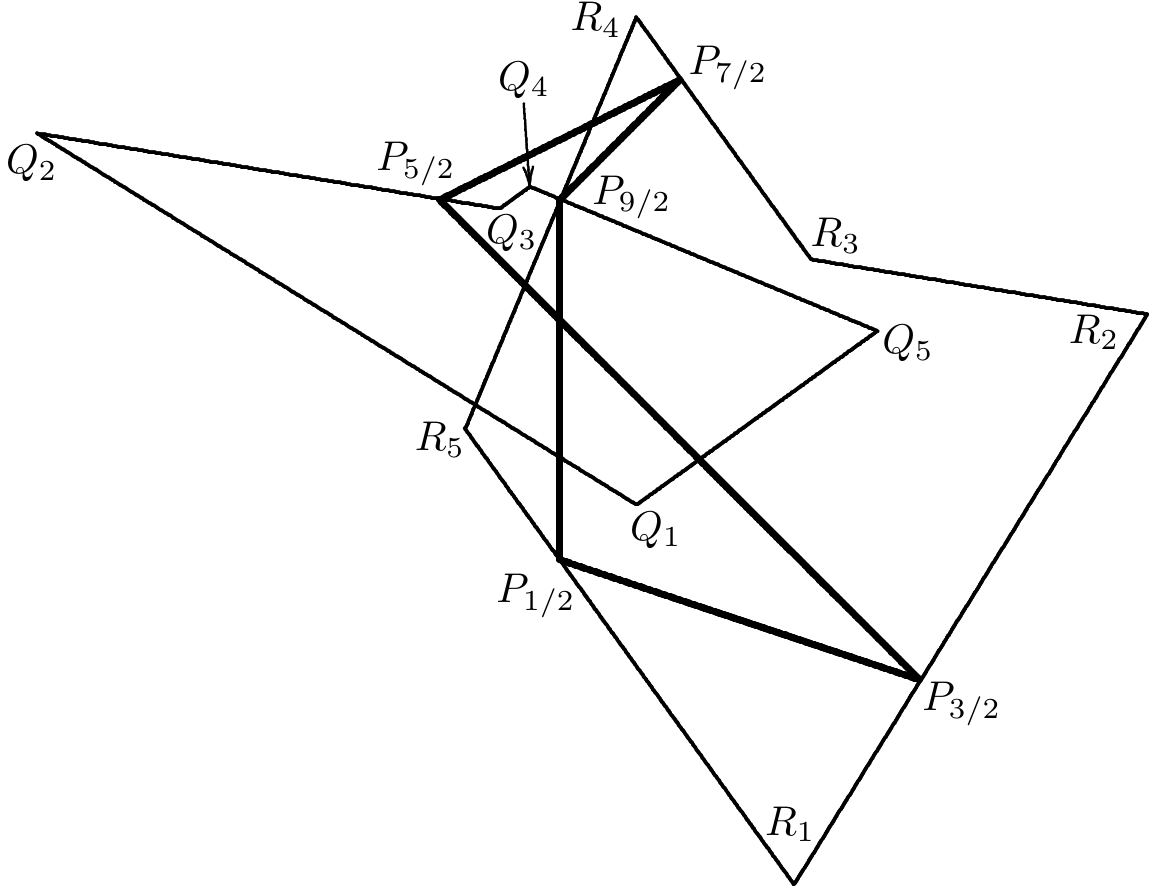}
\caption{Two $\Aeo$-evolutes of a polygon $\P=P_{\frac12}P_{\frac32}P_{\frac52}P_{\frac72}P_{\frac92}$.}
\label{Aeo_vs_Aec}
\end{figure}

\begin{remark}
{\rm 
By definition of a cooriented polygon, the consecutive lines are not parallel, hence the vertices $P_{j+1/2}$ and the bisectors $l^*_{j+1/2}$ are well-defined. However, it can happen that the $\Aeo$-evolute of a cooriented polygon has a pair of parallel consecutive sides: if the lines $l_{j-1}$ and $l_{j+1}$ are parallel, then $l^*_{j-1/2}$ and $l^*_{j+1/2}$ are parallel. This is similar to the evolute escaping to infinity at an inflection point in the classical case. 
}
\end{remark}
 
 If $\P$ is a discrete multi-hedgehog, i.e., it has only positive turning angles, then $\Aeo(\P)$ also has only positive turning angles, and so $\Aeo$ is a well-defined transformation on the space of discrete multi-hedgehogs. 

Recall our convention \eqref{SmallTurn} that the values of the turning angles are taken between $-\pi$ and $\pi$. This allows to use them in computing the coordinates of the $\Aeo$-evolute.

\begin{lemma}[$\Aeo$-analog of Lemma \ref{evolsupp}]
\label{ParamBisect}
 Let $\P=\{(\alpha_{j},p_{j})\}$ and $\Aeo(\P)=\{(\alpha_{j+\frac12}^*,p_{j+\frac12}^*)\}$. Then
\[ \alpha^*_{j+\frac12} = \alpha_j + \frac{\theta_{j+\frac12}}2 + \frac{\pi}2, \quad p^*_{j+\frac12} = \frac{p_{j+1} - p_j}{2 \sin(\theta_{j+\frac12}/2)}. \]
The vector $\theta^*=(\theta_1^*,\dots, \theta_n^*)$ of the turning angles of $\Aeo(\P)$ is computed from the vector $\theta = \left(\theta_{1+\frac12}, \ldots, \theta_{n+\frac12}\right)$ of the turning angles of $\P$ by 
\begin{equation}
\label{fAngles}
\theta^* = \frac12 (\id +\mathcal{Z}) \theta,
\end{equation}
where $\mathcal{Z}$ is given by \eqref{eqnZ}. 
\end{lemma}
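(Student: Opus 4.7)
The plan is to verify the three formulas in order by direct computation, using only the coordinate formulas from Lemma \ref{CoordPolygon} and the definition of the interior bisector given before Definition \ref{Aeo}.

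For the angle coordinate, I would let $e_i = (-\sin\alpha_i, \cos\alpha_i)$ denote the orienting unit vector of $l_i$. By the convention established in Figure \ref{OrBisector}, the interior bisector $l^*_{j+\frac12}$ is oriented by $e_{j+1} - e_j$. Applying the sum-to-product identities with $\bar\alpha := \frac{\alpha_j + \alpha_{j+1}}{2} = \alpha_j + \frac{\theta_{j+\frac12}}{2}$, a short calculation gives
\[ e_{j+1} - e_j = -2\sin\!\left(\frac{\theta_{j+\frac12}}{2}\right)(\cos\bar\alpha, \sin\bar\alpha). \]
Since $\theta_{j+\frac12} \in (0,\pi)$, the scalar factor is negative, so the orienting direction of $l^*_{j+\frac12}$ is $-(\cos\bar\alpha, \sin\bar\alpha) = (-\sin(\bar\alpha+\frac{\pi}{2}), \cos(\bar\alpha+\frac{\pi}{2}))$. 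Reading off the corresponding coorientation yields $\alpha^*_{j+\frac12} = \bar\alpha + \frac{\pi}{2} = \alpha_j + \frac{\theta_{j+\frac12}}{2} + \frac{\pi}{2}$.

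For the signed distance $p^*_{j+\frac12}$, I would use that $l^*_{j+\frac12}$ passes through the vertex $P_{j+\frac12}$, so $p^*_{j+\frac12} = \langle P_{j+\frac12}, n^*_{j+\frac12}\rangle$ with $n^*_{j+\frac12} = (-\sin\bar\alpha, \cos\bar\alpha)$. Substituting the vertex formula \eqref{vertexcoord} and expanding, the inner product groups into
\[ p^*_{j+\frac12} = \frac{p_{j+1}\cos(\bar\alpha - \alpha_j) - p_j\cos(\alpha_{j+1} - \bar\alpha)}{\sin\theta_{j+\frac12}}. \]
Both cosines equal $\cos(\theta_{j+\frac12}/2)$, and after using $\sin\theta_{j+\frac12} = 2\sin(\theta_{j+\frac12}/2)\cos(\theta_{j+\frac12}/2)$, one obtains the claimed formula $p^*_{j+\frac12} = \frac{p_{j+1} - p_j}{2\sin(\theta_{j+\frac12}/2)}$.

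The turning angle formula follows immediately: subtracting consecutive values of $\alpha^*$ gives
\[ \alpha^*_{i+\frac12} - \alpha^*_{i-\frac12} = (\alpha_i - \alpha_{i-1}) + \frac{\theta_{i+\frac12} - \theta_{i-\frac12}}{2} = \frac{\theta_{i-\frac12} + \theta_{i+\frac12}}{2}, \]
which under the natural identification is the $i$-th component of $\frac{1}{2}(\id + \mathcal{Z})\theta$. The only delicate point in this argument is justifying that the orienting convention $e_{j+1} - e_j$ really picks out the \emph{interior} bisector in the sense of the text, and that the sign of $\sin(\theta_{j+\frac12}/2)$ lines up so that $l^*_{j+\frac12}$ inherits the orientation claimed; once these conventions are pinned down the rest is straightforward trigonometry.
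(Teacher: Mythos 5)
Your proof is correct and follows essentially the same route as the paper: identify the direction of the interior bisector (the paper rotates $l_j$ through $\theta_{j+\frac12}/2$ and then $\pi/2$, you compute $e_{j+1}-e_j$ directly by sum-to-product, which amounts to the same thing), then obtain $p^*_{j+\frac12}$ as the scalar product of the normal $\bigl(-\sin\bar\alpha,\cos\bar\alpha\bigr)$ with the vertex $P_{j+\frac12}$ from Lemma \ref{CoordPolygon}. Deriving \eqref{fAngles} by differencing the $\alpha^*$ formula rather than via the exterior bisectors is a harmless variant, and your explicit attention to the sign of $\sin(\theta_{j+\frac12}/2)$ (which fixes the coorientation of $l^*_{j+\frac12}$) is, if anything, more careful than the paper's own argument.
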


\begin{proof}
The turning angle $\theta^*_j$ from $l^*_{j-\frac12}$ to $l^*_{j+\frac12}$ equals the turning angle between the exterior bisectors $l^\circ_{j-\frac12}$ and $l^\circ_{j+\frac12}$. The turning angles from $l^\circ_{j-\frac12}$ to $l_j$ and from $l_j$ to $l^\circ_{j+\frac12}$ are equal to $\dfrac{\theta_{j-\frac12}}2$ and $\dfrac{\theta_{j+\frac12}}2$, respectively, and formula \eqref{fAngles} follows.
	
The formula for $\alpha^*_{j+\frac12}$ is derived in a similar way: first rotate $l_j$ to $l^\circ_{j+\frac12}$ and then to $l^*_{j+\frac12}$. To compute the signed distance from the origin to the line $l^*_{j+\frac12}$, take the scalar product of its normal
\[ n^*_{j+\frac12} = \left( -\sin\left( \alpha_j + \frac{\theta_{j+\frac12}}2 \right), \cos\left( \alpha_j + \frac{\theta_{j+\frac12}}2 \right) \right) \]
with the position vector of the point $P_{j+\frac12}$ whose coordinates are given in Lemma \ref{CoordPolygon}.
\end{proof}

\begin{corollary}
Adding a constant to all $p_j$ does not change the $\Aeo$-evolute.
\end{corollary}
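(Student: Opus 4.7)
The plan is extremely short: the statement follows by immediate inspection of the coordinate formulas for $\Aeo(\P)$ given in Lemma \ref{ParamBisect}. The $\Aeo$-evolute is determined by the pair $(\alpha^*_{j+\frac12}, p^*_{j+\frac12})$ for each $j$, so it suffices to check that neither coordinate changes when $p_j \mapsto p_j + c$ for every $j$.

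First, I would note that the angle coordinate
\[
\alpha^*_{j+\frac12} = \alpha_j + \frac{\theta_{j+\frac12}}{2} + \frac{\pi}{2}
\]
depends only on $\alpha_j$ and on the turning angle $\theta_{j+\frac12} = \alpha_{j+1} - \alpha_j$, hence not on the support numbers $p_j$ at all. So the directions of the sides of $\Aeo(\P)$ are obviously unaffected by the substitution $p_j \mapsto p_j + c$.

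Next, I would turn to the signed distance
\[
p^*_{j+\frac12} = \frac{p_{j+1} - p_j}{2 \sin(\theta_{j+\frac12}/2)}.
\]
The numerator is the difference of consecutive support numbers, which is invariant under $p_j \mapsto p_j + c$, while the denominator depends only on the turning angle. Therefore $p^*_{j+\frac12}$ is unchanged, and the corollary follows.

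There is no real obstacle here; the only thing worth a remark is the geometric meaning. Adding the same constant $c$ to all $p_j$ corresponds to shifting each side $l_j$ by distance $c$ in the direction of its coorienting normal, i.e., replacing $\P$ by an ``outer parallel'' polygon (the discrete analog of an equidistant curve). The corollary is thus the discrete counterpart of the classical fact recalled in Section \ref{classical} that equidistant curves share the same evolute.
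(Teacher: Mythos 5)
Your proof is correct and is exactly the argument the paper intends: its proof of this corollary simply says it is clear from the formulas of Lemma \ref{ParamBisect} (and can also be seen geometrically), and you have spelled out both the formula check and the equidistant-curve interpretation. No gaps.
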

\begin{proof}
It is clear from the formulas of Lemma \ref{ParamBisect}, and also can be seen geometrically.
\end{proof}

This is similar to the smooth case: the evolute does not change if every point of the curve is moved by the same distance along the coorienting normal. 

Let us emphasize  the difference between the $\Pev$ and $\Aeo$ transformations. While the $\Pev$-evolute essentially preserves the angles of the cooriented lines and acts solely on the support numbers by a linear operator depending on the turning angles, the $\Aeo$-evolute acts on pairs $(\theta_j, p_j)$ as a skew product: the turning angles are changed by a linear operator with constant coefficients while the support numbers are changed by a linear operator whose coefficients depend on the turning angles.

\subsubsection{The equiangular case}\label{equiangularAo}

\begin{theorem}[$\Aeo$-analog of  Theorem \ref{EvolDHPerp}]
\label{EvolDH}
Let $\P$ be an equiangular hedgehog with $n$ sides tangent to a hypocycloid $h$ of order $m$. Then $\Aeo(\P)$ is tangent to $\E(h)$, scaled by  factor $\dfrac{\sin(\pi m/n)}{m\sin(\pi/n)}$ with respect to its center.
\end{theorem}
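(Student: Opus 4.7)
The plan is to mimic the proof of Theorem \ref{EvolDHPerp}, substituting the $\Aeo$-formulas from Lemma \ref{ParamBisect} in place of the $\Pev$-formulas from Lemma \ref{ParamPBisect}. Since the $\Aeo$-evolute acts linearly on the support numbers when the turning angles are held fixed (this is immediate from Lemma \ref{ParamBisect}, because $\theta^*$ depends only on $\theta$), and since $\P$ is a linear combination $a\mathbf{C}_m(n)+b\mathbf{S}_m(n)$ in the equiangular case, it suffices to compute $\Aeo(\mathbf{C}_m(n))$ and $\Aeo(\mathbf{S}_m(n))$ separately and then assemble the result.

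Normalize so that $\alpha_j=2\pi j/n$; then $\theta_{j+\frac12}=2\pi/n$, and Lemma \ref{ParamBisect} gives
\[ \alpha^*_{j+\frac12} = \frac{2\pi j}{n} + \frac{\pi}{n} + \frac{\pi}{2}, \qquad p^*_{j+\frac12} = \frac{p_{j+1}-p_j}{2\sin(\pi/n)}. \]
For $\P=\mathbf{C}_m(n)$, a sum-to-product identity yields
\[ p^*_{j+\frac12} = \frac{\cos\frac{2\pi m(j+1)}{n}-\cos\frac{2\pi mj}{n}}{2\sin(\pi/n)} = -\frac{\sin(\pi m/n)}{\sin(\pi/n)}\,\sin\!\left(\frac{2\pi m j}{n}+\frac{\pi m}{n}\right). \]
Noting that $\frac{2\pi m j}{n}+\frac{\pi m}{n}=m\alpha^*_{j+\frac12}-\frac{m\pi}{2}$, this rewrites as
\[ p^*_{j+\frac12} = -\frac{\sin(\pi m/n)}{\sin(\pi/n)}\,\sin\!\left(m\!\left(\alpha^*_{j+\frac12}-\frac{\pi}{2}\right)\right). \]
An analogous computation for $\mathbf{S}_m(n)$ gives $p^*_{j+\frac12}=\frac{\sin(\pi m/n)}{\sin(\pi/n)}\cos\!\left(m(\alpha^*_{j+\frac12}-\frac{\pi}{2})\right)$.

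By Lemma \ref{evolsupp}, the smooth evolute $\E(h)$ of the hypocycloid $h$ with support function $p_h(\alpha)=a\cos m\alpha+b\sin m\alpha$ has support function
\[ p_{\E(h)}(\alpha)=p_h'\!\left(\alpha-\frac{\pi}{2}\right) = m\!\left(-a\sin\!\left(m\!\left(\alpha-\frac{\pi}{2}\right)\right)+b\cos\!\left(m\!\left(\alpha-\frac{\pi}{2}\right)\right)\right). \]
Combining the two computations above by linearity, the support numbers of $\Aeo(\P)=a\,\Aeo(\mathbf{C}_m(n))+b\,\Aeo(\mathbf{S}_m(n))$ at the angle $\alpha^*_{j+\frac12}$ agree with $\lambda\,p_{\E(h)}(\alpha^*_{j+\frac12})$ for $\lambda=\frac{\sin(\pi m/n)}{m\sin(\pi/n)}$. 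Hence each side of $\Aeo(\P)$ is tangent to the curve $\lambda\E(h)$, which is $\E(h)$ scaled by $\lambda$ about its center (the center of $h$, which is also the center of $\E(h)$).

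I do not anticipate a genuine obstacle here: the argument is structurally identical to the $\Pev$-case, and the only nontrivial ingredient is the sum-to-product identity responsible for the appearance of $\sin(\pi m/n)$ (rather than $\sin(2\pi m/n)$ as in the $\Pev$-case). The main points requiring care are (i) the half-step $\pi/n$ in the angle shift, which is what distinguishes $\Aeo$ from $\Pev$ and produces the different scaling factor, and (ii) the verification that the center of the rescaling is indeed the center of $h$, which follows because the constant and first-harmonic ($\mathbf{C}_0,\mathbf{C}_1,\mathbf{S}_1$) summands are absent when $\P$ is tangent to a centered hypocycloid of order $m\ge 2$, and the linear operation $p\mapsto p^*$ sends purely $m$-th harmonic input to purely $m$-th harmonic output.
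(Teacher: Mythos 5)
Your proposal is correct and follows essentially the same route as the paper's proof: apply the formulas of Lemma \ref{ParamBisect} to $\mathbf{C}_m(n)$ and $\mathbf{S}_m(n)$, use the sum-to-product identity to extract the factor $\sin(\pi m/n)/\sin(\pi/n)$, and compare with the support function of $\E(h)$ from Lemma \ref{evolsupp}. The only difference is that you spell out the intermediate trigonometric step and the justification that the scaling center is the center of $h$, which the paper leaves implicit.
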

\begin{proof}
One computes the coordinates of the first $\Aeo$-evolute of $\mathbf{C}_m(n)$ and $\mathbf{S}_m(n)$ using  the formulas of Lemma \ref{ParamBisect}:
\begin{gather*}
\left(\frac{2\pi  j}n, \cos  \frac{2\pi j m}n \right)_{j=1}^n \mapsto 
\frac{\sin(\pi m/n)}{\sin(\pi/n)} \left( \frac{\pi}2 + \frac{\pi}n + \frac{2\pi j}n, -\sin m \left( \frac{\pi}n + \frac{2\pi j}n \right) \right)_{j=1}^n,\\
\left(\frac{2\pi j}n, \sin  \frac{2\pi j m}n \right)_{j=1}^n \mapsto \frac{\sin(\pi m/n)}{\sin(\pi/n)} \left( \frac{\pi}2 + \frac{\pi}n + \frac{2\pi j}n, \cos m \left( \frac{\pi}n + \frac{2\pi j}n \right) \right)_{j=1}^n.
\end{gather*}
Hence the $\Aeo$-evolute of a hedgehog, tangent to the hypocycloid $h$ given by $p(\alpha) = a\cos m\alpha + b\sin m\alpha$, is tangent to the hypocycloid
\[ p(\alpha) = \frac{\sin(2\pi m/n)}{\sin(2\pi/n)} (-a\sin m(\alpha - \frac{\pi}2) + b\cos m(\alpha - \frac{\pi}2). \]
By Lemma \ref{evolsupp}, the evolute  $\E(h)$ has the equation$$p(\alpha) = m\left(-a\sin m\left(\alpha - \dfrac{\pi}2\right) + b\cos m\left(\alpha - \dfrac{\pi}2\right)\right).$$
\end{proof}
\begin{corollary} \label{scalingfactor}
$\Aeo^2(\P)$ arises from $\P$ through reversing the orientations of all sides and scaling by the factor $\dfrac{\sin^2(\pi m/n)}{\sin^2(\pi/n)}$.
\end{corollary}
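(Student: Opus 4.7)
The plan is to apply Theorem \ref{EvolDH} twice and track what happens to the Fourier coefficients of the support function, combined with the shifts of the direction angles.

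First I would note that Theorem \ref{EvolDH} applies to $\Aeo(\P)$ just as well as to $\P$: the $\Aeo$-evolute of an equiangular hedgehog with $n$ sides is again equiangular with $n$ sides (since the turning angles $\theta_{j+1/2}=2\pi/n$ are preserved by the action \eqref{fAngles} on equiangular polygons), and the sides of $\Aeo(\P)$ are tangent to $\E(h)$, which is still a hypocycloid of order $m$ (the evolute of the $m$-th harmonic is an $m$-th harmonic). Therefore the scaling factor in the second step is the \emph{same} as in the first step, namely $c:=\dfrac{\sin(\pi m/n)}{m\sin(\pi/n)}$.

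Iterating, $\Aeo^2(\P)$ is an equiangular hedgehog tangent to $c^2\,\E^2(h)$. Since on a hypocycloid of order $m$ the smooth evolute operator acts on the support function by the transformation $p(\alpha)\mapsto p'(\alpha-\pi/2)$, which is multiplication by $im$ on the complex $m$-th harmonic, we have $\E^2(h)=-m^2\,h$. Consequently $\Aeo^2(\P)$ is tangent to $-m^2c^2\,h=-\dfrac{\sin^2(\pi m/n)}{\sin^2(\pi/n)}\,h$.

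It remains to interpret this identity as ``reverse orientations of all sides and scale by $\sin^2(\pi m/n)/\sin^2(\pi/n)$''. For this step I would track the direction angles explicitly: by Lemma \ref{ParamBisect}, one application of $\Aeo$ shifts $\alpha_j=2\pi j/n$ to $2\pi j/n+\pi/n+\pi/2$, and two applications shift $\alpha_j$ to $2\pi(j+1)/n+\pi$. The extra $2\pi/n$ is just a cyclic relabeling of the sides (harmless for an equiangular polygon), while the extra $\pi$, together with the minus sign obtained above on the support numbers, is exactly the transformation $(\alpha,p)\mapsto(\alpha+\pi,-p)$ that reverses the coorientations of all sides. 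This gives the stated conclusion.

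The only delicate point, which I expect to be the main bookkeeping obstacle, is the second paragraph above: one must check that the ``scale by $c$'' statement of Theorem \ref{EvolDH} is invariant under iteration, i.e.\ that the center of the hypocycloid tangent to $\Aeo(\P)$ coincides (up to the center of the original hypocycloid) with the point used for rescaling in the second application. This follows from the proof of Theorem \ref{EvolDH}, which is carried out in coordinates centered at the hypocycloid's center and shows explicitly that the image polygon is tangent to a hypocycloid of the same order with the same center. Once this is verified, the computation above goes through cleanly, and the factor $c^2m^2=\sin^2(\pi m/n)/\sin^2(\pi/n)$ emerges without further work.
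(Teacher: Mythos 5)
Your proof is correct and is essentially the argument the paper intends: the corollary is stated without proof as an immediate consequence of Theorem \ref{EvolDH}, justified exactly as the paper justifies the analogous Corollary \ref{corEvolDHPerp} for $\Pev$-evolutes, namely by iterating the theorem, using $\E^2(h)=-m^2h$, and tracking the angle shift $\alpha_j\mapsto\alpha_j+2\pi/n+\pi$ (the paper dismisses the cyclic index shift the same way you do, in the remark following Theorem \ref{AngleEvolShapes}). Your care about the center of scaling and the identification of the $\pi$-shift plus sign change with coorientation reversal $(\alpha,p)\mapsto(\alpha+\pi,-p)$ is exactly right.
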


\begin{proposition}
\label{PSPres_A}
The vertex centroids of an equiangular hedgehog and its $\Aeo$-evolute coincide.
\end{proposition}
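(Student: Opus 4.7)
My plan is to mimic the proof of Proposition \ref{PsStPres}, exploiting that the $\Aeo$-transformation restricted to equiangular $n$-gons is linear in the support numbers and preserves equiangularity. Indeed, by formula \eqref{fAngles}, if all $\theta_{j+\frac12} = 2\pi/n$, then all turning angles $\theta^*_j$ of $\Aeo(\P)$ equal $2\pi/n$; and by the formula for $p^*_{j+\frac12}$ in Lemma \ref{ParamBisect}, the support numbers are transformed by a fixed linear map (depending on $n$ only). Combined with \eqref{eqn:VertCentr}, this shows that the vertex centroid is a linear function of the support numbers of an equiangular hedgehog. It therefore suffices to check the preservation of the vertex centroid on each Fourier harmonic in the decomposition \eqref{DiscrFourier}.

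For a harmonic $\mathbf{C}_m(n)$ or $\mathbf{S}_m(n)$ with $m \neq 1$ (and $m \not\equiv n-1 \pmod n$, which reduces to $m=1$ by \eqref{reduction_formulas}), Lemma \ref{PseudoSteiner2Origin} asserts that its vertex centroid lies at the origin. By Theorem \ref{EvolDH}, the $\Aeo$-image of such a harmonic is a discrete hypocycloid of the same order $m$, i.e., again a real linear combination of $\mathbf{C}_m(n)$ and $\mathbf{S}_m(n)$; hence it too has vertex centroid at the origin, so the claim is trivially true on these components.

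The delicate case is the first harmonic $a\mathbf{C}_1(n) + b\mathbf{S}_1(n)$: as noted in Section \ref{Fourier}, this is a degenerate hedgehog all of whose lines pass through $(a,b)$, so its vertex centroid equals $(a,b)$. Here Theorem \ref{EvolDH} degenerates (the ``evolute'' of a hypocycloid of order $1$ shrinks to a point), so I would verify the statement by direct computation from Lemma \ref{ParamBisect}. Setting $\beta_j = \frac{2\pi j}{n} + \frac{\pi}{n}$, one obtains
\[ p^*_{j+\frac12} = \frac{a\bigl(\cos\tfrac{2\pi(j+1)}{n} - \cos\tfrac{2\pi j}{n}\bigr) + b\bigl(\sin\tfrac{2\pi(j+1)}{n} - \sin\tfrac{2\pi j}{n}\bigr)}{2\sin(\pi/n)} = -a\sin\beta_j + b\cos\beta_j, \]
while $\alpha^*_{j+\frac12} = \beta_j + \tfrac{\pi}{2}$. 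Plugging $(a,b)$ into the equation $x\cos\alpha^*_{j+\frac12} + y\sin\alpha^*_{j+\frac12} = p^*_{j+\frac12}$ of the line $l^*_{j+\frac12}$ yields the identity $-a\sin\beta_j + b\cos\beta_j = p^*_{j+\frac12}$. Thus all lines of $\Aeo(a\mathbf{C}_1 + b\mathbf{S}_1)$ again pass through $(a,b)$, which is therefore the vertex centroid of the image.

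Combining the three cases via linearity, the vertex centroid of an equiangular hedgehog $\P$ and of $\Aeo(\P)$ both coincide with the vertex centroid of the first harmonic component of $\P$, which is preserved. The only real obstacle is the first harmonic case, where Theorem \ref{EvolDH} is uninformative and one must rely on the direct computation above.
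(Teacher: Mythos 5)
Your proposal is correct and follows essentially the same route as the paper: the paper's proof of Proposition \ref{PSPres_A} simply says it ``literally follows the proof of Proposition \ref{PsStPres} with the obvious modification,'' i.e., decompose into discrete harmonics, note that the higher harmonics and their $\Aeo$-images all have vertex centroid at the origin, and check that the first harmonic (a pencil of lines through $(a,b)$) is sent to a pencil through the same point. Your explicit computation for the first-harmonic case is exactly the ``obvious modification,'' carried out in detail (and it also follows from the displayed formulas in the proof of Theorem \ref{EvolDH} specialized to $m=1$).
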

\begin{proof}
Literally follows the proof of the Proposition \ref{PsStPres} with the obvious modification.
\end{proof}

Note the difference between the scaling factors in Theorems \ref{EvolDH} and \ref{EvolDHPerp} and the different behavior of the second evolute.

For $x>0$, we have $\sin mx < m\sin x$. Therefore the $\Aeo$-evolute of a discrete hypocycloid is ``smaller'' than the evolute of a smooth hypocycloid. At the same time, as the ratio $n/m$ (the number of sides to the order of hypocycloid) tends to infinity, the $\Aeo$-evolute tends to the smooth hypocycloid in the Hausdorff metric.

Figure \ref{5and9} shows the discrete astroids with five and nine sides and their $\Aeo$-evolutes. Compare this with Figure \ref{9and5}.

\begin{figure}[htbp]
\centering
\includegraphics[width=2.5in]{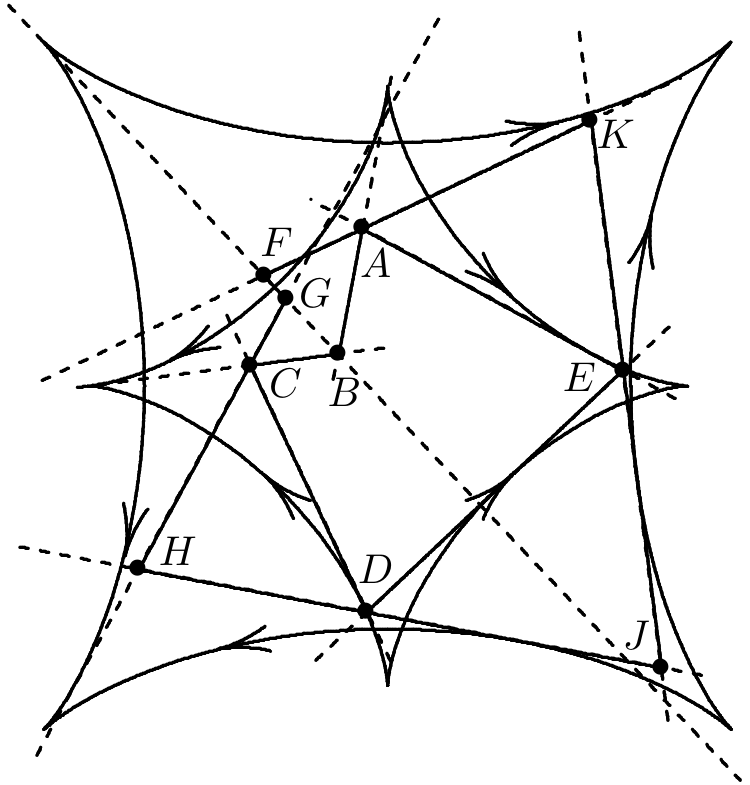}
\raise10pt\hbox{\includegraphics[width=2.5in]{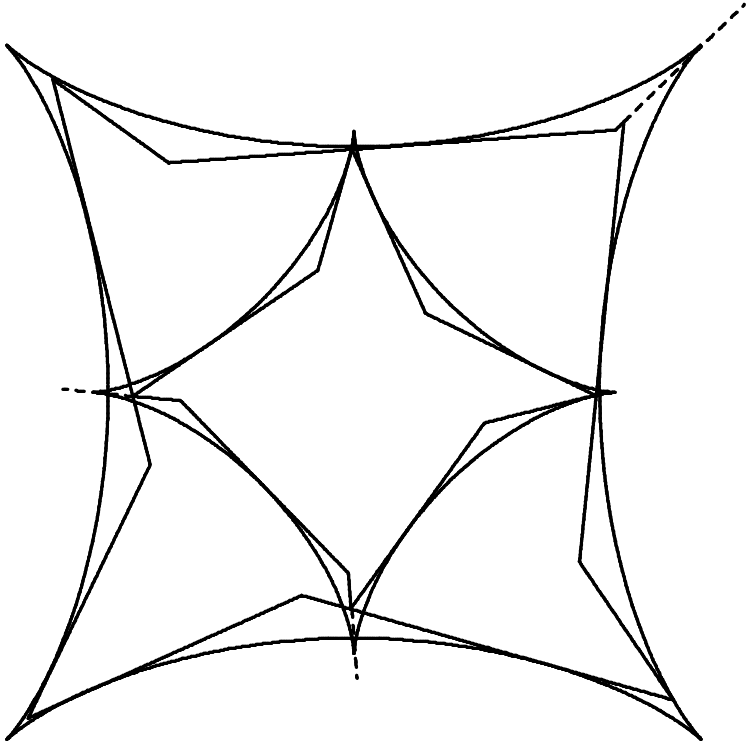}}
\caption{Discrete astroids and their $\Aeo$-evolutes.}
\label{5and9}
\end{figure}

The $\Aeo$-evolute of an equiangular hedgehog $\P$ degenerates to a point only if $\P$ is circumscribed about a circle. In this case, the discrete Fourier transform \eqref{DiscrFourier} contains no terms of order larger than $1$.

If \eqref{DiscrFourier}  contains higher order terms, then  $\Aeo^k(\P)$ expand to infinity: by Theorem \ref{EvolDH}, the $m$-th summand of $\Aeo^2(\P)$ equals that for $\P$, scaled by the factor $\dfrac{\sin^2(\pi m/n)}{\sin^2(\pi/n)}$. The highest discrete hypocycloids $\mathbf{C}_{n/2}(n)$ and $a \mathbf{C}_{(n-1)/2} + b \mathbf{S}_{(n-1)/2}$ are scaled by the largest factor. Therefore they determine the limiting shape of $\Aeo^k(\P)$.

The hedgehog $\mathbf{C}_{n/2}(n)$ is a regular $n$-gon with each side traversed twice in different directions. Its $\Aeo$-evolute is a regular $(n/2)$-gon rotated by $\frac{\pi}n$. The hedgehog $a \mathbf{C}_{(n-1)/2} + b \mathbf{S}_{(n-1)/2}$ has an irregular shape, and is not similar to its $\Aeo$-evolute, see Figure \ref{5and9}, left, for  $n=5$. Thus, for even $n$, the limiting shape is a regular $(n/2)$-gon, while for odd $n$, there are two limiting shapes which alternate.

Iterated $\Aeo$-evolutes of non-equiangular hedgehogs exhibit the same behavior.

\begin{theorem}
\label{AngleEvolShapes}
The iterated $\Aeo$-evolutes of discrete hedgehogs converge in the shape to discrete hypocycloids. Generically, these are the hypocycloids of the highest order: $n/2$ for even $n$, and $(n-1)/2$ for odd $n$.
\end{theorem}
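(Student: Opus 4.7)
The plan is to exploit the skew-product structure of the $\Aeo$-transformation noted in Section \ref{Ao-evo}: the turning-angle vector $\theta$ evolves autonomously under a fixed linear map, while the support numbers $p$ evolve under a linear map whose coefficients depend on the current $\theta$. So the argument naturally splits into (1) controlling the angle dynamics, (2) analysing the $p$-dynamics in the equiangular limit via Theorem \ref{EvolDH}, and (3) combining these by a perturbation argument.

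First I would analyse the dynamics on $\theta$. By Lemma \ref{ParamBisect}, $\theta^{(k+1)} = \frac12(\id + \mathcal{Z})\theta^{(k)}$, and this circulant matrix has eigenvalues $\frac12(1+\omega^j) = e^{i\pi j/n}\cos(\pi j/n)$, $\omega = e^{2\pi i/n}$, with moduli $|\cos(\pi j/n)|$. Only $j=0$ yields modulus $1$; its eigenvector is $(1,\ldots,1)^\top$, and the coordinate along it equals $\frac1n\sum_j\theta_j^{(k)} = \frac{2\pi}{n}$ at every step. Hence $\theta^{(k)}$ converges geometrically at rate $\cos(\pi/n)$ to the equiangular vector $\bar\theta=(\frac{2\pi}{n},\ldots,\frac{2\pi}{n})$.

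Second, I would study the support-number dynamics in the equiangular regime using the discrete Fourier decomposition \eqref{DiscrFourier}. Theorem \ref{EvolDH} and Corollary \ref{scalingfactor} show that $\lin\{\mathbf{C}_m(n), \mathbf{S}_m(n)\}$ is invariant under the equiangular $\Aeo$-map, and that $\Aeo^2$ acts on it by orientation reversal combined with scaling by $\dfrac{\sin^2(\pi m/n)}{\sin^2(\pi/n)}$. Since $\sin(\pi m/n)$ is strictly increasing on $[0, n/2]$, this scaling factor is strictly maximized at $m^*=n/2$ for even $n$ and $m^*=(n-1)/2$ for odd $n$, with a spectral gap depending only on $n$. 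Thus, in the purely equiangular case, any hedgehog whose expansion \eqref{DiscrFourier} contains a nonzero $m^*$-component converges in shape, after rescaling, to the corresponding discrete hypocycloid (alternating between two shapes for odd $n$, since $\mathbf{C}_{m^*}$ and $\mathbf{S}_{m^*}$ are not similar to each other).

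Third, I would transfer this analysis to general hedgehogs. Writing $p^{(k+1)} = T_{\theta^{(k)}}p^{(k)}$, the iterates satisfy $p^{(k)} = T_{\theta^{(k-1)}}\cdots T_{\theta^{(0)}}p^{(0)}$, a product of linear maps that converges to the autonomous map $T_{\bar\theta}$ at geometric rate $\cos(\pi/n)$. The main obstacle is making this perturbation rigorous: the $p$-dynamics is non-autonomous, and one must show that cumulative perturbations do not preferentially amplify a subdominant eigenspace. The key observation is that the spectral gap between the dominant eigenvalues of $T_{\bar\theta}$ and the next-largest ones is a fixed positive quantity depending only on $n$, whereas the perturbations $T_{\theta^{(k)}} - T_{\bar\theta}$ form a geometric sequence with ratio $\cos(\pi/n)$; a standard Lyapunov-type estimate for products of perturbed matrices then shows that the projection onto the dominant eigenspace survives, so that $p^{(k)}$, suitably rescaled, aligns with $\lin\{\mathbf{C}_{m^*}, \mathbf{S}_{m^*}\}$ in the limit. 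The genericity assumption is exactly that the initial hedgehog has a nonzero component in the $m^*$-eigenspace of $T_{\bar\theta}$, which expressed via \eqref{DiscrFourier} reads $(a_{m^*}, b_{m^*}) \neq (0,0)$.
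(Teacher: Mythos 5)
Your proposal is correct and follows essentially the same route as the paper: analyze the autonomous angle dynamics via the circulant map $\tfrac12(\id+\mathcal{Z})$, observe that the support-number maps converge to the equiangular operator $(\mathcal{Z}^\top-\id)/(2\sin(2\pi/n))$, and identify the dominant eigenspaces with the highest-order discrete hypocycloids. If anything, your third step spells out the non-autonomous perturbation argument more explicitly than the paper, which simply asserts that operator-norm convergence of the maps implies the conclusion.
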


\begin{proof}
Under the $\Aeo$-transformations, the sequence of turning angles is transformed by \eqref{fAngles}. This is a linear map with the eigenvalues
\[ \lambda_m = \frac{1+e^{\frac{2\pi i m}{n}}}2, \quad m = 0, 1, \ldots, n-1. \]
In particular, $\lambda_0 = 1$, and $|\lambda_m| < 1$ for $m > 0$. It follows that the turning angles of iterated evolutes converge to $\dfrac{2\pi k}{n}$, where $k$ is the turning number of the initial polygon. 
	
The turning angles and the support numbers are transformed according to
\[ (\theta^*, p^*) = (f(\theta), g_\theta(p)), \]
with $f$ given by \eqref{fAngles}, and
\[ g_\theta(p) = \mathcal{D}_\theta (\mathcal{Z}^\top - \id), \quad \mathcal{D}_\theta = \diag\left(\frac1{2\sin(\theta_1/2)}, \ldots, \frac1{2\sin(\theta_n/2)}\right), \]
where $p = (p_1, \ldots, p_n)$ and $p^* = (p^*_{1+\frac12}, \ldots, p^*_{n+\frac12})$. Since $f^k(\theta)$ converges to $\left( \dfrac{2\pi}n, \ldots, \dfrac{2\pi}n \right)$, the linear map $g_{f^k(\theta)}$ converges, in the operator norm, to the transformation
\[ \Aev = \frac{\mathcal{Z}^\top - \id}{2\sin(2\pi/n)}.\]
The eigenspaces of $\Aev$ correspond to the discrete hypocycloids, and those with the absolute largest eigenvalues, to the $\left\lfloor \dfrac{n}2\right\rfloor$-hypocycloids. This implies the theorem.
\end{proof}

\begin{remark}
{\rm 
When we apply the map $(f,g_\theta)$ twice,  $(\theta, p)$ goes to $(\theta^{**}, p^{**})$ with
\[ \theta^{**} = (\theta^{**}_{2+\frac12}, \ldots, \theta^{**}_{n+\frac12}, \theta^{**}_{1+\frac12}), \quad p^{**} = (p^{**}_2, \ldots, p^{**}_n, p^{**}_1). \]
Since the eigenspaces of the transformation $\Aev$ are invariant under the cyclic shift of indices, the conclusion of the Theorem doesn't change if we preserve the marking of the lines by shifting the indices of every other evolute.
}
\end{remark}
\subsection{$\Aec$-evolutes}\label{Ac-evo}

There exists another natural approach to angular bisector evolutes. Namely, we can always endow a polygon $\P=P_{\frac12}P_{\frac32}\dots P_{n-\frac12}$ with a {\it cyclic} orientation, that is, its $i$-th side is oriented from $P_{i-\frac12}$ to $P_{i+\frac12}$. To the oriented polygon obtained in this way, we apply the construction of Definition \ref{Aeo}, but with one essential modification: we endow the evolute not by the orientation provided by this definition, but again with the cyclic orientation. 

That these two orientations may be different can be derived from our example in Figure \ref{Aeo_vs_Aec}: $\mathbf Q$ is, actually, the $\Aec$-evolute of $P$, but the orientation of $\mathbf Q$ prescribed by Definition \ref{Aeo} is not cyclic (the sides $Q_1Q_2,Q_2Q_3$ and $Q_4Q_5$ are oriented according to the cyclic orientation, but the sides $Q_3Q_4$ and $Q_5Q_1$ are oriented in the opposite way).

\begin{figure}[hbtp]
	\centering
	\includegraphics[height=2.4in]{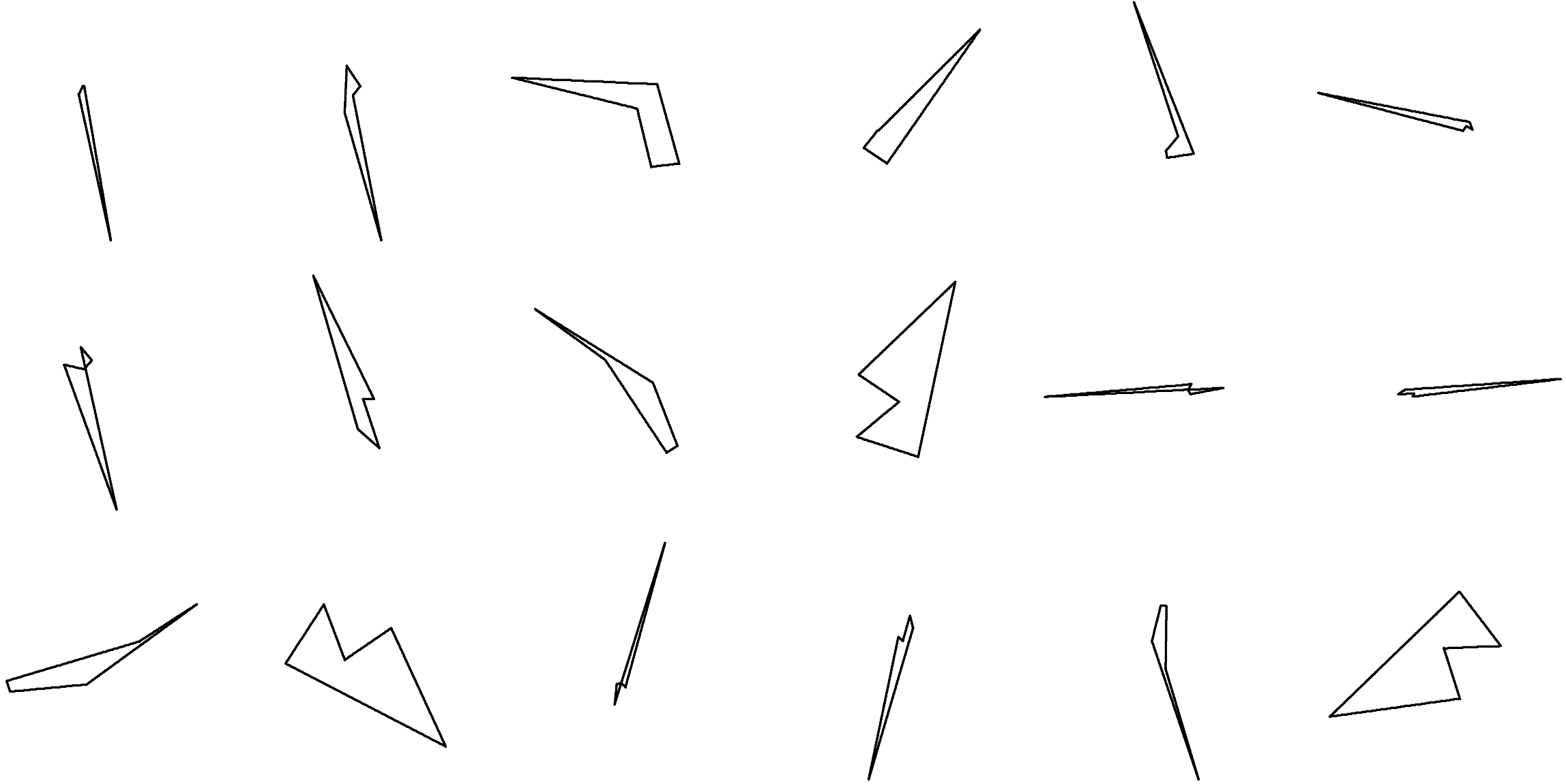} 
	\caption{The sequence of $\Aec$-evolutes of a pentagon.}
	\label{saw1}
\end{figure}

We cannot say much about $\Aec$-evolute, but we will mention some purely experimental observation concerning iterations of the $\Aec$-construction, applied to a randomly chosen pentagon. We modify the sequence of consecutive $\Aec$-evolutes precisely as we did for the sequence of consecutive $\Pev$-evolutes in Section \ref{limitbehavior}: to every term of our sequence, we first apply a translation which takes its centroid to the origin, and then a dilation which makes the maximal distance from the centroid to a vertex equal to 1.

The resulting sequence of pentagons, with some positive probability, possesses a surprising periodicity:  for a large $N$, the polygon number $N+4$ is obtained from the polygon number $N$ by a  rotation by  $3\pi/5$. Moreover, up to a rotation (by some angle) and a reflection in a line, these polygons do not depend on the initial random pentagon. Figures \ref{saw1} and \ref{saw2} demonstrate this phenomenon. 

\begin{figure}[hbtp]
	\centering
	\includegraphics[height=2.4in]{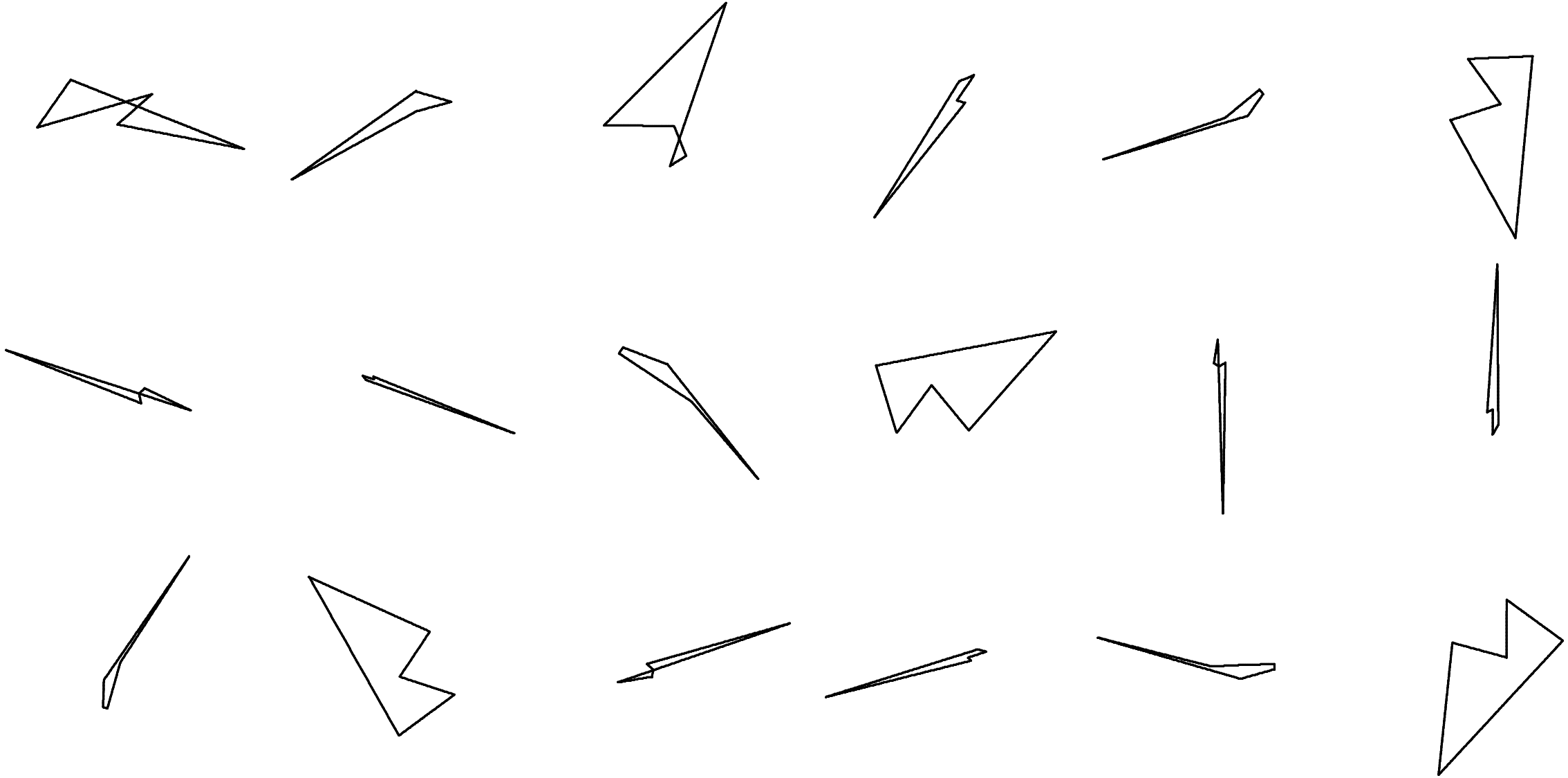} 
	\caption{One more sequence of $\Aec$-evolutes of a pentagon.}
	\label{saw2}
\end{figure}

In both sequences, starting from some place, a saw-shaped pentagon is repeated. This pentagon is shown separately in Figure \ref{saw}.  

Thus, the forth $\Aec$-evolute of the saw polygon is similar to the same polygon, rotated by $3\pi/5$. We were not able to detect any special properties of the angles and the side lengths of this polygon.

\begin{figure}[hbtp]
	\centering
	\includegraphics[height=1.2in]{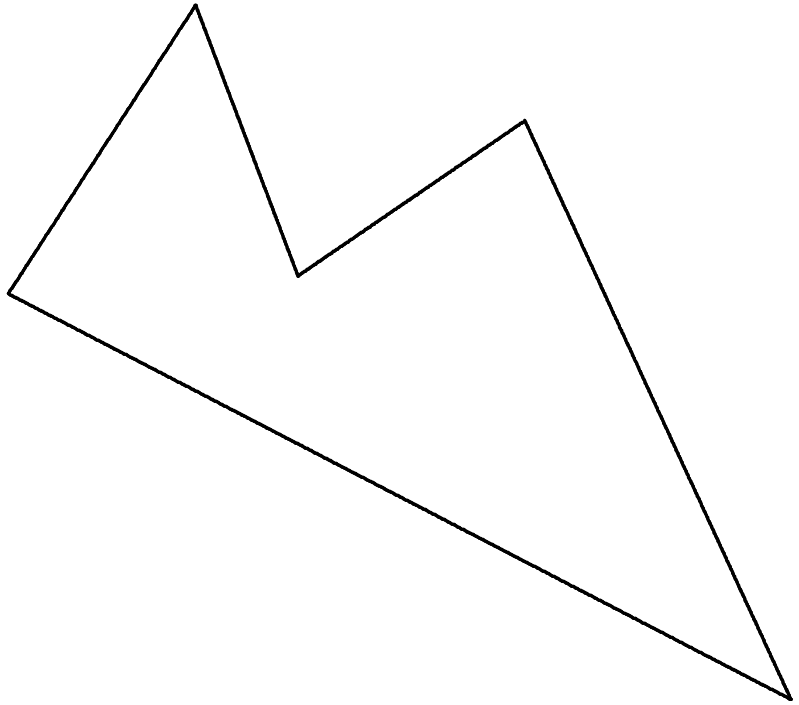} 
	\caption{The saw.}
	\label{saw}
\end{figure}

No similar property was detected in our experiments involving $n$-gons with $n>5$. (The cases $n<5$ are not interesting: the $\Aec$-evolute of a triangle is always one point, and for a quadrilateral, the limit shape of a multiple $\Aec$-evolute is always an interval traversed four times in two opposite directions, or a point.)

\section{Discrete involutes} \label{AP-inv}
\subsection{Preliminary considerations}\label{preliminary}
For a polygon $\mathbf Q$, we can consider $\Pev$-involutes and $\Aev$-involutes. Namely, $\P$ is a $\Pev$-involute of $\mathbf Q$ if $\mathbf Q$ is the $\Pev$-evolute of $\P$, and $\P$ is an $\Aev$-involute of $\mathbf Q$ if $\mathbf Q$ is an $\Aev$-evolute of $\P$ (one of the $\Aev$-evolutes of $\P$). 

Thus, if $\P$ is a $\Pev$-involute of $\mathbf Q$, then the sides of $\mathbf Q$ are perpendicular bisectors of the sides of $\P$. This means that the vertices $P_{i+\frac12},P_{i-\frac12}$ of $\P$ are symmetric to each other with respect to the side $l_i$ of $\mathbf Q$. Therefore the action of consecutive reflections in the sides $l_1,l_2,\dots,l_n$ of $\mathbf Q$ on $P_{\frac12}$ is as follows:$$P_{\frac12}\mapsto P_{\frac32}\mapsto P_{\frac52}\mapsto\dots\mapsto P_{n-\frac12}\mapsto P_{\frac12};$$in particular, $P_{\frac12}$ is fixed by the composition ${\mathcal S}={\mathcal S}_n\circ\dots\circ{\mathcal S}_1$, where ${\mathcal S}_i$ is the reflection in the line $l_i$. 

We obtain the following universal way of constructing $\Pev$-involutes: find a fixed point of the transformation $\mathcal S$ and reflect it, consecutively, in $l_1,\dots,l_n$. The $n$ points thus constructed  are the vertices of a $\Pev$-involute of $\mathbf Q$, and this construction gives all the $\Pev$-involutes of $\mathbf Q$. Thus, we obtain a bijection between the fixed points of $\mathcal S$ and the $\Pev$-involutes of $\mathbf Q$. 

Remark, in addition, that if we take not a fixed, but a periodic point of $\mathcal S$ of period $k$, and repeat our sequence of reflections $k$ times, then we will obtain a $kn$-gon whose $\Pev$-evolute is the polygon $\P$ traversed $k$ times. 

Similarly, if $\P$ is an $\Aev$-involute of $\mathbf Q$, then the side $l_i$ of $\mathbf Q$ is a (interior or exterior) bisector of the angle formed by the sides $l^\ast_{i-\frac12}$ and $l^\ast_{i+\frac12}$ of $\P$ (a small change of notation is made for the future convenience's sake). This means that the sides $l^\ast_{i+\frac12},l^\ast_{i-\frac12}$ are symmetric  with respect to the side $l_i$ of $\mathbf Q$ (this is true whether $l_i$ is an interior or exterior bisector of an angle of $\P$). Thus, the action of the reflections ${\mathcal S}_1,{\mathcal S}_2,\dots,{\mathcal S}_n$ on the line $l^\ast_{\frac12}$ is as follows: $$l^\ast_{\frac12}\mapsto l^\ast_{\frac32}\mapsto l^\ast_{\frac52}\mapsto\dots\mapsto l^\ast_{n-\frac12}\mapsto l^\ast_{\frac12};$$in particular, $l^\ast_{\frac12}$ is invariant with respect to the composition $\mathcal S$. 

We get a universal way of constructing  $\Aev$-involutes: find an invariant line of the transformation $\mathcal S$ and reflect it, consecutively, in $l_1,\dots,l_n$. The $n$ lines thus constructed are the sides of an $\Aev$-involute of $\mathbf Q$, and this construction gives all the $\Aev$-involutes of $\mathbf Q$. Thus, we obtain a bijection between the invariant lines of $\mathcal S$ and the $\Aev$-involutes of $\mathbf Q$. 

Similarly to the $\Pev$-case, an invariant line of ${\mathcal S}^k$ gives rise to a $kn$-gon which is an $\Aev$-involute of the polygon $\P$ traversed $k$ times. 

\begin{remark}
\label{rem:EvenOdd}
{\rm
If $l$ is an invariant line of $\mathcal S$, then $\mathcal S$ either preserves or reverses the orientation of $l$. How does this affect the corresponding $\Aev$-involute? 

Let $\P$ be an $\Aev$-involute of $\mathbf Q$, and let us equip the sides of $\P$ with the cyclic orientation. The sides of $\mathbf Q$ are either interior, or exterior, angle bisectors of $\P$. But the reflection in a bisector of the angle formed by consecutive sides $l,l'$ provides a map $l\to l'$ which preserves orientation if the bisector is exterior, and reverses the orientation if the bisector is interior. 

We arrive at the following conclusion: if the transformation $\mathcal S$ preserves the orientation of an invariant line, then the number of sides of $\mathbf Q$ which are the interior angle bisectors of the $\Aev$-involute corresponding to this line is even; if $\mathcal S$ reverses the orientation, then this number is odd. 

The same can be said about the invariant lines of the transformation ${\mathcal S}^k$.
}
\end{remark}

\subsection{Fixed points and invariant lines of compositions of reflections in lines}\label{fixed_points}

Let, as before, $l_1,\dots,l_n$ be $n$ lines in the plane, ${\mathcal S}_i$ the reflection in $l_i$, and $\mathcal S={\mathcal S}_n\circ\ldots\circ{\mathcal S}_1$. We will consider separately the cases of odd and even $n$.

\subsubsection{The case of odd $n$}\label{oddn} The transformation $\mathcal S$ is either a reflection, or a glide reflection. 

If $\mathcal S$ is a glide reflection, then it has no fixed or periodic points, the axis of $\mathcal S$ is a unique invariant line and all lines parallel to the axis (and no other lines) are invariant lines of ${\mathcal S}^2$. 

If $\mathcal S$ is a reflection, then the invariant lines are the axis of $\mathcal S$ and all lines perpendicular to the axis, and fixed points are points of the axis; since in this case ${\mathcal S}^2=\id$, all points are fixed points and all lines are invariant lines of ${\mathcal S}^2$. 

How to find out whether $\mathcal S$ is a reflection or a glide reflection?

We fix orientations for all lines $l_i$ (the reflections ${\mathcal S}_i$ will not depend on these orientations). Let the coordinates of the line $l_i$ be $(\alpha_i,p_i)$. We will use the following notation: the vector $(\cos\alpha,\sin\alpha)$ is denoted by $e_\alpha$ (so, $e_{\alpha+\pi}=-e_\alpha$), and the reflection in the line through the origin, with the coorienting vector $e_\alpha$, is denoted by $A_\alpha$ (so, $A_\alpha$ is a linear transformation). 

Then, obviously, $A_\alpha e_\beta=-e_{2\alpha-\beta}$. A direct computation shows that the reflection in the line with the coordinates $(\alpha,p)$ acts by the formula $$u\mapsto -A_\alpha u+2pe_\alpha.$$Now let us apply ${\mathcal S}^2$ to the origin 0. If the result is 0, then $\mathcal S$ is a reflection, if it is not 0, then it is a glide reflection. To make the computation more transparent, we will do it for $n=3$ and then formulate the result for an arbitrary odd $n$.

\[\begin{array} {rlll} 0\, \mathop{\longmapsto}\limits^{{\mathcal S}_1}&\phantom{-}2p_1e_{\alpha_1}\\ 
\mathop{\longmapsto}\limits^{{\mathcal S}_2}&-2p_1e_{2\alpha_2-\alpha_1}&+2p_2e_{\alpha_2}\\ 
\mathop{\longmapsto}\limits^{{\mathcal S}_3}&\phantom{-}2p_1e_{2\alpha_3-2\alpha_2+\alpha_1}&-2p_2e_{2\alpha_3-\alpha_2}&+2p_3e_{\alpha_3}\\ 
\mathop{\longmapsto}\limits^{{\mathcal S}_1}&-2p_1e_{-2\alpha_3+2\alpha_2+\alpha_1}&+2p_2e_{-2\alpha_3+\alpha_2+2\alpha_1}&-2p_3e_{-\alpha_3+2\alpha_1}\\
&+2p_1e_{\alpha_1}\\
\mathop{\longmapsto}\limits^{{\mathcal S}_2}&\phantom{-}2p_1e_{2\alpha_3-\alpha_1}&-2p_2e_{2\alpha_3+\alpha_2-2\alpha_1}&-2p_3e_{\alpha_3+2\alpha_2-2\alpha_1}\\ 
&-2p_1e_{2\alpha_2-\alpha_1}&+2p_2e_{\alpha_2}\\
\mathop{\longmapsto}\limits^{{\mathcal S}_3}&-2p_1e_{\alpha_1}&+2p_2e_{-\alpha_2+2\alpha_1}&-2p_3e_{\alpha_3-2\alpha_2+2\alpha_1}\\ 
&\phantom{-}2p_1e_{2\alpha_3-2\alpha_2+\alpha_1}&-2p_2e_{2\alpha_3-\alpha_2}&+2p_3e_{\alpha_3}
\end{array}\]

Obviously, $e_\beta+e_\gamma=2\cos\displaystyle\frac{\gamma-\beta}2e_{\frac{\beta+\gamma}2}$, from which$$e_\beta-e_\gamma=e_\beta+e_{\gamma+\pi}=2\cos\frac{\gamma+\pi-\beta}2e_{\scriptstyle{\frac\pi2+\frac{\beta+\gamma}2}}=2\sin\frac{\beta-\gamma}2e_{\scriptstyle{\frac\pi2+\frac{\beta+\gamma}2}}.$$Using this formula, we can reduce the last result to the following convenient form:$${\mathcal S}^2(0)=4(p_1\sin(\alpha_3-\alpha_2)+p_2\sin(\alpha_1-\alpha_3)+p_3\sin(\alpha_1-\alpha_2))e_{\frac\pi2+\alpha_1-\alpha_2+\alpha_3}.$$A similar computation for an arbitrary odd $n$ yields a similar result which we can write, using the notations $\B$ and $\B_j$ (see Section \ref{DefP-evo}) in the following, final, form:
$$
{\mathcal S}^2(0)=4 e_{\frac\pi2+\B(\alpha)} \sum_{j=1}^np_j\sin\B_j(\alpha).
$$

\begin{definition}\label{quasiperimeter}
The sum $\sum_{j=1}^np_j\sin\B_j(\alpha)$ is called the \emph{quasiperimeter} of a cooriented polygon $\{l_1,\dots,l_n\}$ (it does not depend on the coorientation).
\end{definition}
We arrive at the following result. 
\begin{proposition}
The composition of the reflections in the sides of an odd-gon is a reflection if and only if its quasiperimeter is zero. 
\end{proposition}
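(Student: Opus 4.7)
The claim follows immediately from the computation that precedes it in the text, so the proof proposal is more a matter of organizing the preliminary observations than doing new work.

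The plan is to first establish the dichotomy. Since $n$ is odd, $\mathcal{S}={\mathcal S}_n\circ\cdots\circ{\mathcal S}_1$ is a composition of an odd number of reflections and is therefore orientation-reversing. Every orientation-reversing isometry of the Euclidean plane is either a (pure) reflection in some line, or a glide reflection with a nonzero translation component along its axis. A reflection satisfies $\mathcal{S}^2=\id$, whereas a glide reflection with translation vector $v\neq 0$ along its axis satisfies $\mathcal{S}^2=$ translation by $2v$, which moves every point. In particular, $\mathcal{S}$ is a pure reflection if and only if $\mathcal{S}^2(0)=0$.

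Next I would invoke the explicit formula derived just above the statement:
\[
\mathcal{S}^2(0)=4\,e_{\frac{\pi}{2}+\mathcal{B}(\alpha)}\sum_{j=1}^n p_j\sin\mathcal{B}_j(\alpha).
\]
Since $e_{\frac{\pi}{2}+\mathcal{B}(\alpha)}$ is a unit vector, it is nonzero, and so $\mathcal{S}^2(0)=0$ if and only if the scalar factor, which is by Definition \ref{quasiperimeter} the quasiperimeter of the polygon, vanishes. Combining this with the dichotomy above yields precisely the stated equivalence.

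The only genuinely non-formal step is the identification $\mathcal{S}^2(0)=4e_{\frac{\pi}{2}+\mathcal{B}(\alpha)}\cdot(\text{quasiperimeter})$, but this is already carried out (for $n=3$, and stated for general odd $n$) in the paragraph preceding the statement. The main "obstacle," such as it is, lies simply in making sure that the general odd-$n$ case of that computation is cleanly justified: one checks by induction on $n$ (or by inspection of the pattern in the $n=3$ computation) that each successive application of ${\mathcal S}_i$ alternates the sign of existing terms while conjugating their angle labels by the reflection $\beta\mapsto 2\alpha_i-\beta$, and that after $2n$ reflections the resulting $2n$ vectors pair up into $n$ differences $e_\beta-e_\gamma$, each yielding a scalar multiple of the single unit vector $e_{\frac{\pi}{2}+\mathcal{B}(\alpha)}$ with coefficient $2p_j\sin\mathcal{B}_j(\alpha)$.
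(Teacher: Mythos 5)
Your proof is correct and follows essentially the same route as the paper: the paper also reduces the claim to the dichotomy that an orientation-reversing isometry is a reflection precisely when $\mathcal{S}^2(0)=0$, and then reads off the result from the explicit formula $\mathcal{S}^2(0)=4\,e_{\frac{\pi}{2}+\mathcal{B}(\alpha)}\sum_j p_j\sin\mathcal{B}_j(\alpha)$ established in the preceding computation. Your remark that the general odd-$n$ case of that formula deserves a clean inductive justification is fair, but the paper treats it at the same level of detail (verifying $n=3$ and asserting the pattern), so there is no gap relative to the paper's own argument.
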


\begin{remark}
{\rm 
For equiangular polygons, the quasiperimeter is proportional to the perimeter. But, in general, it is a non-local quantity (as opposed to the perimeter which is the sum of the side lengths): all the cosines in the definition of the quasiperimeter are those of the angles which depend on all the angles $\alpha_j$.
}
\end{remark}

Notice, in conclusion, that whether the composition $\mathcal S$ of reflections in the lines $l_i$ is a reflection or a glide reflection, the direction of the axis of this (glide) reflection is determined by the directions of the lines $l_i$: if the angular parameters of $l_1,\dots,l_n$ are $\alpha_1,\dots,\alpha_n$, then the angular parameter of the axis is $\B(\alpha)$. This has an obvious geometric proof, but, in the case of glide reflection, follows from the last formula, and the case of reflection is reduced to the case of glide reflection by continuity. 

\subsubsection{The case of even $n$.}\label{evenn}
As before, we have $n$ oriented lines $l_i=(\alpha_i,p_i)$, and as before, the final results do not depend on the orientations; the only difference with the previous case is that $n$ is even. In this case, the transformation $\mathcal S$ is either a rotation (by some non-zero angle, about some point), or a (non-trivial) translation, or the identity. 

A rotation has just one fixed point, the center of rotation, and, except the case of the rotation by $\pi$, no invariant lines. If the angle of rotation is $\pi$ (this rotation is the reflection in a point), then all lines passing through this point are invariant, and the rotation reverses their orientations. If the angle of rotation is a rational multiple of $\pi$, then all point and all lines are periodic. 

A translation has no fixed or periodic points, and all lines parallel to the direction of the translation are invariant, their orientations are preserved by the translation. 
No comments for the identity.

The transformation $\mathcal S$ acts by the formula
$$
{\mathcal S}u=A_{\alpha_n}A_{\alpha_{n-1}}\dots A_{\alpha_1}u+\sum_{k=1}^n(-1)^kA_{\alpha_n}\dots A_{\alpha_{k+1}}e_{\alpha_k}.
$$
But the composition of two reflections is a rotation, $A_\beta A_\gamma=R_{2(\gamma-\beta)}$, and, as we noticed earlier, $A_\beta e_\gamma=-e_{2\beta-\gamma}$. Hence,
$$
{\mathcal S}u=R_{2{\mathcal B}(\alpha)}u+\sum_{k=1}^n2p_ke_{(-1)^k\alpha_k+2\sum_{j=k+1}^n(-1)^j\alpha_j}.
$$
Using the notation
$$
w=\sum_{k=1}^n2p_ke_{(-1)^k\alpha_k+2\sum_{j=k+1}^n(-1)^j\alpha_j},
$$
we can formulate the result.

\begin{proposition} {\rm (i)} If $\B(\alpha)$ is not a multiple of $\pi$, then $\mathcal S$ is a rotation by the angle $2\B(\alpha)$ about the point 
$$
\frac{w-R_{-2\B(\alpha)}w}{4 \sin^2\B(\alpha)}.
$$ 

{\rm(ii)} If $\B(\alpha)$ is a multiple of $\pi$ and $w\ne0$, then $\mathcal S$ is a translation by $w$.

{\rm(iii)} If $\B(\alpha)$ is a multiple of $\pi$ and $w=0$, then $\mathcal S=\id$.
\end{proposition}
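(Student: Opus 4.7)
The plan is to read off the three cases directly from the formula
\[\mathcal{S}u = R_{2\B(\alpha)} u + w\]
displayed just before the proposition, which already exhibits $\mathcal{S}$ as a Euclidean motion whose linear part is $R_\theta$ with $\theta := 2\B(\alpha)$ and whose translational offset is $w$. Cases (ii) and (iii) then fall out immediately: if $\B(\alpha) \equiv 0 \pmod \pi$, then $\theta \equiv 0 \pmod{2\pi}$, so $R_\theta = \id$ and $\mathcal{S}$ reduces to $u \mapsto u + w$, which is a genuine translation when $w \ne 0$ and the identity when $w = 0$.

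For case (i), since $\theta \not\equiv 0 \pmod{2\pi}$, the operator $I - R_\theta$ is invertible, so $\mathcal{S}$ has a unique fixed point $c$ characterized by $(I - R_\theta)c = w$. A one-line rearrangement gives $\mathcal{S}u - c = R_\theta(u-c)$, which identifies $\mathcal{S}$ as the rotation through $\theta = 2\B(\alpha)$ about $c$. It therefore remains only to recognize $(I - R_\theta)^{-1}w$ in the form stated.

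For this final identification I would verify by direct substitution that $c = (w - R_{-\theta}w)/(4\sin^2(\theta/2))$ satisfies $(I - R_\theta)c = w$. Expanding the numerator and using $R_\theta R_{-\theta} = I$ together with the elementary matrix identity $R_\theta + R_{-\theta} = 2\cos\theta \cdot I$ telescopes $(I - R_\theta)(w - R_{-\theta}w)$ down to $2(1 - \cos\theta)w = 4\sin^2(\theta/2)\,w$, which after dividing by the denominator recovers $w$, as required.

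There is effectively no obstacle here: the substantive work was already carried out in deriving the expression for $\mathcal{S}u$ immediately preceding the proposition, and what remains is the standard classification of orientation-preserving plane isometries together with a single short algebraic check. The only point that requires minor care is matching conventions, so that the denominator $4\sin^2\B(\alpha)$ appearing in the statement is correctly identified with $4\sin^2(\theta/2)$ produced by the fixed-point computation.
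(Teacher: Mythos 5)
Your proposal is correct and follows essentially the same route as the paper: the paper derives the formula $\mathcal Su=R_{2\B(\alpha)}u+w$ immediately before the proposition and treats the three-way classification as an immediate consequence, exactly as you do. Your explicit verification that $(I-R_\theta)\bigl(w-R_{-\theta}w\bigr)=2(1-\cos\theta)w=4\sin^2(\theta/2)\,w$ with $\theta=2\B(\alpha)$ is the only detail the paper leaves unspoken, and it checks out.
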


\subsection{$\Aev$-involutes}\label{Ainv}

\subsubsection{The case of odd $n$}\label{Ainvodd}

Let $\mathbf Q$ be some $n$-gon with  odd $n$. Label the sides of $\mathbf Q$ cyclically as $l_1,\dots,l_n$. The composition $\mathcal S$ of the consecutive reflections ${\mathcal S}_i$  in these sides is a reflection or a glide reflection, and in either of these cases, $\mathcal S$ possesses a unique invariant line whose orientation is preserved by $\mathcal S$: the axis of the reflection or the glide reflection. 

\begin{figure}[htbp]
\centering
\includegraphics[width=4in]{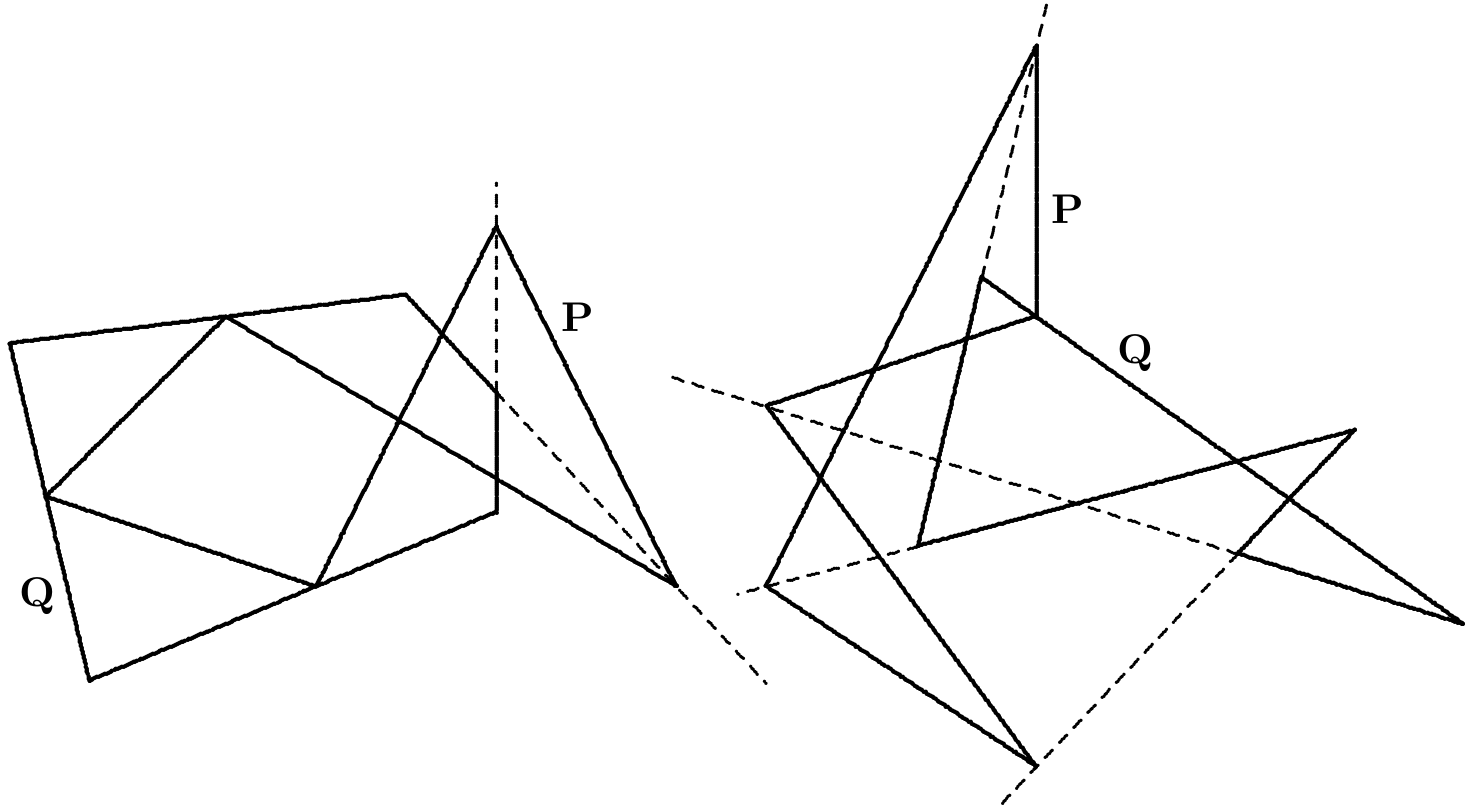}
\caption{$\Aev_{\rm odd}$-evolvents}
\label{anglevolodd}
\end{figure}

According to Section \ref{preliminary}, the consecutive reflections of this line in $l_1,\dots,l_n$ give an involute of $\mathbf Q$. This involute $\P$ does not change under the action of the dihedral group on the labels of the sides (if we replace $l_1,\dots,l_n$ by $l_n,\dots,l_1$, then $\mathcal S$ will become ${\mathcal S}^{-1}$, but the axis will stay unchanged). We call $\P$ the {\it $\Aev_{\rm odd}$-evolvent} of $\mathbf Q$. 

This case is illustrated in Figure \ref{anglevolodd}. The polygon $\P$ is the $\Aev_{\rm odd}$-evolvent of the polygon $\mathbf Q$.  The sides of $\mathbf Q$ are the bisectors of (interior or exterior) angles of $\P$, and, according to Remark \ref{rem:EvenOdd}, the number of exterior angles is odd. This number is 3 in Figure \ref{anglevolodd}, left and 1 in Figure \ref{anglevolodd}, right.

\begin{remark} \label{bill}
{\rm
One may consider the $\Aev_{\rm odd}$-evolvent $\P$ of an $n$-gon $\mathbf Q$ as a generalized $n$-periodic billiard trajectory in $\mathbf Q$: the consecutive pairs of sides of $\P$ make equal angles with the respective sides of $\mathbf Q$. For example, if $\mathbf Q$ is an acute triangle, then $\P$ is the so-called Fagnano 3-periodic billiard trajectory connecting the foot points of the altitudes of $\mathbf Q$. However, if $\mathbf Q$ is an obtuse triangle, then $\P$ does not lie inside $\mathbf Q$ anymore, see Figure \ref{pedal}.
}
\end{remark}

\begin{figure}[hbtp]
\centering
\includegraphics[width=3.8in]{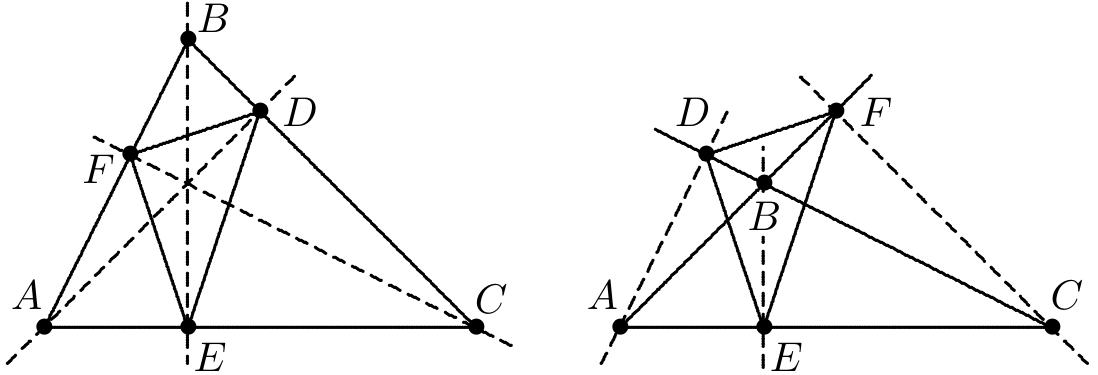} 
\caption{Triangle $DEF$ is the $\Aev_{\rm odd}$-evolvent of the triangle $ABC$.}
\label{pedal}
\end{figure}

If the quasiperimeter of $\mathbf Q$ is zero, then $\mathcal S$ is a reflection, and all lines perpendicular to the axis of this reflection are invariant with respect to $\mathcal S$; in this case, $\mathcal S$ reverses the orientation of all these lines.

This case is illustrated in Figure \ref{anglevoleven}. The polygon $\mathbf Q$ has  zero quasi\-perimeter. It has a family of parallel $\Aev$-involutes. The  $\alpha$-coordinates of the sides of these involutes are the same for all involutes in our family, and the  $p$-coordinates vary with a constant speed,  the same for all sides. This shows that the quasiperimeters of our involutes vary at a constant speed, and generically this speed is not zero. For this reason, for a generic odd-gon with  zero quasiperimeter, precisely one of these parallel involutes has a zero quasiperimeter. We can call this involute the $\Aev_{\rm even}$-evolvent. Figure \ref{anglevoleven} shows several such involutes for two polygons $\mathbf Q$. One of them is the $\Aev_{\rm even}$-evolvent; it is denoted by $\P$. 

\begin{figure}[htbp]
\centering
\includegraphics[width=4.1in]{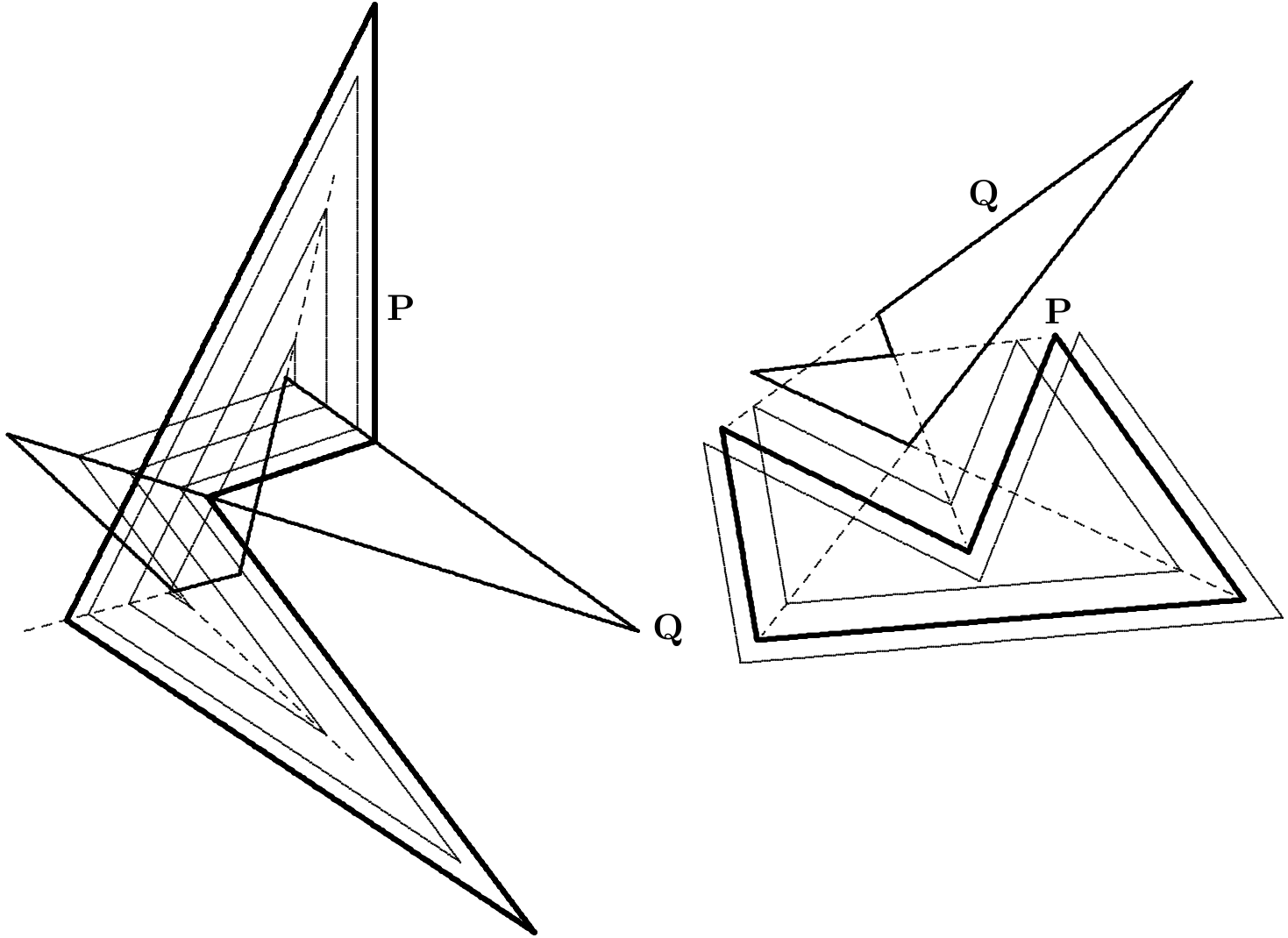}
\caption{Families of parallel $\Aev$-involutes of an odd-gon with zero quasiperimeter. The $\Aev$-involute marked as $\P$ has a zero quasiperimeter; it is the $\Aev_{\rm even}$-evolvent.}
\label{anglevoleven}
\end{figure}

The sides of $\mathbf Q$ are the bisectors of interior or exterior angles of all involutes, and the number of exterior angles is even. This number is 0 in Figure \ref{anglevoleven}, left, and is 2 in Figure \ref{anglevoleven}, right.

\subsubsection{The case of even $n$}\label{Ainveven}

For an even-gon $\mathbf Q$, the transformation $\mathcal S$ preserves  orientation, so it may be a rotation, a parallel translation, or the identity. If the angular coordinates of the sides of $\mathbf Q$ are $\alpha_1,\alpha_2,\dots,\alpha_n$, then the angle of rotation is 
$$\beta=2\B(\alpha_1,\dots,\alpha_n)= 2(\alpha_1-\alpha_2+\dots-\alpha_n);$$
 if $\beta\bmod2\pi$ is not 0 or $\pi$, then $\mathcal S$ has no invariant lines; so, in this case (which is generic), $\mathbf Q$ has no $\Aev$-involutes at all. (Still, if $\beta$ is a non-zero rational multiple of $\pi$, then, for some $k$, ${\mathcal S}^k=\id$, and every line is invariant with respect to ${\mathcal S}^k$. In this case, there are infinitely many polygons whose $\Aev$-evolute is $\mathbf Q$, traversed $k$ times.) 
 
 If $\beta\equiv\pi\bmod2\pi$, then $\mathcal S$ is a reflection in some point. In this case, every line through this point is $\mathcal S$-invariant, which creates a one-parameter family of $\Aev$-involutes (and every line in ${\mathcal S}^2$-invariant).

All these cases are shown in Figures \ref{hexaginvolone} -- \ref{hexaginvolthree} below.

\begin{figure}[hbtp]
\centering
\includegraphics[width=3in]{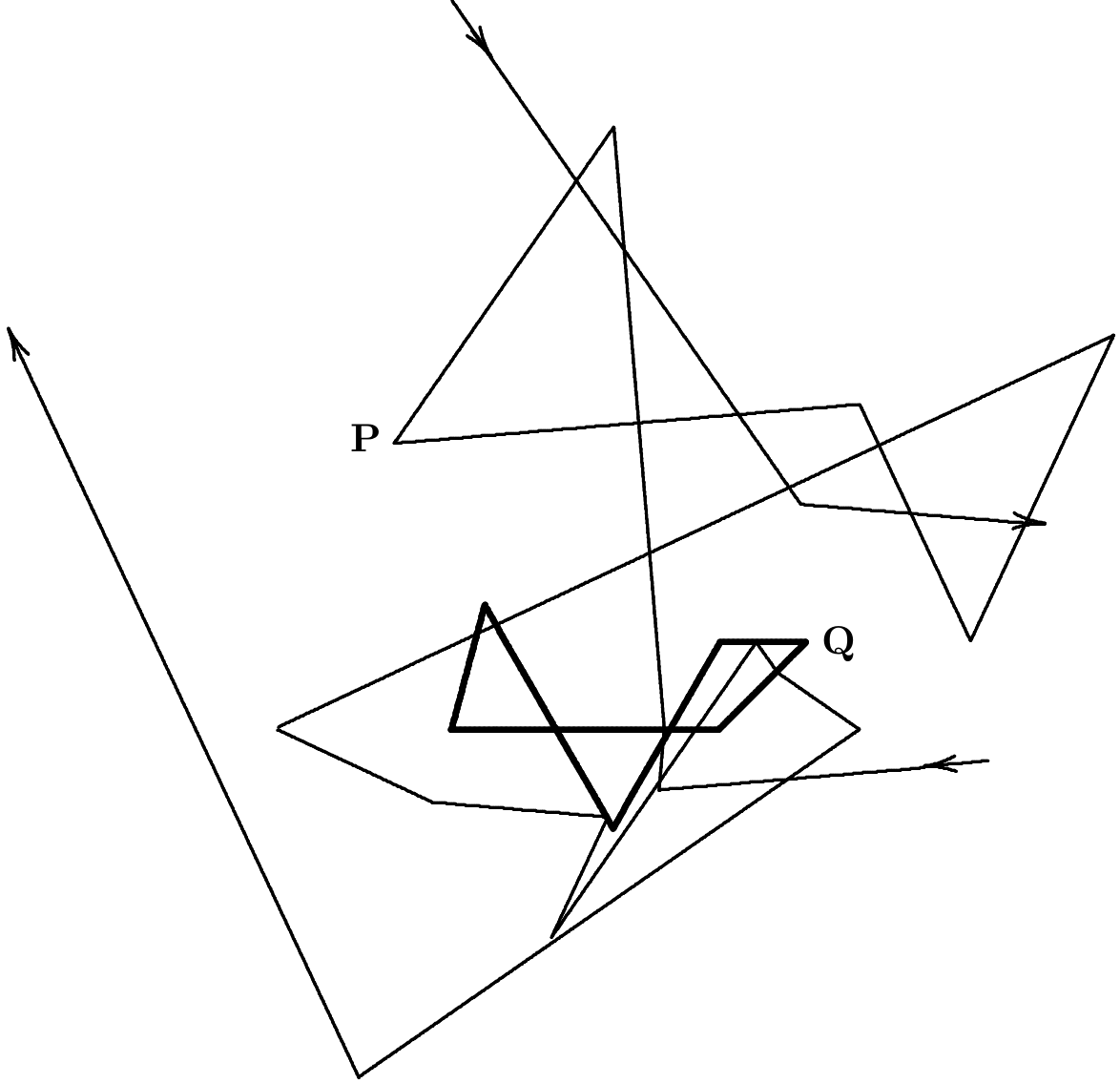}
\caption{An 18-gonal $\Aev$-involute $\P$ of the hexagon $\mathbf Q$ traversed thrice.}
\label{hexaginvolone}
\end{figure}

For the hexagon $\mathbf Q$ shown in Figure \ref{hexaginvolone}, the transformation $\mathcal S$ is a rotation by the angle $4\pi/3$. As it was noted above, this hexagon has no $\Aev$-involutes; however, there is a 2-parameter family of 18-gonal $\Aev$-involutes of the hexagon $\mathbf Q$ traversed thrice; one of them is shown in Figure \ref{hexaginvolone} (it is too large to be fully visible: two of its 18 vertices are outside of the figure). We will consider this polygon $\mathbf Q$ again in Section \ref{Pinveven}: see Figure \ref{hexagPinvol}.

For the hexagon $\mathbf Q$ shown in Figure \ref{hexaginvoltwo}, the transformation $\mathcal S$ is a rotation by the angle $\pi$. (There exists a very simple way to construct such a hexagon: the sum of the first, third, and fifth angles should be $\dfrac{\pi}2\bmod\pi$; for example, all  three of them may be right, and this is the case for the hexagon in Figure \ref{hexaginvoltwo}.) As it was noted above, such a polygon $\mathbf Q$ has a one-parameter family of $\Aev$-involutes. The vertices of these involutes slide along the sides of $\mathbf Q$ which are angle bisectors for all the involutes. There are $n$ points in the plane which lie on the sides of all involutes; one of these points is marked in Figure \ref{hexaginvoltwo}.

\begin{figure}[hbtp]
\centering
\includegraphics[width=3.7in]{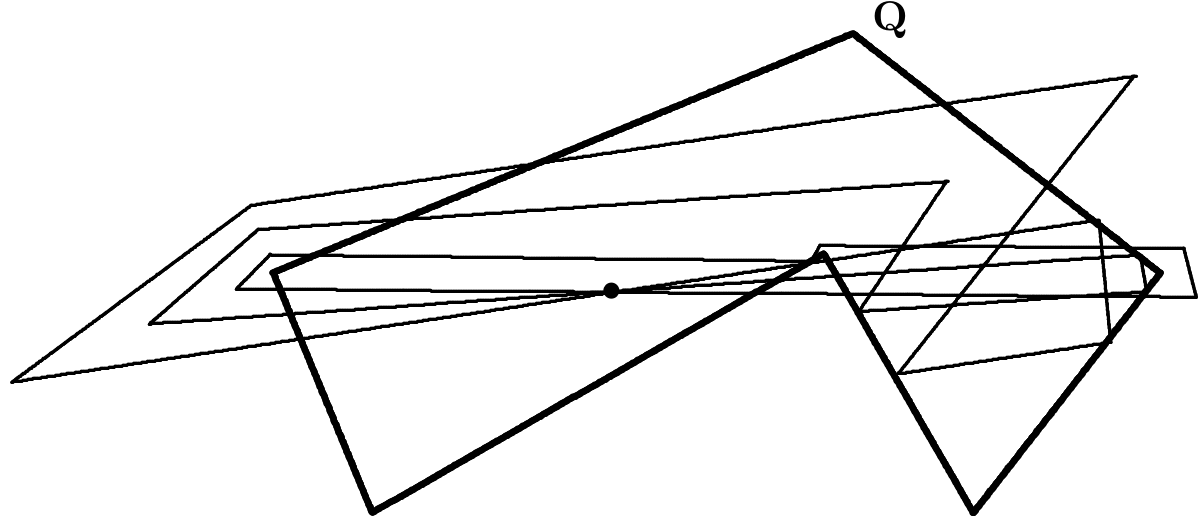}
\caption{A family of $\Aev$-involutes of a hexagon $\mathbf Q$ for which the transformation $\mathcal S$ is a reflection in a point.}
\label{hexaginvoltwo}
\end{figure}

Figure \ref{hexaginvolthree} shows two hexagons for which the transformation $\mathcal S$ is, respectively, a translation and the identity; they possess, respectively, a one-parameter and a two parameter family of $\Aev$-involutes.

\begin{figure}[hbtp]
\centering
\includegraphics[width=3.5in]{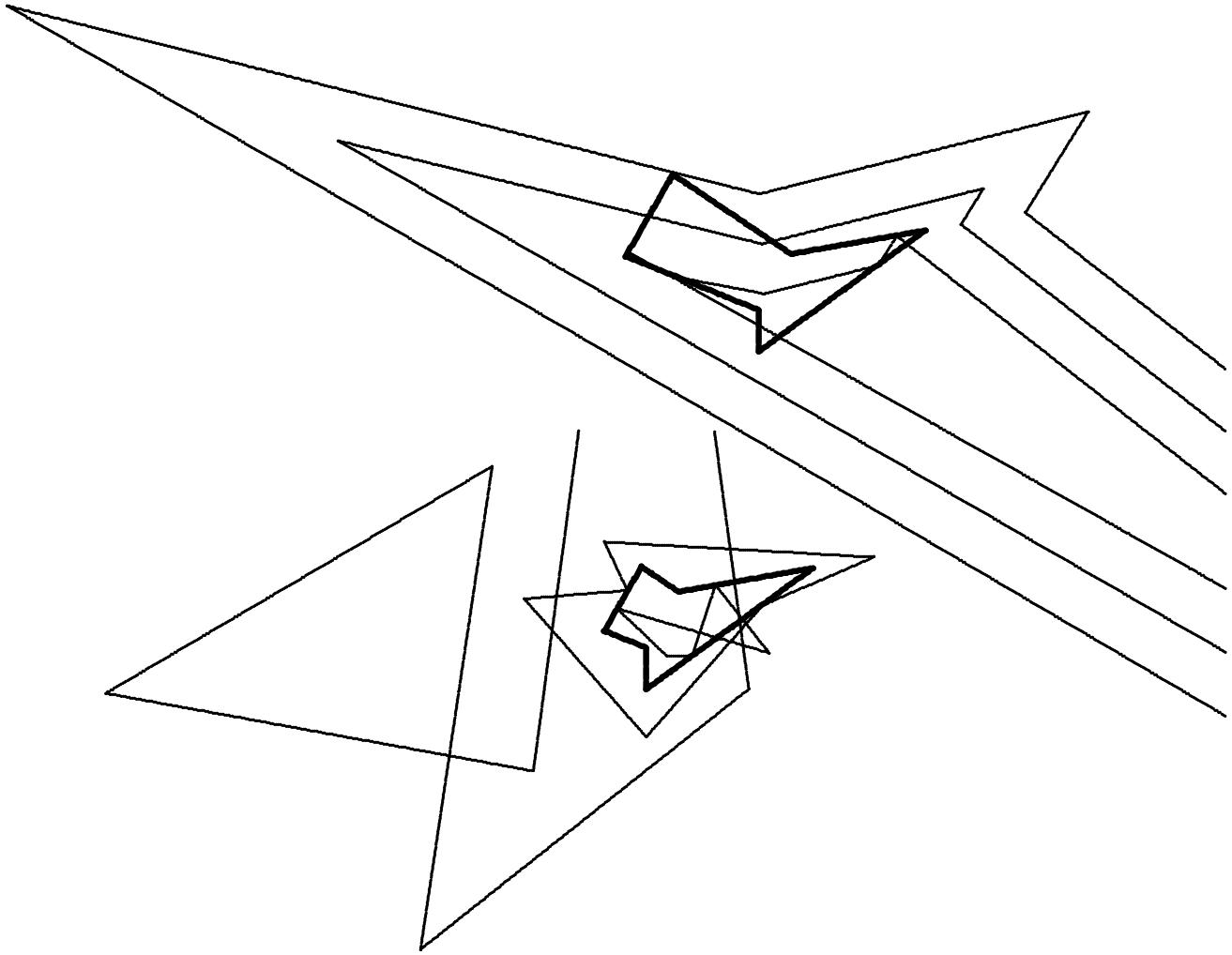}
\caption{Upper: a hexagon with a family of parallel $\Aev$-involutes. Lower: a hexagon with a two-parameter family of $\Aev$-involutes.}
\label{hexaginvolthree}
\end{figure}

\subsubsection{Limiting behavior of $\Aev$-evolvents for equiangular hedgehogs}

In the case of equiangular hedgehogs most of our constructions are simplified, and one observes an almost  straightforward similarity to the smooth case. 

\begin{proposition}
\label{WhenInvol}
For an equiangular hedgehog $\mathbf Q$, the following three conditions are equivalent:

{\rm (i)} $\mathbf Q$ posesses an $\Aeo$-involute;

{\rm (ii)}  the coefficient $a_0$ in the Fourier decomposition \eqref{DiscrFourier} vanishes;

{\rm (iii)} $\mathbf Q$ has zero perimeter.
\end{proposition}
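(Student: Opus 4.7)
The plan is to dispose of (ii) $\iff$ (iii) by a direct computation of the perimeter in terms of the support numbers, and then to prove (i) $\iff$ (ii) by recognizing that the $\Aeo$-transformation, restricted to the equiangular case, acts on the support numbers as a (rescaled) cyclic difference operator, whose image is exactly the hyperplane of zero-sum sequences.

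For (ii) $\iff$ (iii), denote by $q_1,\ldots,q_n$ the support numbers of $\mathbf{Q}$. Specializing formula \eqref{lengths} to $\theta_{i\pm\frac12}=2\pi/n$ gives
\[
\ell_i \;=\; \frac{q_{i-1}+q_{i+1}-2q_i\cos(2\pi/n)}{\sin(2\pi/n)},
\]
and summing over $i$, together with $1-\cos(2\pi/n)=2\sin^2(\pi/n)$, yields the perimeter $\sum_i\ell_i = 2\tan(\pi/n)\sum_j q_j$. On the Fourier side, $\mathbf{C}_0(n)$ is the unique discrete harmonic in \eqref{DiscrFourier} whose support numbers do not sum to zero (the orthogonality $\sum_{j=1}^n\cos(2\pi mj/n)=\sum_{j=1}^n\sin(2\pi mj/n)=0$ disposes of the other harmonics, and also of $\mathbf{C}_{n/2}$ when $n$ is even, since $\sum_j(-1)^j=0$). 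Hence $\sum_j q_j = \tfrac{n}{2}a_0$, and vanishing of the perimeter is equivalent to $a_0=0$.

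For (i) $\iff$ (ii), I invoke Lemma \ref{ParamBisect} in the equiangular case: if $\P$ is an equiangular hedgehog with support numbers $p_j$, then $\Aeo(\P)$ has support numbers
\[
q_{j+\frac12} \;=\; \frac{p_{j+1}-p_j}{2\sin(\pi/n)}.
\]
Summing telescopically, any such $q$ satisfies $\sum_j q_{j+\frac12}=0$, proving (i) $\Rightarrow$ (ii). Conversely, assuming $a_0=0$, i.e. $\sum_j q_{j+\frac12}=0$, the cyclic recurrence $p_{j+1}=p_j+2\sin(\pi/n)\,q_{j+\frac12}$ is consistent for any choice of $p_1$, and after adding a sufficiently large constant (which, as noted after Definition \ref{discrete_hedgehog}, does not change the $\Aeo$-evolute) one obtains an actual convex equiangular hedgehog $\P$ with $\Aeo(\P)=\mathbf{Q}$.

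The only real obstacle is a matter of interpretation for even $n$: a priori, Lemma \ref{ParamBisect} allows involutes $\P$ whose turning angles form the nontrivial alternating solution $2\pi/n\pm c$ of $\tfrac12(\id+\mathcal{Z})\theta'=(2\pi/n,\ldots,2\pi/n)$, and one must check that such ``exotic'' involutes are not the right notion here. The natural reading, consistent with the equiangular setting of Section \ref{equiangularAo}, is that ``$\Aeo$-involute of $\mathbf{Q}$'' refers to an equiangular discrete hedgehog whose $\Aeo$-evolute is $\mathbf{Q}$; for odd $n$ the map $\tfrac12(\id+\mathcal{Z})$ is invertible and this restriction is automatic, while for even $n$ it should be imposed explicitly. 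Under this (natural) convention the three-way equivalence then reduces to the two clean steps above.
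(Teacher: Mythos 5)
Your proof is correct. For the equivalence (ii) $\iff$ (iii) you do the same thing as the paper (apply formula \eqref{lengths} in the equiangular case), only with the constant $2\tan(\pi/n)$ made explicit. For the main equivalence (i) $\iff$ (ii) your route is genuinely different: the paper works harmonic-by-harmonic, observing that $\Aeo$ kills $\mathbf{C}_0$ and sends every higher discrete harmonic to a nonzero multiple of a harmonic of the same order (Theorem \ref{EvolDH} and Corollary \ref{scalingfactor}), and then realizes any $\mathbf Q$ with $a_0=0$ as $\Aeo^2$ of an explicitly rescaled hedgehog; you instead read off from Lemma \ref{ParamBisect} that the equiangular $\Aeo$-map is a rescaled cyclic difference operator, whose image is the zero-sum hyperplane by telescoping and whose surjectivity onto that hyperplane is just the solvability of the cyclic recurrence. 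Your argument is more elementary and self-contained (it does not need the Fourier machinery at this step), while the paper's fits the proposition into its diagonalization framework and produces the involute component-by-component. Finally, the subtlety you flag for even $n$ is real: since $\tfrac12(\id+\mathcal{Z})$ has the alternating vector in its kernel for even $n$, an equiangular $\mathbf Q$ with nonzero perimeter can indeed possess non-equiangular $\Aeo$-involutes (one can arrange $s_+\Sigma_{\mathrm{even}}+s_-\Sigma_{\mathrm{odd}}=0$ with $s_\pm=2\sin(\pi/n\pm c/2)$ even when $\Sigma_{\mathrm{even}}+\Sigma_{\mathrm{odd}}\ne 0$), so the proposition must be read, as you do, with ``involute'' meaning an equiangular one. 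The paper's proof makes this same restriction silently, since its argument also only describes the image of $\Aeo$ on the space of equiangular hedgehogs; your explicit identification and resolution of the point is an improvement in rigor.
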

\begin{proof}
The $\Aeo$-evolute of $\mathbf{C}_0$ is $0$, that is, a set of lines through the origin; the $\Aeo$-evolutes of higher discrete hypocycloids are discrete hypocycloids of the same order, according to Theorem \ref{EvolDH}. Therefore, if $\P$ is the $\Aeo$-evolute of some polygon, then the $a_0$ coefficient of $\P$ vanishes. By Theorem \ref{EvolDH} and Corollary \ref{scalingfactor}, every equiangular hedgehog with vanishing $a_0$ is the $\Aeo^2$-evolute of some other hedgehog; in particular it has an $\Aeo$-involute. Thus the first two conditions are equivalent.
	
The second condition is easily seen to be equivalent to $\sum_{j=1}^n p_j = 0$. Formula (\ref{lengths}) from Lemma \ref{CoordPolygon} implies that this is equivalent to $\sum_{j=1}^n \ell_j = 0$.
\end{proof}

By Proposition \ref{WhenInvol}, every equiangular hedgehog of zero perimeter generates an infinite sequence of iterated $\Aev$-evolvents: at every step one chooses the involute that is free of the $\mathbf{C}_0$ term.

\begin{theorem}\label{AInvolEqui}
The iterated $\Aev$-evolvents of an equiangular hedgehog with zero perimeter converge to its vertex centroid.
\end{theorem}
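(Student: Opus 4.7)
My plan is to reduce the theorem to a statement about the discrete Fourier decomposition \eqref{DiscrFourier} and to exploit Theorem \ref{EvolDH}, which records exactly how $\Aeo$ acts on each harmonic eigenspace. By hypothesis, $\P$ has zero perimeter, which by Proposition \ref{WhenInvol} is equivalent to $a_0 = 0$. At each step, the $\Aev$-evolvent to be chosen is the unique $\Aeo$-involute with vanishing $\mathbf{C}_0$-coefficient: this is well defined because $\Aeo$ annihilates exactly the $\mathbf{C}_0$-direction while acting invertibly on every other harmonic eigenspace (by Theorem \ref{EvolDH}, with nonzero scaling factors). Consequently, the iterated evolvents stay inside the codimension-one subspace $\{a_0 = 0\}$, and it suffices to track how $\Aeo^{-1}$ restricted to this subspace acts on each harmonic component.

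Next, I would read off from Theorem \ref{EvolDH}, combined with the fact that the smooth evolute of a hypocycloid of order $m$ scales it by $m$ (Lemma \ref{evolsupp}), that $\Aeo$ multiplies the magnitude of the $m$-th harmonic by $\mu_m := \sin(\pi m/n)/\sin(\pi/n)$ and rotates its angular parameter by $\pi/n + \pi/2$. Hence the $\Aev$-evolvent multiplies the $m$-th harmonic magnitude by $\mu_m^{-1}$. Note that $\mu_1 = 1$, so the first harmonic component $a_1 \mathbf{C}_1 + b_1 \mathbf{S}_1$ is preserved in size, which is consistent with Proposition \ref{PSPres_A}. For $m \in \{2, \ldots, \lfloor n/2\rfloor\}$, the quantity $\pi m/n$ lies in $[2\pi/n, \pi/2]$, on which $\sin$ is strictly increasing, so $\mu_m \geq \mu_2 > 1$; after $k$ iterations the magnitude of the $m$-th harmonic is therefore scaled by $\mu_m^{-k}$ and decays geometrically.

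Passing to the limit, only the first harmonic component $a_1\mathbf{C}_1 + b_1\mathbf{S}_1$ survives. By Lemma \ref{DHypocycl_onepoint} (or the direct observation around Figure \ref{SpecHedge}), such a hedgehog consists of $n$ lines all passing through the point $(a_1, b_1)$, so every vertex of the corresponding degenerate polygon coincides with $(a_1, b_1)$. By Lemma \ref{PseudoSteiner2Origin} this point is precisely the vertex centroid of $\P$, which is preserved by all the iterates. The iterated $\Aev$-evolvents therefore converge, vertex by vertex, to this common vertex centroid.

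The only delicate point is book-keeping: the angular parameters rotate by a fixed amount at each step, and $\Aeo^2$ additionally produces an index shift (cf.\ Corollary \ref{scalingfactor} and the remark following Theorem \ref{AngleEvolShapes}). These cosmetic issues do not affect the limit, because the first-harmonic representation of a single point is invariant under any such rotation and reindexing; the magnitude estimates on the higher harmonics are therefore enough to conclude convergence in, say, Hausdorff distance to the point $(a_1, b_1)$.
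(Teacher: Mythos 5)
Your proposal is correct and follows essentially the same route as the paper: decompose the hedgehog into discrete harmonics, invoke Theorem \ref{EvolDH} (with Corollary \ref{scalingfactor}) to see that every harmonic of order $m>1$ is contracted by the evolvent by the factor $\mu_m^{-1}=\sin(\pi/n)/\sin(\pi m/n)<1$, and conclude that only the first harmonic --- the point $(a_1,b_1)$, i.e.\ the vertex centroid --- survives in the limit. Your version merely spells out more explicitly the well-definedness of the evolvent at each step (via Proposition \ref{WhenInvol}) and the harmless rotation/reindexing bookkeeping, both of which the paper handles implicitly.
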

\begin{proof}
By Theorem \ref{EvolDH} and Corollary \ref{scalingfactor}, the iterated $\Aev$-evolvents of $a_m \mathbf{C}_m + b_m \mathbf{S}_m$ for $m > 1$ are inscribed into hypocycloids centered at the origin and shrinking with each step by a constant factor. Thus the distances of the sides of the hedgehog from its vertex centroid decrease exponentially, and all vertices converge to the vertex centroid.
\end{proof}

Note that the smallest non-vanishing harmonics dominate the sequence of iterated $\Aev$-evolvents. Therefore this sequence has a limiting shape or, more often, two shapes between which the members of the sequence alternate.

\subsubsection{Evolution of the angles of $\Aev_{\rm odd}$-evolvent, $n$ odd}

Consider Figure \ref{pedal} again. Given a triangle $ABC$, the triangle $DEF$ made by the foot points of its altitudes is called the {\it pedal triangle}.  As we mentioned in Remark \ref{bill}, the pedal triangle is the $\Aev_{\rm odd}$-evolvent of the initial triangle. 

A number of authors studied the dynamics of the map that sends a triangle to its pedal triangle \cite{KS,Lax,Un,Al,Ma}. The iterated pedal triangles converge to a point, and this point depends continuously, but not differentiably, on the initial triangle. Restricted to the shapes of triangles, the pedal map is ergodic  (it is modeled by the full shift on four symbols).

In this section, we extend this ergodicity result to odd $n>3$. Since we are interested in the angles only, we consider $n$ lines $l_1,\ldots,l_n$ through the origin. As before, let $\mathcal S$ be the composition of the reflections in $l_1,\ldots,l_n$. Since $n$ is odd, $\mathcal S$ is a reflection; let $l^*_1$ be its axis. Let $l^*_2,\ldots,l^*_n$ be the consecutive reflections of $l^*_1$ in $l_1, l_2,\ldots, l_{n-1}$. We are interested in the map
$$
F: (l_1,\ldots,l_n) \mapsto (l^*_1,\ldots,l^*_n).
$$

The map $F$ can be considered as a self-map of the torus $\R^n/(\pi\Z)^n$. This map commutes with the diagonal action of the circle $\R/\pi\Z$ (rotating all lines by the same angle). Let $G$ be the induced map on the quotient torus $\R^{n-1}/(\pi\Z)^{n-1}$. This map describes the evolution of the angles of $\Aev_{\rm odd}$-evolvents of an $n$-gon.

\begin{theorem} \label{ergodic}
The map $G$ is measure-preserving and ergodic.
\end{theorem}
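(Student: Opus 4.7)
The approach is to realize $F$ as an integer linear endomorphism of $(\R/\pi\Z)^n$, compute its spectrum by diagonalizing simultaneously with the cyclic shift, and then invoke Halmos' ergodicity criterion for torus endomorphisms.

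Since each $l_i$ passes through the origin, all support numbers vanish and hence the quasiperimeter is zero; by the analysis of Section \ref{oddn}, $\mathcal{S}$ is the reflection in the line through the origin with angular coordinate $\B(\alpha)=\alpha_1-\alpha_2+\alpha_3-\cdots+\alpha_n$. Thus $\alpha^*_1=\B(\alpha)$, and since $l^*_{k+1}$ is the reflection of $l^*_k$ in $l_k$ one has the recursion $\alpha^*_{k+1}=2\alpha_k-\alpha^*_k$. This writes $F$ as a linear map with integer matrix $A$, which descends to a self-map of the torus $T^n:=(\R/\pi\Z)^n$. The diagonal vector $(1,\dots,1)$ is an eigenvector of $A$ with eigenvalue $1$, corresponding to the $S^1$-action by simultaneous rotation, and quotienting by this action produces the map $G$ on the $(n-1)$-torus.

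Next I would use the fact that $F$ commutes with cyclic shifting of indices: relabeling $(l_1,\dots,l_n)$ as $(l_2,\dots,l_n,l_1)$ conjugates $\mathcal S$ by $\mathcal S_1$, which simply shifts the output sequence by one. Hence $A$ commutes with the cyclic shift matrix $\mathcal Z$, and they share the eigenbasis $v_m=(1,\omega^m,\omega^{2m},\dots,\omega^{(n-1)m})$ with $\omega:=e^{2\pi i/n}$. Reading off the first coordinate of $A v_m$ gives
\[
\mu_m \;=\; \sum_{j=1}^n(-1)^{j-1}\omega^{(j-1)m} \;=\; \frac{1-(-\omega^m)^n}{1+\omega^m} \;=\; \frac{2}{1+\omega^m},
\]
where the last equality uses that $n$ is odd. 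The eigenvalue $\mu_0=1$ is precisely the one corresponding to the diagonal direction quotiented out, so the spectrum of $G$ is $\{\mu_1,\dots,\mu_{n-1}\}$. A short calculation gives $|\mu_m|=1/|\cos(\pi m/n)|$; since $n$ is odd and $0<m<n$, the cosine is nonzero with absolute value strictly less than $1$, whence $|\mu_m|>1$ and in particular no $\mu_m$ is a root of unity.

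Finally, by the classical Halmos--Rokhlin criterion, a linear endomorphism of a torus is ergodic with respect to Haar measure iff none of its eigenvalues is a root of unity, so $G$ is ergodic. Measure preservation is automatic: any linear torus endomorphism of degree $d=|\det|$ is a $d$-to-one covering with local Jacobian equal to $d$, so normalized Haar measure is invariant. The main technical point I expect to require care is identifying $G$ as an honest endomorphism of an $(n-1)$-torus in coordinates where the Halmos criterion directly applies --- one must select a basis of the quotient lattice $\pi\Z^n / \pi\Z\cdot(1,\dots,1)$ in which $G$ is represented by an integer matrix, and check that its spectrum is exactly $\{\mu_1,\dots,\mu_{n-1}\}$. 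Given the explicit commutation with $\mathcal Z$ and the closed form for the $\mu_m$, this is routine bookkeeping.
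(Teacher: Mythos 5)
Your proposal is correct and follows essentially the same route as the paper: realize $F$ as the integer linear (circulant) torus map $\alpha^*_i=\alpha_i-\alpha_{i+1}+\cdots+\alpha_{n+i-1}$, diagonalize it along the eigenvectors of the cyclic shift to get eigenvalues $2/(1+\omega^m)$, observe that the eigenvalue $1$ corresponds exactly to the diagonal circle being quotiented out while the remaining eigenvalues have modulus $1/|\cos(\pi m/n)|>1$, and conclude by the root-of-unity criterion for ergodicity of toral endomorphisms together with the covering argument for invariance of Haar measure. The only cosmetic difference is that you derive the linear formula via the recursion $\alpha^*_{k+1}=2\alpha_k-\alpha^*_k$ from the axis direction $\B(\alpha)$, whereas the paper states the closed form directly.
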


\proof
Let $\alpha_1,\ldots,\alpha_n$ be the directions of the lines $l_1,\ldots,l_n$; the angles $\alpha_i$ are considered mod $\pi$. One can easily calculate the directions of the lines $l^*_1,\ldots,l^*_n$; these directions are
\begin{equation} \label{cyclang}
\alpha^*_{i}=\alpha_i-\alpha_{i+1}+\alpha_{i+2}-\ldots + \alpha_{n+i-1}
\end{equation}
(the indices are considered mod $n$). 

Thus the torus map $F$ is covered by the linear map (\ref{cyclang}). This linear torus map is measure-preserving: 
the preimage of every subset consists of $2^{n}$ copies of this subset, each having the volume $2^{n}$ times smaller.

A linear epimorphism of a torus is ergodic if and only if it is generated by a matrix that has no roots of unity as eigenvalues, see, e. g., \cite[Corollary 1.10.1]{Wal}. Let us find the eigenvalues of the map (\ref{cyclang}).

The matrix of (\ref{cyclang}) is circulant, and its eigenvalues are 
$$
\lambda_k = 1 - \omega^k + \omega^{2k} - \ldots + \omega^{(n-1)k},\ k=0,1\ldots,n-1,
$$
where $\omega$ is a primitive $n$th root of 1. That is, $\lambda_k = 2/(1+\omega^k)$.

In particular,  $\lambda_0 = 1$, and the respective eigenspace is spanned by the vector $(1,\ldots,1)$. The eigenvalues of the quotient map, $G$, are $\lambda_k,\ k=1\ldots,n-1$, and we have $|\lambda_k| > 1$. Therefore $G$ is ergodic.
\proofend

\subsection{$\Pev$-involutes}\label{P-inv}
\subsubsection{The case of odd $n$} \label{Pinvodd}
As before, the transfomation $\mathcal S$  is either a reflection in a line or a glide reflection. The latter has neither fixed, nor periodic points, and the former has a line consisting of fixed points. Thus a necessary and sufficient condition for $\mathbf Q$ to have a $\Pev$-involute is that the isometry $\mathcal S$ be a reflection (and not a glide reflection), in other words, that the quasiperimeter of $\mathbf Q$ be zero.

 In this case, $\mathbf Q$ has a one-parameter family of (parallel) $\Pev$-involutes; to construct any one of them, we choose a point on the axis of reflection and reflect it in the consecutive sides of $\mathbf Q$; the points obtained are the vertices of a $\Pev$-involute of $\mathbf Q$. 
 
 The quasiperimeters of the involutes of this family vary linearly. So a generic $n$-gon $\mathbf Q$ of zero quasiperimeter has precisely one $\Pev$-involute of zero quasiperimeter (compare with Section \ref{Ainvodd}). We call it the {\it $\Pev$-evolvent} of $\mathbf Q$. So a generic odd-gon of zero quasiperimeter gives rise to an infinite sequence of consecutive $\Pev$-evolvents.

If $\mathcal S$ is a reflection, then every point not on the axis is  periodic  of period 2. If we take such a point and repeat the full cycle of reflections in the sides of $\mathbf Q$ twice, we obtain a $2n$-gonal $\Pev$-involute of the polygon $\mathbf Q$, traversed twice; thus, there exists a two-parameter family of such involutes.

Figure \ref{Pinvolodd} shows all these constructions applied to a pentagon $\mathbf Q$ with zero quasiperimeter. The left diagram shows a family of $\Pev$-involutes; one of them, $\P$, is the $\Pev$-evolvent of $\mathbf Q$. The right diagram shows a 10-gonal $\Pev$-involute $\widetilde\P$ of the pentagon $\mathbf Q$, traversed twice.

\begin{figure}[hbtp]
\centering
\includegraphics[height=2.5in]{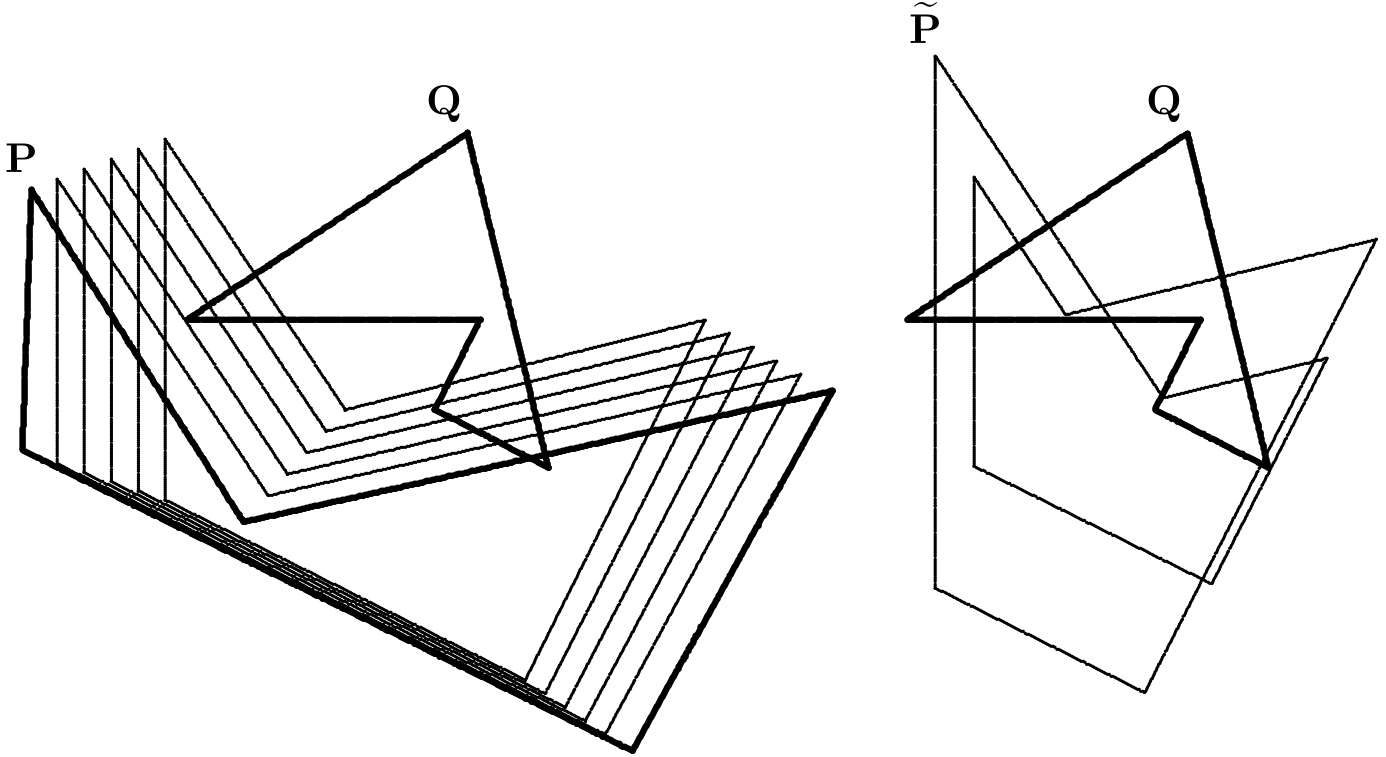}
\caption{Left: a family of $\Pev$-involutes of a pentagon $\mathbf Q$; $\P$ is the $\Pev$-evolvent. Right: a $\Pev$-involute $\widetilde\P$ of the same pentagon traversed twice.}
\label{Pinvolodd}
\end{figure}

\subsubsection{The case of even $n$}\label{Pinveven}
 The transformation $\mathcal S$ is a composition of an even number of reflections. Thus, it is either a (nontrivial) rotation, or a (nontrivial) parallel translation, or the identity. In the first case, one has a unique fixed point corresponding to the center of rotation, in the second case there are no fixed points, and in the third case, all the points are fixed. 

\begin{figure}[hbtp]
\centering
\includegraphics[width=5in]{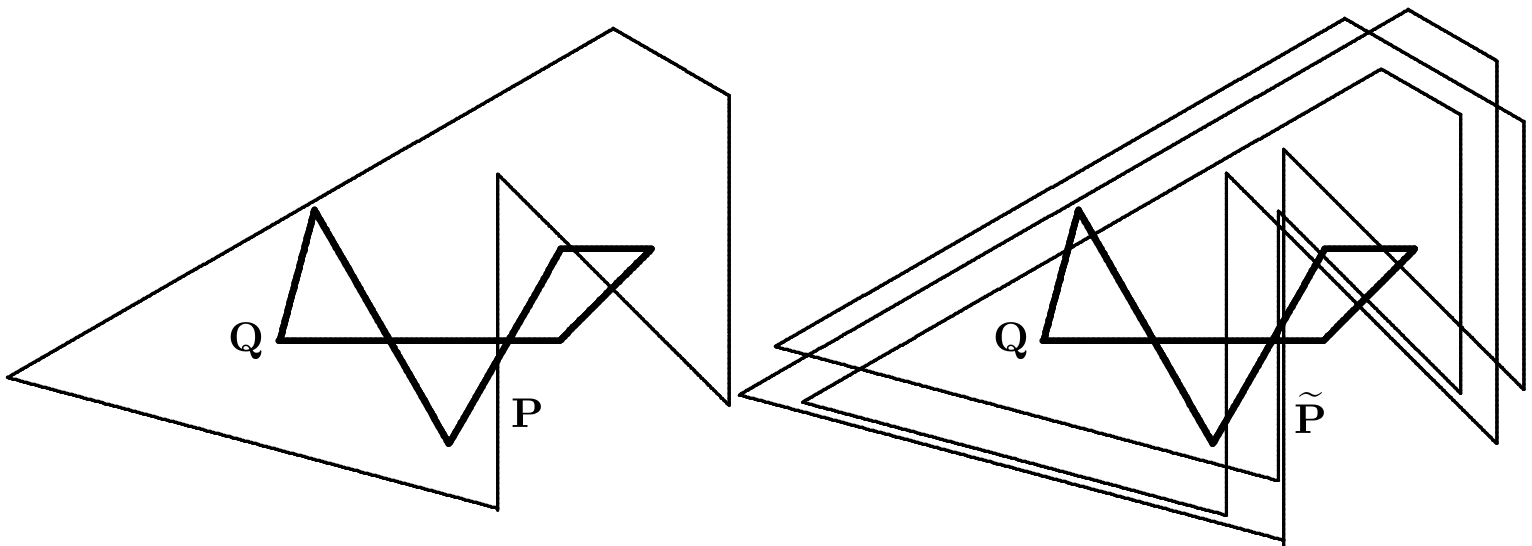} 
\caption{Left: a hexagon $\mathbf Q$ with the (hexagonal) $\Pev$-evolvent $\P$. Right: a $\Pev$-involute $\widetilde\P$ of the same hexagon traversed 3 times.}
\label{hexagPinvol}
\end{figure}

For a generic polygon $\mathbf Q$, the transformation $\mathcal S$ is a rotation, and there is a unique $\Pev$-evolvent $\P$. To construct it, we should take the center of rotation and consecutively reflect it in the sides of $\mathbf Q$. If the angle of this rotation is a rational multiple of $\pi$, then all the points of the plane, except the center of rotation, are periodic points of $\mathcal S$ of some period $k$. In this case we apply the same construction to an arbitrary point in the plane, repeating it $k$ times. We will get a closed $kn$-gon $\widetilde\P$ whose $\Pev$-evolute is the polygon $\mathbf Q$, traversed $k$ times. For this $kn$-gon, the vertices number $j,j+n,\dots,j+(k-1)n$ with $j=1,2,\dots,k$, form a regular $k$-gon whose size is the same for all $j$. These possibilities are shown in Figure \ref{hexagPinvol}.

In the case of parallel translation, no involute exists. For example, this happens if $\mathbf{Q}$ is a quadrilateral, inscribed into a circle (the opposite angles add up to $\pi$). If $\mathcal S$ is the identity, one has a 2-parameter family of $\Pev$-involutes that still have parallel sides. See Figure \ref{hexa}.

\begin{figure}[hbtp]
\centering
\includegraphics[height=2.45in]{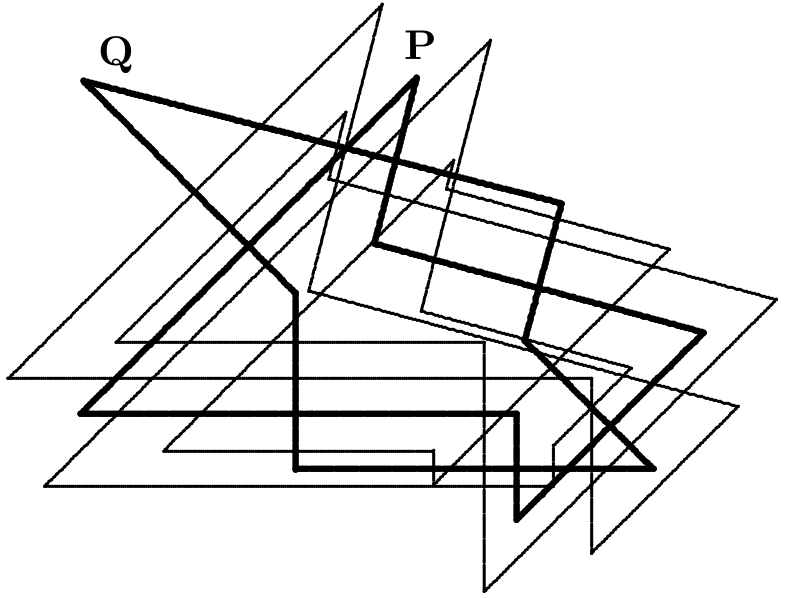} 
\caption{A hexagon $\mathbf Q$ with a two-parameter family of $\Pev$-involutes. One of them, $\P$, is the $\Pev$-evolvent of $\mathbf Q$.}
\label{hexa}
\end{figure}

Similarly to the case of odd-gons, we will describe the conditions for the existence of a sequence of consecutive $\Pev$-evolvents for an even-gon. 

Namely, if for a given even-gon, the transformation $\mathcal S$ is a non-trivial rotation, then it has a unique $\Pev$-involute, and for this $\Pev$-involute the transformation $\mathcal S$ is again a rotation (the directions of the sides of a $\Pev$-involute differ from the directions of the sides of the initial polygon by $\dfrac{\pi}2$), so $\B(\alpha)$ stays unchanged modulo $\pi$. Thus, in this (generic) case, we have an infinite sequence of consecutive $\Pev$-evolvents. 

If $\B(\alpha)\equiv0\bmod\pi$, then $\mathcal S$ is a parallel translation by the vector$$2\sum_{k=1}^n(-1)^kp_k(\cos\beta_k,\sin\beta_k),$$where$$\beta_k=(-1)^k\alpha_k+2\sum_{j=k+1}^n(-1)^j\alpha_j$$(see Section \ref{evenn}). If this vector does not vanish, then there are no $\Pev$-involutes at all. But if it does vanish, then there is a two parameter family of $\Pev$-involutes, and for each of them, the transformation $\mathcal S$ is again a non-trivial parallel translation or the identity. It is the identity, if$$\sum(-1)^kp^\ast_k(\cos\beta^\ast_k,\sin\beta^\ast_k)=0,$$where $p^\ast_k$ and $\beta_k^\ast$ are calculated for the polygons in the family of involutes. However, $\beta^\ast_k$ are the same for all the involutes, and $p_k^\ast$ are linear functions of the two parameters of the family. So, generically, the vector of translation is zero for precisely one $\Pev$-involute. We  take it for $\Pev$-evolvent and conclude that, for almost all even-gons with $\B(\alpha)\equiv0\bmod\pi$, an infinite sequence of consecutive $\Pev$-evolvents still exists.

\subsubsection{Limiting behavior of $\Pev$-evolvents for equiangular hedgehogs}

\begin{proposition}
\label{nodd_Pinv_equi}
{\rm (A).} For $n$ odd, an equiangular hedgehog $\P$ with $n$ sides possesses a $\Pev$-involute if and only if one of the following equivalent conditions is satisfied:

{\rm (i)} the coefficient $a_0$ in \eqref{DiscrFourier} vanishes;

{\rm (ii)} $\P$ has zero perimeter: $\ell_1 + \cdots + \ell_n = 0$.\smallskip

{\rm (B).} For $n$ even, an equiangular hedgehog $\P$ with $n$ sides possesses a $\Pev$-involute if and only if one of the following equivalent conditions is satisfied:

{\rm (i)} the coefficients $a_0$ and $a_{n/2}$ in \eqref{DiscrFourier} vanish;

{\rm (ii)} the lengths of the even-numbered sides, as well as the lengths of the odd-numbered sides, sum up to zero:
\begin{equation}
\label{TwoPer}
\ell_1 + \ell_3 + \cdots + \ell_{n-1} = 0 = \ell_2 + \ell_4 + \cdots + \ell_n.
\end{equation}
\end{proposition}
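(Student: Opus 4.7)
The plan is to combine the spectral analysis of $\Pev_\theta$ from Section \ref{PevEquiang} with the formula for side lengths in Lemma \ref{CoordPolygon}. For an equiangular hedgehog $\P$ with turning angle $2\pi/n$, any $\Pev$-involute $\mathbf{R}$ must itself be equiangular with the same turning angles, because the angular part of Theorem \ref{CoordP} shifts each $\alpha_j$ by $\pi/2$, forcing $\alpha_j^{\mathbf{R}} = \alpha_j^{\P} - \pi/2$. Hence the existence of a $\Pev$-involute is equivalent to the support vector $p$ of $\P$ lying in the image of the linear map $\Pev_\theta = \frac{1}{2\sin(2\pi/n)}(\mathcal{Z}-\mathcal{Z}^\top)$ acting on $\R^n$. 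This reduction is the only place in the argument where one must check that no ``exotic'' non-equiangular involutes are missed.

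To characterize this image, I will use that $\Pev_\theta$ is skew-symmetric, hence normal and diagonalizable, so $\im\Pev_\theta = (\ker\Pev_\theta)^\perp$. By Lemma \ref{spectrum}, the eigenvalues $\lambda_m = i\sin(2\pi m/n)/\sin(2\pi/n)$ vanish precisely for $m=0$ (always) and $m=n/2$ (for $n$ even); the corresponding real eigenvectors are $\mathbf{C}_0$ and, for even $n$, also $\mathbf{C}_{n/2}$. Consequently $p\in\im\Pev_\theta$ is equivalent to the Fourier coefficient $a_0$ vanishing (and, for $n$ even, to $a_{n/2}$ also vanishing) in the discrete Fourier decomposition \eqref{DiscrFourier}. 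This establishes condition (i) in both (A) and (B).

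To pass from (i) to (ii), I will specialize formula \eqref{lengths} to $\theta_{j\pm 1/2}=2\pi/n$, obtaining $\ell_i\sin(2\pi/n) = p_{i-1}+p_{i+1}-2p_i\cos(2\pi/n)$. Summing this identity with the weights $1$ and $(-1)^i$ and reindexing yields
\[
\sum_{i=1}^n\ell_i = \frac{2(1-\cos(2\pi/n))}{\sin(2\pi/n)}\sum_{i=1}^n p_i, \quad \sum_{i=1}^n(-1)^i\ell_i = -\frac{2(1+\cos(2\pi/n))}{\sin(2\pi/n)}\sum_{i=1}^n(-1)^i p_i.
\]
By discrete Fourier orthogonality, $\sum p_i = \frac{n}{2}a_0$ and, for even $n$, $\sum (-1)^i p_i = \frac{n}{2}a_{n/2}$. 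Both proportionality constants are nonzero for $n\ge 3$, so the vanishing of $a_0$ is equivalent to the vanishing of the total perimeter, and the vanishing of $a_{n/2}$ (for $n$ even) is equivalent to the vanishing of the alternating perimeter. The pair of identities in \eqref{TwoPer} is exactly the conjunction $\sum\ell_i = 0$ and $\sum(-1)^i\ell_i = 0$, which completes the equivalence. No further obstacles should arise: once the equiangular reduction is in place, the whole proof is a linear-algebraic translation of the spectral structure of $\Pev_\theta$.
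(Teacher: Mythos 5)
Your proof is correct and follows essentially the same route as the paper's: the paper likewise reduces to the spectral structure of the equiangular $\Pev_\theta$ (identifying $\mathbf{C}_0$, and $\mathbf{C}_{n/2}$ for even $n$, as the degenerate harmonics) and then uses formula \eqref{lengths} to convert the conditions $\sum_j p_j=0$ and $\sum_j(-1)^jp_j=0$ into the perimeter conditions. The only cosmetic difference is that you characterize the image via skew-symmetry ($\im\Pev_\theta=(\ker\Pev_\theta)^\perp$), whereas the paper argues by analogy with Proposition \ref{WhenInvol}, i.e.\ via Theorem \ref{EvolDHPerp} showing that each nondegenerate harmonic subspace is mapped onto itself; both give the same image, and your explicit check that no non-equiangular involutes can occur is a welcome addition.
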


\begin{proof}
The proof is similar to that of Proposition \ref{WhenInvol}. In the even case, the decomposition \eqref{DiscrFourier} is free of the $\mathbf{C}_{n/2}$ term if and only if $p$ is orthogonal to the vector $(1,-1, \ldots, 1, -1)$. Combining this with the orthogonality to $(1, 1, \ldots, 1)$ and using the formulas for $\ell$ in terms of $p$, one transforms this to $\sum_j \ell_j = \B(\ell) = 0$.
\end{proof}

Any  $\Pev$-involute  can be modified by a $\mathbf{C}_0$ or $\mathbf{C}_{n/2}$ summand, remaining a $\Pev$-involute. We have to avoid these summands in order to be able to iterate  $\Pev^{-1}$. Thus the $\Pev$-evolvent of an odd-gon is such a $\Pev$-involute which has no $\mathbf{C}_0$ component.  Using our knowledge of the eigenvalues of the matrix $\Pev_\theta$, we arrive at the following theorem.

\begin{theorem}
\label{nodd_Pinv_equi_limit}
{\rm (A)}. For $n$ odd, every equiangular hedgehog of zero perimeter has a unique infinite sequence of iterated $\Pev$-evolvents. Generically, the polygons in this sequence expand, tending in their shapes to two discrete hypocycloids of order $\dfrac{n-1}2$.\smallskip

{\rm (B)} For $n$ even, every equiangular hedgehog that satisfies \eqref{TwoPer} has a unique infinite sequence of $\Pev$-evolvents. Generically, the polygons in this sequence tend to two discrete hypocycloids of order $\dfrac{n}2 - 1$. Otherwise they shrink to the vertex centroid of the initial hedgehog, tending in shape to discrete hypocycloids of some other order.
\end{theorem}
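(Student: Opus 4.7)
The plan is to diagonalize $\Pev_\theta$ in the discrete Fourier basis \eqref{DiscrFourier} and track the growth of each harmonic component under iteration of $\Pev_\theta^{-1}$. By Lemma \ref{spectrum}, on the complexified $2$-dimensional subspace $\lin\{\mathbf{C}_m, \mathbf{S}_m\}$ the transformation $\Pev_\theta$ acts with eigenvalues $\pm\lambda_m$ where $\lambda_m = i\sin(2\pi m/n)/\sin(2\pi/n)$. The kernel of $\Pev_\theta$ is spanned by $\mathbf{C}_0$ for odd $n$ and by $\{\mathbf{C}_0, \mathbf{C}_{n/2}\}$ for even $n$; all other discrete harmonic subspaces are $\Pev_\theta$-invariant, and $\Pev_\theta$ is invertible on their direct sum $W$.

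First, I would establish existence and uniqueness of the iterated $\Pev$-evolvent sequence. By Proposition \ref{nodd_Pinv_equi}, the hypothesis that $\P$ has zero perimeter (for odd $n$) or satisfies \eqref{TwoPer} (for even $n$) is precisely the statement that $\P \in W$. The $\Pev$-evolvent is defined as the unique preimage of $\P$ in $W$; since $W$ is $\Pev_\theta$-invariant, the construction iterates indefinitely.

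Next, I would reduce the limit-shape analysis to harmonics of order $m \geq 2$ by translating the origin to the vertex centroid: by Proposition \ref{PsStPres} the vertex centroid is preserved by $\Pev$, and by Lemma \ref{PseudoSteiner2Origin} it lies at the origin if and only if the first-harmonic coefficients $a_1, b_1$ vanish. On each remaining real $2$-dimensional subspace $\lin\{\mathbf{C}_m, \mathbf{S}_m\}$, the map $\Pev_\theta^{-1}$ acts as a quarter-turn composed with a scaling by $|\lambda_m|^{-1} = \sin(2\pi/n)/|\sin(2\pi m/n)|$. Maximizing these scaling factors: for odd $n \geq 5$, the maximum over $m \notin \{0, 1, n-1\}$ is attained at $m = (n \pm 1)/2$ and equals $2\cos(\pi/n) > 1$; for even $n \geq 6$, the maximum over $m \notin \{0, 1, n-1, n/2\}$ is attained at $m = n/2 \pm 1$ and equals exactly $1$, while every remaining surviving harmonic has scaling strictly less than $1$.

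In the generic case, the dominant harmonic has non-zero initial coefficient, so the suitably normalized iterates tend in shape to a discrete hypocycloid of the stated order; the alternation between the two limit shapes arises because the dominant eigenvalue of $\Pev^{-1}$ is purely imaginary, inducing a $\pi/2$ rotation on the $2$-dimensional eigenspace per iteration and combining with the $\alpha$-shift $\alpha_j^* = \alpha_j + \pi/2$ from Theorem \ref{CoordP}. In the exceptional case in (B), when the $(n/2-1)$-harmonic coefficient vanishes, every surviving harmonic has $|\lambda_m| > 1$ and so the iterates decay exponentially; the polygons shrink to the (preserved) vertex centroid, with the shape tending to the discrete hypocycloid of the next highest surviving order. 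The main technical obstacle I anticipate is justifying the precise ``two shapes'' claim — in particular, verifying that the combined rotation from the complex eigenvalue and the $\alpha$-shift really produce two genuinely distinct limit shapes rather than collapsing to one under the symmetries of the limiting hypocycloid.
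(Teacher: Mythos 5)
Your proposal is correct and follows essentially the same route as the paper: both rest on the spectrum $\lambda_m = i\sin(2\pi m/n)/\sin(2\pi/n)$ from Lemma \ref{spectrum}, restrict to the invariant complement of the kernel $\lin\{\mathbf{C}_0\}$ (resp.\ $\lin\{\mathbf{C}_0,\mathbf{C}_{n/2}\}$) where $\Pev_\theta^{-1}$ is defined, and identify the dominant harmonics of the inverse map as $m=(n\pm1)/2$ with factor $2\cos(\pi/n)>1$ for odd $n$, and $m=1,\ n/2\pm1$ with factor $1$ for even $n$. You are somewhat more explicit than the paper about the existence/uniqueness of the evolvent sequence, the role of the vertex centroid, and the two-shape alternation coming from the purely imaginary dominant eigenvalue, but these are elaborations of the same argument rather than a different one.
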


\begin{proof} In the odd case, the iterated involutes are dominated by the $a\mathbf{C}_{\frac{n-1}2} + b\mathbf{S}_{\frac{n-1}2}$ term (which is generically non-zero). Indeed, this invariant subspace of $\Pev_\theta$ has the smallest eigenvalues $\pm i \dfrac{\sin(\pi/n)}{\sin(2\pi/n)}$. The absolute value of the latter is smaller than $1$, hence the $\Pev$-involutes expand (we apply the inverse of $\Pev_\theta$ on the subspace where it is defined).
	
\begin{figure}[htbp]
\centering
\includegraphics[width=3.8in]{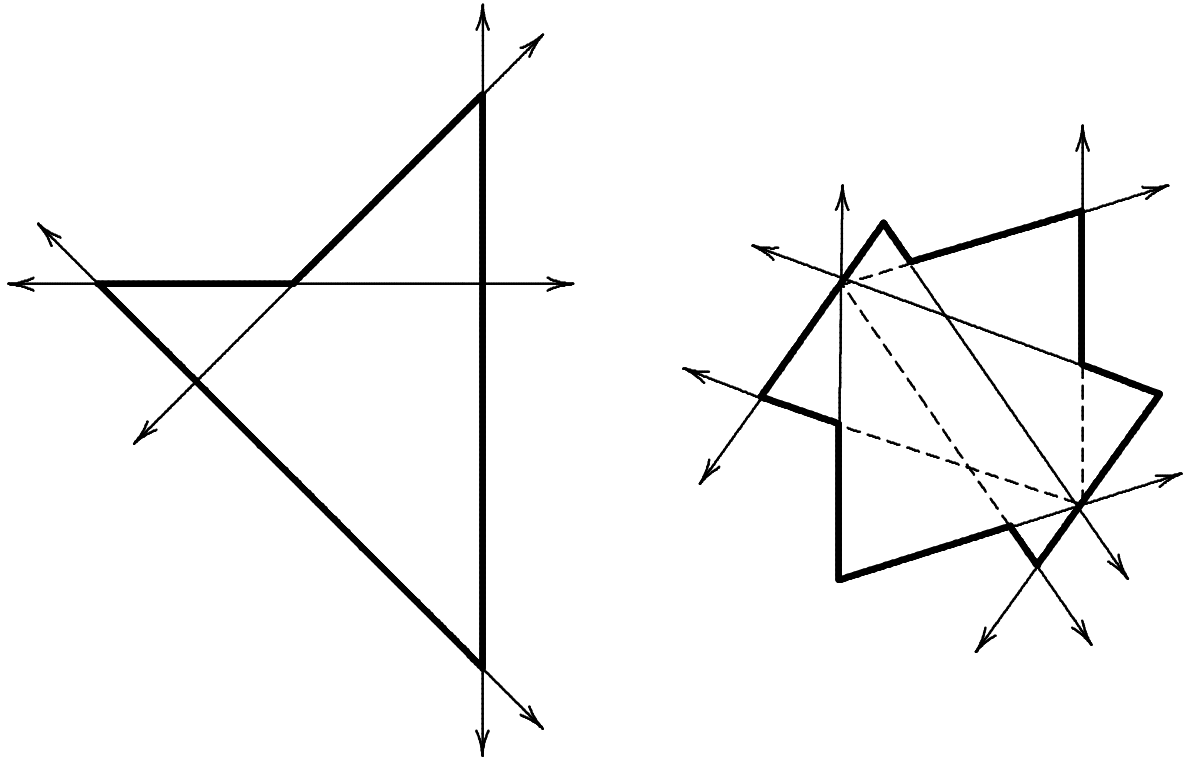}
\caption{Limiting shapes of iterated $\Pev$-evolvents of equiangular even-gons: discrete hypocycloids of order $\dfrac{n}2 - 1$ for $n=8$ (all lines occur twice with different orientations) and $n=10$.}
\label{EvenShapeInvol}
\end{figure}

Similarly, in the even case, the smallest eigenvalues are $\pm i$. They correspond to $a\mathbf{C}_1 + b\mathbf{S}_1$, which fix the center of hypocycloids, but also to $a\mathbf{C}_{\frac{n}2 - 1} + b\mathbf{S}_{\frac{n}2 - 1}$. All other terms tend to zero as the involute is iterated.
\end{proof}

The discrete hypocycloids of order $\dfrac{n}2 - 1$ are linear combinations of the hedgehogs
\begin{equation*}
\mathbf{C}_{\frac{n}2 - 1} = \left\{\left( \frac{2\pi j}n, (-1)^j\cos\frac{2\pi j}n \right) \right\} \mbox{ and }\,\mathbf{S}_{\frac{n}2 - 1} = \left\{\left( \frac{2\pi j}n, (-1)^{j-1}\sin\frac{2\pi j}n \right)\right\}.
\end{equation*}
In the hedgehog $a\mathbf{C}_{\frac{n}2 - 1} + b\mathbf{S}_{\frac{n}2 - 1}$, all lines with odd indices pass through the point $(-a,b)$, and all lines with even indices through $(a,-b)$ (see Lemma \ref{DHypocycl_onepoint}). We have already met these hedgehogs, for $n=6$, in Figure \ref{RegHexEvol}. Examples for $n=8$ and $n=10$ together with their $\Pev$-evolvents are shown in Figure \ref{EvenShapeInvol}.

\section{Future directions}\label{future_directions}

In the future, we aim to pursue similar problems in the set-up of the spherical and hyperbolic geometries. In the Euclidean case, the maps involved are, essentially, linear. For example,  a plane curve can be characterized by its support function $p(\alpha)$; then the support function of its evolute is $p'(\alpha-\pi/2)$. Thus the continuous problem reduces to the study of iterations of the derivative, or the inverse derivative, of a periodic function. In contrast, the constant curvature versions of the problem are highly non-linear.

\begin{figure}[hbtp]
\centering
\includegraphics[width=5in]{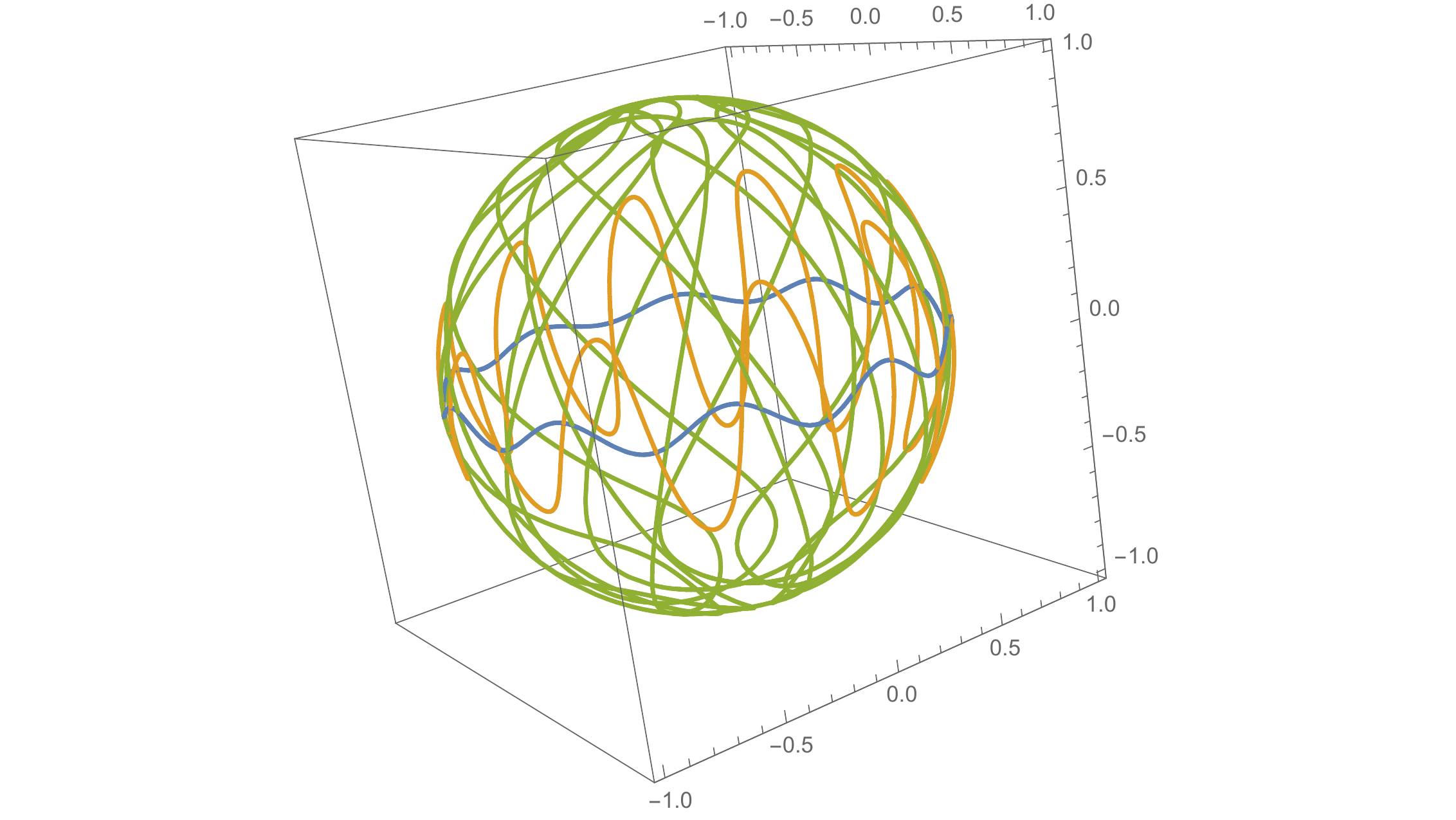}
\caption{Two iterations of the spherical derivative map.}
\label{sphere}
\end{figure}

For example, an equivalent formulation, in the spherical geometry, is as follows. Let $\gamma$ be a spherical arc-length parameterized curve. Consider the tangent indicatrix curve $\Gamma = \gamma'$. The curve $\Gamma$ bisects the area of the sphere. 
The `derivative' mapping $\gamma \mapsto \Gamma$ is a spherical analog of the evolute transformation in the plane, see Figure \ref{sphere}. 
One can also consider the `inverse derivative'  mapping $\Gamma \mapsto \gamma$ where $\gamma$ is chosen so that it bisects the area of the sphere. 
Concerning these mappings, it would be of interest to ascertain whether:
\begin{itemize}
\item the iterated derivatives of a curve, other than a circle, fill the sphere densely;
\item if the number of intersections of the iterated derivatives of a curve with a great circle is uniformly bounded, then the curve is a circle;
\item the inverse derivatives of an area-bisecting curve, sufficiently close to a great circle, converge to a great circle.
\end{itemize}
The latter question has a distinctive curve-shortening flavor, in a discrete setting.

In the hyperbolic plane, for the evolute to be bounded, the curve must be horocyclically convex. We are interested in the following questions:
\begin{itemize}
\item  if all the iterated evolutes of a curve are  horocyclically convex, does it imply that the curve is a circle?
\item do the iterated involutes of any curve converge to a point?
\end{itemize}

The discrete (polygonal) versions of these questions are also wide open and present a substantial challenge.

\end{document}